\newtheorem{thm}{Theorem}[section]
\newtheorem{prop}[thm]{Proposition}
\newtheorem{lem}[thm]{Lemma}
\newtheorem{cor}[thm]{Corollary}
\theoremstyle{definition}
\newtheorem{nt}[thm]{Notation}
\newtheorem{defn}[thm]{Definition}
\theoremstyle{remark}
\newtheorem{rem}[thm]{Remark}
\newcommand{\gen}[1]{\left\langle#1\right\rangle}
\newcommand{\prs}[2]{\gen{#1\parallel #2}}
\newcommand{\pc}{{\rm Pc}}
\renewcommand{\d}{{\rm d}}
\newcommand{\ga}{\Gamma}
\newcommand{\e}{\varepsilon }
\renewcommand{\kappa }{\varkappa}
\newcommand{\G}{\ga(G, X\sqcup \mathcal H)}
\newcommand{\Hl}{\{H_\lambda\}_{\lambda \in \Lambda}}
\renewcommand{\le}{\leqslant}
\renewcommand{\ge}{\geqslant}
\renewcommand{\phi}{\varphi}
\newcommand{\vr}{\mathcal{VR}}
\newcommand{\avr}{\mathcal{AVR}}
\newcommand{\cH}{\mathcal{H}}
\newcommand{\cQ}{\mathcal{Q}}
\newcommand{\cS}{\mathcal{S}}
\newcommand{\cT}{\mathcal{T}}
\newcommand{\cW}{\mathcal{W}}
\newcommand{\cM}{\mathcal{M}}
\newcommand{\N }{\mathbb N}
\newcommand{\Z }{\mathbb Z}
\newcommand{\LWPD}{\mathcal{L}_{\text{WPD}}(G,\cS)}
\newcommand{\LWPDHP}{\mathcal{L}^{+}_{\text{WPD}}(H,\cS)}
\newcommand{\LWPDP}{\mathcal{L}^{+}_{\text{WPD}}(G,\cS)}
\newcommand{\apg}{\stackrel{G}{\approx}}
\newcommand{\napg}{\stackrel{G}{\not\approx}}
\newcommand{\h}{\hookrightarrow_{h}}
\newcommand{\Lab}{{\bf Lab}}
\newcommand{\diam}{{\rm diam}}
\newcommand{\nei}[2]{\mathcal{N}_{#2}(#1)}
\newcommand{\B}[2]{\mathbb{B}(#1 , #2)}
\newcommand{\llangle}{\langle\hspace{-2.5pt}\langle}
\newcommand{\rrangle}{\rangle\hspace{-2.5pt}\rangle}
\newcommand{\tame}{loxodromic WPD }
\newcommand{\Tame}{Loxodromic WPD }
\def\coloneq{\mathrel{\mathop\mathchar"303A}\mkern-1.2mu=}
\renewcommand{\le }{\leq}
\renewcommand{\ge }{\geq}
\begin{document}

\title[Commensurating endomorphisms of acylindrically hyperbolic groups]{Commensurating endomorphisms of acylindrically hyperbolic groups and applications}

\author{Yago Antol\'{i}n}
\address[Yago Antol\'{i}n]{1326 Stevenson Center,
Vanderbilt University,
Nashville, TN 37240, USA.}
\email{yago.anpi@gmail.com}

\author{Ashot Minasyan}
\address[Ashot Minasyan]{Mathematical Sciences,
University of Southampton, Highfield, Southampton, SO17 1BJ, United
Kingdom.}
\email{aminasyan@gmail.com}

\author{Alessandro Sisto}
\address[Alessandro Sisto]{Department of Mathematics, ETH Zurich, 8092 Zurich, Switzerland.}
\email{sisto@math.ethz.ch}

\thanks{The work of the first two authors was partially supported by the EPSRC grant EP/H032428/1. The work of the first author was partially supported by  the MCI (Spain) through project MTM2011-25955.
The work of the third author was partially funded by the EPSRC grant ``Geometric and analytic aspects of infinite groups''.}

\begin{abstract} We prove that the outer automorphism group $Out(G)$ is residually finite when the group $G$ is
virtually compact special (in the sense of Haglund and Wise) or when $G$ is  isomorphic to the fundamental group of some compact $3$-manifold.

To prove these results we characterize commensurating endomorphisms of acylindrically hyperbolic groups.
An endomorphism $\phi$ of a group $G$ is said to be commensurating, if for every $g \in G$ some non-zero power of $\phi(g)$ is conjugate to a non-zero power of $g$.
Given an acylindrically hyperbolic group $G$, we show that any commensurating endomorphism of $G$ is inner modulo a small perturbation.
This generalizes a theorem of Minasyan and Osin, which provided a similar statement in the case when $G$ is relatively hyperbolic. We then use this result
to study  pointwise inner and normal endomorphisms of acylindrically hyperbolic groups.
\end{abstract}

\keywords{Acylindrically hyperbolic groups, hyperbolically embedded subgroups, commensurating endomorphisms, pointwise inner automorphisms, right angled Artin groups, outer automorphism groups, $3$-manifold groups.}

\subjclass[2010]{20F65, 20F67, 20F28, 20E26, 20F34, 57M05}
\setcounter{tocdepth}{1}
\maketitle
\vspace{-0.5cm}
\tableofcontents

\section{Introduction}
A group $G$ is said to be \emph{residually finite} if for any distinct $x,y \in G$ there is a finite group $Q$ and a homomorphism $\psi:G \to Q$ such that $\psi(x) \neq \psi(y)$
in $Q$. Informally speaking this means that $G$ can be approximated by its finite quotients, in which case these quotients can be used to study the group $G$.
For example, two classical theorems of Mal'cev state that finitely presented residually finite groups have solvable word problem \cite{Malc1},
and finitely generated residually finite groups are Hopfian \cite{Malc2}.
Thus residual finiteness is a very basic property, so given any infinite group $G$, one of the first questions we could ask about $G$ is whether it is residually finite.

The goal of this work is to prove residual finiteness of the outer automorphism group $Out(G)=Aut(G)/Inn(G)$, where $G$ belongs to one of the
following large classes of groups:
\begin{itemize}
  \item the class of virtually compact special groups (in the sense of F. Haglund and D. Wise \cite{Hag-Wise}),
  \item the class of fundamental groups of  compact $3$-manifolds.
\end{itemize}

Before formulating the main results, let us recall some background of the problem.
A  well-known theorem of G. Baumslag \cite{Baumslag} asserts that for a finitely generated residually finite group $G$ its group of automorphisms,
$Aut(G)$, is also residually finite. In \cite{MO} the second author and Osin showed that in the case when $G$ has infinitely many ends, the same assumptions on $G$ (finite generation and residual finiteness) also imply that the group of outer automorphisms $Out(G)$ is residually finite. However, this `outer' version of Baumslag's
result may not hold if $G$ is $1$-ended. Indeed, Bumagin and Wise \cite{Bum-Wise} proved that for any finitely presented group $S$ there is a finitely generated
residually finite ($1$-ended) group $G$ such that  $Out(G) \cong S$.

A classical criterion for establishing residual finiteness of outer automorphism groups was discovered by Grossman \cite{Grossman}.
This criterion imposes stronger assumptions on the group: $G$ must be conjugacy separable and any pointwise inner automorphism of $G$ must be inner.
Recall that a group is {\it conjugacy separable} if for any pair of non-conjugate elements $x,y \in G$ there exists a finite quotient $Q$, of $G$, in which the images of $x,y$ are non-conjugate. An automorphism $\alpha \in Aut(G)$ is said to be {\it pointwise inner} if $\alpha(g)$ is conjugate to $g$ for each $g \in G$.
The set of all pointwise inner automorphism, $Aut_{pi}(G)$, forms a normal subgroup of $Aut(G)$.

In \cite{Grossman} Grossman proved the following theorem: if $G$ is a finitely generated conjugacy separable group such that
$Aut_{pi}(G)=Inn(G)$ then $Out(G)$ is residually finite. Unfortunately, it is usually hard to show that a given group is conjugacy separable, as it is  a much more  delicate
condition than residual finiteness (for example conjugacy separability may not pass to subgroups or overgroups of finite index -- see \cite{MM,Goryaga}).

One class of groups for which conjugacy separability is known is the class $\mathcal{VR}$, consisting of virtual retracts of finitely generated right angled Artin
groups -- see \cite{Minasyan_hcs}.
Let $\mathcal{AVR}$ denote the class of groups which contain a finite index subgroup from the class $\mathcal{VR}$. The significance of the class
$\mathcal{AVR}$ can be seen from the work of Haglund and Wise \cite{Hag-Wise}, who proved that every virtually compact special group belongs to this class
(recall that a group $G$ is said to be \emph{virtually compact special} if $G$ contains a finite index subgroup which is the fundamental group of a
compact special cube complex in the sense of \cite{Hag-Wise}). The list of virtually compact special groups is quite large and includes most Coxeter groups,
$1$-relator groups with torsion and finitely generated fully residually free (limit) groups -- see \cite{Hag-Wise-cox,Wise-QH}.
Therefore the following theorem covers a wide range of groups:

\begin{thm}\label{thm:avr} For any group $G \in \mathcal{AVR}$ the  group $Out(G)$ is residually finite.
\end{thm}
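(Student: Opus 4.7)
My plan is to deduce Theorem~\ref{thm:avr} from Grossman's criterion, recalled in the introduction: it suffices to prove that every $G\in\avr$ is finitely generated, conjugacy separable, and satisfies $Aut_{pi}(G)=Inn(G)$. Finite generation is automatic from the definition (a virtual retract of a finitely generated RAAG is finitely generated, and so is any finite-index overgroup), so the real content lies in the last two properties.

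For conjugacy separability, the starting point is Minasyan's result from \cite{Minasyan_hcs} establishing that groups in $\vr$ are in fact \emph{hereditarily} conjugacy separable. Since conjugacy separability is notoriously not preserved under finite extensions, the step from $\vr$ to $\avr$ is not a formality; the natural route is to identify the strengthened separation property (hereditary conjugacy separability together with separability of centralisers of finite subgroups, or a related enhancement) that $\vr$-groups enjoy and that \emph{does} pass to finite overgroups, and to invoke it for the pair $H\le G$ with $H\in\vr$ of finite index in $G\in\avr$.

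For the pointwise-inner equals inner statement, let $\phi\in Aut_{pi}(G)$. Because $\phi(g)$ is conjugate to $g$ for every $g$, $\phi$ is trivially a commensurating endomorphism in the sense of the abstract (with first powers). I would then split cases according to whether $G$ is acylindrically hyperbolic. In the acylindrically hyperbolic case, the main theorem of the paper yields an inner automorphism $\iota_h$ and a ``small perturbation'' which conjugates $\phi$ to $\iota_h$; the extra rigidity coming from pointwise innerness (in particular, the perturbation must also be pointwise inner on each element) should force the perturbation to be trivial, so $\phi\in Inn(G)$. In the residual case, where $G\in\avr$ is not acylindrically hyperbolic, the structure of virtual retracts of RAAGs is very constrained: such $G$ should be forced (up to finite index) to be abelian, or more generally to have a large abelian direct factor absorbing all of the non-hyperbolic behaviour, and one handles this by direct inspection, verifying $Aut_{pi}=Inn$ for finitely generated virtually abelian groups and using that pointwise inner automorphisms respect any direct product decomposition factorwise.

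I expect the main obstacle to be twofold. First, and more seriously, one has to verify that the ``small perturbation'' in the main theorem really collapses to the identity when the endomorphism in question is pointwise inner rather than merely commensurating; this is where the additional hypothesis must be leveraged carefully, probably after unpacking what kind of element the perturbation is (e.g.\ lying in the centraliser of a suitably generic loxodromic). Second, upgrading conjugacy separability from $\vr$ to $\avr$ will require more than citing \cite{Minasyan_hcs} verbatim, and is the most delicate ``bookkeeping'' part of the argument.
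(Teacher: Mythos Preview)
Your plan has a genuine gap on both fronts, and in fact the paper explicitly flags each obstruction in the introduction.

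First, the claim $Aut_{pi}(G)=Inn(G)$ is simply false for general $G\in\avr$. Take $G=F_2\times M$ where $M$ is a finite group with $Aut_{pi}(M)\neq Inn(M)$ (Burnside constructed such groups). Then $G\in\avr$, since $F_2\in\vr$ has finite index in $G$, but any non-inner pointwise inner automorphism of $M$ extends (acting trivially on $F_2$) to a non-inner pointwise inner automorphism of $G$. So your second ingredient cannot be salvaged; the ``small perturbation'' in the main theorem is governed by the finite radical $E_G(G)$, which here is exactly $M$, and it need not collapse. Your case split (acylindrically hyperbolic versus virtually abelian) is also wrong: $F_2\times\Z$ lies in $\vr\subset\avr$, is not acylindrically hyperbolic (it has infinite centre), and is not virtually abelian.

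Second, conjugacy separability of all groups in $\avr$ is an open question; the paper states this explicitly. Hereditary conjugacy separability of the finite-index $\vr$ subgroup does not by itself give conjugacy separability of the overgroup, and no ``enhanced'' separability property closing this gap is available.

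The paper therefore abandons Grossman's criterion for $G$ itself. Instead it proves a new criterion (Proposition~\ref{prop:new_crit}): if $N\lhd G$ has finite index with finitely generated centre $Z$, and $Out(G/Z^{mn})$ is residually finite for all $n$, then $Out(G)$ is residually finite. The quotients $G/Z^{mn}$ are shown to have centerless finite-index normal subgroups lying in $\vr$ (using that $\vr$ is closed under quotienting by the centre, Proposition~\ref{prop:quot_by_center}), and for \emph{those} groups Grossman applies via Theorem~\ref{thm:comm_end-raag} and \cite{Minasyan_hcs}; residual finiteness of $Out$ then passes back up via Lemma~\ref{lem:outnormsub}.
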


As discussed above, Theorem \ref{thm:avr} together with results from \cite{Hag-Wise} yields
\begin{cor} If $G$ is  virtually compact special then $Out(G)$ is residually finite.
\end{cor}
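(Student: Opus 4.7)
The plan is to reduce the corollary to Theorem \ref{thm:avr} by showing that every virtually compact special group lies in the class $\avr$. By definition, if $G$ is virtually compact special then it contains a finite index subgroup $H$ which is the fundamental group of a compact special cube complex in the sense of Haglund--Wise \cite{Hag-Wise}. So the only thing to verify is that such an $H$ belongs to $\vr$, i.e.\ is a virtual retract of a finitely generated right angled Artin group.

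To do this, I would invoke the canonical completion/retraction construction of Haglund and Wise in \cite{Hag-Wise}: for a compact special cube complex $X$, the natural local isometry from $X$ into the Salvetti complex of a finitely generated RAAG $A(X)$ induces an injective homomorphism $\pi_1(X) \hookrightarrow A(X)$ such that, after passing to a finite cover, the image is a retract of $A(X)$. This means $H$ has a finite index subgroup $H_0$ which is a retract of a finitely generated RAAG, so $H_0 \in \vr$ and hence $H \in \avr$. Since $\avr$ is closed under passing to finite index overgroups (a finite index subgroup in $\vr$ remains a finite index subgroup in $\vr$ after enlarging to $G$), we conclude $G \in \avr$.

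With this in hand, Theorem \ref{thm:avr} applies directly to $G$ and yields that $Out(G)$ is residually finite. There is no real obstacle in this argument: the nontrivial content is entirely absorbed by Theorem \ref{thm:avr} on the one side and by the Haglund--Wise embedding theorem on the other. The only minor point requiring attention is to make sure that being in $\avr$ is genuinely a property of $G$ and not just of its finite index subgroup $H$, which follows formally from the definition of $\avr$ as the class of groups containing a finite index $\vr$-subgroup.
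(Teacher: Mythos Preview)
Your proof is correct and follows exactly the same approach as the paper: the paper simply records that Haglund and Wise \cite{Hag-Wise} showed every virtually compact special group belongs to $\avr$, and then invokes Theorem~\ref{thm:avr}. Your description of the Haglund--Wise canonical completion is slightly imprecise (it is $H$ itself, not a finite index subgroup $H_0$, that is a retract of a finite index subgroup of the RAAG), but this does not affect the argument since either reading places $G$ in $\avr$.
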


It is worth mentioning that residual finiteness of $Out(G)$, when $G$ is itself a finitely generated right angled Artin group, was proved by the second author in \cite{Minasyan_hcs} and, independently,
by Charney and Vogtmann in \cite{Char-Vogt}.
On the other hand, there exist finitely generated groups $H$ such that $H$ is a subgroup of some finitely generated right angled Artin group and
$Out(H)$ is not residually finite. Such examples can be easily found using the modification of the Rips's construction proposed by Haglund and Wise in \cite{Hag-Wise}.

It is easy to produce examples of groups from $\mathcal{AVR}$ which possess pointwise inner
automorphisms that are not inner (one can simply take the  direct product of the free group of rank $2$ with a finite group $M$ for which $Aut_{pi}(M)\neq Inn(M)$ --
see \cite{Burnside} for a construction of such finite groups). Also, it is unknown whether all groups from $\avr$ are conjugacy separable, thus to prove Theorem \ref{thm:avr}
one cannot simply apply Grossman's criterion, and a more elaborate approach is required.

The original application of Grossman's criterion was the proof that $Out(\pi_1(\Sigma))$ is residually finite for any compact orientable surface $\Sigma$ (see \cite{Grossman}).
Our second theorem extends this result to $Out(\pi_1(\cM))$, where $\cM$ is a compact $3$-manifold:

\begin{thm} \label{thm:3-man} Let $G$ be a group containing a finite index subgroup that is isomorphic to the fundamental group of
some compact $3$-manifold $\cM$. Then  $Out(G)$ is residually finite.
\end{thm}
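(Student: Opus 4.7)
The plan is to apply Grossman's criterion, recalled in the introduction: for a finitely generated conjugacy separable group with $Aut_{pi} = Inn$, the outer automorphism group is residually finite. Conjugacy separability of fundamental groups of compact $3$-manifolds is now known, by work of Hamilton, Wilton and Zalesskii (which in turn builds on the virtual specialness results of Agol, Wise, Przytycki--Wise, and others), and I would extend it to the virtual setting in which Theorem \ref{thm:3-man} is phrased. The crux is then to establish $Aut_{pi}(G) = Inn(G)$. Before engaging with this, I would reduce to the case $G = \pi_1(\cM)$ by replacing the given finite-index subgroup isomorphic to $\pi_1(\cM)$ by its characteristic core in $G$ -- which is again the fundamental group of a finite cover of $\cM$ -- and then running an $Aut(G)$-equivariant argument on this core, parallel to the reduction that must be carried out for Theorem \ref{thm:avr}.

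With $G=\pi_1(\cM)$, the first step is the Kneser--Milnor prime decomposition. If $\cM$ is a nontrivial connected sum, then $G$ is a nontrivial free product, hence has infinitely many ends, and residual finiteness of $Out(G)$ follows from the Minasyan--Osin theorem cited in the introduction, together with the residual finiteness of $G$ (established via geometrization by Hempel, Agol, and others). So one may assume $\cM$ is prime, and pass to its JSJ/geometric decomposition.

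At this point two sub-cases arise. If every geometric piece of $\cM$ has virtually polycyclic fundamental group -- i.e., $\cM$ is a spherical, Euclidean, Nil, or Sol manifold -- then $G$ itself is virtually polycyclic and the residual finiteness of $Out(G)$ is classical, going back to results of Auslander and Wehrfritz on arithmetic and polycyclic groups. Otherwise some geometric piece is hyperbolic, or Seifert fibered with a hyperbolic base orbifold, or the JSJ decomposition is nontrivial; in each of these situations the Bass--Serre action on the JSJ tree (or the hyperbolic structure itself) shows that $G$ is acylindrically hyperbolic, and $G$ has no nontrivial finite normal subgroup since $\cM$ is aspherical. Because every pointwise inner automorphism is commensurating in the sense of the paper, the main theorem on commensurating endomorphisms applies to $G$ and forces $Aut_{pi}(G) = Inn(G)$, after which Grossman's criterion completes the argument.

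I expect the main difficulty to be the virtual-to-full reduction: verifying that conjugacy separability and the absence of non-inner pointwise inner automorphisms both lift from $\pi_1(\cM)$ to the overgroup $G$. A secondary subtlety is the enumeration of small manifolds -- for instance $S^2\times S^1$, lens spaces, or mapping tori of the Klein bottle -- whose fundamental groups fit neither the acylindrically hyperbolic nor the clearly aspherical framework; fortunately each such case has either virtually polycyclic or infinite-ended fundamental group, and is therefore subsumed by the arguments already outlined.
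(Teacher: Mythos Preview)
Your case analysis misses the single closed Seifert fibered manifold with hyperbolic base orbifold. If $\cM$ is such a manifold (say the unit tangent bundle of a hyperbolic surface), then $H=\pi_1(\cM)$ has infinite cyclic center---the fiber class---and therefore is \emph{not} acylindrically hyperbolic, contrary to what you assert. There is no nontrivial JSJ tree to act on, and the hyperbolic structure lives on the base, not on $H$ itself. So neither Grossman's criterion (via Corollary~\ref{cor:conj_aut}) nor your ``reduce to the centerless core'' step applies: the core has infinite center, and the standard passage from $Out(N)$ to $Out(G)$ (Lemma~\ref{lem:outnormsub}) requires $N$ to be centerless. The paper handles exactly this case by a separate argument, Lemma~\ref{lem:Seifert}, which in turn rests on the new criterion Proposition~\ref{prop:new_crit}: one quotients by congruence subgroups $Z^n$ of the center and checks that each $Out(G/Z^n)$ is residually finite.

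Your virtual-to-full reduction is also more fragile than you suggest. Conjugacy separability does \emph{not} in general pass to finite index overgroups (see the Goryaga example cited in the introduction), so ``extending HWZ to the virtual setting'' is not a formality. The paper sidesteps this entirely: rather than proving that $G$ itself satisfies Grossman's hypotheses, it proves $Out(H)$ is residually finite for the normal core $H\cong\pi_1(\cM')$, using the trichotomy of Theorem~\ref{thm:trichotomy} (acylindrically hyperbolic with trivial finite radical / Seifert-type / virtually polycyclic), and then lifts to $Out(G)$ via Lemma~\ref{lem:outnormsub} in the centerless case and via Lemma~\ref{lem:Seifert} in the Seifert case. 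Your free-product / infinitely-many-ends shortcut for reducible $\cM$ is correct but unnecessary once this trichotomy is in hand.
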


For fundamental groups of Seifert fibered $3$-manifolds (with two exceptions), the residual finiteness of outer automorphism groups was proved by Allenby, Kim and Tang \cite{AKT-1,AKT-2}. Evidently, Theorem \ref{thm:3-man} cannot be further generalized to $4$-dimensional manifolds, as it is well-known that for any finitely presented group $G$
there is a closed $4$-manifold $\cM$ such that $G \cong \pi_1(\cM)$.

It is well-known that, for a manifold $\cM$, the group $Out(\pi_1(\cM))$ is closely related to the \emph{mapping class group} (i.e., the group of isotopy classes of self-homeomorphisms)
$\mathcal{H}(\cM)$ of $\cM$. For example,
Waldhausen \cite{Wald} proved that if $\cM$ is an irreducible orientable Haken $3$-manifold with incompressible boundary such that $\cM$ is not a line bundle, then
$\mathcal{H}(\cM)$ embeds into $Out(\pi_1(\cM))$. A similar statement when $\cM$ is non-orientable (but still Haken and $\mathbb{P}^2$-irreducible) was proved in \cite{Heil}. If $\cM$ is not irreducible then
the natural homomorphism $\mathcal{H}(\cM) \to Out(\pi_1(\cM))$ will not, in general, be injective -- see \cite{McCullough}.

Thus Theorem \ref{thm:3-man} yields

\begin{cor} Suppose that $\cM$ is a compact irreducible orientable Haken $3$-manifold with incompressible boundary that is not a line bundle. Then the mapping class group $\mathcal{H}(\cM)$ is residually finite.
\end{cor}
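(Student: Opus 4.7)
The plan is very short: this corollary should follow by combining two ingredients that are both already mentioned in the surrounding discussion, so the task is essentially to verify that the hypotheses line up. First I would invoke Waldhausen's theorem (the version cited in the excerpt from \cite{Wald}): under the stated assumptions on $\cM$ (irreducible, orientable, Haken, with incompressible boundary, not a line bundle), the natural map $\mathcal{H}(\cM) \to Out(\pi_1(\cM))$ is injective, so $\mathcal{H}(\cM)$ embeds as a subgroup of $Out(\pi_1(\cM))$.

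Next I would apply Theorem \ref{thm:3-man} to the group $G = \pi_1(\cM)$ itself (taking the finite index subgroup to be all of $G$), which directly gives that $Out(\pi_1(\cM))$ is residually finite. Since residual finiteness is inherited by subgroups, the embedding from the first step forces $\mathcal{H}(\cM)$ to be residually finite.

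There is essentially no obstacle here: the entire content is in Theorem \ref{thm:3-man} (already stated) together with Waldhausen's classical embedding theorem. The only thing to be slightly careful about is making sure Waldhausen's hypotheses are being quoted in the same form as they appear in \cite{Wald}, and that the orientability/Haken/incompressibility/non-line-bundle conditions on $\cM$ are exactly what is needed to guarantee injectivity of $\mathcal{H}(\cM) \to Out(\pi_1(\cM))$. Once those match, the corollary is immediate and takes no more than two or three lines to write out.
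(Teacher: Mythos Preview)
Your proposal is correct and matches the paper's approach exactly: the paper presents this corollary as an immediate consequence of Theorem~\ref{thm:3-man} together with Waldhausen's embedding $\mathcal{H}(\cM)\hookrightarrow Out(\pi_1(\cM))$, and does not even write out a separate proof. There is nothing to add.
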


Thanks to a recent result of Hamilton, Wilton and Zalesskii \cite{HWZ}, stating that the fundamental group of any
orientable compact $3$-manifold is conjugacy separable, we can prove Theorem~\ref{thm:3-man} using an approach
which is similar to the one employed in Theorem \ref{thm:avr}.
Namely, in both theorems we use the techniques from geometric group theory, of groups acting on hyperbolic spaces, to prove a strong version of the fact
that pointwise inner automorphisms are inner, which constitutes the second ingredient of Grossman's criterion.

\subsection{Details of the proof}
Let us now discuss how the two theorems above are proved in more detail.

A group $G$ is called \emph{acylindrically hyperbolic} if it admits a non-elementary acylindrical action on a hyperbolic metric space -- see Subsection \ref{subsec:acyl_hyp}.
This definition was originally proposed by D. Osin in \cite{Osin-acyl},
where he proved that the class of such groups coincides with other large classes, previously studied  by Bestvina and Fujiwara \cite{BestvinaFujiwara},
Dahmani, Guirardel and Osin \cite{DGO}, Hamenst\" adt \cite{Ham} and the third author \cite{Sisto-contr}. The class of acylindrically hyperbolic groups is rather extensive:
it includes all non-elementary relatively hyperbolic groups, non-(virtually cyclic) groups acting properly on proper CAT($0$)-spaces with at least one rank $1$-element (see \cite{Osin-acyl}),
mapping class groups of compact surfaces of genus at least $1$, outer automorphism groups of free groups of rank at least $2$ (see \cite{DGO}),
many groups acting on simplicial trees (see \cite{MO2}), etc. In \cite{MO2} the second author and Osin proved that for any compact irreducible
$3$-manifold $\cM$, $\pi_1(\cM)$ is either acylindrically hyperbolic or virtually polycyclic, or $\cM$ is Seifert fibered.

Two elements $g,h$ of a group $G$ are said to be {\it commensurable} if there are $z\in G$ and $n,m\in \mathbb{Z}\setminus \{0\}$ such that $g^n=zh^mz^{-1}$ in $G$. In this we case we will write
$g\stackrel{G}{\approx} h$. Otherwise, if $g$ and $h$ are non-commensurable, we will write
$g\stackrel{G}{\not\approx} h$.
Note that commensurability is an equivalence relation on the set of elements of $G$.
Given a subgroup $H$ of a group $G$ and a homomorphism $\phi \colon H \to G$, we will say that $\phi$ is {\it commensurating  }
if $h\apg \phi(h)$ for all $h\in H$.

Commensurating homomorphisms were introduced and studied by the second author and Osin in the context of relatively hyperbolic groups in \cite{MO}.
To prove Theorems \ref{thm:avr} and \ref{thm:3-man} we study such homomorphisms for
an acylindrically hyperbolic group $G$. Our main technical result (Theorem \ref{thm:comm} in Section~\ref{sec:comm_hom}), generalizing the work from  \cite{MO},
states that if $H$ is a
sufficiently large subgroup of $G$, then every commensurating homomorphism $H \to G$ is induced by an inner
automorphism of $G$ modulo a small perturbation (which  disappears when one restricts to some finite index subgroup of $H$).
In Theorem \ref{thm:comm_end} we apply this result to the case when $H=G$ to obtain a characterization of commensurating endomorphisms of an acylindrically hyperbolic group. In particular, we get the following

\begin{cor}\label{cor:conj_aut} For any acylindrically hyperbolic group $G$, $Inn(G)$ has finite index in $Aut_{pi}(G)$.
Moreover, if $G$ has no non-trivial finite normal subgroups then $Aut_{pi}(G)=Inn(G)$.
\end{cor}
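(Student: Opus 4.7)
The plan is to derive this corollary as a direct consequence of Theorem \ref{thm:comm_end}. The starting observation is that every pointwise inner automorphism $\alpha \in Aut_{pi}(G)$ is, in particular, a commensurating endomorphism: if $\alpha(g) = z_g g z_g^{-1}$ for some $z_g \in G$, then the exponents $n = m = 1$ already witness $\alpha(g) \apg g$. Hence Theorem \ref{thm:comm_end} applies to every $\alpha \in Aut_{pi}(G)$.

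Next, I would invoke Theorem \ref{thm:comm_end} to write each such $\alpha$ as $\alpha = \iota_{z_\alpha} \circ \psi_\alpha$, where $\iota_{z_\alpha}$ denotes conjugation by some $z_\alpha \in G$ and $\psi_\alpha$ is the promised ``small perturbation'' of the identity --- as the introduction explains, $\psi_\alpha$ coincides with the identity after restriction to a suitable finite index subgroup. A standard fact about acylindrically hyperbolic groups (see \cite{DGO}) is that $G$ contains a unique maximal finite normal subgroup $K(G)$; this finite subgroup should control the room available for non-trivial perturbations $\psi_\alpha$, in the sense that the map $g \mapsto \psi_\alpha(g) g^{-1}$ takes values in $K(G)$ and is constant on cosets of the finite index subgroup on which $\psi_\alpha$ is trivial. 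In particular, there are only finitely many possibilities for $\psi_\alpha$ modulo $Inn(G)$.

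Both conclusions now follow. The finiteness of the set of perturbations yields $[Aut_{pi}(G) : Inn(G)] < \infty$, proving the first assertion. If $G$ has no non-trivial finite normal subgroups, then $K(G) = \{1\}$, which forces $\psi_\alpha = \mathrm{id}_G$ and therefore $\alpha = \iota_{z_\alpha} \in Inn(G)$, giving the second assertion.

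The main obstacle is unpacking the precise form of the ``small perturbation'' output by Theorem \ref{thm:comm_end} and checking that the perturbations $\psi_\alpha$ that can arise when $\alpha$ is an automorphism form a finite set modulo $Inn(G)$. Once the quantitative version of Theorem \ref{thm:comm_end} is in hand, the rest of the argument is elementary bookkeeping: one has to verify that if $\iota_{z_\alpha} \circ \psi_\alpha = \iota_{z_\beta} \circ \psi_\beta$ then $\psi_\alpha$ and $\psi_\beta$ differ by conjugation by an element $z_\beta z_\alpha^{-1}$, and that the resulting equivalence relation has finitely many classes of perturbations taking values in (translates of) $K(G)$.
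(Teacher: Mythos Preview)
Your overall strategy---apply Theorem~\ref{thm:comm_end} and analyze the resulting perturbation $\e\colon G\to E_G(G)$---is the paper's strategy, and your proof of the second assertion (the case $E_G(G)=\{1\}$) is correct as stated. However, there is a genuine gap in the first assertion.

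You argue that $\psi_\alpha$ is trivial on a finite index subgroup (namely $\ker(\e|_{C_G(E_G(G))})$), that $\e$ is constant on cosets of this subgroup, and hence that there are only finitely many possibilities for $\psi_\alpha$. The problem is that the finite index subgroup on which $\e$ vanishes \emph{depends on $\alpha$}: Theorem~\ref{thm:comm_end} only tells you that $\e|_{C_G(E_G(G))}$ is a homomorphism into the finite group $E_G(G)$, not that this homomorphism is trivial. Different $\alpha$'s may give different kernels, and in an infinitely generated group there can be infinitely many such homomorphisms. Indeed, Remark~\ref{rem:f_many} gives the explicit counterexample $F_\infty\times\Z_2$: this group is acylindrically hyperbolic with $E_G(G)=\Z_2$, and it has \emph{uncountably} many commensurating automorphisms. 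Your argument, as written, uses only that $\alpha$ is commensurating and hence would apply equally to all of $Aut_{com}(G)$---but that conclusion is false.

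What you are missing is a step that exploits the pointwise inner hypothesis beyond mere commensurability. The paper does this by taking an $H$-special element $h\in S_G(G,\cS)$: for such $h$ one has $E_G(h)=\langle h\rangle\times E_G(G)$, and comparing $\varphi(h)=wh\e(h)w^{-1}$ with the genuine conjugacy $\varphi(h)=xhx^{-1}$ forces $\e(h)=1$. Since special elements generate $C_G(E_G(G))$ (Proposition~\ref{prop:special}), this pins down $\e\equiv 1$ on the \emph{fixed} finite index subgroup $C_G(E_G(G))$, independent of $\alpha$. Only then does the coset-counting argument go through.
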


Sections \ref{sec:adding} -- \ref{sec:techn} of the paper develop the theory of acylindrically hyperbolic groups, which is necessary to prove the main technical theorem.
In particular, in Section  \ref{sec:adding} we investigate the necessary and sufficient conditions
for adding a subgroup to an existing family of hyperbolically embedded subgroups,
generalizing Osin's work from \cite{O06} (this has also been independently done by M. Hull \cite{Hull}).

The proof of Theorem \ref{thm:avr} uses the following quite general statement:

\begin{thm}\label{thm:comm_end-raag}
Let $H$ be a non-abelian subgroup of a finitely generated right angled Artin group. Then every commensurating  endomorphism $\phi\colon H \to H$ is an inner automorphism of $H$.
\end{thm}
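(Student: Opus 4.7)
The plan is to reduce the statement to Theorem~\ref{thm:comm_end} applied to an acylindrically hyperbolic overgroup of $H$, and then to use the torsion-freeness of right-angled Artin groups (RAAGs) to eliminate the perturbation that Theorem~\ref{thm:comm} leaves behind.

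First, I would exploit the join decomposition of the defining graph: writing $\Gamma = J_1 \ast \cdots \ast J_r$ with each $J_i$ join-indecomposable, the ambient RAAG decomposes as $A(\Gamma) \cong A(J_1) \times \cdots \times A(J_r)$, where each factor is either infinite cyclic (contributing to the centre) or acylindrically hyperbolic (whenever $|J_i|\ge 2$). Since $H$ is non-abelian and the cyclic factors are central in $A(\Gamma)$, the projection of $H$ to at least one acylindrically hyperbolic factor $A(J_i)$ is non-abelian, which localises the problem to an acylindrically hyperbolic setting.

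In the core case when $A(\Gamma)$ is itself acylindrically hyperbolic, I would view $\phi$ as a commensurating homomorphism $H \to A(\Gamma)$. The non-abelianness of $H$, combined with standard facts about contracting (rank-one) elements in subgroups of RAAGs acting on the Salvetti complex, yields a loxodromic WPD element of $A(\Gamma)$ inside $H$, making $H$ ``sufficiently large'' in $A(\Gamma)$ in the sense of Theorem~\ref{thm:comm}. Applying that theorem gives $\phi(h) = ghg^{-1}$ for all $h\in H$ and some $g\in A(\Gamma)$, modulo a perturbation by an element of finite order. Torsion-freeness of $A(\Gamma)$ kills the perturbation (as in Corollary~\ref{cor:conj_aut}, since $H$ inherits the absence of non-trivial finite normal subgroups), and a centraliser argument in the acylindrically hyperbolic RAAG shows that $g$ may be chosen in $H$, yielding that $\phi$ is an inner automorphism of $H$. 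The general case (non-trivial join) reduces to this by working with a non-cyclic factor onto which $H$ projects non-abelianly and using the commensurating condition to pin down $\phi$ on the central abelian part of $H$, where the only a priori ambiguity is a sign that is ruled out by $\phi$ being an endomorphism.

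The main obstacle, and the step that requires real work beyond invoking Theorem~\ref{thm:comm}, is producing a loxodromic WPD element inside $H$ for the natural hyperbolic action of a non-join factor of $A(\Gamma)$. This is a structural question about subgroups of RAAGs, typically approached via rank-one elements or the Kim--Koberda extension graph; once this is in place, everything else is a direct application of the machinery developed earlier in the paper together with torsion-freeness.
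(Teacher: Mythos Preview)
Your high-level strategy matches the paper's: decompose the ambient RAAG into its irreducible factors, apply Theorem~\ref{thm:comm} factorwise (torsion-freeness trivialises $E_G(H)$), and treat the abelian factor separately. You also correctly flag the need for a WPD element inside the relevant subgroup; the paper supplies this via Lemma~\ref{lem:heraags}.

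There are, however, genuine gaps. First, the statement about the abelian part is wrong: a commensurating endomorphism of a free abelian group has the form $a\mapsto a^{s}$ for an arbitrary nonzero integer $s$, not just a sign, and this is not ruled out merely by ``$\phi$ being an endomorphism''. The paper eliminates $s\neq 1$ by a real mixing argument: it produces $h_0$ in the abelian part and $h_1\in H\cap A_1$ with $E_{A_1}(h_1)=\langle h_1\rangle$, and compares $\phi(h_0h_1)$ with a conjugate of a power of $h_0h_1$ coordinatewise to force $s=1$. Second, ``working with a non-cyclic factor onto which $H$ projects non-abelianly'' is not enough; one must show that $\phi$ preserves each $\ker(\rho_i|_{H})$ (so that it descends to every $\rho_i(H)$) and then assemble the resulting conjugators $w_i\in A_i$ into a single one. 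Third, the promotion of the conjugator $w\in A$ to an element of $H$ is not a routine centraliser argument: it requires, in each non-abelian factor, an element $h_i\in H\cap A_i$ with $E_{A_i}(h_i)=\langle h_i\rangle\subseteq H$, and in particular that $H\cap A_i$ is non-cyclic. The paper secures all of this by first passing to a RAAG of \emph{minimal} rank containing $H$, which forces each $\rho_i(H)$ to have full parabolic closure and each $H\cap A_i$ to be non-trivial (and then non-cyclic by Lemma~\ref{lem:norm-centr}); this minimality device is essential and is missing from your outline.
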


While Theorem \ref{thm:comm_end-raag} employs the characterization of commensurating endomorphisms of acylindrically hyperbolic groups from Theorem \ref{thm:comm_end},
it  is not a straightforward consequence of it, as not all subgroups of right angled Artin groups are acylindrically hyperbolic. The proof of Theorem \ref{thm:comm_end-raag}
occupies Section \ref{sec:RAAG-end}.

The final ingredient in the proofs of Theorems \ref{thm:avr} and \ref{thm:3-man} is a new criterion
for residual finiteness of outer automorphism groups -- see Proposition \ref{prop:new_crit} in Section \ref{sec:crit}. This criterion could be of independent interest:
for example, it gives a short proof of the fact that $Out(\pi_1(\mathcal{M}))$ is residually finite for any Seifert fibered space $\mathcal M$, which was conjectured
by Allenby, Kim and Tang in \cite{AKT-2} -- see Lemma~\ref{lem:Seifert}.

The last application of Theorem \ref{thm:comm_end} that we discuss here concerns normal endomorphisms. We will say that an endomorphism
$\phi \colon G \to G$ is  \emph{normal} if $\phi(N) \subseteq N$ for every normal subgroup $N \lhd G$. Normal automorphisms (with a slightly more restrictive  definition
requiring that $\phi(N)=N$ for all $N \lhd G$) have been studied by several authors before. For instance, Lubotzky \cite{Lub} showed that all normal automorphisms of free groups are inner.
A similar statement was proved for non-trivial free products \cite{Obr} and non-elementary relatively hyperbolic groups with trivial finite radical \cite{MO}; see \cite{MO} for more
results and references.

It is known that every acylindrically hyperbolic group $G$ contains a unique maximal finite normal subgroup (see \cite[Thm. 2.24]{DGO} or Lemma \ref{lem:E_G(H)} below).
This subgroup, sometimes called the \emph{finite radical} of $G$, will be denoted by $E_G(G)$ ($K(G)$ is the notation used in \cite{DGO}), in line with Lemma \ref{lem:E_G(H)} below.
Combining Theorem \ref{thm:comm_end} with the theory of Dehn fillings for hyperbolically embedded subgroups, developed by Dahmani, Guirardel and Osin in \cite{DGO}
we show that almost all normal endomorphisms of acylindrically hyperbolic groups are commensurating, and so their structure is described by Theorem \ref{thm:comm_end}.

\begin{thm}\label{thm:norm_end} Let $G$ be an acylindrically hyperbolic group and let $\phi\colon G \to G$ be a normal endomorphism.
Then either $\phi(G) \subseteq E_G(G)$ or $\phi$ is commensurating. In particular, if $E_G(G)=\{1\}$ and $\phi(G) \neq \{1\}$ then
$\phi$ is an inner automorphism of $G$.
\end{thm}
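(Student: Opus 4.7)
The plan is to combine Theorem~\ref{thm:comm_end} with the Dehn filling machinery of Dahmani--Guirardel--Osin~\cite{DGO}. First, observe that the finite radical $E_G(G)$, being the unique maximal finite normal subgroup of $G$, is characteristic, so $\phi(E_G(G))\subseteq E_G(G)$, and $\phi$ descends to a normal endomorphism $\bar\phi$ of $\bar G := G/E_G(G)$. The quotient $\bar G$ is acylindrically hyperbolic with trivial finite radical. After checking that the conclusion for $\bar\phi$ lifts to the conclusion for $\phi$ (a small bookkeeping step involving $E_G(G)$-valued perturbations), we reduce to the case $E_G(G)=\{1\}$, where the task becomes: if $\phi\neq 0$ then $\phi$ is commensurating; Theorem~\ref{thm:comm_end} will then upgrade this to the conclusion that $\phi$ is an inner automorphism.

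Assume $E_G(G)=\{1\}$, $\phi\neq 0$, and, for contradiction, suppose some $g\in G$ satisfies $\phi(g)\napg g$. Using the abundance of loxodromic WPD elements in general position in an acylindrically hyperbolic group (as developed in \cite{Osin-acyl, DGO} and in the earlier sections of this paper) and the fact that non-commensurability is preserved under suitable perturbations, we arrange---after replacing $g$ by a product such as $fg$ or $f^{k}g$ for a carefully chosen loxodromic WPD element $f$---that both $g$ and $\phi(g)$ are themselves loxodromic WPD in $G$, while still $\phi(g)\napg g$.

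Now $E_G(g)$ is hyperbolically embedded in $G$, and the Dehn filling theorem of~\cite{DGO} applied to $E_G(g)$ asserts that for all sufficiently large $n$ the quotient $\bar G := G/\llangle g^n \rrangle$ is acylindrically hyperbolic, $\bar g$ has order exactly $n$, and every loxodromic WPD element $h\in G$ with $h\napg g$ projects to a loxodromic WPD element of $\bar G$, hence of infinite order. Since $\phi$ is normal and $\llangle g^n\rrangle$ is normal in $G$, $\phi$ preserves $\llangle g^n\rrangle$ and descends to a well-defined endomorphism $\bar\phi\colon \bar G\to \bar G$ with $\bar\phi(\bar g)=\overline{\phi(g)}$. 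By the choice of $g$, the element $\overline{\phi(g)}=\bar\phi(\bar g)$ is loxodromic WPD in $\bar G$ and therefore has infinite order, yet $\bar g^n=1$ forces $\bar\phi(\bar g)^n=\bar\phi(\bar g^n)=1$, so $\overline{\phi(g)}$ has finite order dividing $n$, a contradiction.

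The main obstacle will be the reduction in the second paragraph: starting from an arbitrary $g$ with $\phi(g)\napg g$, we must pass to a new $g$ such that both $g$ and $\phi(g)$ are loxodromic WPD while preserving $\phi(g)\napg g$. This likely requires a case analysis distinguishing elliptic, torsion, and loxodromic behaviour of $\phi(g)$, combined with a genericity argument for loxodromic WPD elements and careful tracking of commensurability classes under multiplication; the normality of $\phi$ should ensure that such perturbations behave well.
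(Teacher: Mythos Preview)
Your overall architecture matches the paper's: reduce to $E_G(G)=\{1\}$, then find $g\in\LWPD$ with $\phi(g)\in\LWPD$ and $\phi(g)\napg g$, then use Dehn filling to produce a quotient where $g$ becomes torsion while $\phi(g)$ stays infinite order. Two points are worth comparing.

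For the reduction you flag as the main obstacle, the paper gives a concrete mechanism you are missing. The key is Lemma~\ref{lem:norm_sbgp_in_acyl_hyp}: for $g\in\LWPD$ and suitable $m$, the normal closure $\llangle g^m\rrangle^G$ is free and every nontrivial element of it is loxodromic (hence in $\LWPD$, by acylindricity). Since $\phi$ is normal, $\phi(\llangle g^m\rrangle^G)\subseteq\llangle g^m\rrangle^G$, so as soon as $\phi(g^m)\neq 1$ one gets $\phi(g^m)\in\LWPD$, hence $\phi(g)\in\LWPD$. Starting from a special element $g$ with $\phi(g)\neq 1$ (such $g$ exists since special elements generate $C_G(E_G(G))=G$), this yields $\phi(g)\in\LWPD$; a short case analysis (using that $\phi$ is not commensurating to produce $h\in\LWPD$ with $\phi(h)\napg h$, and combining $g$ and $h$ via Lemma~\ref{lem:newtamenc} if $\phi(h^l)=1$) then gives the desired $g_1$.

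For the Dehn filling step, your version requires knowing that a loxodromic WPD element $h\napg g$ stays loxodromic (or at least infinite order) in $G/\llangle g^n\rrangle$. That is true for deep enough fillings, but it is not part of Theorem~\ref{thm:Dehn_fil} as stated. The paper avoids this by a neat trick: it applies Dehn filling to the \emph{pair} $\{E_G(g_1),E_G(\phi(g_1))\}\hookrightarrow_h G$ with filling kernels $N_1=\langle g_1^n\rangle$ and $N_2=\{1\}$. Then part~(a) of Theorem~\ref{thm:Dehn_fil} gives directly $N\cap E_G(\phi(g_1))=\{1\}$, so $\phi(g_1)$ has infinite order in $G/N$, while $N\cap E_G(g_1)=\langle g_1^n\rangle$ forces $g_1$ to have order $n$ there. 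This uses only the basic Dehn filling statement and sidesteps any analysis of the quotient geometry.
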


\smallskip
\noindent {\bf Acknowledgements.} The authors would like to thank Denis Osin for helpful discussions. We are also grateful to Henry Wilton for suggesting and discussing the applications
of the main result to $3$-manifold groups.

\section{Preliminaries}
\subsection{Notation}\label{subsec:not}
In this subsection we fix the notation and recall some basic concepts that will be used throughout the paper.

Let $(\cS,\d)$ be a metric space. Given a subset  $A\subseteq \cS$ and $\e>0$, we denote by $\nei{A}{\e}$ the closed
$\e$-neighborhood of $A$, i.e., $$\nei{A}{\e}=\{x\in\cS \mid \d(x,A) \le \e\} .$$ Similarly, we denote by $\B{x}{\e}=\{s\in \cS \mid \d(x,s) \le \e\}$, the closed ball of center $x\in \cS$ and radius $\e$.

Recall that for $A,B\subseteq \cS$, the Hausdorff distance is given by $$\d_{Hau}(A,B)\coloneq \max \left\{ \adjustlimits\sup_{a\in A} \inf_{b\in B} \d(a,b),  \adjustlimits\sup_{b\in B} \inf_{a\in A} \d(a,b)\right\}.$$

An isometric action of a group $G$ on $(\cS,\d)$ is {\it metrically proper } if for any bounded subset $B\subseteq \cS$,  the set
$\{g\in G \mid B \cap g\circ B \neq \emptyset\}$
is finite.

Recall that a {\it path} in $\cS$ is a continuous function $p\colon  [0,1]\to \cS$, and  the {\it length } of  $p$ is
$$\ell( p )=\sup_{0=t_0\leq t_1\leq \dots\leq t_n=1} \sum_{i=0}^{n-1} \d(p(t_i),p(t_{i+1})). $$
The path $p$ is rectifiable if $\ell( p )$ is finite. We denote by $p_{-}$ and $p_{+}$ the initial and the final points of $p$.

The metric $\d$ is a {\it length metric} if for every $x,y\in{\cS}$, $$\d(x,y)=\inf \{ \ell ( p ) \mid  \text{$p$ a rectifiable path from $x$ to $y$} \}.$$
If the metric $\d$ is a length metric, $(\cS,\d)$ is called a {\it length space}. If the infinum above is always realized (i.e., for any $x, y \in \cS$ there is a rectifiable
path $p$ with $\ell(p)=\d(x,y)$), then $(\cS,\d)$ is said to be a \emph{geodesic metric space}.

Let $G$ be a group. Suppose that $X$ is a set equipped with a map $\pi\colon X\to G$. We will say that $G$ generated by $X$ if
$G=\langle \pi(X) \rangle$. The set $X$ will be called \emph{symmetric} if  $\pi(X)=\pi(X)^{-1}$ in $G$.
In this case one can define the {\it Cayley graph} $\Gamma(G,X,\pi)$, of $G$ with respect to $X$ and $\pi$, as the graph with vertex set $G$ and
edge set $G\times X$, where the initial vertex of $(g,x)$ is $g$ and the final vertex is $g\pi(x)$.
Note that this definition allows the Cayley graph to have multiple edges joining two vertices.
When the map $\pi$ is clear we will abuse the notation and simply write $\ga(G,X)$ instead of  $\Gamma(G,X,\pi)$.
Given a word $U$ over $X$, $\|U\|$ will denote the length of $U$. For any other word $V$ over $X$, we will write $U \equiv V$ to denote the graphical (letter-by-letter) equality between words $U$ and $V$.

If $X$ generates $G$ and $g \in G$ then $|g|_X$ will denote the length of a shortest word over $X$ representing $g$ in $G$.
We  will denote by $\d_X$ the {\it graph metric} on $\ga(G,X)$, that is $\d_X$ is the metric of the geometric realization of the graph
where all the edges are isometric to the unit interval. Thus if $g,h \in G$ then $\d_X(g,h)=|g^{-1}h|_X$.

In the context of graphs, we will consider combinatorial paths.
A {\it combinatorial path} in $\ga(G,X)$ is  a formal sequence $p=e_1,\dots, e_n$ where $e_1,\dots, e_n$ are edges  and the
initial vertex of $e_i$ is  the terminal vertex of $e_{i-1}$, $i=2,\dots,n$. In this case, the \emph{length} $\ell( p )$ of $p$ is $n$; $p^{-1}$ will be the path, inverse to $p$
(i.e., $p^{-1}=e_n^{-1},\dots,e_1^{-1}$, where $e_j^{-1}$ is the the edge inverse to $e_j$). Furthermore, $p_-$ and $p_+$ will denote the initial and the terminal vertices of $p$ respectively.
If $p$ is a combinatorial path in a labelled directed graph (e.g., a Cayley graph), we will use $\Lab ( p )$ to denote its label.

Given a subgroup $H$ of a group $G$ and a subset $E \subseteq G$, $C_H(E):=\{h \in H \mid he=eh, \,\,\forall\,e \in E\}$ will denote the centralizer of $E$ in $H$, and
$N_G(H):=\{g \in G \mid gHg^{-1}=H\}$ will denote the normalizer of $H$ in $G$. We will also use $\llangle E \rrangle^G\lhd G$ to denote the normal closure of $E$ in $G$.

\subsection{Hyperbolic spaces}
A geodesic metric space $({\cS},\d)$ is called {\it $\delta$-hyperbolic} if for any geodesic triangle, every side of the triangle is contained in the $\delta$-neighborhood of the union of the other two sides.
A metric space is said to be {\it hyperbolic} if it is  geodesic and $\delta$-hyperbolic  for some $\delta\geq 0$.

A subset $A$ of $\cS$ is {\it $\sigma$-quasi-convex}, for some $\sigma \geq 0$, if for every geodesic path $p$ in $\cS$ with $p_-,p_+\in A$, one has $p\subset \nei{A}{\sigma}$.
A set is {\it quasi-convex} if it is $\sigma$-quasi-convex for some $\sigma \geq 0$.

The following observation is an easy exercise on the definitions:

\begin{rem} \label{rem:qc_orbits} Suppose that $Q$ is a subgroup of a group $G$ acting by isometries on some $\delta$-hyperbolic space $(\cS,\d)$. If the orbit $Q \circ s$ is $\sigma$-quasi-convex for some
$ s \in \cS$ and $\sigma \ge 0$ then for any $s' \in \cS$ the orbit $Q \circ s'$ is $\sigma'$-quasi-convex, where $\sigma':=2\delta+ 2\d(s,s')+\sigma$.
\end{rem}

If $(\mathcal{T},{\rm e})$ is another metric space, then a map $f \colon \mathcal{T} \to \cS$ is a \emph{quasi-isometric embedding} if there exist $\lambda \ge 1$ and $c \ge 0$ such that
$$\frac{1}{\lambda} {\rm e}(x,y) -c \le \d(f(x),f(y)) \le \lambda {\rm e}(x,y)+c \mbox{ for all } x,y \in \mathcal{T}.$$
If the quasi-isometric embedding $f$ is \emph{quasi-surjective}, i.e., $\cS= \nei{f(\mathcal{T})}{\e}$ for some $\e \ge 0$, then $f$ is said to be a quasi-isometry.
The spaces $(\mathcal{T},{\rm e})$ and $(\cS,\d)$ are quasi-isometric if there exists a quasi-isometry $f: \mathcal{T}\to \cS$.

We will say that a path $p$ in $({\cS},\d)$ is a $(\lambda, c)$-{\it quasi-geodesic} for some $\lambda \geq 1, c\geq 0$ if for any subpath $q$ of $p$ we have
$$\ell ( q )\leq \lambda \d(q_{-},q_{+})+c,$$ where $\ell(q)$ is the length of $q$ and $q_-$, $q_+$ are the initial and terminal points of $q$ respectively.

We now collect a series of well known facts about quasi-geodesic paths in hyperbolic spaces.

\begin{lem}[{\cite[III.H.1.7]{BH}}]\label{qg}
For any $\delta \geq 0$, $\lambda \geq 1$, $c\geq 0$, there exists a constant $\kappa =\kappa(\delta,\lambda,c)\geq 0$ such that in a $\delta$-hyperbolic space
any two $(\lambda, c)$-quasi-geodesics with the same endpoints belong to the closed $\kappa$-neighborhoods of each other.
\end{lem}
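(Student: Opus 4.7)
The plan is to prove the (stronger) auxiliary claim that there exists a constant $K=K(\delta,\lambda,c)\ge 0$ such that every $(\lambda,c)$-quasi-geodesic in the $\delta$-hyperbolic space $(\cS,\d)$ lies in the closed $K$-neighborhood of any geodesic joining its endpoints. Once this is established, applying it to each of two $(\lambda,c)$-quasi-geodesics $p_1,p_2$ sharing their endpoints (together with a geodesic $\gamma$ between those endpoints, which is itself a $(1,0)$-quasi-geodesic) yields $p_1\subset\nei{\gamma}{K}\subset\nei{p_2}{2K}$, and symmetrically, so the lemma follows with $\kappa:=2K$.

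Before tackling the auxiliary claim I would perform the standard continuity reduction. Given a possibly discontinuous $(\lambda,c)$-quasi-geodesic $p\colon[a,b]\to\cS$, sample $p$ at the times $a,a+1,a+2,\ldots$ (together with $b$) and join consecutive samples by geodesic segments to produce a continuous path $\tilde p$. A routine computation shows that $\tilde p$ is a continuous $(\lambda',c')$-quasi-geodesic with $\lambda',c'$ depending only on $\lambda,c$, and that $\d_{Hau}(\mathrm{Im}\,p,\mathrm{Im}\,\tilde p)\le C$ for some $C=C(\lambda,c)$. Hence it is enough to prove the auxiliary claim for continuous quasi-geodesics and add $C$ to the final constant.

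The core of the argument is then the following. Let $\gamma$ be a geodesic from $x$ to $y$ and $q\colon[0,T]\to\cS$ a continuous $(\lambda',c')$-quasi-geodesic with $q(0)=x$, $q(T)=y$. Set $D:=\max_t\d(q(t),\gamma)$, attained at some $t_0$. Assuming $D$ is sufficiently large compared with $\delta$, extract a subarc $q|_{[s_-,s_+]}$ containing $t_0$ whose endpoints lie on the sphere $\{v\in\cS:\d(v,\gamma)=D/2\}$ while its interior avoids the open $D/2$-ball around $\gamma$. A classical exponential-divergence estimate in $\delta$-hyperbolic geometry shows that any continuous path staying outside $\nei{\gamma}{r}$ has length at least $2^{\lfloor r/\delta\rfloor}$ times the distance on $\gamma$ between the closest-point projections of its endpoints. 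Combined with the linear upper bound $\ell(q|_{[s_-,s_+]})\le\lambda'\,\d(q(s_-),q(s_+))+c'$ from the quasi-geodesic condition, this forces an \emph{a priori} upper bound on $D$ in terms of $\delta,\lambda',c'$ only, providing the required $K$.

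The main obstacle is the exponential-divergence estimate itself. I would prove it by a dyadic iteration of the thin-triangle condition: given a continuous path $\alpha$ at distance $\ge r$ from $\gamma$, subdivide it and ``push'' each piece $\delta$ units closer to $\gamma$ via geodesic segments to its closest-point projections, using $\delta$-thinness of the resulting quadrilaterals to conclude that the total path length drops by a factor of roughly $2$ at each step. After $\lfloor r/\delta\rfloor$ iterations the path arrives within $O(\delta)$ of $\gamma$, where its length trivially dominates the distance between the projections of its endpoints; reversing the inequality chain produces the factor $2^{\lfloor r/\delta\rfloor}$. The bookkeeping of the coarse Lipschitz constant of the projection and of the small overlap corrections at each scale is elementary but delicate, and once it is done, balancing the lower and upper bounds in the previous paragraph produces an explicit $\kappa=\kappa(\delta,\lambda,c)$ and completes the proof.
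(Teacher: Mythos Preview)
The paper does not prove this lemma; it is quoted verbatim from Bridson--Haefliger \cite[III.H.1.7]{BH} as a standard fact about hyperbolic spaces. Your sketch is essentially the proof given in that reference: tame the quasi-geodesic to a continuous one at bounded Hausdorff distance, then use the exponential divergence of paths in $\delta$-hyperbolic spaces to bound the distance from the tamed quasi-geodesic to a geodesic with the same endpoints.

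There is, however, a small gap in your reduction step. Your auxiliary claim only asserts that a quasi-geodesic lies in the $K$-neighbourhood of a geodesic, not conversely. From $p_1\subset\nei{\gamma}{K}$ and $p_2\subset\nei{\gamma}{K}$ you cannot deduce $\nei{\gamma}{K}\subset\nei{p_2}{2K}$; for that you need $\gamma\subset\nei{p_2}{K}$, and your parenthetical remark that $\gamma$ is itself a $(1,0)$-quasi-geodesic does not help, since $p_2$ is not a geodesic. The fix is standard and appears in the source you are following: once you know the tamed $p_2$ is continuous, has the same endpoints as $\gamma$, and satisfies $p_2\subset\nei{\gamma}{K}$, a connectedness argument (if some point of $\gamma$ were far from $p_2$, the nearest-point projection of $p_2$ to $\gamma$ would miss an interval yet hit both endpoints) shows $\gamma\subset\nei{p_2}{K'}$ for a comparable constant $K'$. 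With this two-sided version of the auxiliary claim in hand, your reduction goes through.
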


Two paths $p, q$ in a metric space ${(\cS,\d)}$ are called $k$-{\it connected}, if
$$\max\{\d(p_{-}, q_{-}), \d(p_{+}, q_{+})\} \leq k.$$ The paths $p$ and $q$ are {\it $k$-close} for some $k >0$ if $p$ is $k$-connected with either $q$ or $q^{-1}$.

The next lemma is a simplification of Lemma 25 from \cite{Ols92}. Basically it says that if some sides of a geodesic polygon are much longer than the rest, then
there is a pair of the long sides having sufficiently long subsegments which travel close to each other.

\begin{lem}\label{Ols} Let $P$ be a geodesic $n$-gon in a $\delta$-hyperbolic space whose sides $p_1,\dots, p_n$ are divided into two subsets $S$, $T$.
Denote the total length of all sides from $S$ by $\sigma$ and the total length of all sides from $T$ by $\rho$, and assume that $\sigma \ge \max\{10^3 a n, 10^3\rho\}$ for some $a\ge 30\delta $.
Then there are two distinct sides $p_i, p_j \in S$,  and $13\delta $-close
subsegments $u$ and $v$ of $p_i$ and $p_j$, respectively, such that $\min \{ \ell(u), \ell(v)\} > a$.
\end{lem}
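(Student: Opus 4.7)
The plan is to combine a counting argument on the sides of $P$ with a standard hyperbolic-geometry tool that lets one recover $2\delta$-close fellow-travelling from the mere existence of a long common projection. The key estimate I would invoke is: if $\alpha$ and $\beta$ are geodesic segments in a $\delta$-hyperbolic space whose endpoints lie pairwise within distance $K$, then every point of $\alpha$ at distance greater than $K + 2\delta$ from both endpoints of $\alpha$ lies within $2\delta$ of $\beta$. This follows by triangulating the geodesic quadrilateral formed by $\alpha$, $\beta$ and the two connecting geodesics, and applying $\delta$-thinness twice.

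First I would triangulate $P$ into $n-2$ geodesic triangles by drawing $n-3$ diagonals from a fixed vertex. Iterating $\delta$-thinness across the triangulation, each point of any side $p_i$ of $P$ lies within some controlled distance $D$ (depending on $n$ and $\delta$) of the union $\bigcup_{j\ne i} p_j$ of the other sides of $P$. This produces a coarse nearest-point projection $\pi_i \colon p_i \to \bigcup_{j\ne i} p_j$ for each $S$-side $p_i$, and a routine computation using $\delta$-hyperbolicity shows that $\pi_i$ is approximately arc-length preserving.

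Because $\pi_i$ is approximately arc-length preserving and the $T$-sides have total length only $\rho$, the part of $p_i$ whose projection lands on a $T$-side has length at most $\rho + O(D)$. Combining this with the hypothesis $\sigma \ge 10^3\rho$ (and using $\sigma \ge 10^3 a n \ge 3 \cdot 10^4 n\delta$ to absorb the $O(D)$ errors), most of the $S$-length projects onto \emph{other} $S$-sides. An averaging argument over the pairs of distinct $S$-sides, together with the assumption $\sigma \ge 10^3 a n$, then singles out a specific pair $(p_i, p_j) \in S \times S$ and a subsegment $u \subseteq p_i$ whose projection under $\pi_i$ is contained in a subsegment $v \subseteq p_j$, with $\min\{\ell(u), \ell(v)\}$ substantially bigger than $a$.

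The main obstacle is then replacing the bound $D$ (which a priori depends on $n$) by the clean universal constant $13\delta$ demanded in the conclusion. For this I would trim initial and final pieces of length of order $D$ from $u$, and apply the thin-quadrilateral estimate to the trimmed $u$ together with $v$: each point of the trimmed subsegment lies within $2\delta$ of $p_j$, so its new endpoints can be paired with their nearest-point projections on $p_j$ to form endpoints within $13\delta$ of one another. The ample slack provided by the hypothesis $\sigma \ge 10^3 a n$ (together with $a \ge 30\delta$) ensures that after trimming, the resulting subsegments of $p_i$ and $p_j$ are still of length $> a$, which furnishes the required $u$ and $v$.
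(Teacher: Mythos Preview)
The paper does not prove this lemma; it simply records it as a simplification of \cite[Lemma~25]{Ols92}. So there is no in-paper argument to compare against, and the question is whether your sketch stands on its own.

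It does not, and the gap is in the averaging step. Triangulating from a fixed vertex gives a thin-polygon constant $D$ of order $n\delta$ (or $\delta\log n$ with a balanced triangulation). When you then project each $S$-side onto the union of the remaining sides and cut it into maximal subintervals each landing on a single other side, every $S$-side breaks into at most $n-1$ such pieces; summing over all $S$-sides this produces on the order of $n^2$ pieces. Pigeonhole therefore only yields a piece of length roughly $\sigma/n^2$, not $\sigma/n$. The hypothesis $\sigma\ge 10^3 a n$ guarantees $\sigma/n\gg a$ but says nothing about $\sigma/n^2$, so you cannot conclude that the selected piece has length exceeding $a$, let alone $a+O(D)$, which is what you need to survive the final trimming. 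The sentence ``an averaging argument \dots\ together with $\sigma\ge 10^3 a n$ singles out a specific pair'' is exactly where the proof breaks.

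What makes Olshanskii's argument go through is a sharper structural fact: a geodesic $n$-gon in a $\delta$-hyperbolic space is approximated by a finite tree, so its perimeter decomposes into only $O(n)$ pairs of $O(\delta)$-close fellow-travelling subsegments (with total unmatched length $O(n\delta)$), rather than $O(n^2)$ pairs at distance growing with $n$. Both the linear count and the $n$-independent closeness constant are essential for the stated hypotheses to suffice, and neither follows from the naive thin-polygon bound you invoke. You would need to reproduce this tree-approximation step or cite \cite{Ols92} directly.
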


For our purposes we need the following version of the {\v S}varc-Milnor Lemma.
\begin{lem} [The {\v S}varc-Milnor Lemma] \label{lem:svarc-milnor} Let $({\cS},\d)$ be a length space. Suppose that a group $G$ acts by isometries on ${\cS}$ and the action is cobounded.
Then there exists a symmetric generating set $X$ of $G$ such that for any $s\in {\cS}$, the map $g \mapsto g\circ s$ is a quasi-isometry from $(G,\d _X)$ to $({\cS},\d)$.

Moreover if the action is metrically proper then $X$ can be chosen to be finite.
\end{lem}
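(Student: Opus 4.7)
The plan is to adapt the classical Švarc–Milnor argument. First, using coboundedness, fix a basepoint $s_0 \in \cS$ and a radius $R > 0$ such that $\nei{G \circ s_0}{R} = \cS$, and set
$$X \coloneq \{g \in G \setminus \{1\} \mid \d(s_0, g \circ s_0) \le 2R + 1\}.$$
The set $X$ is symmetric, since $\d(s_0, g^{-1} \circ s_0) = \d(g \circ s_0, s_0)$.

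Next I would simultaneously show that $X$ generates $G$ and establish the nontrivial direction of the quasi-isometry estimate. Given $g \in G$, use that $\cS$ is a length space to choose a rectifiable path $p$ from $s_0$ to $g \circ s_0$ with $\ell(p) \le \d(s_0, g \circ s_0) + 1$, and subdivide it by points $s_0 = p(t_0), p(t_1), \dots, p(t_n) = g \circ s_0$ with $\d(p(t_{i-1}), p(t_i)) \le 1$ and $n \le \ell(p) + 1$. By coboundedness, pick $g_i \in G$ with $\d(p(t_i), g_i \circ s_0) \le R$, choosing $g_0 = 1$ and $g_n = g$. The triangle inequality then gives
$$\d(s_0, g_{i-1}^{-1} g_i \circ s_0) = \d(g_{i-1} \circ s_0, g_i \circ s_0) \le 2R + 1,$$
so each $g_{i-1}^{-1} g_i$ lies in $X \cup \{1\}$. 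The telescoping product $g = (g_0^{-1} g_1)(g_1^{-1} g_2) \cdots (g_{n-1}^{-1} g_n)$ then shows that $X$ generates $G$ and that $|g|_X \le n \le \d(s_0, g \circ s_0) + 3$.

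The reverse inequality is immediate: if $g^{-1} h = x_1 \cdots x_k$ with $x_i \in X$ and $k = \d_X(g, h)$, then $\d(g \circ s_0, h \circ s_0) = \d(s_0, g^{-1} h \circ s_0) \le (2R + 1)\,\d_X(g, h)$. Combined with the previous step, the orbit map $g \mapsto g \circ s_0$ is a quasi-isometric embedding, and coboundedness makes it quasi-surjective since $\cS = \nei{G \circ s_0}{R}$. For an arbitrary basepoint $s \in \cS$ in place of $s_0$, the bound $|\d(g \circ s, h \circ s) - \d(g \circ s_0, h \circ s_0)| \le 2\d(s, s_0)$ only alters the additive constants, and the orbit $G \circ s$ is still quasi-dense, so the orbit map at $s$ remains a quasi-isometry.

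For the \emph{moreover} clause, assume the action is metrically proper. Every $g \in X$ satisfies $g \circ \B{s_0}{1} \cap \B{s_0}{2R + 2} \ne \emptyset$, because $g \circ s_0 \in \B{s_0}{2R+1}$, so $X$ is contained in the finite set of group elements translating the bounded set $\B{s_0}{2R+2}$ to one intersecting itself; hence $X$ is finite. The one step needing genuine care is the length-space discretization in the second paragraph: this is the sole place where the length-metric hypothesis is used, converting a geometric distance into a word-length bound by approximating it with a rectifiable path and invoking coboundedness at each subdivision point. Without the length-space assumption this discretization is unavailable, which is why the lemma genuinely requires more than an arbitrary metric space.
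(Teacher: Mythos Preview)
Your proof is correct and follows the classical \v{S}varc--Milnor argument, which is exactly what the paper has in mind: the paper simply cites \cite[I.8.19]{BH} and observes that the metric-properness hypothesis there is used only to conclude that $X$ is finite, so it can be dropped at the cost of allowing $X$ to be infinite. Your write-up spells out precisely that argument, and your handling of the ``arbitrary basepoint'' and the ``moreover'' clause are both fine.
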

\begin{proof}
This is proved in \cite[I.8.19]{BH} with the assumption that the action is metrically proper, which is only used to conclude that $X$ is finite.
\end{proof}

\begin{lem}\label{lem:proper}
If $G$ acts by isometries on a hyperbolic space $(\cS,\d)$, $s \in \cS$ and $Q \leqslant G$ then the following are equivalent:
\begin{enumerate}
\item[{\rm (1)}] The orbit $Q \circ s$ is  quasi-convex and the induced action of $Q$ on $\cS$ is metrically proper;
\item[{\rm (2)}] $Q$ is generated by a finite set of elements $Y$ and  there exist $\mu\geq 1, c \geq 0$ s.t. $|g|_Y \leq \mu \d(s,g\circ s)+c$ for all $g \in Q.$
\end{enumerate}
\end{lem}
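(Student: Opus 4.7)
The plan is to prove the two implications separately; together they amount to a quasi-convex refinement of the \v Svarc--Milnor lemma (Lemma \ref{lem:svarc-milnor}).

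For the direction (2) $\Rightarrow$ (1), I would first observe that the hypothesis says exactly that the orbit map $g \mapsto g \circ s$ is a quasi-isometric embedding from $(Q, \d_Y)$ into $(\cS,\d)$: the lower bound $\d(g \circ s, h \circ s) \ge (|g^{-1}h|_Y - c)/\mu$ follows from (2) applied to $g^{-1}h$ and the $Q$-action being isometric, while the upper bound $\d(g \circ s, h \circ s) \le D\,|g^{-1}h|_Y$, with $D := \max_{y \in Y} \d(s, y \circ s)$, is the usual triangle inequality. Consequently, the image of any $\d_Y$-geodesic from $e$ to $g$, interpolated by geodesic segments of $\cS$ between consecutive orbit points, is a $(\lambda, c')$-quasi-geodesic from $s$ to $g \circ s$ with constants depending only on $\mu, c, D$, so by Lemma \ref{qg} every geodesic $[s, g \circ s]$ lies in a uniform neighborhood of $Q \circ s$, giving quasi-convexity. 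For metric properness, if $B \subseteq \cS$ is bounded and $B \cap g \circ B \neq \emptyset$, picking any $s_0 \in B$ yields $\d(s_0, g \circ s_0) \le 2 \diam(B)$, so $\d(s, g \circ s) \le 2\d(s, s_0) + 2\diam(B)$ is bounded; hypothesis (2) then bounds $|g|_Y$, leaving only finitely many such $g$ since $Y$ is finite.

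For the direction (1) $\Rightarrow$ (2), I fix $\sigma$ such that $Q \circ s$ is $\sigma$-quasi-convex and set
$$Y := \{y \in Q \mid \d(s, y \circ s) \le 2\sigma + 2\},$$
which is symmetric since $\d(s, y^{-1} \circ s) = \d(y \circ s, s)$. Metric properness forces $Y$ to be finite: taking $B := \B{s}{2\sigma+2}$, each $y \in Y$ satisfies $s, y^{-1} \circ s \in B$, so $s \in B \cap y \circ B$, and only finitely many $y \in Q$ can enjoy this property. Given any $g \in Q$, I would choose a geodesic from $s$ to $g \circ s$ in $\cS$, subdivide it by points $p_0 = s, p_1, \dots, p_n = g \circ s$ with $\d(p_{i-1}, p_i) \le 1$ and $n \le \lceil \d(s, g \circ s) \rceil$, and use $\sigma$-quasi-convexity to pick $q_i \in Q$ with $\d(p_i, q_i \circ s) \le \sigma$, taking $q_0 = e$ and $q_n = g$. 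The telescoping identity $g = (q_0^{-1}q_1)(q_1^{-1}q_2)\cdots (q_{n-1}^{-1}q_n)$ then writes $g$ as a word of length $n$ in the letters $y_i := q_{i-1}^{-1}q_i$, each of which lies in $Y$ because $\d(s, y_i \circ s) = \d(q_{i-1} \circ s, q_i \circ s) \le 2\sigma + 1$. Hence $Q = \langle Y \rangle$ and $|g|_Y \le n \le \d(s, g \circ s) + 1$, giving (2) with $\mu = 1$ and $c = 1$.

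I do not anticipate any serious obstacle: the content of the argument is just the combination of quasi-convexity (to approximate a geodesic in $\cS$ by a bounded-step path in the orbit) with metric properness (to ensure the short-translation set $Y$ is finite), which is exactly what makes the classical \v Svarc--Milnor argument go through for a subgroup with a quasi-convex orbit rather than for a cobounded action.
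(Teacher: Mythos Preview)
Your proposal is correct. For the direction (2) $\Rightarrow$ (1) you do essentially what the paper does: build a broken-geodesic path through consecutive orbit points along a $Y$-geodesic word, verify it is a $(\lambda,c')$-quasi-geodesic, and invoke Lemma~\ref{qg}. The paper spells out the quasi-geodesic estimate for an arbitrary subpath more carefully, but the argument is the same.

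For (1) $\Rightarrow$ (2) your route genuinely differs from the paper's. The paper applies the \v{S}varc--Milnor lemma (Lemma~\ref{lem:svarc-milnor}) to the induced length metric on $\nei{Q\circ s}{\sigma}$: quasi-convexity makes this inclusion a quasi-isometric embedding into $\cS$, and properness plus coboundedness of the $Q$-action on the neighborhood yield a finite generating set $Y$ and the desired inequality. You instead give the classical direct argument: subdivide a geodesic $[s,g\circ s]$ into unit steps, project each step to the orbit using $\sigma$-quasi-convexity, and telescope. Your approach is more elementary, avoids the length-space technicalities implicit in applying \v{S}varc--Milnor to $\nei{Q\circ s}{\sigma}$, and yields the explicit constants $\mu=1$, $c=1$. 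The paper's approach, on the other hand, packages the argument conceptually and makes the analogy with the cocompact case transparent.
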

\begin{proof}
Assume (1). Let $s\in {\cS}$ be such that $Q\circ s$ is $\sigma$-quasi-convex for some $\sigma\geq 0$.  Let $\d_Q$ be the induced length metric on $\nei{Q\circ s}{\sigma}$, i.e.,  $\d_Q(x,y)$ is the infimum of  the
lengths of all the paths from $x$ to $y$ contained in $\nei{Q\circ s}{\sigma}$.
Since $Q\circ s$ is $\sigma$-quasi-convex in ${\cS}$, the inclusion map $(\nei{Q\circ s}{\sigma}, \d_Q)\to ({\cS},\d)$  is a quasi-isometric embedding.

Note that the action of $Q$ on $\nei{Q\circ s}{\sigma}$  is metrically proper, by isometries and cobounded. Hence {\v S}varc-Milnor lemma (Lemma \ref{lem:svarc-milnor})
 implies the existence of some finite generating set $Y$ of $Q$ such that $(\nei{Q\circ s}{\sigma},\d_{Q})$ and $(Q, \d_{Y})$ are quasi-isometric. Since the natural inclusion of
 $(Q\circ s,\d_{Q})$ into $({\cS},\d)$ is a quasi-isometric embedding,  there exist $\mu\geq 1$ and $c\geq 0$ such that  $|g|_{Y}\leq \mu \d(s,g\circ s)+c$ for all $g\in Q$, implying that (2) holds.

Now, assume (2).
For every $R>0$ we have $|(Q\circ s) \cap \B{s}{R}|<\infty$, so that the induced action of $Q$ on $\cS$ is metrically proper.

To prove  that the orbit $Q \circ s$ is $\sigma$-quasi-convex, for some $\sigma \ge 0$, take any geodesic path $p$ in ${\cS}$ with endpoints in $Q\circ s$. We are going to show that $p \subseteq \nei{Q \circ s}{\sigma}$, where $\sigma$ will be determined later.
Since $Q$ is a group acting by isometries on $\cS$, without loss of generality we can assume that $p_-=s$.

Define $m:=\max \{\d(s,y\circ s) \mid y\in Y\}$ and choose $g\in Q$ with $g\circ s=p_+$. Suppose that $y_1y_2\dots y_n$ is a shortest word in $Y^{\pm 1}$ representing $g$.
Let $q$ be the path obtained by  concatenating  the geodesic segments $[(y_1\cdots y_i) \circ s, (y_1 \cdots y_{i+1})\circ s]$  of length at most
$m$, for $i=0,\dots, n-1$. Then $q_-=s=p_-$ and $q_+=g \circ s=p_+$.

We are now going to show that $q$ is a quasi-geodesic.

Consider an arbitrary subpath $r$ of $q$. By the construction of $q$,  there is a subpath $r'$ of $q$ such that
$r'_-=(y_1 \cdots y_i) \circ s$, $r'_+ (y_1 \cdots y_j) \circ s$, for some $0 \le i\le j \le n$,
$\d_\cS(r_-,r'_-) \le m/2$, $\d_\cS(r_+,r'_+) \le m/2$ and $\ell(r) \le \ell(r')+m$.
Then we have
$$\ell(r) \le \ell(r')+m \le m(j-i)+m = m |y_{i+1} \cdots y_j|_Y+m.$$
On the other hand, recalling (2) we get
$$|y_{i+1} \cdots y_j|_Y\le \mu \d_\cS(s, (y_{i+1} \cdots y_j)\circ s)+c=\mu \d_\cS((y_{1} \cdots y_i)\circ s, (y_{1} \cdots y_j)\circ s)+c=\mu\d_\cS(r'_-, r'_+)+c.$$

Combining the two inequalities above with the fact that $\d_\cS(r'_-, r'_+) \le \d_\cS(r_-, r_+)+m$, we obtain
$$\ell(r) \le m \mu \d_\cS(r'_-, r'_+)+ m(c+1)\le  m \mu \d_\cS(r_-, r_+)+m(m \mu+c+1).$$
Thus the path $q$ is $(m\mu,m(m \mu+c+1))$-quasi-geodesic in $\cS$. Let $\kappa=\kappa(\delta,m\mu,m(m \mu+c+1))$ be the constant provided by Lemma~\ref{qg},
so that $p$ is lies in the $\kappa$-neighborhood of $q$. Since  $q \subseteq \nei{Q \circ s}{m/2}$, we see that $p \subseteq \nei{Q \circ s}{\sigma}$, where $\sigma:=\kappa+m/2$.
\end{proof}


\subsection{\Tame elements}
Let $({\cS},{\rm d})$ be a hyperbolic metric space and let $G$ be a group acting on ${\cS}$ by isometries.

\begin{defn}An element $h\in  G$ will be called {\it loxodromic} (with respect to the action on $\cS$), if for
some $s\in \cS,$ the map $\Z \to \cS$, $n \mapsto h^n \circ s$ is a quasi-isometric embedding. By Lemma~\ref{lem:proper}, this is equivalent to the requirements that
the orbit $\gen{h}\circ s$ is quasi-convex and the induced action of $\gen{g}$ on $\cS$ is metrically proper.

An element $h \in G$ enjoys the  \emph{weak proper discontinuity} condition (or $h$ is a \emph{WPD element}) if for every $\varepsilon >0$ and any $x\in {\cS}$, there exists $N=N(\varepsilon,x)$ such that
$$| \{g\in G \mid {\rm d}(x,g\circ x)< \varepsilon, {\rm d}(h^N\circ  x, g h^N\circ x)<\varepsilon \}|<\infty.$$
\end{defn}

 \begin{figure}[ht]
  \begin{center}
 \input{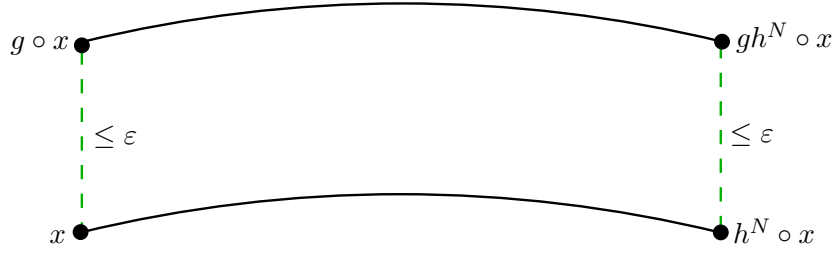}
  \end{center}
  \caption{The WPD property requires the existence of finitely many $g$'s as in the picture.}
  \label{fig:WPD}
\end{figure}

WPD elements originally were introduced by Bestvina and Fujiwara in \cite{BestvinaFujiwara}.
Further in the text we will use $\LWPD$ to denote the set of all elements $g \in G$ that are \tame with respect to the action of $G$  on $\cS$.

\begin{rem}\label{rem:powers}
An element $h\in \LWPD$ if and only if $h^n \in \LWPD$ for any $n\in \mathbb N$.
\end{rem}

To see this, fix some $n\in \mathbb{N}$. Since $\d_{Hau}(\gen{h^n}\circ y,\gen{h}\circ y)$ is finite for all $y\in {\cS}$,  $h$ is loxodromic if and only if $h^n$ is loxodromic.
On the other hand, assuming that the element $h$ is loxodromic,  \cite[Lemma~6.4]{DGO} shows that $h$ is WPD if and only if $h^n$ is WPD.

It is an easy exercise to prove the following
\begin{rem}\label{rem:tame-conj} Suppose that $g,h$ are conjugate elements of $G$. If $g$ is \tame then so is $h$.
\end{rem}

Remark \ref{rem:powers} and \ref{rem:tame-conj} together imply that if $g \in \LWPD$ and $h \apg g$ then $h\in \LWPD$.

Recall that a group is said to be \emph{elementary} if it contains a cyclic subgroup of finite index.

\begin{lem}{\rm \cite[Lemma 6.5, Corollary 6.6]{DGO}}\label{lem:elemrem}
Suppose that $\cS$ is a hyperbolic space, $G$ is a group acting on ${\cS}$ by isometries and $h\in G$ a \tame  element. Then there is a unique maximal elementary subgroup
$E_G(h) \leqslant G$ that contains $h$.
Moreover, for every $x\in G$ the following are equivalent:
\begin{itemize}
\item[{\rm (a)}] $x\in E_G(h)$;
\item[{\rm (b)}] $xh^nx^{-1}=h^{\pm n}$ for some $n\in \N$;
\item[{\rm (c)}] $xh^kx^{-1}=h^l$ for some $k,l\in \Z\setminus \{ 0\}$.
\end{itemize}
Furthermore, set $E_G^+(h) \coloneq \{ x\in G \mid \exists \, k,l\in \mathbb N \mbox{ such that } xh^kx^{-1}=h^l\}$. Then $E_G^+(h)$ is a subgroup of index at most $2$ in $E_G(h)$, and
there exists $n \in \N$ such that $ E_G^+(h)=C_G (h^n)$.
\end{lem}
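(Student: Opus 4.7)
The plan is to define $E_G(h)$ directly via condition (c), prove the equivalences using the translation length of $h$, and then use the WPD property to show that $E_G(h)$ is elementary; the index bound on $E_G^+(h)$ and the centralizer description will follow as byproducts.

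The first step handles (b)$\Leftrightarrow$(c) together with the subgroup structure. Since $h$ is loxodromic, Lemma~\ref{lem:proper} guarantees that the limit $\tau(h)\coloneq \lim_{n\to\infty}\d(s,h^n\circ s)/n$ exists and is strictly positive, is invariant under conjugation, and satisfies $\tau(h^k)=|k|\tau(h)$ for all $k\in \Z$. Hence $xh^kx^{-1}=h^l$ with $k,l\neq 0$ forces $|k|\tau(h)=|l|\tau(h)$ and thus $|k|=|l|$, giving (c)$\Rightarrow$(b); the converse is trivial. I then define $E_G(h)\coloneq \{x\in G \mid xh^kx^{-1}=h^{\pm k} \text{ for some } k\in\N\}$ and check it is a subgroup (closure under products is obtained by replacing $k_x,k_y$ by the common exponent $k_xk_y$). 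The sign of the conjugation exponent yields a homomorphism $E_G(h)\to \{\pm 1\}$ (well-defined because $\tau(h)>0$ rules out $h$ having finite order), whose kernel is $E_G^+(h)$; consequently $[E_G(h):E_G^+(h)]\le 2$.

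The key step is then to establish $[E_G^+(h):\gen{h}]<\infty$, which makes $E_G(h)$ elementary. By Lemma~\ref{lem:proper} the orbit $L\coloneq\gen{h}\circ s$ is quasi-convex, and $n\mapsto h^n\circ s$ is a quasi-isometric embedding. For every $x\in E_G^+(h)$ with $xh^{k_x}x^{-1}=h^{k_x}$, the set $xL$ lies within Hausdorff distance $C$ of $L$ for a constant $C$ depending only on the quasi-isometry constants, because $x$ conjugates $\gen{h^{k_x}}$ to itself and the orbits of $\gen{h}$ and $\gen{h^{k_x}}$ are at bounded Hausdorff distance from each other. Hence each coset of $\gen{h}$ in $E_G^+(h)$ has a representative $y_x$ with $\d(s,y_x\circ s)\le C$; and since $y_x$ commutes with $h^{k_x}$, we also have $\d(h^{Nk_x}\circ s,y_xh^{Nk_x}\circ s)=\d(s,y_x\circ s)\le C$ for every $N\in\N$. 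A careful application of the WPD condition, taking the WPD-power to be a sufficiently large multiple of the $k_x$'s occurring for a bounded family of coset representatives, then limits to finitely many possibilities for $y_x$, so only finitely many cosets of $\gen{h}$ occur. Taking $n$ to be a common multiple of the $k_{y_x}$ across these coset representatives yields $E_G^+(h)\subseteq C_G(h^n)$; the reverse inclusion is immediate from the definition of $E_G^+(h)$.

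Finally, (a) and uniqueness drop out. If $E\le G$ is any elementary subgroup containing $h$, then $\gen{h}$ has finite index in $E$, so for each $x\in E$ some power $h^n$ satisfies $xh^nx^{-1}=h^{\pm n}$, placing $x$ in $E_G(h)$ by condition (b). Thus $E_G(h)$ contains every elementary subgroup through $h$ and, being elementary itself by the previous paragraph, is the unique maximal such subgroup. The principal obstacle lies in the WPD step: since the exponent $k_x$ varies with $x$, one cannot apply WPD to a single fixed power of $h$; the argument requires either first producing a uniform bound on $k_x$ (via a pigeonhole on the shifts $\d(s,y_x\circ s)$) or a telescoping reduction to a finite common multiple, so that a single WPD application at $h^M$ controls all coset representatives simultaneously.
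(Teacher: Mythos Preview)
The paper does not give its own proof of this lemma; it is quoted verbatim from \cite[Lemma~6.5, Corollary~6.6]{DGO}. So there is no in-paper argument to compare against. Your overall architecture---define $E_G(h)$ by the conjugation condition, use the stable translation length for (b)$\Leftrightarrow$(c), extract the sign homomorphism to bound $[E_G(h):E_G^+(h)]$, and then invoke WPD for $[E_G^+(h):\gen{h}]<\infty$---is the standard one, and the first several steps are correct (modulo the minor point that Lemma~\ref{lem:proper} is not quite the right citation for the translation-length facts; those follow directly from the quasi-isometric embedding in the definition of loxodromic).

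The gap is precisely where you flag it, and neither of your proposed repairs works. Taking a common multiple of the $k_x$ over coset representatives is circular: you need finitely many cosets before you have a finite common multiple. The pigeonhole suggestion is too vague to manufacture a uniform bound on $k_x$. The correct fix does not pass through a common exponent at all. Once you have coset representatives $y_x$ with $\d(s,y_x\circ s)\le C$, use that each $y_x$ commutes with some positive power of $h$ and hence fixes both limit points $h^{\pm\infty}\in\partial\cS$ individually. In a $\delta$-hyperbolic space, an isometry fixing both endpoints of a $(\lambda,c)$-quasi-geodesic and displacing one point of it by at most $C$ displaces \emph{every} point of it by at most some $C'=C'(\delta,\lambda,c,C)$: the image quasi-geodesic has the same endpoints, hence lies within uniform Hausdorff distance by Lemma~\ref{qg} (extended to bi-infinite quasi-geodesics), and the bounded displacement at $s$ together with preservation of direction pins down the shift along the axis. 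This yields $\d(h^N\circ s,\,y_x h^N\circ s)\le C'$ for \emph{all} $N\in\Z$, uniformly in $x$. Now a single application of WPD with $\varepsilon>C'$ at $s$ produces an $N_0$ for which only finitely many $g\in G$ satisfy both displacement bounds; every $y_x$ does, so there are only finitely many cosets. After that, taking $n$ to be a common multiple of the (now finitely many) $k_{y_x}$ gives $E_G^+(h)=C_G(h^n)$, and your final paragraph on maximality and uniqueness goes through unchanged.
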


\begin{rem}\label{rem:E_G(g)} If $G$ is an arbitrary group and $h \in G$ is any element, it is easy to check that the subset $E_G(h) \subseteq G$, defined by
$$E_G(h)\coloneq\left\{x \in G \mid x h^k x^{-1}=h^l \text{ for some }k,l \in \Z\setminus\{0\}\right\},$$
is a subgroup of $G$  containing the centralizer $C_G(h)$. Lemma \ref{lem:elemrem} above describes the structure of this subgroup in the case when
$G$ acts on a hyperbolic space $\cS$ and $h$ is a \tame element.
\end{rem}

\begin{rem}\label{rem:elem_inter} Suppose that $g,h \in G$ are \tame elements for an action of $G$ on some hyperbolic space $\cS$.
\begin{itemize}
  \item If $|E_G(g) \cap E_G(h)|=\infty$ then $g^m=h^n$ for some $m,n \in \Z\setminus\{0\}$.
  \item If $g^m=h^n$ for some $m,n \in \Z\setminus\{0\}$, then $E_G(g)=E_G(h)$.
\end{itemize}
\end{rem}

Indeed, the first claim immediately follows from Lemma \ref{lem:elemrem}, stating that $|E_G(g):\gen{g}|<\infty$ and $|E_G(h):\gen{h}|<\infty$.
The second claim can be quickly derived from part (b) of that lemma.

\subsection{Hyperbolically embedded subgroups}
In this subsection we recall some basic concepts which were originally introduced by Dahmani, Guirardel and Osin in \cite{DGO}.

Let $G$ be a group and let $\Hl $ be a family of subgroups of $G$. Suppose that $X$ is a \emph{relative generating set} of $G$ with respect to $\Hl$
(i.e., $G=\gen{X\cup \bigcup_{\lambda\in \Lambda} H_\lambda}$). Note that $X$ could be infinite; we also assume that it is symmetric, i.e., $X=X^{-1}$ in $G$.
Denote
\begin{equation}
\mathcal{H}= \bigsqcup_{\lambda \in \Lambda} (H_\lambda\setminus \{1\}).
\end{equation}

As discussed in Subsection \ref{subsec:not}, the disjoint union $X\sqcup \mathcal{H}$ can be considered as a `generating alphabet' for $G$, even though
some letters from $X\sqcup\mathcal{H}$ may represent the same element in $G$. Let  $\G$ be the corresponding Cayley graph of $G$.
We also let $\Gamma _\lambda$ denote the Cayley graphs $\Gamma (H_\lambda, H_\lambda\setminus \{1\})$,
which we think of as complete subgraphs of $\G$. By $E\ga_\lambda$ we denote the set of edges of $\ga_\lambda$.

\begin{defn}[{\cite[Def. 4.2]{DGO}}] \label{def:maindef}
For every $\lambda \in \Lambda $, the \textit{relative metric} $\widehat{\rm d}_{\lambda} \colon H_\lambda \times H_\lambda \to [0, \infty)\sqcup\{\infty\}$ is defined as follows.
For any $g,h\in H_\lambda $, $\widehat{\rm d}_{\lambda} (g,h)$ is the length of a shortest path in $\G \setminus E\ga_\lambda$ that joins $g$ to $h$.
If there is no such path, one sets $\widehat{\rm d}_{\lambda} (g,h) \coloneq \infty $.
\end{defn}

It is easy to see that  $\widehat{\rm d}_{\lambda}$ is an extended metric on $H_\lambda$.

\begin{defn}[{\cite[Def. 4.25]{DGO}}]\label{def:he}
The family $\{H_\lambda\}_{\lambda \in \Lambda}$ is \emph{hyperbolically embedded} in $G$ with respect to $X$ (notation: $\{H_\lambda\}_{\lambda \in \Lambda} \h (G,X)$),
if the Cayley graph $\G$ is hyperbolic and the metric space $(H_\lambda, \widehat{\rm d}_{\lambda})$ is locally finite for every $\lambda \in \Lambda$.

We will say that $\{H_\lambda\}_{\lambda \in \Lambda}$ is {\it hyperbolically embedded} in $G$ (notation: $\{H_\lambda\}_{\lambda \in \Lambda} \h G$)
if there exists a (possibly infinite) relative generating set $X$, of $G$ with respect to $\{H_\lambda\}_{\lambda \in \Lambda}$, such that  $\{H_\lambda\}_{\lambda \in \Lambda} \h (G,X)$.
\end{defn}

The concept of hyperbolically embedded subgroups has been introduced by Dahmani, Guirardel and Osin in \cite{DGO}, where they also give an equivalent
definition in terms of relative isoperimetric functions \cite[Theorem 4.24]{DGO} (see also \cite[Thm. 6.4]{Sisto} or Theorem \ref{thm:addQ} below for more equivalent conditions).

Definition \ref{def:he} immediately implies the following observation (cf. \cite[Rem. 4.26]{DGO}):
\begin{rem} \label{rem:subset-hyp_emb} Consider any subset  $\Lambda_1 \subseteq \Lambda$ and set $\Lambda_2:=\Lambda \setminus \Lambda_1$. If  $\Hl \h (G,X)$ then
$\{H_\lambda\}_{\lambda \in \Lambda_1} \h (G,X_1)$, where $X_1:=X \cup \bigcup_{\mu \in \Lambda_2} H_\mu$.
\end{rem}

The following lemma will be useful:

\begin{lem}[{\cite[Cor. 4.27]{DGO}}] \label{lem:changegenset} Suppose that $G$ is a group, $\{H_\lambda\}_{\lambda \in \Lambda}$ is a family of subgroups of $G$ and $X_1,X_2 \subseteq G$ are
relative generating sets of $G$, with respect to $\{H_\lambda\}_{\lambda \in \Lambda}$, such that $|X_1\bigtriangleup X_2|<\infty$. Then
$\{H_\lambda\}_{\lambda \in \Lambda} \h (G,X_1)$ if and only if  $\{H_\lambda\}_{\lambda \in \Lambda} \h (G,X_2).$
\end{lem}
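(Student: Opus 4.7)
The plan is to reduce to a one-element change in the generating set and then separately verify the two conditions defining hyperbolic embeddedness: hyperbolicity of the relative Cayley graph and local finiteness of $\dl$. By using $X_1\cup X_2$ as an intermediate relative generating set (which contains both $X_1$ and $X_2$ and has finite symmetric difference with each), the statement reduces to the case $X_1\subseteq X_2$. Inducting on $|X_2\setminus X_1|$ further reduces matters to $X_2 = X_1\cup\{x\}$ for some $x\in G$. Write $\Gamma_i\coloneq \Gamma(G,X_i\sqcup \mathcal H)$.

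For hyperbolicity, since $X_1$ is a relative generating set, $x$ can be written as a word $y_1\cdots y_L$ in $X_1\cup \mathcal H$. Thus each $X_2$-edge of $\Gamma_2$ unfolds into a path of length $L$ in $\Gamma_1$, while $\Gamma_1$ is already a subgraph of $\Gamma_2$; the identity map on $G$ is therefore a $(L,0)$-quasi-isometry between the two Cayley graphs, so hyperbolicity of one is equivalent to hyperbolicity of the other.

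For local finiteness of $\dl$, the inclusion $\Gamma_1\setminus E\Gamma_\lambda\subseteq \Gamma_2\setminus E\Gamma_\lambda$ yields $\dl^{X_2}\le \dl^{X_1}$ on $H_\lambda$, so each $\dl^{X_1}$-ball embeds into the corresponding $\dl^{X_2}$-ball. Consequently, local finiteness of $(H_\lambda, \dl^{X_2})$ immediately implies that of $(H_\lambda, \dl^{X_1})$. For the converse, given a path $p$ of length $n$ in $\Gamma_2\setminus E\Gamma_\lambda$ between elements of $H_\lambda$, I would produce a corresponding path in $\Gamma_1\setminus E\Gamma_\lambda$ of length bounded in terms of $n$, by replacing each of the $x^{\pm 1}$-edges of $p$ with a short detour built from the fixed word $y_1\cdots y_L$ representing $x$, modified locally to avoid any stray $E\Gamma_\lambda$-edges.

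The main obstacle is precisely the construction of those detours: a literal substitution may introduce $\mathcal H$-labelled edges joining pairs of $H_\lambda$-vertices, and these lie in $E\Gamma_\lambda$ and must be circumvented. Generically the detour starts at a vertex outside $H_\lambda$ and stays outside, so the issue does not arise; when an intermediate step would place an $H_\lambda$-labelled edge at an $H_\lambda$-vertex, one inserts a bounded excursion through $X_1\cup \bigcup_{\mu\ne\lambda}H_\mu$ to escape $H_\lambda$ first. The truly delicate case occurs when $x\in H_\lambda$ and the $x$-edge in question joins two $H_\lambda$-vertices: here no uniform bounded replacement in $\Gamma_1\setminus E\Gamma_\lambda$ is available, and one must instead control the $\dl^{X_2}$-ball directly, noting that such $x$-shortcuts can appear only boundedly often in a path of given length, and that the pieces between successive $x$-edges already live in $\Gamma_1\setminus E\Gamma_\lambda$, so the hypothesis of local finiteness of $\dl^{X_1}$ together with the finiteness of $|X_2\setminus X_1|$ suffices to conclude.
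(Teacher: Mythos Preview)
The paper does not prove this lemma; it simply quotes it from \cite[Cor.~4.27]{DGO}. So there is no in-paper argument to compare against, and I will assess your sketch on its own terms.

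Your reduction to $X_2=X_1\cup\{x\}$ and the quasi-isometry argument for hyperbolicity are fine, as is the easy inequality $\dl^{X_2}\le\dl^{X_1}$. The substance is entirely in the converse direction for local finiteness, and here your sketch contains one false lead and leaves the real mechanism unwritten.

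The false lead is the ``bounded excursion through $X_1\cup\bigcup_{\mu\ne\lambda}H_\mu$ to escape $H_\lambda$'': there is no reason such an excursion of uniformly bounded length exists (take $\Lambda=\{\lambda\}$, $H_\lambda=G$, $X_1=\emptyset$; then there is no way to leave $H_\lambda$ at all, yet the lemma still holds). The actual fix is the one you only hint at in your last sentence. Given a path $p$ of length $\le n$ in $\Gamma_2\setminus E\Gamma_\lambda$ between two points of $H_\lambda$, first replace every $x^{\pm1}$-edge by the fixed word $y_1\cdots y_L$, obtaining a path $p'$ in $\Gamma_1$ of length $\le nL$. Now cut $p'$ at every vertex lying in $H_\lambda$. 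Each resulting segment $q_k$ either (i) is a single $H_\lambda$-labelled edge, necessarily one of the finitely many substituted letters $y_i\in H_\lambda$ (since $p$ itself had no $E\Gamma_\lambda$-edges), or (ii) has length $\ge 1$ with no interior $H_\lambda$-vertices, in which case \emph{every} edge of $q_k$ lies in $\Gamma_1\setminus E\Gamma_\lambda$ (the first and last because an $H_\lambda$-labelled edge starting in $H_\lambda$ would end in $H_\lambda$, forcing the segment to have length $1$). In case (ii), $\dl^{X_1}$ of its endpoints is $\le nL$, so by local finiteness there are only finitely many possible increments. Hence $h$ is a product of at most $nL$ factors drawn from a fixed finite set, and the $\dl^{X_2}$-ball of radius $n$ is finite.

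So your approach does go through once this re-decomposition step is made explicit; the ``escape $H_\lambda$'' idea should be dropped. Incidentally, the proof in \cite{DGO} passes through the equivalent characterisation via bounded relative presentations and linear relative isoperimetric functions \cite[Thm.~4.24]{DGO}, where invariance under finite changes of the relative generating set is almost immediate.
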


\begin{defn}[{\cite[Def. 4.5]{DGO}}]
Let $q$ be a path in the Cayley graph $\G$ and let $\lambda \in \Lambda$. A non-trivial subpath $p$ of $q$ is called an $H_\lambda$-\emph{component}, if all the edges of $p$ are labelled by letters from
$H_\lambda \setminus\{ 1\}$ and $p$ is a maximal subpath of $Q$ with this property. A \emph{component} of $q$ is an $H_\lambda$-component
for some $\lambda \in \Lambda$.

Two $H_\lambda$-components $p_1$, $p_2$ of  paths $q_1$, $q_2$, respectively, in $\G$ are said to be \emph{connected} if all vertices of $p_1$ and $p_2$
lie in the same left coset of $H_\lambda$ in $G$ (this is equivalent to the existence of an edge $e$ between any two distinct vertices of $p_1$ and $p_2$ with $\Lab(e) \in H_\lambda \setminus\{1\}$).
A component $p$ of a path $q$ is \emph{isolated} if it is not connected to any other component of $q$.

A path $q$ in $\G$ is said to be {\it without backtracking} if all of its components are isolated.
\end{defn}

Below we formulate one of the main technical tools for working with hyperbolically embedded subgroups.
This statement is proved in \cite{DGO}  and is analogous to the relatively hyperbolic case (cf. \cite[Lemma 2.7]{Osin-periph_fil}).

\begin{lem}[{\cite[Lemma 4.11 and Thm. 4.24]{DGO}}]\label{lem:omega}
Suppose that $\Hl$ is hyperbolically embedded in  $(G,X)$. Then for each $\lambda \in \Lambda$ there exists a finite subset
$\Omega_\lambda \subseteq H_\lambda$ and a constant $K \in \mathbb N$ such that the following
holds. Let $q$ be a cycle in $\G$, let $p_i$ be isolated $H_{\lambda_i}$-components of $q$ for $i=1,\dots,k$, and let $h_1, \ldots , h_k$ be the elements of $G$
represented by $\Lab(p_1), \ldots, \Lab(p_k)$
respectively. Then  $h_i$ belongs to the subgroup $\langle \Omega_{\lambda_i}\rangle \le G$ for every $i=1,\dots,k$,  and the word lengths of $h_i$'s
with respect to $\Omega_{\lambda_i}$ satisfy
$$ \sum\limits_{i=1}^k |h_i|_{\Omega_{\lambda_i}} \le K\ell(q).$$
\end{lem}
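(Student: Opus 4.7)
The plan is to adapt the proof of \cite[Lemma 2.7]{Osin-periph_fil} from the relatively hyperbolic setting to the hyperbolically embedded one, following the strategy of Dahmani--Guirardel--Osin. The argument has three ingredients: a choice of $\Omega$ built from $\hat d_\lambda$-balls, a ``detour'' estimate exploiting the isolation hypothesis, and a conversion from $\hat d_\lambda$-distances to $\Omega$-word lengths.

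First I would fix a constant $R$ (to be chosen in terms of the hyperbolicity constant of $\Gamma(G, X \sqcup \mathcal H)$) and set $\Omega := \bigcup_\lambda B_\lambda(R) \setminus \{1\}$, where $B_\lambda(R) := \{h \in H_\lambda \mid \hat d_\lambda(1, h) \le R\}$. The local finiteness of each $(H_\lambda, \hat d_\lambda)$ built into Definition~\ref{def:he} guarantees that each $B_\lambda(R)$ is finite. Next I would establish the detour bound: for every isolated $H_\lambda$-component $p_i$ of the cycle $q$, $\hat d_\lambda((p_i)_-, (p_i)_+) \le \ell(q) - \ell(p_i)$. After left-translating by $(p_i)_-^{-1}$ so that $(p_i)_- = 1$ and $(p_i)_+ = h_i \in H_\lambda$, the complementary path $\tilde q := q \setminus p_i$ runs from $h_i$ back to $1$. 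Because $p_i$ is isolated, any other $H_\lambda$-component of $\tilde q$ has its vertices in some $H_\lambda$-coset distinct from $H_\lambda$ itself; since $E\Gamma_\lambda$ consists only of edges with both endpoints in $H_\lambda$, no edge of $\tilde q$ lies in $E\Gamma_\lambda$. Hence $\tilde q$ is a path in $\Gamma \setminus E\Gamma_\lambda$ of length $\ell(q) - \ell(p_i)$, establishing the claim.

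The third step converts this $\hat d_\lambda$-bound into an $\Omega$-word-length bound. Taking a shortest $\hat d_\lambda$-path $\sigma_i$ from $1$ to $h_i$ and cutting it at its successive visits to the coset $H_\lambda$, one writes $h_i = u_{i,1} \cdots u_{i,m_i}$ with each $u_{i,j} \in H_\lambda$ and $\hat d_\lambda(1, u_{i,j}) \le R$, provided $R$ is chosen large enough that long geodesic subsegments of $\sigma_i$ in $\Gamma \setminus E\Gamma_\lambda$ can be rerouted to pass through $H_\lambda$ at intervals bounded by $R$ (this uses the hyperbolicity of $\Gamma$). Each such $u_{i,j}$ lies in $\Omega$, yielding $|h_i|_\Omega \le C\, \hat d_\lambda(1, h_i)$ for an absolute constant $C$.

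The main obstacle is the final summation. The naive bound $\hat d_\lambda(1, h_i) \le \ell(q) - \ell(p_i)$, summed over $k$ components, only gives $\sum_i |h_i|_\Omega \lesssim k\, \ell(q)$, which is wasteful (quadratic in $\ell(q)$ at worst). To reach the target $K\ell(q)$ one must either select each detour more carefully---exploiting the hyperbolic geometry of $\Gamma$ via an Olshanskii-style decomposition in the spirit of Lemma~\ref{Ols}, so that most components are short-circuited locally rather than through the full complementary cycle---or run an amortised accounting over all $p_i$'s simultaneously, in which the disjointness of the $p_i$'s along $q$ keeps the total detour length linear in $\ell(q)$. This is the heart of the argument and the point where hyperbolicity of $\Gamma(G, X \sqcup \mathcal H)$ plays a role beyond merely ensuring local finiteness of $\hat d_\lambda$.
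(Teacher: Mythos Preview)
Your approach is genuinely different from the paper's, and the gap you flag in the summation step is real and, I believe, not repairable along the lines you sketch.

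The paper does not work with $\hat d_\lambda$ directly at all. It invokes \cite[Thm.~4.24]{DGO}, which says that $\Hl \h (G,X)$ is \emph{equivalent} to $G$ admitting a strongly bounded relative presentation $\prs{X \cup \mathcal H}{\mathcal R}$ (only finitely many $\mathcal H$-letters occur in $\mathcal R$, and relator lengths are uniformly bounded) with a \emph{linear relative isoperimetric function}. The set $\Omega$ is then taken to be the finite collection of $\mathcal H$-letters appearing in $\mathcal R$, and the inequality follows from a van Kampen diagram argument (\cite[Lemmas~4.10, 4.11]{DGO}): a disc filling $q$ has area $\le C\ell(q)$, and each isolated component corresponds to a boundary arc whose label is a product of $\Omega$-letters read off the $2$-cells it meets. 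The linearity in $\ell(q)$ is inherited directly from the linear isoperimetric function---it is a filling statement, not a thin-triangles statement.

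Your route already has a gap at step~3, before the summation. A $\hat d_\lambda$-geodesic from $1$ to $h_i$ is a path in $\Gamma \setminus E\Gamma_\lambda$, and there is no reason for it to revisit $H_\lambda$ at bounded intervals; hyperbolicity of $\Gamma$ gives no quasi-convexity of $H_\lambda$ inside $\Gamma \setminus E\Gamma_\lambda$, so the ``rerouting'' you invoke has no mechanism. Even granting step~3, the obstacle in step~4 is exactly where the isoperimetric function does the work in the paper's argument. Neither an Olshanskii polygon decomposition (Lemma~\ref{Ols}) nor a naive amortisation sees the problem correctly: the complementary arcs $q \setminus p_i$ overlap massively, and no local hyperbolic-geometry trick converts $k$ overlapping detours of length $\sim\ell(q)$ into total length $\sim\ell(q)$. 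The passage through relative presentations is how \cite{DGO} extracts the linear bound, and that equivalence is itself a nontrivial theorem.
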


\subsection{Acylindrically hyperbolic groups}\label{subsec:acyl_hyp}
Suppose that a group $G$ acts by isometries on a metric space $(\cS,\d)$. Following Bowditch \cite{Bow} we will say that this action is \emph{acylindrical} if for every $\varepsilon > 0$ there exist $R,N > 0$
such that for any pair of points $x, y \in \cS$ with $\d(x, y) \ge R$ one has
$$|\{g \in G \mid d(x, g\circ x) \le \varepsilon \mbox{ and } d(y, g\circ y)\le \varepsilon \}|\le N. $$

Comparing this with the definition of a \tame element above, we immediately obtain
\begin{rem}\label{rem:acyl->tame}  If a group $G$ acts acylindrically on a hyperbolic space $\cS$ then every loxodromic element of $G$ satisfies the WPD condition.
\end{rem}

The action of $G$ on $\cS$ is \emph{non-elementary} if for some (equivalently, for any) $s \in \cS$,  the set of limit points $\Lambda(G \circ s)$ of the orbit $G \circ s$ in the Gromov boundary $\partial S$
has at least $3$ points.

In \cite{Osin-acyl} Osin proved the following theorem:

\begin{thm}[{\cite[Thm. 1.2]{Osin-acyl}}]\label{thm:acyl-equiv_def} For any group $G$ the following are equivalent:
\begin{itemize}
  \item[(i)] $G$ admits a non-elementary acylindrical action on some hyperbolic space;
  \item[(ii)] there is a symmetric generating subset $X$ of $G$ such that the Cayley graph $\ga(G,X)$ is hyperbolic, the natural action of $G$ on $\ga(G,X)$ is acylindrical and
                  the Gromov boundary $\partial \ga(G,X)$ has more than two points;
  \item[(iii)] $G$ is non-elementary and there exists a hyperbolic space $\cS$ such that $G$ acts on $\cS$ coboundedly and by isometries and $\LWPD \neq \emptyset$;
  \item[(iv)] $G$ contains a proper infinite hyperbolically embedded subgroup.
\end{itemize}
\end{thm}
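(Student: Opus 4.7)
The plan is to establish the cycle of implications $(ii) \Rightarrow (i) \Rightarrow (iii) \Rightarrow (iv) \Rightarrow (ii)$. The first is immediate: the natural $G$-action on its own Cayley graph $\ga(G,X)$ is by isometries and cobounded, so acylindricity together with $|\partial \ga(G,X)| > 2$ already delivers a non-elementary acylindrical action on a hyperbolic space.

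For $(i) \Rightarrow (iii)$, starting from a non-elementary acylindrical action of $G$ on a hyperbolic space $\cS'$, my plan is to pass to the quasi-convex hull of an orbit: fix $s \in \cS'$ and let $\cS$ be the union of all geodesics between pairs of points in $G \circ s$. This is $G$-invariant and quasi-convex (hence hyperbolic), and the restricted action is cobounded. Non-elementarity of the original action yields an element with North--South dynamics on the Gromov boundary, hence a loxodromic one, and by Remark~\ref{rem:acyl->tame} this element is automatically WPD since the ambient action is acylindrical; thus the action on $\cS$ satisfies (iii).

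For $(iii) \Rightarrow (iv)$, given a loxodromic WPD element $h \in G$ for a cobounded $G$-action on a hyperbolic space $\cS$, I would take $H = E_G(h)$, the maximal elementary subgroup from Lemma~\ref{lem:elemrem}, which is infinite and virtually cyclic. Starting from the symmetric generating set $Y$ produced by the {\v S}varc--Milnor Lemma (Lemma~\ref{lem:svarc-milnor}) for the cobounded action, I would set $X = Y \setminus H$ and argue that the Cayley graph $\ga(G, X \sqcup (H \setminus \{1\}))$ is hyperbolic, using that it is obtained from a graph quasi-isometric to $\cS$ by adding edges that collapse cosets of $H$ to bounded diameter. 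The key technical point is local finiteness of the relative metric $\widehat{\rm d}_H$: assuming infinitely many elements of $H$ lie at bounded $\widehat{\rm d}_H$-distance from $1$, the bounded-length external paths realizing these distances, combined with the loxodromic behavior of $h$, would yield after conjugation by a large power of $h$ infinitely many $g \in G$ with both $\d(s, g \circ s)$ and $\d(h^N \circ s, g h^N \circ s)$ bounded, contradicting WPD of $h$. I expect this step to be the main obstacle.

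For $(iv) \Rightarrow (ii)$, given a proper infinite $H$ with $H \h (G, X)$, take $X' = X \cup (H \setminus \{1\})$. Then $\ga(G, X')$ is exactly the graph $\G$ of Definition~\ref{def:he} and is hyperbolic by hypothesis. Acylindricity of the $G$-action on $\ga(G, X')$ would be deduced from Lemma~\ref{lem:omega}: many elements $g$ translating the endpoints of a long geodesic by a bounded amount would produce cycles containing many parallel isolated $H$-components, and the linear bound $\sum_i |h_i|_\Omega \le K \ell(q)$ combined with local finiteness of $\widehat{\rm d}_H$ then caps the number of such $g$. For $|\partial \ga(G,X')| > 2$, using $H$ together with a generic $G$-conjugate one can produce two independent loxodromic elements with disjoint fixed-point pairs on the boundary, hence a free subgroup with a Cantor set of limit points.
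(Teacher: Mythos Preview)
The paper does not prove this theorem at all: it is quoted verbatim as \cite[Thm.~1.2]{Osin-acyl}, and the only argument the paper supplies is a one-sentence remark after the statement explaining why the slightly stronger form of (iii) (with the coboundedness hypothesis) follows from (ii) by taking $\cS=\ga(G,X)$ and invoking Remark~\ref{rem:acyl->tame} together with Osin's classification \cite[Thm.~1.1]{Osin-acyl}. Everything else is deferred to \cite{Osin-acyl} and \cite{DGO}.

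Your sketch is therefore not comparable to the paper's treatment; you are attempting to reprove Osin's theorem. A few comments on your outline. Your implication $(iii)\Rightarrow(iv)$ is essentially correct and is in fact what Corollary~\ref{cor:wpd-he} of the present paper (and \cite[Thm.~6.8]{DGO}) establishes, though the clean argument goes through conditions (Q1)--(Q3) of Section~\ref{sec:adding} rather than the ad hoc Cayley-graph surgery you describe. The serious gap is in $(iv)\Rightarrow(ii)$: deducing \emph{acylindricity} of the $G$-action on $\ga(G,X\sqcup(H\setminus\{1\}))$ from the hyperbolically-embedded hypothesis is the main technical content of \cite[Thm.~1.2]{Osin-acyl}, and your appeal to Lemma~\ref{lem:omega} is far too coarse---that lemma bounds $\Omega$-lengths of isolated components along a single cycle, but acylindricity requires a uniform bound on the \emph{number} of group elements moving two far-apart points a bounded distance, and extracting this uniform bound needs considerably more work (Osin uses an auxiliary argument involving the structure of geodesics in $\G$ and a careful counting). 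Similarly, in $(i)\Rightarrow(iii)$ you also need to know that a non-elementary acylindrical action forces $G$ itself to be a non-elementary group; this again comes from Osin's classification \cite[Thm.~1.1]{Osin-acyl} and is not as automatic as your sketch suggests.
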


Remark that in \cite[Thm. 1.2]{Osin-acyl} the statement (iii) of Theorem \ref{thm:acyl-equiv_def} is formulated in a weaker form, without the requirement for the action to be cobounded.
However, (ii) clearly implies (iii) with this additional condition: assuming (ii), one can simply take $\cS$ to be the Cayley graph $\ga(G,X)$  on which $G$ acts acylindrically (the hypothesis that $\partial \ga(G,X)\neq \emptyset$
implies that the unique $G$-orbit of vertices in $\cS=\ga(G,X)$ is unbounded, hence $\LWPD\neq\emptyset$ by Remark~\ref{rem:acyl->tame} and
the classification of acylindrical actions of groups on hyperbolic spaces obtained by Osin in \cite[Thm. 1.1]{Osin-acyl}).

Theorem \ref{thm:acyl-equiv_def} allows one to say that a group $G$ is \emph{acylindrically hyperbolic} if it satisfies one of the equivalent conditions (i)--(iv) from its claim.


\section{Adding  subgroups to a family of hyperbolically embedded subgroups}\label{sec:adding}
In this section we give necessary and sufficient conditions that allow to add a finite family of subgroups to the existing family of hyperbolically embedded subgroups. This is analogous to
Osin's theorem \cite{O06}, where a similar criterion was developed for relatively hyperbolic groups.

\subsection{Necessary conditions}
In this subsection we suppose that $G$ is a group, $X_1$ is a generating set of $G$ and $Q_1,\dots,Q_n$ is a collection of subgroups of $G$
such that $\{Q_i\}_{i=1}^n \h (G,X_1)$.

\begin{lem}\label{lem:he->fgqi}
For each $i \in \{1,\dots,n\}$ there exists a finite generating set $Y_i$ of $Q_i$ and constants
$\mu_i \ge 1$, $c_i \ge 0$ such that 
$$|h|_{Y_i} \leq  \mu_i |h|_{X_1}+c_i$$ for all $h\in Q_i$.
\end{lem}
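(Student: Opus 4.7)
My plan is to reduce the claim to Lemma~\ref{lem:omega} applied to a well-chosen cycle in the Cayley graph $\Gamma(G, X_1 \sqcup \mathcal H)$. The first observation is that $\widehat{\rm d}_i(1,h) \le |h|_{X_1}$ for every $h \in Q_i$: any geodesic word $x_1\cdots x_r$ over $X_1$ representing $h$ labels a path in $\Gamma(G, X_1 \sqcup \mathcal H)$ of length $r = |h|_{X_1}$ whose edges are all labeled by letters of $X_1$, and hence lies entirely in $\Gamma \setminus E\Gamma_i$. In particular, local finiteness of $\widehat{\rm d}_i$ already forces $\{h \in Q_i : |h|_{X_1} \le R\}$ to be finite for each $R$.

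The main step is to package this into a statement about a finite word metric on $Q_i$. For $h \in Q_i$, I would form the cycle $q$ of length $|h|_{X_1}+1$ obtained by following the $X_1$-geodesic path $p$ from $1$ to $h$ and returning via the single $\mathcal H$-edge $e$ labeled by $h^{-1} \in Q_i \setminus\{1\}$; the edge $e$ is the unique $\mathcal H$-component of $q$, hence trivially isolated. Lemma~\ref{lem:omega} then yields a finite subset $\Omega \subseteq \bigcup_l Q_l$ and a constant $K \in \mathbb N$, independent of $h$, such that $|h|_\Omega \le K\,\ell(q) = K(|h|_{X_1}+1)$.

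To finish, I would set $Y_i := \Omega \cap Q_i$ and argue that the expression of $h$ as a product of at most $K(|h|_{X_1}+1)$ letters of $\Omega$ supplied by Lemma~\ref{lem:omega} can actually be realized inside $Y_i$. This requires a refinement of Lemma~\ref{lem:omega}, obtained by re-examining the van Kampen / relative presentation argument of \cite[Lemmas~4.10, 4.11]{DGO}: one tracks the $Q_l$-membership of each letter of $\Omega$ across the filling diagram and uses that the unique $\mathcal H$-component on the boundary lies in $Q_i$ to conclude that the associated factorization stays in $\Omega \cap Q_i$. Once this is in hand, $Y_i$ is a finite generating set of $Q_i$ and $|h|_{Y_i} \le K|h|_{X_1}+K$, so one may take $\mu_i = c_i = K$. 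This last refinement is the principal obstacle: the raw conclusion of Lemma~\ref{lem:omega} only bounds $|h|_\Omega$, where $\Omega$ a priori contains letters from every $Q_l$, and upgrading it to a bound in $|h|_{\Omega \cap Q_i}$ is precisely what both makes $Q_i$ finitely generated in this framework and produces the desired linear inequality in $|h|_{X_1}$.
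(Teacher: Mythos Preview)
You have correctly identified the obstacle, but you have not resolved it, and the paper's proof shows that you do not need to. Your bound $|h|_\Omega \le K(|h|_{X_1}+1)$ is fine, but $\Omega$ as produced by Lemma~\ref{lem:omega} is a subset of $\bigcup_{l} Q_l$, and there is no a priori reason why a shortest $\Omega$-word for $h\in Q_i$ should use only letters from $\Omega\cap Q_i$. Your proposed ``refinement'' --- tracking $Q_l$-membership through the van Kampen diagram in \cite[Lemmas~4.10,~4.11]{DGO} --- is vague: the filling may well pass through relators that mix letters from several $Q_l$, and nothing in the boundary data forces the factorization of $h$ to stay inside $Q_i$. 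As written, this is a genuine gap, not a routine check.

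The paper avoids the whole issue by a one-line reduction. By Remark~\ref{rem:subset-hyp_emb}, if $\{Q_1,\dots,Q_n\}\h(G,X_1)$ then $Q_i\h(G,X_2)$ where $X_2:=X_1\cup\bigcup_{j\neq i}Q_j$. Working with this \emph{single} hyperbolically embedded subgroup, the finite set $\Omega$ produced by Lemma~\ref{lem:omega} is automatically contained in $Q_i$, so $Y_i:=\Omega$ generates $Q_i$ and your cycle argument gives $|h|_{Y_i}\le K|h|_{X_2}+K\le K|h|_{X_1}+K$ (since $X_1\subseteq X_2$). No refinement of Lemma~\ref{lem:omega} is needed.
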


\begin{proof} Obviously, it is enough to prove the statement for $i=1$.
Let $X_2:=X_1 \cup \bigcup_{j=2}^n Q_j$. Then $X_2$ generates $G$ and $Q_1 \h (G,X_2)$ by Remark \ref{rem:subset-hyp_emb}.
Let $\Omega_1 \subseteq Q_1$ and $K>0$ be the finite subset and the constant provided by Lemma \ref{lem:omega}.

Consider any element $h \in Q_1\setminus \{1\}$. Since $X_2$ generates $G$, we can let $W$ to be a shortest word over $X_2$ such that $h=W$ in $G$.
Therefore, in the Cayley graph $\ga(G,X_2\sqcup {Q_1\setminus\{1\}})$, there is a cycle $q$ with $\Lab(q) \equiv Wh^{-1}$. Evidently, $q$ has exactly one $Q_1$-component
labelled by $h^{-1}$, hence it must be isolated in it.  Consequently, by Lemma  \ref{lem:omega}, $h \in \gen{\Omega_1}$ and
$$|h|_{\Omega_1} \le K \ell(q)=K\|W\|+K=K|h|_{X_2}+K\le K|h|_{X_1}+K.$$
Thus $Q_1$ is generated by the finite set $\Omega_1$ and the required inequality for the word lengths is satisfied.
\end{proof}

\begin{lem}\label{lem:he->geomsep}
Let $i,j \in\{1,\dots,n\}$ and $g\in G$.
For  every $\e>0$, there exists $R=R(\e)>0$ such that diameter
 $ \diam_{X_1} (Q_i \cap \nei{gQ_j}{\e})<R$ 
whenever $i\neq j$ or $i=j$ and $g\notin Q_i$ (here $\nei{gQ_j}{\e}:= \{z\in G \mid  \d_{X_1}(z,gQ_j) \le \e\}$).

\end{lem}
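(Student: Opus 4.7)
The plan is to form a short closed cycle in the extended Cayley graph $\G = \ga(G, X_1 \sqcup \mathcal{H})$ having two isolated components — one in $Q_i$ and one in $Q_j$ — and then invoke Lemma~\ref{lem:omega} to bound $\d_{X_1}(h_1,h_2)$ for arbitrary $h_1, h_2 \in Q_i \cap \nei{gQ_j}{\e}$.

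First, I would take distinct $h_1, h_2 \in Q_i \cap \nei{gQ_j}{\e}$ (the case $|Q_i \cap \nei{gQ_j}{\e}| \le 1$ is trivial). By definition there exist $q_1, q_2 \in Q_j$ together with paths $p_1, p_2$ in $\ga(G, X_1)$, each of length at most $\e$, joining $h_k$ to $gq_k$ for $k = 1,2$. I would then form the closed cycle $q$ in $\G$, of length at most $2\e + 2$, obtained by concatenating: the single $Q_i$-edge $e_1$ from $h_1$ to $h_2$ labelled by $h_1^{-1}h_2$; the path $p_2^{-1}$ from $h_2$ to $gq_2$; the single $Q_j$-edge $e_2$ from $gq_2$ to $gq_1$ labelled by $q_2^{-1}q_1$ (omitted if $q_1 = q_2$, in which case $\d_{X_1}(h_1,h_2) \le 2\e$ via $p_1 p_2^{-1}$); and the path $p_1$ from $gq_1$ back to $h_1$.

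The hard part will be verifying that $e_1$ and $e_2$ are isolated components of $q$. Since $p_1$ and $p_2$ consist only of $X_1$-edges, they contribute no $\mathcal{H}$-components, so $e_1$ is the unique $Q_i$-component of $q$ and $e_2$ is the unique $Q_j$-component. When $i \ne j$, the two components lie in different families and hence cannot be connected. When $i = j$ but $g \notin Q_i$, both are $Q_i$-components, but the vertices of $e_1$ lie in the coset $Q_i$ while those of $e_2$ lie in $gQ_i \ne Q_i$, so they are still not connected. In either case, $e_1$ and $e_2$ are isolated in $q$.

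Once isolation is established, applying Lemma~\ref{lem:omega} to $q$ yields a finite set $\Omega \subseteq \bigcup_{k=1}^n Q_k$ and a constant $K \in \N$, depending only on $\{Q_k\}$ and $X_1$, such that $h_1^{-1}h_2 \in \gen{\Omega}$ and $|h_1^{-1}h_2|_\Omega \le K\,\ell(q) \le 2K(\e + 1)$. Setting $M := \max\{|\omega|_{X_1} \mid \omega \in \Omega\} < \infty$, I conclude that $\d_{X_1}(h_1, h_2) = |h_1^{-1}h_2|_{X_1} \le M \cdot |h_1^{-1}h_2|_\Omega \le 2MK(\e + 1)$, so taking $R := 2MK(\e + 1) + 1$ completes the proof.
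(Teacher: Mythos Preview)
Your proof is correct, modulo a harmless notational slip (the inverses on $p_1$ and $p_2$ are swapped in the concatenation). The cycle argument with Lemma~\ref{lem:omega} works exactly as you describe: the key observation that $e_1$ and $e_2$ are isolated is sound in both the $i\neq j$ case and the $i=j$, $g\notin Q_i$ case.

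However, the paper takes a different and more direct route. Rather than invoking Lemma~\ref{lem:omega}, the paper goes straight to Definition~\ref{def:he}: it translates by $h_1^{-1}$ to get a path of length at most $2\e+1$ from $1$ to $h_1^{-1}h_2$ in $\ga(G,X_1\sqcup\mathcal{Q})$ that avoids $E\ga_i$ (the middle edge being the single $Q_j$-edge, which is not in $E\ga_i$ precisely because $i\neq j$ or $g\notin Q_i$). This bounds the relative metric $\widehat{\d}_i(1,h_1^{-1}h_2)\le 2\e+1$, and local finiteness of $(Q_i,\widehat{\d}_i)$ then forces $h_1^{-1}h_2$ into a finite set. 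Your approach trades this direct appeal to the definition for the derived tool Lemma~\ref{lem:omega}; what you gain is a more explicit bound $R=2MK(\e+1)+1$, whereas the paper's argument only yields $R$ as a maximum over an unspecified finite set. Both are short and natural; the paper's version is closer to first principles, yours is closer in spirit to how the rest of Section~\ref{sec:adding} operates.
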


\begin{proof} This is a straightforward consequence of Definition \ref{def:he}. Indeed, this definition implies that for any $\e>0$  the set $I_l:=\{h \in Q_i \mid \widehat {\rm d}_l (1,h) \leq 1+2\e\}$ is finite,
where $\widehat {\rm d}_l$ is the relative metric on $Q_l$ induced from the Cayley graph $\ga(G,X_1 \sqcup \mathcal{Q})$ with $\mathcal{Q}:=\bigsqcup_{k=1}^n Q_k$.
Hence we can let $R:=\max \{|h|_{X_1} \mid h \in I_l, l\in \{1,\dots,n\}\}+1$.

Now, for any distinct $h_1,h_2\in Q_i \cap \nei{g Q_j}{\e}$ there are $f_1,f_2 \in G$ such that $|f_k|_{{X_1}} \leq \e$ and $h_k \in gQ_j f_k$ for $k=1,2$.
Therefore $h_1^{-1}h_2 = Q_i \cap f_1^{-1} t f_2$ for some $t \in Q_j$.
For $k=1,2$, let $U_k$ be a word over ${X_1}$ of length at most $\e$ representing $f_k$ in $G$, and let $T \in \cH$ be the letter representing $t$.
Consider the path $p$ in $\ga(G,X_1 \sqcup \mathcal{Q})$ starting at $1$ and labelled by the word $U_1^{-1}TU_2$. If $i \neq j$ or $g \notin Q_i$ then the path $p$
has no edges from $ E\ga_i$ (indeed, if $i=j$ but $g \notin Q_i$ then $f_1^{-1} \notin Q_i$) and $p_+=h_1^{-1}h_2$. Since
$\ell(p)\leq 2\e+1$ we see that $h_1^{-1}h_2 \in I_i$, which implies that $\d_{X_1}(h_1,h_2)=|h_1^{-1}h_2|_{X_1}<R$ as required.
\end{proof}


\subsection{Sufficiency}

\begin{nt} \label{hyp:add} Throughout this section  $G$ is a group,  $X_1$ is a generating set of $G$  such that  $\ga=\ga(G,X_1)$ is $\delta$-hyperbolic, for some $\delta \ge 0$,
and $Q_1,\dots, Q_n$ is a finite collection  of subgroups of $G$. We use $\d$ to denote the graph metric on $\ga$.

We will consider the following properties for the family $Q_1,\dots, Q_n$.

\begin{enumerate}

\item[(Q1)] ({\it $\{Q_i,\}_{i=1}^n$ is geometrically separated}) For every $\e>0$ there exists $R=R(\e)$ such that for $g\in G$ if $$\diam (Q_i \cap \nei{gQ_j}{\e}) \ge R$$
then $i=j$ and $g\in  Q_i$ (here the distances are measured with respect to the graph metric $\d$ on $\ga$).

\item[{\rm (Q2)}] ({\it Finite generation}) For each $i$,  there exists a finite subset $Y_i\subset G$   generating  $Q_i$.
\item[{\rm (Q3)}] ({\it Quasi-isometrically embedded}) There exist $\mu\geq 1$ and $c\geq 0$ such that for any $i \in \{1,\dots,n\}$ and all $h\in Q_i$ one has $|h|_{Y_i}\leq \mu |h|_{X_1}+c$.

\end{enumerate}
\end{nt}

\begin{rem}\label{rem:he->Q1-3}
Under the previous notation, suppose that  $\{Q_1,\dots, Q_n\}$ is hyperbolically embedded in $(G, X_1)$, then
by Lemma \ref{lem:he->geomsep} the family $\{Q_1,\dots, Q_n\}$ satisfies (Q1), and by Lemma~\ref{lem:he->fgqi} $\{Q_1,\dots, Q_n\}$ satisfies (Q2) and (Q3) with $\mu:=\max\{\mu_i \mid i=1,\dots,n\}$
and $c:=\max\{c_i \mid i=1,\dots,n\}$.
\end{rem}
The goal of this section is prove the converse result. Namely, if $Q_1,\dots, Q_n$ satisfy (Q1)--(Q3) then $\{Q_1,\dots, Q_n\}\h (G,X_1)$.

The next lemma says that if a pair of geodesics, labelled by elements of some $Q_i$'s, have sufficiently long $k$-connected subpaths, then the endpoints of these geodesics belong to the same coset of
$Q_i$.

\begin{lem}\label{lem:Q2}
In the Notation \ref{hyp:add}, suppose that $Q_1,\dots, Q_n$ satisfy {\rm (Q1)--(Q3).}

For every $k > 0$ there exists $A=A(k )>0$ such that the
following holds. Suppose that $p$, $q$ are geodesic paths in $\ga$ such that $\Lab( p )$ (resp. $\Lab (q )$)
represents an element of $Q_i$ (resp. $Q_j$), and that there exist two $k$-close subpaths  $u$ and $v$ of $p$ and $q$.
If $\max\{ \ell( u ), \ell( v )\}\geq A$, then $i=j$ and  
the label of an arbitrary path connecting   any endpoint of $p$ with any endpoint of $q$ represents an element of $Q_i$.
\end{lem}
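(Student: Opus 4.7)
The plan is to reduce the lemma to an application of the geometric separation property (Q1) to the pair of left cosets $p_- Q_i$ and $q_- Q_j$.

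First, I would establish that each subgroup $Q_i$, and hence each of its left cosets in $G$, is $\sigma$-quasi-convex in $\ga$ for some uniform constant $\sigma$. This is a consequence of (Q2) and (Q3) together with Lemma \ref{lem:proper}, applied to the left-multiplication action of $Q_i$ on $\ga$ with basepoint $1 \in G$: condition (Q3) provides precisely part~(2) of that lemma with $s=1$. Since $\Lab(p) \in Q_i$ means $p_-, p_+ \in p_- Q_i$ and $p$ is a geodesic between them, one obtains $p \subseteq \nei{p_- Q_i}{\sigma}$; similarly $q \subseteq \nei{q_- Q_j}{\sigma}$.

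Next, I would exploit that $u$ and $v$ are themselves geodesics with endpoint pairs at distance at most $k$ (after possibly reversing $v$, which we tacitly do). Standard thinness of geodesic quadrilaterals in a $\delta$-hyperbolic space furnishes a constant $C=C(k,\delta)$ with $u \subseteq \nei{v}{C}$. Combined with the previous step, every vertex of $u$ lies within $C+\sigma$ of $q_- Q_j$ while remaining within $\sigma$ of $p_- Q_i$. Picking points $y_\pm \in p_- Q_i$ at distance at most $\sigma$ from the endpoints $u_\pm$ then places $y_-, y_+$ in $p_- Q_i \cap \nei{q_- Q_j}{\e}$, where $\e := 2\sigma + C$, with $\d(y_-, y_+) \geq \ell(u) - 2\sigma$. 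Setting $A := R(\e) + 2\sigma + 1$, where $R(\e)$ is the constant from (Q1), and assuming without loss of generality that $\ell(u) \geq A$, we get $\d(y_-, y_+) > R(\e)$; translating by $p_-^{-1}$ (an isometry of $\ga$) and applying (Q1) then forces $i=j$ and $p_-^{-1} q_- \in Q_i$, i.e., $q_- \in p_- Q_i$.

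Finally, since $i=j$ gives $q_+ \in q_- Q_j = q_- Q_i \subseteq p_- Q_i$, all four endpoints $p_\pm$ and $q_\pm$ lie in the same coset $p_- Q_i$, so for any two of them $z_1,z_2$ one has $z_1^{-1}z_2 \in Q_i$, and the label of any path in $\ga$ from $z_1$ to $z_2$ represents this element of $Q_i$. The symmetric case $\ell(v) \geq A$ is handled by interchanging the roles of $(p,u)$ and $(q,v)$. I do not expect a conceptual obstacle: the main technical point is to ensure that the constants $\sigma$, $C$, $\e$, $R(\e)$ and $A$ depend uniformly on $k$ and on the ambient finite data $(\{Q_1,\dots,Q_n\}, X_1, \delta, \mu, c)$, which is automatic from the finiteness of the family.
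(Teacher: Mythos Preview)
Your proof is correct and follows essentially the same approach as the paper: both use quasi-convexity of the $Q_i$ (via Lemma~\ref{lem:proper}) to approximate the endpoints of $u$ by points of $p_-Q_i$ lying in a bounded neighborhood of $q_-Q_j$, and then invoke (Q1). The only minor difference is that the paper uses the $k$-closeness of the \emph{endpoints} of $u$ and $v$ directly (setting $\e=k+2\sigma$), rather than passing through thin quadrilaterals to obtain $u\subseteq\nei{v}{C}$; so hyperbolicity of $\ga$ is not actually needed at that particular step.
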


\begin{proof}
By Lemma \ref{lem:proper} and Remark \ref{rem:qc_orbits}, there exists $\sigma \ge 0$ such that $Q_i$ and $Q_j$ (considered as subsets of $\ga$) are $\sigma$-quasi-convex.
Let $\e \coloneq k+2\sigma$ and $R=R(\e)$ be given by (Q1); set $A:=R+2\sigma$.

Without loss of generality we can assume that $u$ and $v$ are $k$-connected, $p_-=1$ and $\ell(u)\ge A$. Denote $g \coloneq q_-$.
Then there are vertices $a_-,a_+ \in Q_i$ and $b_-,b_+ \in g Q_j$ such that $\d(a_-,u_-)\le \sigma$, $\d(a_+,u_+)\le \sigma$,
$\d(b_-,v_-)\le \sigma$ and $\d(b_+,v_+)\le \sigma$. Consequently, $\d(a_-,b_-),\d(a_+,b_+) \le k+2\sigma$, thus
$a_-,a_+ \in Q_i \cap \nei{gQ_j}{\e}$. One can also note that $$\d(a_-,a_+) \ge \ell(u)-\d(u_-,a_-)-\d(u_+,a_+)\ge A-2\sigma \ge R,$$
hence $i=j$ and $g \in Q_i$ by the assumption (Q1), finishing the proof of the lemma.
\end{proof}

\begin{nt}\label{not:phi}
In the Notation \ref{hyp:add}, suppose that $Q_1,\dots, Q_n$ satisfy (Q1)--(Q3) and $\bigcup_{i=1}^n Y_i \subseteq X_1$.
Let $\mathcal{Q}= \bigcup_{i=1}^n (Q_i\setminus \{1\})$ and $\ga' = \Gamma(G,X_1\sqcup \mathcal{Q})$.
We will denote by  $\d'$ the graph metric on $\ga'$.

For every $i=1,\dots, n$ and every $h\in Q_i$,  fix a shortest word $V(h)$ over $Y_i^{\pm 1}$
representing $h$. Since $\ga$ and $\ga'$ have the same vertex set $G$, we can define a map $$\phi\colon \{\text{paths in }\Gamma' \}\to \{\text{paths in }\Gamma\} $$ just by replacing each edge $e$,
labelled by some $h\in Q_i$  in $\ga'$, with the (unique) path $\phi(e)$, labelled by $V(h)$ and having the same initial and terminal vertices as $e$ in $\ga$. In particular, $\phi(p)_-=p_-$ and $\phi(p)_+=p_+$
for any path $p$ in $\ga'$.
\end{nt}

Our goal now is to show that if $p$ is a geodesic in $\Gamma'$ then $\phi(p)$ is a quasi-geodesic in $\Gamma$. In order to do so we will use the following lemma that deals with the situation when
for some path $p$ the path $\phi(p)$ is ``far'' from being a geodesic. The conclusion is that in this case $p$ backtracks, i.e., it goes through a coset of some $Q_i$ twice.

\begin{lem}\label{lem:paths}
In the Notation \ref{not:phi}, there exists $D\ge 1$ such that for all $r \ge 1$, $k\geq 0$ and  every path $p$ in $\ga'$ satisfying $\ell( p )\leq r$ and $\d(p_-,p_+)\leq k$, if
$\ell(\phi( p )) \geq D(r+k)$  then there exist $l\in \{1,\dots, n\}$ and two distinct
edges $e_1$ and $e_2$ of $p$ that are labelled by letters from $Q_l\setminus\{1\}$, so that
all the endpoints of $e_1$ and $e_2$ belong to the same left coset of $Q_l$ in $G$.
\end{lem}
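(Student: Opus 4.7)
The plan is to build a geodesic polygon in $\ga$ whose long sides are shadows of the $\mathcal{Q}$-edges of $p$ and then apply the Ols Lemma (Lemma \ref{Ols}) to find two such long sides that travel close together, after which Lemma \ref{lem:Q2} upgrades this to the coset-membership conclusion.

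More precisely, first I would enumerate the edges of $p$ as $e_1, \dots, e_m$ (with $m \le r$), let $\alpha$ denote the number of $\mathcal{Q}$-edges among them, and for each $\mathcal{Q}$-edge $e_t$ (labelled by some $h_t \in Q_{l_t}\setminus\{1\}$) replace it by a geodesic $[e_t^-, e_t^+]$ in $\ga$ of length $|h_t|_{X_1}$. Keep the $X_1$-edges of $p$ as is, and close up with a geodesic $s$ from $p_+$ to $p_-$ of length at most $k$. Call the $\alpha$ geodesics coming from $\mathcal{Q}$-edges the set $S$ of "$Q$-sides" (of total length $\sigma$) and the remaining $X_1$-edges of $p$ together with $s$ the set $T$ (of total length $\rho \le r + k$); the number of sides satisfies $n \le r+1 \le 2(r+k)$ since $r \ge 1$.

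Next I would use (Q3) to transfer the lower bound on $\ell(\phi(p))$ into a lower bound on $\sigma$. For each $\mathcal{Q}$-edge $e_t$ one has $\ell(\phi(e_t)) = |h_t|_{Y_{l_t}} \le \mu\, |h_t|_{X_1} + c$, while every $X_1$-edge $e_t$ contributes $\ell(\phi(e_t)) = 1$. Summing gives $\ell(\phi(p)) \le \mu\sigma + (c+1)r$, so the assumption $\ell(\phi(p)) \ge D(r+k)$ forces $\sigma \ge (D - c - 1)(r+k)/\mu$. Setting $a \coloneq \max\{30\delta,\, A(13\delta)\}$, where $A(\cdot)$ is the constant from Lemma~\ref{lem:Q2}, and choosing $D$ of order $10^3 a \mu + c$ (say $D := 2000 a\mu + c + 2$) we can guarantee both $\sigma \ge 10^3 a \cdot n$ and $\sigma \ge 10^3 \rho$. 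For this choice of $D$ the degenerate cases $\alpha \in \{0,1\}$ do not arise, since then the bound $\ell(\phi(p)) \le \mu\sigma + (c+1)r$ combined with $\sigma \le k + r$ (triangle inequality in $\ga$) contradicts the hypothesis.

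Lemma \ref{Ols} then yields two distinct $Q$-sides, coming from $\mathcal{Q}$-edges $e_i$ and $e_j$ with labels $h_i \in Q_{l_i}$ and $h_j \in Q_{l_j}$, together with $13\delta$-close subsegments $u, v$ of these two sides satisfying $\min\{\ell(u), \ell(v)\} > a \ge A(13\delta)$. Since each $Q$-side is a geodesic whose label represents an element of $Q_{l_t}$, Lemma \ref{lem:Q2} (applied with $k = 13\delta$) forces $l_i = l_j =: l$ and forces the label of every path joining an endpoint of $[e_i^-, e_i^+]$ to an endpoint of $[e_j^-, e_j^+]$ in $\ga$ to represent an element of $Q_l$. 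In particular $(e_i^\pm)^{-1}\,e_j^\pm \in Q_l$, so all four vertices $e_i^-, e_i^+, e_j^-, e_j^+$ lie in a common left coset of $Q_l$, giving the conclusion.

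The main obstacle I anticipate is bookkeeping: ensuring that $D$ is chosen large enough to simultaneously beat the $10^3 a n$ and $10^3 \rho$ thresholds of Lemma \ref{Ols}, and also large enough to exclude the degenerate cases with fewer than two $\mathcal{Q}$-edges, while keeping the dependencies among $\mu, c, \delta$ and the geometric separation constant $A(13\delta)$ transparent. A minor but important point is to work with geodesics in $\ga$ between $e_t^-$ and $e_t^+$ rather than with $\phi(e_t)$ itself (which is only a $(\mu,c)$-quasi-geodesic), so that Lemma \ref{lem:Q2} applies verbatim.
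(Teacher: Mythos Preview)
Your proposal is correct and follows essentially the same route as the paper's proof: build a geodesic polygon in $\ga$ whose $S$-sides are geodesics shadowing the $\mathcal{Q}$-edges of $p$ and whose $T$-sides comprise the $X_1$-edges together with a closing geodesic, use (Q3) to convert the lower bound on $\ell(\phi(p))$ into a lower bound on $\sigma$, then apply Lemma~\ref{Ols} followed by Lemma~\ref{lem:Q2}. The only cosmetic differences are that the paper groups consecutive $X_1$-edges into geodesic words $U_i$ (yielding a $(2m+2)$-gon rather than your $(r+1)$-gon) and takes $a=A(13\delta)+30\delta$ instead of your $\max\{30\delta,A(13\delta)\}$; neither change affects the argument.
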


\begin{proof}
Let $A=A(13\delta)$ be the constant provided by Lemma \ref{lem:Q2}, where $\delta$ is the hyperbolicity constant of $\ga$, and set $a:=A+30\delta$.

We now fix $r\geq 0$, $k\geq 0$ and a path  $p$ in $\ga'$ such that  $\ell( p )\leq r$ and $\d(p_-,p_+)\leq k$.

Suppose that $\Lab (p )\equiv W_0 h_1 W_1 h_2\dots W_{m-1} h_mW_m$ where each $h_i\in \cQ$ and each $W_i$ is a (possibly empty) word in $X_1$, in particular $m \le r$.
We have that $$\Lab (\phi ( p )) \equiv W_0 V(h_1) W_1 V(h_2)\dots W_{m-1} V(h_m)W_m.$$ Let $U_i$ be a shortest word over $X_1$ representing the same
element of $G$ as $W_i$, $i=0,\dots,m$, and let $V_j$ be the shortest word over $X_1$ representing the element $h_j$, $j=1,\dots,m$.
Consider the path $q$ in $\ga$ with the same endpoints as $\phi(p)$ and with
$\Lab(q)\equiv U_0 V_1 U_1 V_2\dots U_{m-1} V_mU_m$.
Recall that $\sum_{i=0}^m \|W_i\| \le \ell(p)\le r$, hence,  in view of (Q3), we have
\begin{equation}\label{eq:sum-V_j}
\sum_{j=1}^m \|V_j\| \ge \frac{1}{\mu} \sum_{j=1}^m\|V(h_j)\|-mc \ge   \frac{1}{\mu} \ell(\phi(p))-(c+1)r .
\end{equation}

Observe that $ q $ can be written as the concatenation of geodesic paths  $t_0, s_1,\dots, t_{m-1}, s_m,t_{m} $ in $\ga$, where $\Lab (t_i)\equiv  U_i$ and $\Lab (s_j) \equiv V_j$.
Let  $t_{m+1}$ be a geodesic path in $\Gamma$ from $q_+$ to $q_{-}$; then $\ell(t_{m+1})=\d(q_-,q_+)=\d(p_-,p_+)\le k$.
The polygon $\mathcal{P}:=t_0s_1 \dots s_m t_m t_{m+1}$ is a geodesic $(2m+2)$-gon in $\ga$
and we partition its sides into two subsets $S:=\{s_1,\dots,s_m\}$ and $T:=\{t_0,\dots,t_{m+1}\}$.

By the assumptions we have that $\rho:=\sum_{i=0}^{m+1} \ell (t_i) \leq \ell(p)+d(p_-,p_+) \leq r+k$, and
$$\sigma:=\sum_{j=1}^m \ell(s_j)=\sum_{j=1}^m \|V_j\| \ge \frac{1}{\mu} \ell(\phi(p))-(c+1)r$$
by \eqref{eq:sum-V_j}.
Choose a constant $D\ge 1$ (independent of $r$ and $k$) so that $$\frac{D}{\mu}(r+k)-(c+1)r \ge \max\{ 10^3 a (2r+2),10^3 (r+k)\},$$ and suppose that $\ell(\phi( p )) \geq D(r+k)$.
Since $2m+2 \le 2r+2$, all the conditions of Lemma \ref{Ols} will then be satisfied, hence there will be $i,j \in \{1,\dots,m\}$, $i \neq j$, and two
$13\delta$-{close} subsegments $u$ of $s_i$ and $v$ of $s_j$ such that $\min \{ \ell (u ) ,\ell (v )\}>a\geq A$.
It remains to apply Lemma \ref{lem:Q2}, claiming that there is $l\in \{1,\dots, n\}$ such that $h_i,h_j \in Q_l$ and all the endpoints of the corresponding edges
of $\Gamma'$ belong to the same left coset of $Q_l$.
\end{proof}

We are now ready to show that $\phi(p)$ is a geodesic when $p$ is a geodesic. The key observation, which allows us to use the previous lemma, is that a geodesic does not backtrack. (We also apply this to subpaths of $p$.)

\begin{lem}\label{lem:geod-qgeod}
In the Notation \ref{not:phi}, let  $D\ge 1$ be the constant provided by Lemma \ref{lem:paths}.
Then for any geodesic path $p$ in $\ga'$, the path $\phi(p)$ is $(2D,5D)$-quasi-geodesic in $\ga$.
\end{lem}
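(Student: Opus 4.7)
The plan is to apply Lemma~\ref{lem:paths} in contrapositive form to subpaths of $p$, exploiting the fact that a $\ga'$-geodesic cannot exhibit the backtracking pattern appearing in that lemma's conclusion.

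First, I claim that no subpath $p'$ of $p$ can contain two distinct edges $e_1, e_2$, each labelled by an element of some $Q_l \setminus \{1\}$, with all four of their endpoints in a single left coset $gQ_l$ of $G$. Indeed, if $e_1$ precedes $e_2$ in $p'$, then $h := (e_1)_-^{-1}(e_2)_+$ lies in $Q_l$; when $h \neq 1$, the subpath of $p'$ from $(e_1)_-$ to $(e_2)_+$ (of length at least $2$) could be replaced in $\ga'$ by the single edge labelled $h$, while when $h = 1$ this subpath is a loop that could simply be excised. Either operation shortens $p$ in $\ga'$, contradicting its geodesicity. Thus the backtracking alternative in Lemma~\ref{lem:paths} fails for $p'$, yielding $\ell(\phi(p')) < D(\ell(p') + \d(p'_-, p'_+))$. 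Since the $\ga'$-geodesic $p'$ satisfies $\ell(p') = \d'(p'_-, p'_+) \le \d(p'_-, p'_+)$ (the inclusion of edges $\ga \subseteq \ga'$ gives $\d' \le \d$), this tightens to
\[
\ell(\phi(p')) \;<\; 2D \cdot \d(p'_-, p'_+).
\]

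Now let $q$ be an arbitrary subpath of $\phi(p)$ with vertex endpoints $v_1, v_2 \in G$, and let $p' = f_j f_{j+1} \cdots f_k$ be the minimal subpath of $p$ such that $q \subseteq \phi(p')$. Write $\phi(p') = q_1 \cdot q \cdot q_2$, where $q_1 \subseteq \phi(f_j)$ is a subpath from $(f_j)_-$ to $v_1$ of length $a$ and $q_2 \subseteq \phi(f_k)$ is a subpath from $v_2$ to $(f_k)_+$ of length $b$. The triangle inequality gives $\d(p'_-, p'_+) \le a + \d(v_1, v_2) + b$, and combined with the displayed estimate this yields
\[
\ell(q) \;=\; \ell(\phi(p')) - a - b \;<\; 2D \cdot \d(v_1, v_2) + (2D - 1)(a + b).
\]
When $v_1, v_2$ are themselves vertices of $p$ one has $a = b = 0$, giving $\ell(q) < 2D \cdot \d(v_1, v_2)$, well within the asserted $(2D, 5D)$-quasi-geodesic bound.

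The remaining case is when $v_1$ or $v_2$ is an intermediate vertex of some $\phi(f_i)$, which can only occur when $f_i$ is $\mathcal{Q}$-labelled. Here the fragments $q_1, q_2$ are subpaths of $\phi(f_j), \phi(f_k)$ whose labels are subwords of the shortest words $V(h_j), V(h_k)$; since a subword of a shortest word is itself shortest, property (Q3) gives $\ell(q_i) \le \mu \cdot \d((q_i)_-, (q_i)_+) + c$. Recalling from the proof of Lemma~\ref{lem:paths} that $D$ was taken much larger than $\mu$ and $c$, this local quasi-geodesy of $\phi(f_j)$ and $\phi(f_k)$ should allow the $(2D - 1)(a + b)$ error term to be absorbed into the $5D$ additive constant. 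The main obstacle is obtaining effective control on $a + b$ in terms of $\d(v_1, v_2)$: I expect this to follow from the minimality of $p'$ combined with an ``edge-splitting'' observation --- namely, that an intermediate vertex $v_1$ is connected to $(f_j)_\pm$ by single $\ga'$-edges (since $(f_j)_-^{-1} v_1$ lies in the relevant $Q_l$), so inserting such edges into $p$ and re-invoking its geodesicity constrains the fragment lengths $a, b$ from growing uncontrollably relative to $\d(v_1, v_2)$.
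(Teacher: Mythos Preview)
Your argument for subpaths of $\phi(p)$ whose endpoints are already vertices of $p$ is correct and matches the paper's approach. The gap is precisely where you flag it: when $v_1$ (or $v_2$) lies in the interior of some $\phi(f_j)$, the inequality $\ell(q) < 2D\,\d(v_1,v_2) + (2D-1)(a+b)$ cannot be upgraded to the desired bound, because $a+b$ is not controlled by $\d(v_1,v_2)$ or by any constant. For instance, if $v_1,v_2$ both lie deep inside $\phi(f_j),\phi(f_k)$ and these are long, then $a,b$ are large while $\d(v_1,v_2)$ need not be; the local $(\mu,c)$-quasi-geodesy from (Q3) bounds $a$ in terms of $\d((f_j)_-,v_1)$, not in terms of $\d(v_1,v_2)$, so it does not help here.

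Your edge-splitting intuition is exactly the right repair, but it should be used differently than you suggest: rather than trying to bound $a+b$, use the split edges to build a new path in $\ga'$ to which Lemma~\ref{lem:paths} applies directly. Concretely, since the prefix/suffix of $V(h_j)$ ending/starting at $v_1$ is itself a $Y_i$-geodesic word, it represents some $h'\in Q_i$ with $\|V(h')\|$ equal to the length of that fragment. Replacing the partial segments at both ends of $q$ by single $\mathcal Q$-letters $h',h''$ yields a path $\tilde q$ in $\ga'$ from $v_1$ to $v_2$ with $\ell(\phi(\tilde q))=\ell(q)$ exactly. Removing at most these two boundary edges leaves a genuine subpath $s$ of $p$, so $\ell(\tilde q)\le \ell(s)+2=\d'(s_-,s_+)+2\le \d(v_1,v_2)+4$. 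The non-backtracking hypothesis of Lemma~\ref{lem:paths} holds for $\tilde q$ because each of its $\mathcal Q$-components is either a component of $p$ or shares a vertex with one, so two connected components in $\tilde q$ would force two connected components in the geodesic $p$. Applying Lemma~\ref{lem:paths} to $\tilde q$ (with $r=\ell(\tilde q)+1$ and $k=\d(v_1,v_2)$) then gives $\ell(q)=\ell(\phi(\tilde q))<D(r+k)\le 2D\,\d(v_1,v_2)+5D$, which is the paper's argument.
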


\begin{proof} As before, suppose that  $\Lab (p )\equiv W_0 h_1 W_1 h_2\dots W_{m-1} h_mW_m$ where each $h_i\in \cQ$ and each $W_i$ is a (possibly empty) word in $X_1$.
Consider any (combinatorial) subpath $p'$ of $\phi(p)$ in $\ga$. Let us assume that $\Lab(p')$ starts with a suffix $V'(h_\alpha)$ of $V(h_\alpha)$ and ends with a prefix $W'_\beta$  of $W_\beta$
for some $\alpha, \beta \in \{1,\dots,m\}$, $\alpha \le \beta$, as the other cases can be treated similarly. Thus
$\Lab (p') \equiv V'(h_\alpha) W_\alpha V(h_{\alpha+1})\dots V(h_\beta)W'_\beta$. Since $V'(h_\alpha)$ is a geodesic word over $Y_i^{\pm 1}$ for some $i \in \{1,\dots,n\}$,
it represents an element $h'\in Q_i$ and  $\|V'(h_\alpha)\|=\|V(h')\|$. Let $q$ be the path in $\ga'$ with $q_-=p'_-$ and
$\Lab(q) \equiv h' W_\alpha h_{\alpha+1}\dots h_\beta W'_\beta$. Then $\Lab(\phi(q))\equiv V(h') W_\alpha V(h_{\alpha+1})\dots V(h_\beta)W'_\beta$, which implies that
$q_+=\phi(q)_+=p'_+$ and $\ell(\phi(q))=\ell(p')$.

Let $s$ be the subpath of $q$ with $\Lab(s) \equiv W_\alpha h_{\alpha+1}\dots h_\beta W'_\beta$. Then $s$ is geodesic in $\ga'$,
as it is also a subpath of $p$, $\ell(s) \ge \ell(q)-2$, and the endpoints of $s$ lie at distance at most $1$ from the corresponding endpoints of $q$ in $\ga'$.

Set $r:=\ell(q)+1$ and $k:=\d(q_-,q_+)$. Then $r \le k+5 $ because
$$k=\d(q_-,q_+)\ge \d'(q_-,q_+) \ge \d'(s_-,s_+)-2=\ell(s)-2 \ge \ell(q)-4= r-5.$$

Since $p$ is geodesic in $\ga'$, all $\cQ$-components of $p$ consist of single edges and no two components of $p$ are connected. The latter
also holds for $q$ since any component of $q$ is connected to a component of $p$. Therefore Lemma \ref{lem:paths} implies that $\ell(\phi(q))<D(r+k)$.
Consequently,
$$\ell(p')=\ell(\phi(q)) <  D(2k+5) =2D \d(q_-,q_+)+5D=2D\d(p'_-,p'_+)+5D,$$
which shows that $\phi(p)$ is $(2D,5D)$-quasi-geodesic in $\ga$.
\end{proof}

The following is the main result of this section. It generalizes  \cite[Theorem 1.5]{O06}.
\begin{thm}\label{thm:addQ}
Suppose that $G$ is a group, $\Hl$  is a collection of subgroups of $G$ and $X$ is a relative generating set of $G$ with respect to $\Hl$, such that  $\Hl \h (G,X)$.
Set $X_1:=X \sqcup \cH$.

A family $\{Q_i\}_{i=1}^n$  of subgroups of $G$ satisfies {\rm (Q1)--(Q3)} if and only if the family $\Hl \sqcup \{ Q_i\}_{i=1}^n$ is hyperbolically embedded in $(G,X)$.
  \end{thm}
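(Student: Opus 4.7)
The forward direction is immediate from the material already in this section. If $\{H_\lambda\}_{\lambda \in \Lambda} \sqcup \{Q_i\}_{i=1}^n \h (G,X)$, then Remark \ref{rem:subset-hyp_emb} gives $\{Q_i\}_{i=1}^n \h (G, X \cup \bigcup_\lambda H_\lambda)$, which by Lemma \ref{lem:changegenset} is equivalent to $\{Q_i\}_{i=1}^n \h (G, X_1)$. Now properties (Q2) and (Q3) follow from Lemma \ref{lem:he->fgqi} and property (Q1) from Lemma \ref{lem:he->geomsep}, exactly as recorded in Remark \ref{rem:he->Q1-3}.

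For the backward direction, assume (Q1)--(Q3). By enlarging $X_1$ by the finite set $\bigcup_{i=1}^n Y_i$ --- a modification that preserves the original hyperbolic embedding by Lemma \ref{lem:changegenset} --- we may place ourselves in the setting of Notation \ref{not:phi} where the map $\phi$ is defined. It remains to verify that $\ga' = \Gamma(G, X_1 \sqcup \cQ)$ is hyperbolic and that each relative metric $\widehat{\d}_\lambda$ on $H_\lambda$ and $\widehat{\d}_i$ on $Q_i$, computed in $\ga'$, is locally finite. For hyperbolicity, given a geodesic triangle $p_1 p_2 p_3$ in $\ga'$, Lemma \ref{lem:geod-qgeod} shows that each $\phi(p_i)$ is a $(2D, 5D)$-quasi-geodesic in the $\delta$-hyperbolic space $\ga = \Gamma(G, X_1)$. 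Lemma \ref{qg} and the thinness of geodesic triangles in $\ga$ provide, for each vertex $x$ on $p_1$, a point $z$ on $\phi(p_2) \cup \phi(p_3)$ with $\d_\ga(x, z) \le \kappa$ for a constant $\kappa$ depending only on $\delta$ and $D$. Since $\d_{\ga'} \le \d_\ga$ on $G$, and every vertex of $\phi(p_i)$ lies at $\ga'$-distance at most $1$ from a vertex of $p_i$ (any internal vertex of an expanded subpath $V(h_j)$ equals $u \cdot y_1 \cdots y_k$ for a nontrivial prefix of a geodesic word in $Y_{l_j}$, so it differs from the starting vertex $u$ of that expansion by a non-identity element of $Q_{l_j}$), we conclude that $x$ is at $\d_{\ga'}$-distance $\le \kappa + 1$ from some vertex on $p_2 \cup p_3$, establishing thinness in $\ga'$.

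The main obstacle is local finiteness of the new relative metrics, handled via Lemma \ref{lem:paths}. For $g \in Q_i$ with $\widehat{\d}_i(1, g) \le R$, let $p$ be a shortest path in $\ga' \setminus E\ga_i$ of length $\le R$ realizing this. Minimality forces $p$ to have no pair of connected $Q_l$-components: any such pair could be replaced by a single $Q_l$-edge, strictly shortening $p$ without reintroducing $E\ga_i$-edges (when $l \ne i$; the case $l = i$ is vacuous since $p$ has no $Q_i$-components). Therefore the cycle $c = p \cdot e^{-1}$, where $e$ is the $Q_i$-edge from $1$ to $g$, also has no pair of connected $Q_l$-components ($e$ is its only $Q_i$-component). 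Applying Lemma \ref{lem:paths} with $r = R+1$, $k = 0$ yields $\ell(\phi(c)) < D(R+1)$, hence $|g|_{Y_i} = \|V(g)\| < D(R+1)$; since $Y_i$ is finite by (Q2), only finitely many such $g$ exist. The case $g \in H_\lambda$ with $\widehat{\d}_\lambda(1, g) \le R$ runs along the same lines: the same minimality argument produces a cycle $\phi(c)$ in $\ga$ of length $< D(R+1)$ containing the $H_\lambda$-edge $e^{-1}$ labelled by $g^{-1}$. If $e^{-1}$ is isolated among the $H_\lambda$-components of $\phi(c)$, Lemma \ref{lem:omega} applied to the original embedding $\{H_\lambda\} \h (G,X)$ bounds $|g|_\Omega$ in terms of $R$, giving finiteness. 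The principal technical hurdle is the non-isolated case, which can only arise when a $\phi$-expansion $V(h_j)$ contributes an $H_\lambda$-letter whose endpoints land in the coset $H_\lambda$. My plan is to handle it by iteratively collapsing each maximal set of pairwise connected $H_\lambda$-components of $\phi(c)$ (together with the intermediate subpaths) into a single $H_\lambda$-edge with composite label --- a length-non-increasing operation, since each collapse replaces at least two edges by one --- until either the cycle shrinks enough or the component carrying $g^{-1}$ in its accumulated label becomes isolated in a cycle of length $O(R)$, at which point Lemma \ref{lem:omega} applied to this residual cycle bounds $|g|_\Omega$ and completes the argument.
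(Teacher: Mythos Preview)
Your overall strategy matches the paper's, but two points need correction.

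First, in the $Q_i$ case your claim that ``the case $l=i$ is vacuous since $p$ has no $Q_i$-components'' is false: a shortest path $p$ in $\ga'\setminus E\ga_i$ may well contain $Q_i$-labelled edges, provided their endpoints lie in a coset $hQ_i$ with $h\notin Q_i$. The repair is easy (and is what the paper does): work with the cycle $c=p\cdot e^{-1}$ and observe that if Lemma~\ref{lem:paths} produces two connected $Q_l$-edges $e_1,e_2$ in a coset $hQ_l$, then either $h\in Q_i$ and $l=i$, forcing $e_1,e_2\in E\ga_i$ --- impossible since the only such edge in $c$ is $e$ --- or else the single replacement $Q_l$-edge lies outside $E\ga_i$, so $p$ can be shortened in $\ga'\setminus E\ga_i$, contradicting minimality.

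Second, and more importantly, the ``principal technical hurdle'' you identify in the $H_\lambda$ case does not exist. Recall that after enlarging $X$ we have $Y_i\subset X$ \emph{as letters of the alphabet} $X\sqcup\cH$; hence every edge in a $\phi$-expansion $V(h_j)$ is an $X$-edge, never an $H_\lambda$-edge. Thus the $H_\lambda$-labelled edges of $\phi(c)$ are exactly the $H_\lambda$-labelled edges of $p$ together with $e^{-1}$. Any $H_\lambda$-edge of $p$ has both endpoints outside $H_\lambda$ (otherwise it would lie in $E\ga_\lambda$), whereas $e^{-1}$ has both endpoints in $H_\lambda$; so $e^{-1}$ is a single-edge $H_\lambda$-component and is automatically isolated in $\phi(c)$. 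Your appeal to Lemma~\ref{lem:omega} then goes through directly and the iterative collapsing scheme is unnecessary. The paper handles this case even more simply: it notes that $\phi(p)$ itself is a path in $\ga\setminus E\ga_\lambda$, so $\widehat{\d}_\lambda(1,g)\le \ell(\phi(p))<D(R+1)$, and invokes the already-known local finiteness of $(H_\lambda,\widehat{\d}_\lambda)$ without touching Lemma~\ref{lem:omega}.
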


\begin{proof}
The necessity is given by Remark \ref{rem:he->Q1-3}, so we only have to show that if  $\{Q_i\}_{i=1}^n$   satisfies (Q1)--(Q3)  then $\Hl \cup \{ Q_i\}_{i=1}^n \h(G,X)$.

Since the set $\bigcup_{i=1}^n Y_i$ is finite, without loss of generality we can suppose that $\bigcup_{i=1}^n Y_i \subset X\subset X_1$ (see Lemma \ref{lem:changegenset}).
Using Notation \ref{not:phi}, let $D\ge 1$ be the constant provided by Lemma~\ref{lem:paths}.

Take any $i\in \{1,\dots, n\}$ and $\lambda \in  \Lambda$. We will denote by $\ga_i$ the Cayley graph $\ga(Q_i, Q_i\setminus\{1\})$ and by $\ga_\lambda$ the Cayley graph $\ga(H_\lambda, H_\lambda\setminus\{1\})$.
The set of edges of $\ga_i$ and $\ga_\lambda$ will be denoted $E\ga_i$ and $E\ga_\lambda$ respectively.
By $\widehat{\d}_\lambda$ and $\widehat{\d'}_\lambda$ we denote the metrics on $H_\lambda$ induced by graph metric on $\ga \setminus E\ga_\lambda$ and $\ga' \setminus E\ga_\lambda$, respectively.
The metric $\widehat{\d'}_i$ on $Q_i$ is defined similarly.

We now break the proof in three claims.

{\bf Claim 1:} For every $i=1,\dots, n$  the metric space $(Q_i,\widehat{\d'}_i)$  is locally finite.

Let $a\in Q_i\setminus\{1\}$ and let $p_1$ be a shortest path from $1$ to $a$ in $\Gamma' \setminus E\Gamma_i$.
Let $e$ be the edge of $\ga_i$ from $(p_1)_-=1$ to $(p_1)_+=a$. Define $p$ to be the cycle in $\ga'$ obtained by concatenating $p_1$ with $e$.
Suppose that $\ell( \phi ( p ) ) \geq  D \ell( p  )=D(\ell(p_1)+1)$.
Then, by Lemma \ref{lem:paths}, there are $l \in \{1,\dots,n\}$ and two distinct edges $e_1$ and $e_2$ of $p$, labelled
by some letters from $Q_l\setminus\{1\}$, such that all endpoints of these edges belong to the same left coset $gQ_l$.

Note that if $l=i$ then $g \notin Q_i$, as otherwise both $e_1$ and $e_2$ would have belonged to  $E\ga_i$, but the only edge of $p$ from $E\ga_i$ is $e$.
In particular,  $e_1 \neq e$ and $e_2 \neq e$.
It follows that the subsegment of $p_1$ starting with $e_1$ and ending with $e_2$ can be substituted by a single edge $e'$, labelled by a letter from $Q_l$, so that the resulting path $p_1'$
still lies in $\ga' \setminus E\ga_i$, connects $1$ with $a$ and $\ell(p_1')<\ell(p_1)$, which contradicts the choice of $p_1$. Therefore
$$\d(1,a) \le\ell( \phi ( p_1 ) ) \le\ell( \phi ( p ) ) <  D\ell( p  )=D(\ell(p_1)+1)=D \widehat{\d'}_i(1,a)+D.$$

By (Q2) and (Q3), for each $R$ there are only finitely many elements in $Q_i$ of $X_1$-length at most $DR+D$. This completes the proof of Claim 1.

{\bf  Claim 2:} For each $\lambda \in \Lambda$ the metric space $(H_\lambda, \widehat{\d'}_\lambda)$ is locally finite.

Recall that by hypothesis $(H_\lambda, \widehat{\d}_\lambda)$ is locally finite.
Arguing by contradiction, suppose that for some $r \ge 1$, there exist infinitely many $h\in H_\lambda$ such that $\widehat{\d'}_\lambda (1,h)\le r$.

Since $(H_\lambda, \widehat{\d}_\lambda)$ is locally finite, there exists $h_0\in H_\lambda$ such that $\widehat{\d'}_\lambda (1,h_0) \le r$ and
$\widehat{\d}_\lambda (1,h_0)> D(r +1)$. Let $p$ be a shortest path  in $\ga' \setminus E\ga_{\lambda}$ from $1$ to $h_0$, with $\ell( p ) \le r$.
Notice that, by construction, $\phi( p )$ is a path in $\ga \setminus E\ga_\lambda$.
Since ${\d}(1,h_0)={\d}(p_-,p_+) =1$, the inequality $\widehat{\d}_\lambda (1,h_0)> D(r +1)$ implies that $\ell (\phi ( p ) )> D(r+\d(p_-,p_+))$.
Hence, we can use Lemma \ref{lem:paths} to argue as above that the path $p$ can be shortened, yielding the required contradiction.

{\bf Claim 3:} The graph $\Gamma'$ is $\delta'$-hyperbolic.

Consider any geodesic triangle $\Delta=p_1 p_2 p_3$ in $\Gamma'$ and vertex $v \in p_1$.
By Lemma \ref{lem:geod-qgeod}, the triangle $\phi(\Delta):=\phi(p_1) \phi(p_2) \phi(p_3)$ is $(2D,5D)$-quasi-geodesic in $\ga$.
Let $\varkappa=\varkappa(\delta,2D,5D)$ be the constant from Lemma \ref{qg}.

Note that $v$ is also a vertex of $\phi(p_1)$, and any vertex $u \in \phi(p_i)$, regarded as an element of $G$ (and thus as a vertex of $\ga'$), lies within $\d'$-distance $1$ of a vertex of $p_i$ in $\ga'$,
$i=1,2,3$. Now, since the graph $\ga$ is $\delta$-hyperbolic, there is a vertex $u \in \phi(p_2) \cup \phi(p_3)$ such that $\d(v,u) \le \delta+2\varkappa$.
Observe that $\d'(v,u) \le \d(v,u)$ by definition, hence there is a vertex $w \in p_2 \cup p_3$ such that
$$\d'(v,w) \le \d'(v,u)+1 \le \delta+2\varkappa+1.$$ Thus the graph $\ga'$ is $\delta'$-hyperbolic, for $\delta':=\delta+2\varkappa+1$.

Claims 1--3 imply that the family of subgroups $\Hl \sqcup \{ Q_i\}_{i=1}^n$ is hyperbolically embedded in $G$, and so the theorem is proved.
\end{proof}

The following corollary gives and alternative proof of \cite[Theorem 4.42]{DGO} when the action of $G$ on $\cS$ is cobounded. During the work on this paper the authors learned that this corollary was
independently proved by Hull in \cite[Thm. 3.16]{Hull}.
See also \cite[Thm. 6.4]{Sisto} for other equivalent conditions.

\begin{cor}\label{cor:add_Q} Let $G$ be a group acting by isometries  on a hyperbolic space $({\cS},\d)$. Suppose that this action is cobounded and $\{Q_i\}_{i=1}^n$ is a finite family of subgroups of $G$.
Fix any $s \in \cS$. Then the following are equivalent.
\begin{enumerate}
\item[\rm{(a)}] The family $\{Q_i\}_{i=1}^n$ satisfies the conditions:
\begin{enumerate}
\item[\rm {(i)}] $Q_i\circ s$ is quasi-convex and the induced action of $Q_i$ on $\cS$ is metrically proper, $i=1,\dots,n$;
\item[\rm{(ii)}]  for every $\e > 0$ there exists $R$ such that for $g \in G$ if $\diam (Q_i\circ s\cap \nei{gQ_j\circ s}{\e}) >R$ then $i=j$ and $g\in Q_i$.
\end{enumerate}
\item[\rm{(b)}]  The family $\{Q_i\}_{i=1}^n$ is hyperbolically embedded in $(G,X_1)$, where $X_1$ is a generating set of $G$ provided by Lemma \ref{lem:svarc-milnor}.
\end{enumerate}
\end{cor}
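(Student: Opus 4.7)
The plan is to reduce the corollary to Theorem~\ref{thm:addQ} applied with the empty family of hyperbolically embedded subgroups (i.e.\ $\Lambda=\emptyset$, so $\cH=\emptyset$ and the relative generating set $X$ is a genuine generating set of $G$). Take $X:=X_1$, the generating set provided by Lemma~\ref{lem:svarc-milnor}. By that lemma, the orbit map $\iota\colon (G,\d_{X_1})\to (\cS,\d)$, $g\mapsto g\circ s$, is a quasi-isometry, so $\ga(G,X_1)$ is hyperbolic. Thus the empty family is (vacuously) hyperbolically embedded in $(G,X_1)$, and Theorem~\ref{thm:addQ} reduces the equivalence (a)$\Leftrightarrow$(b) to showing that conditions (i) and (ii) of (a) for the action on $\cS$ are equivalent to conditions (Q1)--(Q3) of Notation~\ref{hyp:add} for the family $\{Q_i\}_{i=1}^n$ with respect to $X_1$.

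For the equivalence (i) $\Leftrightarrow$ (Q2)+(Q3), I would apply Lemma~\ref{lem:proper} twice. Viewing $G$ as acting on $\ga(G,X_1)$ by left multiplication and taking the base vertex $1\in G$, condition (2) of Lemma~\ref{lem:proper} is precisely (Q2)+(Q3) for the given $Q_i$, which is equivalent to $Q_i\cdot 1$ being quasi-convex in $\ga(G,X_1)$ and the action of $Q_i$ on $\ga(G,X_1)$ being metrically proper. Since $\iota$ is a quasi-isometry that sends the orbit $Q_i\cdot 1$ to $Q_i\circ s$ and is $G$-equivariant, quasi-convexity and metric properness transfer in both directions. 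Hence this condition is equivalent to (i).

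For the equivalence (ii) $\Leftrightarrow$ (Q1), the point is again that $\iota$ is a $G$-equivariant quasi-isometry, so there exist constants $\lambda\ge 1$, $c\ge 0$ such that for all $g,h\in G$, $\tfrac{1}{\lambda}\d_{X_1}(g,h)-c \le \d(g\circ s,h\circ s)\le \lambda\d_{X_1}(g,h)+c$. In particular $\iota$ maps a left coset $gQ_j$ onto its orbit $gQ_j\circ s$ with bounded distortion, so for any $\e>0$ there is $\e'>0$ (and vice-versa) such that
\[
\iota\bigl(Q_i\cap \nei{gQ_j}{\e}\bigr)\subseteq (Q_i\circ s)\cap \nei{gQ_j\circ s}{\e'}, \quad \iota^{-1}\bigl((Q_i\circ s)\cap \nei{gQ_j\circ s}{\e}\bigr)\subseteq Q_i\cap \nei{gQ_j}{\e'}.
\]
Combined with the fact that $\iota$ distorts diameters by only a multiplicative and additive constant, this yields that (Q1) with respect to $\d_{X_1}$ is equivalent to (ii) with respect to $\d$.

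The main technical point to verify carefully is the second bullet: if stabilizers of $s$ are non-trivial, then $\iota$ is not injective and $\iota^{-1}(Q_j\circ s)$ contains cosets other than $gQ_j$. However, any such extra element lies within bounded $\d_{X_1}$-distance of $gQ_j$ (by the quasi-isometry), so up to enlarging $\e'$ everything fits into an $\e'$-neighborhood of a single coset, and the diameter comparison goes through. Once these translations are set up, combining them with Theorem~\ref{thm:addQ} gives the desired equivalence.
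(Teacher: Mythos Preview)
Your approach is essentially the same as the paper's: reduce to Theorem~\ref{thm:addQ} with the empty family $\Lambda=\emptyset$, use the \v{S}varc--Milnor quasi-isometry $\iota$ to see that $\ga(G,X_1)$ is hyperbolic, invoke Lemma~\ref{lem:proper} for (i)$\Leftrightarrow$(Q2)+(Q3), and transfer (ii)$\Leftrightarrow$(Q1) along $\iota$. The paper simply asserts that ``(ii) is a restatement of (Q1)'' via the quasi-isometry, so your more explicit discussion of how $\e$-neighborhoods and diameters transform under $\iota$ (and your remark on non-injectivity of the orbit map) is just a careful expansion of what the paper leaves implicit; note, incidentally, that since both sides of (Q1) and (ii) are phrased directly in terms of the images $Q_i$, $gQ_j$ (resp.\ $Q_i\circ s$, $gQ_j\circ s$), the non-injectivity of $\iota$ does not actually cause any trouble beyond absorbing a fixed additive constant into $\e'$.
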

\begin{proof}
By the {\v S}varc-Milnor lemma (Lemma \ref{lem:svarc-milnor}),  the map $g\mapsto g\circ s$ is a $G$-equivariant quasi-isometry between
$G$, endowed with the metric from $\Gamma(G,X_1)$, and $({\cS},\d)$. In particular, $\Gamma(G,X_1)$ is hyperbolic and $\emptyset \h (G,X_1)$.

If we show that (i)-(ii) are equivalent to (Q1)--(Q3), the result will follow from Theorem~\ref{thm:addQ}.
Indeed, by Lemma \ref{lem:proper} and as $\Gamma(G,X_1)$ is quasi-isometric to $({\cS},\d)$, the family $\{Q_i\}_{i=1}^n$ satisfies (i)
if and only if it satisfies (Q2) and (Q3). On the other hand, (ii) is a restatement of (Q1).
\end{proof}

As a corollary we obtain the following statement (cf. \cite[Cor. 4.14]{Hull}):

\begin{cor}\label{cor:wpd-he}
Let $G$ be a group acting coboundedly on a hyperbolic space $({\cS},d)$ and let $X_1$ be a generating set of $G$ given by Lemma \ref{lem:svarc-milnor}.
If $h_1, \dots , h_k $ is a collection of pairwise non-commensurable \tame elements with respect to the action of $G$ on $\cS$ then $\{ E_G(h_1), \ldots , E_G(h_k)\} \hookrightarrow_{h} (G,X_1)$.
\end{cor}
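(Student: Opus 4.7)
The plan is to apply Corollary~\ref{cor:add_Q} to the family $\{E_G(h_1), \ldots, E_G(h_k)\}$ with basepoint $s$. It suffices to verify the two conditions (i) and (ii) of that corollary.

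Condition (i) is a direct consequence of Lemmas~\ref{lem:proper} and~\ref{lem:elemrem}. Fix $r \in \{1, \ldots, k\}$. Loxodromicity of $h_r$ together with Lemma~\ref{lem:proper} gives that $\gen{h_r} \circ s$ is quasi-convex and that the induced action of $\gen{h_r}$ on $\cS$ is metrically proper. By Lemma~\ref{lem:elemrem}, $\gen{h_r}$ has finite index in $E_G(h_r)$; hence $E_G(h_r) \circ s$ lies at finite Hausdorff distance from $\gen{h_r}\circ s$, so Remark~\ref{rem:qc_orbits} yields quasi-convexity, while metric properness of the $E_G(h_r)$-action is immediate from the finite-index containment.

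The real work is in (ii), which I would prove by contradiction. If (ii) fails for some $\e > 0$, one extracts a subsequence with a constant pair $(i, j)$ and elements $g_n \in G$, with either $i \ne j$ or $g_n \notin E_G(h_i)$, for which $\diam_\cS\bigl(E_G(h_i) \circ s \cap \nei{g_n E_G(h_j) \circ s}{\e}\bigr) \to \infty$. After left-translating and a pigeonhole over the finitely many coset representatives of $\gen{h_i}$ in $E_G(h_i)$ (and similarly for $E_G(h_j)$), one obtains unbounded integers $m_n$ and nonzero integers $l_n$ such that the points $s$ and $h_i^{m_n}\circ s$ lie within $O(\e)$ of $g_n \circ s$ and $g_n h_j^{l_n}\circ s$ respectively. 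Fix positive integers $M$ and $L$ for which $h_i^M$ and $h_j^L$ have approximately equal translation lengths in $\cS$ (possible by rational approximation), and set $\eta_n := h_i^{-M} g_n h_j^{L} g_n^{-1}$. The fellow-travelling of the quasi-geodesics $\gen{h_i}\circ s$ and $g_n \gen{h_j}\circ s$ along the entire segment $[s, h_i^{m_n}\circ s]$ (Lemma~\ref{qg} applied after passing to bounded-distance geodesic approximants) shows that $\eta_n$ moves each $h_i^K \circ s$ with $0 \le K \le m_n - M$ by a uniformly bounded amount. For any prescribed $N$, this places $\eta_n$, for all large $n$, in the set $\{g \in G : \d(s, g \circ s) \le \e_0,\ \d(h_i^N\circ s, g h_i^N\circ s) \le \e_0\}$, which is finite by the WPD property of $h_i^N$ (Remark~\ref{rem:powers}).

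The pigeonhole principle then yields a single element $\eta \in G$ with $\eta = \eta_n$ for infinitely many $n$. Letting $N$ vary, $\eta$ has uniformly bounded displacement on every initial segment of the $h_i$-quasi-axis, hence fixes $h_i^{+\infty}$ on $\partial\cS$; a standard application of Lemma~\ref{lem:elemrem}(c) to $\eta h_i^n \eta^{-1}$ then forces $\eta \in E_G(h_i)$. Consequently $g_n h_j^{L} g_n^{-1} = h_i^{M} \eta \in E_G(h_i)$ for some $n$, so that a nonzero power $g_n h_j^{pL} g_n^{-1} = h_i^q$ lies in $\gen{h_i}$. By Remark~\ref{rem:elem_inter} and the non-commensurability hypothesis, $i = j$, and then Lemma~\ref{lem:elemrem}(c) yields $g_n \in E_G(h_i)$, contradicting the choice of $g_n$. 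The main obstacle throughout is the quantitative upgrade from $\eta_n$ nearly fixing only the endpoints of the fellow-travel segment to nearly fixing all intermediate axis points, and the subsequent deduction that the pigeonhole-extracted $\eta$ lies in $E_G(h_i)$; both are standard in the WPD framework but must be tracked with care.
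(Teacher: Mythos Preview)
Your overall strategy matches the paper's: both proofs verify conditions (i) and (ii) of Corollary~\ref{cor:add_Q}, and your treatment of (i) is essentially identical to (indeed, slightly more detailed than) the paper's.

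The difference lies in (ii). The paper does not argue geometric separation directly; it simply invokes \cite[Thm.~6.8]{DGO}, where this is already established for families $\{E_G(h_i)\}$ with $h_i$ pairwise non-commensurable loxodromic WPD. Your proposal instead reproves this from scratch via a fellow-travelling and pigeonhole argument. That route is viable and is close in spirit to what one finds in \cite{DGO}, but two steps in your sketch deserve more care. First, when you introduce $\eta_n = h_i^{-M} g_n h_j^{L} g_n^{-1}$ and claim bounded displacement along the $h_i$-axis, you are implicitly using that the two fellow-travelling quasi-axes are coherently oriented; you should pass to a subsequence to fix the orientation (replacing $L$ by $-L$ if necessary), and then bound $|b-a-L|$ via the stable translation lengths as you indicate. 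Second, the deduction ``$\eta$ fixes $h_i^{+\infty}$, hence Lemma~\ref{lem:elemrem}(c) gives $\eta\in E_G(h_i)$'' is not quite right as stated: fixing one boundary point does not by itself yield a commensurating relation. The clean way is to observe that all conjugates $h_i^{-n}\eta h_i^{n}$, $n\ge 0$, move both $s$ and $h_i^{N}\circ s$ by at most $\e_0$, so WPD forces two of them to coincide, whence $\eta$ centralises a power of $h_i$ and Lemma~\ref{lem:elemrem}(b) applies. With these two points tightened, your argument goes through; the paper's approach simply outsources this work to \cite{DGO}.
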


\begin{proof}
Fix $i\in\{1,\dots n\}$. Since $h_i$ is loxodromic, there is $s\in {\cS}$ such that the orbit $\gen{h_i}\circ s$ is  quasi-convex and the action of $\gen{h_i}$ on $\cS$ is metrically proper.
Thus the condition (a).(i)  from Corollary \ref{cor:add_Q} is satisfied.

The geometric separability condition (a).(ii) from Corollary \ref{cor:add_Q}  for the family $\{ E(h_i)\}_{i=1}^n$ is proved in \cite[Thm.~6.8]{DGO}.
Hence,  $\{E(h_i)\}_{i=1}^n \h (G,X_1)$ by Corollary  \ref{cor:add_Q}.
\end{proof}

One can note that Corollary \ref{cor:wpd-he} resembles  \cite[Theorem 6.8]{DGO}. The main difference is that we require the action to be cobounded, but because of this we are able to
specify that the relative generating set $X_1$ comes naturally from the action of $G$ on $\cS$ (this will be important for the rest of the paper).

Similarly, Theorem \ref{thm:addQ} can also be used to obtain the following strengthening of Corollary~\ref{cor:wpd-he}:

\begin{cor} Let $G$ be a group with a family of subgroups $\Hl$ and  a relative generating set $X$ (with respect to $\Hl$), such that  $\Hl \h (G,X)$.
Set $\mathcal{H}:=\bigsqcup_{\lambda \in \Lambda} (H_\lambda \setminus \{1\})$.
If $h_1, \dots , h_k $ is a collection of pairwise non-commensurable \tame elements with respect to the action of $G$ on $\ga(G,X \sqcup \mathcal{H})$ then
 the family $\{H_\lambda\}_{\lambda \in \Lambda} \sqcup \{ E_G(h_i)\}_{i=1}^k$ is hyperbolically embedded in $(G,X)$.
\end{cor}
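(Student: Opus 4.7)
The plan is to apply Theorem~\ref{thm:addQ} to the family $\{Q_i\}_{i=1}^n \coloneq \{E_G(h_i)\}_{i=1}^k$; thus it suffices to verify conditions (Q1)--(Q3) of Notation~\ref{hyp:add} with respect to the ambient generating set $X_1 \coloneq X \sqcup \cH$. The hypothesis $\Hl \h (G,X)$ guarantees that $\ga(G, X_1)$ is hyperbolic, and the natural $G$-action on this Cayley graph is vertex-transitive, hence in particular cobounded.

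For (Q2) and (Q3), since each $h_i$ is loxodromic WPD with respect to the $G$-action on $\ga(G, X_1)$, Lemma~\ref{lem:elemrem} provides a maximal elementary subgroup $E_G(h_i)$, which is virtually cyclic (containing $\gen{h_i}$ as a finite-index subgroup), and hence finitely generated by some finite set $Y_i$; this gives (Q2). For (Q3), the loxodromic property means the orbit map $n \mapsto h_i^n$ is a quasi-isometric embedding $\Z \to \ga(G, X_1)$; since $[E_G(h_i):\gen{h_i}] < \infty$, the subset $E_G(h_i) \subseteq G$ stays at finite Hausdorff distance from $\gen{h_i}$, so it is quasi-convex in $\ga(G, X_1)$ and the induced action of $E_G(h_i)$ is metrically proper. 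Lemma~\ref{lem:proper} then yields the word-length inequality required by (Q3).

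Geometric separability (Q1) for the family $\{E_G(h_i)\}_{i=1}^k$ inside $\ga(G, X_1)$ is exactly the content of \cite[Theorem~6.8]{DGO}, whose hypotheses---pairwise non-commensurable loxodromic WPD elements under an isometric group action on a hyperbolic space---are met here. The non-commensurability is essential, since otherwise $E_G(h_i) = E_G(h_j)$ for some $i \neq j$ by Remark~\ref{rem:elem_inter}. With (Q1)--(Q3) established, Theorem~\ref{thm:addQ} immediately yields the desired hyperbolic embedding $\Hl \sqcup \{E_G(h_i)\}_{i=1}^k \h (G,X)$. There is no serious obstacle here: this is a straightforward adaptation of Corollary~\ref{cor:wpd-he} to the relative setting, and the only subtlety worth flagging is that Theorem~\ref{thm:addQ} is designed precisely to enlarge an existing hyperbolically embedded family while preserving the original relative generating set $X$.
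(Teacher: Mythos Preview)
Your proof is correct and follows exactly the approach the paper indicates: the paper simply states that this corollary is obtained from Theorem~\ref{thm:addQ} ``similarly'' to Corollary~\ref{cor:wpd-he}, and your argument spells out precisely that verification of (Q1)--(Q3) for $\{E_G(h_i)\}_{i=1}^k$ with respect to $X_1=X\sqcup\cH$, citing \cite[Thm.~6.8]{DGO} for geometric separability. The only remark is that your mention of coboundedness is unnecessary here, since you apply Theorem~\ref{thm:addQ} directly rather than going through Corollary~\ref{cor:add_Q}.
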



\section{Combinatorics of paths}
This section provides some technical geometric tools which will later be used to develop the theory of acylindrically hyperbolic groups similarly to the theory of relatively hyperbolic groups.
Let $G$ be a group, let $\Hl$ be a family of subgroups of $G$ and let $X$ be a symmetric relative generating set of $G$ with respect to $\Hl$. As usual, we set
$\mathcal{H}:=\sqcup_{\lambda \in \Lambda} (H_\lambda \setminus \{1\} )$.

\begin{defn} \label{defn:w} 
Suppose that  $m \in \mathbb N$  and
$\mathcal{O}=\{\Omega_\lambda\}_{\lambda \in \Lambda}$ is a collection of finite subsets of $G$. Define $\cW (\mathcal{O},m,X,\cH)$ to be the set of all
words $W$ over the alphabet $X \cup \mathcal H$ that have the following form:
$$W \equiv x_0h_1x_1h_2 \dots x_{l-1} h_l x_{l},$$ where $ l \in \mathbb Z$, $l \ge -1$ (if $l=-1$ then $W$ is the empty word;
if $l=0$ then $W \equiv x_0$),
$h_i$ and $x_i$ are considered as single letters and
\begin{itemize}
\item[(1)] for every $i=0,1,\dots,l$ either $x_i \in X$ or $x_i$ is the empty word,
and for each $i=1,2,\dots,l$, there exists $\lambda(i) \in \Lambda$ such that $h_i \in H_{\lambda(i)}$;
\item[(2)] if $\lambda(i)=\lambda(i+1)$ then $x_{i+1} \notin H_{\lambda(i)}$ for each $i=1,\dots,l-1$;
\item[(3)] $h_i \notin \{h \in \langle \Omega_{\lambda(i)} \rangle \mid |h|_{\Omega_{\lambda(i)}} \le m \}$, $i=1,\dots,l$.
\end{itemize}
Finally, let $\cW_0(\mathcal{O},m,X,\cH)$ be defined as the subset of all words $W \equiv h_1x_1h_2 \dots x_{l-1} h_l x_{l} \in \cW (\mathcal{O},m,X,\cH)$ such that
$l \ge 1$ and if $\lambda(l)=\lambda(1)$ then $x_l \notin H_{\lambda(1)}$.
Thus $\cW_0(\mathcal{O},m,X,\cH)$ can be thought of as the set of cyclically reduced words from $\cW(\mathcal{O},m,X,\cH)$.
\end{defn}

For the remainder of this section assume that $\Hl$ is hyperbolically embedded in  $(G,X)$.
Choose the collection of finite subsets $\mathcal{O}=\{\Omega_\lambda\}_{\lambda \in \Lambda}$ of $G$ (so that $\Omega_\lambda \subseteq H_\lambda$ for all $\lambda \in \Lambda$)
and the constant $K>0$ according to the claim of Lemma \ref{lem:omega}.

The following lemmas are taken from \cite[Section 6]{Minasyan_classes}, where they were established in the case when $G$ is hyperbolic relative to the family $\Hl$.
Their proofs  only use the combinatorial properties of the paths with labels from $\cW(\mathcal{O}, m, X, \cH)$, together
with the claim of \cite[Lemma~6.1]{Minasyan_classes}.  Using
Lemma \ref{lem:omega} instead of the latter, the proofs transfer verbatim to the more general settings of the present paper.

\begin{lem} \label{lem:no_back} Let $q$ be a path in the Cayley graph $\G$ with $\Lab(q) \in \cW (\mathcal{O},m,X,\cH)$ and $m \ge 5K$.
Then $q$ is without backtracking.
\end{lem}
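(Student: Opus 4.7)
The plan is to argue by contradiction: assume some component of $q$ is not isolated, and derive a bound incompatible with condition (3) of Definition~\ref{defn:w}. First I would verify that the components of $q$ are precisely the single edges $p_1,\dots,p_l$ labelled by $h_1,\dots,h_l$. Indeed, each letter $x_s$ either lies in $X$ (and is therefore not an $\mathcal H$-letter, which breaks any $H_\lambda$-subpath) or is empty; and when $\lambda(i)=\lambda(i+1)$, condition (2) guarantees that the $x$-letter between $h_i$ and $h_{i+1}$ is nonempty and represents an element of $G\setminus H_{\lambda(i)}$, so that $p_i$ and $p_{i+1}$ do not belong to a common maximal $H_{\lambda(i)}$-subpath.

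Next, suppose two components of $q$ are connected, and pick such a pair $p_i,p_j$ with $i<j$ and $j-i$ as small as possible; set $\lambda\coloneq\lambda(i)=\lambda(j)$. The case $j-i=1$ is immediate: connectedness of $p_i$ and $p_{i+1}$ would place $(p_i)_+$ and $(p_{i+1})_-$ in a common left coset of $H_\lambda$, forcing $x_i$ to represent an element of $H_\lambda$ and contradicting condition (2). Assume then $j-i\geq 2$. Since the endpoints of $p_i$ and $p_j$ lie in a common coset $gH_\lambda$, there is a (possibly trivial) $H_\lambda$-edge $e$ from $(p_j)_-$ to $(p_i)_+$. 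Concatenating the subpath of $q$ from $(p_i)_+$ to $(p_j)_-$ with $e$ produces a cycle $c$ in $\G$ of length at most $2(j-i)$, since each $x_s$ contributes at most one edge and each $h_s$ contributes exactly one.

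The subtle step, and in my view the main obstacle, is to show that the intermediate edges $p_{i+1},\dots,p_{j-1}$ form a system of isolated components in $c$. The minimality of $j-i$ immediately rules out any connection between two of them (that would yield a connected pair with smaller index difference). The same principle handles possible connections to $e$: if some $p_s$ with $i<s<j$ were connected to $e$, then $p_s$ would lie in the coset $gH_\lambda$, hence would be connected to $p_i$, and the pair $(p_i,p_s)$ would violate the minimality of $j-i$.

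Finally, applying Lemma~\ref{lem:omega} to the cycle $c$ with the isolated components $p_{i+1},\dots,p_{j-1}$ gives $h_s\in\langle\Omega\rangle$ for each $s$ and
\[
\sum_{s=i+1}^{j-1}|h_s|_\Omega \leq K\ell(c)\leq 2K(j-i).
\]
On the other hand, condition (3) forces $|h_s|_\Omega>m\geq 5K$ for every such $s$, so $5K(j-i-1)<2K(j-i)$, which simplifies to $3(j-i)<5$. This contradicts $j-i\geq 2$ and completes the proof.
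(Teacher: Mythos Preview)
Your argument is correct and is precisely the standard proof that the paper defers to \cite[Lemma~6.2]{Minasyan_classes}: pick a closest connected pair, close up the intervening subpath with a single $H_\lambda$-edge, observe by minimality that all intermediate $h$-edges are isolated in the resulting cycle, and apply Lemma~\ref{lem:omega} to force $5K(j-i-1)<2K(j-i)$. One small remark: in the paper's Definition~\ref{defn:w}, condition (2) is written with $x_{i+1}$ rather than the $x_i$ you (correctly) use for the letter between $h_i$ and $h_{i+1}$; this appears to be an indexing slip in the paper, and your reading is the intended one.
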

\begin{proof}
See the proof of \cite[Lemma 6.2]{Minasyan_classes}.
\end{proof}

\begin{lem}\label{lem:regul}
Let $o=rqr'q'$ be a cycle in the Cayley graph $\G$, such that $\Lab(q),\Lab(q')\in \cW(\mathcal{O}, m, X,\cH)$.
Suppose that $m\ge 7K$ and denote $C=\max\{ \ell ( r ),\ell( r' )\}$. Then
\begin{itemize}
\item[\rm (a)] if $C\le 1$ then no component of $q$ or $q'$ is isolated in $o$;
\item[\rm (b)] if $C\ge 2$ then each of $q$ and $q'$ can have at most $4C$ isolated components;
\item[\rm (c)] if $l$ is the number of components of $q$, then at least $(l-6C)$ of
components of $q$ are connected to components of $q'$ and are not connected to components of $r$ or $r'$; two distinct components of $q$ cannot be connected to the same
component of $q'$. Similarly for $q'$.
\end{itemize}
\end{lem}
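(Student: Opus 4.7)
All three parts rest on three ingredients: Lemma~\ref{lem:no_back} (applicable since $m \geq 7K \geq 5K$), which makes $q$ and $q'$ paths without backtracking; Lemma~\ref{lem:omega}, which bounds the total $\Omega$-length of the labels of any collection of isolated components of a cycle by $K$ times the length of the cycle; and condition~(3) of Definition~\ref{defn:w}, which forces every letter $h_i \in \cH$ appearing in $q$ or $q'$ to satisfy $|h_i|_\Omega > m \geq 7K$. The first thing I would verify is the second half of (c): if two distinct components $p_1,p_2$ of $q$ were both connected to the same component $p'$ of $q'$, then $p_1$ and $p_2$ would lie in the same left coset of some $H_\lambda$ and hence be connected to one another, contradicting the no-backtracking of $q$. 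The same pairing argument shows that each component of $r$ or $r'$ is connected to at most one component of $q$; therefore the number of components of $q$ connected to $r \cup r'$ is bounded by the number of $\cH$-components of $r\cup r'$, which is at most $\ell(r)+\ell(r') \leq 2C$.

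For parts (a) and (b), let $t$ denote the number of components of $q$ that are isolated in $o$, with labels $h_{i_1},\ldots,h_{i_t}$. Lemma~\ref{lem:omega} applied to $o$ yields $\sum_j |h_{i_j}|_\Omega \leq K\ell(o)$, and condition~(3) then gives $tm < K\ell(o)$. Since the naive bound on $\ell(o)$ involves the uncontrolled summands $\ell(q),\ell(q')$, the plan is to apply Lemma~\ref{lem:omega} instead to a \emph{shortcut cycle} $o^*$ obtained from $o$ by the following surgery: whenever a non-isolated component of $q$ or of $q'$ is connected to another component of $o$, excise the sub-arc of $o$ between them and reglue via the single $\cH$-edge implementing the connection. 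The isolated components $p_{i_1},\dots,p_{i_t}$ survive as isolated components of $o^*$, while $\ell(o^*)$ is controlled by $\ell(r)+\ell(r')+t$ plus a bounded number of shortcut edges (whose count is itself governed by $\ell(r)+\ell(r')$ via the pairing argument above). Plugging this into $tm < K\ell(o^*)$ and using $m \geq 7K$ should give $t \leq 4C$ for $C \geq 2$ (part (b)) and, via a tighter version of the same count, force $t=0$ for $C \leq 1$ (part (a)).

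The first statement of (c) then follows immediately: each component of $q$ is either isolated in $o$, or connected to a component of $r \cup r'$, or connected to a component of $q'$; by (b) the first type contributes at most $4C$, by the pairing argument the second type contributes at most $2C$, so at least $l-6C$ components of $q$ must connect to components of $q'$. The main obstacle in this plan is the careful construction of the shortcut cycle $o^*$ and the precise bookkeeping of $\ell(o^*)$ in parts (a) and (b): the naive application of Lemma~\ref{lem:omega} to $o$ itself is useless because $\ell(o)$ depends on $\ell(q)+\ell(q')$, and one must exploit the connection structure of the non-isolated components to reduce to a cycle whose length is a linear combination of $C$ and $t$ alone. Getting the exact constant $4$ in $t \leq 4C$, rather than a larger one, likely requires attention to how the excised arcs overlap the surviving isolated components.
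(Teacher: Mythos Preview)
Your proposal is correct and matches the approach of the referenced proof in \cite[Lemma~6.3]{Minasyan_classes}: the second half of (c) is exactly the transitivity-of-connection argument you give, part (c) follows from (a)/(b) plus the pairing bound of at most $2C$ components of $q$ connecting to $r\cup r'$, and parts (a)/(b) are proved by the shortcut-cycle surgery you describe, applying Lemma~\ref{lem:omega} to a reduced cycle whose length is linear in $C$ and $t$ rather than to $o$ itself. Your identification of the main obstacle---the careful bookkeeping in constructing $o^*$ so that the isolated components of $q$ remain isolated and the length estimate gives the exact constant $4$---is accurate; this is precisely where the work lies in the original argument.
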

\begin{proof}
See the proof of \cite[Lemma 6.3]{Minasyan_classes}.
\end{proof}

\begin{lem} \label{lem:conseq-reg} In the notations of Lemma \ref{lem:regul}, let $m \ge 7K$ and $C=\max\{ \ell(r),\ell(r')\}$.
For any positive integer $d$ there exists a constant $L=L(C,d) \in \N$ such that if $\ell(q)\ge L$ then
there are $d$ consecutive components $p_s,\dots,p_{s+d-1}$ of $q$ and
$p'_{s'},\dots,p'_{s'+d-1}$ of $q'^{-1}$, so that $p_{s+i}$ is connected to $p'_{s'+i}$ for each $i=0,\dots,d-1$.
\end{lem}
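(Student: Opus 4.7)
The plan is to reduce the lemma to a combinatorial counting argument after establishing a monotonicity property of the connection pattern between components of $q$ and $q'$. The central claim to establish first is an \emph{order-reversing} property: if $p_{i_1}, p_{i_2}$ are components of $q$ with $i_1 < i_2$ that are connected in the cycle $o$ to components $p'_{j_1}, p'_{j_2}$ of $q'$ respectively, then $j_1 > j_2$. Since $j_1 \ne j_2$ by Lemma~\ref{lem:regul}(c), one supposes for contradiction that $j_1 < j_2$ and forms a smaller cycle $\sigma$ from the $q$-subpath between $p_{i_1}$ and $p_{i_2}$, an $H_\lambda$-edge joining $p_{i_2}$ to $p'_{j_2}$, the $q'$-subpath between $p'_{j_1}$ and $p'_{j_2}$ traversed in reverse, and an $H_\lambda$-edge joining $p'_{j_1}$ to $p_{i_1}$. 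Sub-words of a word from $\cW(\Omega, m, X, \cH)$ remain in $\cW(\Omega, m, X, \cH)$, and reversal preserves this class, so the two long sides of $\sigma$ are still without backtracking by Lemma~\ref{lem:no_back}. Choosing the crossing pair with minimal $(i_2 - i_1) + (j_2 - j_1)$ and applying Lemma~\ref{lem:regul}(a) to $\sigma$ (which has $C_\sigma = 1$) forces every interior component of the two long sides to be connected to another; an enumeration of the possible connection partners contradicts either the minimality of the crossing, or the uniqueness of matching stated in Lemma~\ref{lem:regul}(c).

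Granted the order-reversing property, the remainder is a combinatorial counting argument. Because $\Lab(q)$ has the form $x_0 h_1 x_1 \cdots h_l x_l$ with each $x_i$ a single letter or empty, we have $\ell(q) \le 2l + 1$, so $q$ has $l \ge (\ell(q)-1)/2$ components. By Lemma~\ref{lem:regul}(c), the set $A$ of indices of components of $q$ that are connected to some component of $q'$ satisfies $|A| \ge l - 6C$, and the induced matching $\phi \colon A \to B \subseteq \{1,\dots,l'\}$ is an injection (equivalently a bijection onto its image), which by Step~1 is strictly order-reversing. Listing $A = \{i_1 < \cdots < i_{|A|}\}$ we get $\phi(i_1) > \cdots > \phi(i_{|A|})$.

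Now call $k \in \{1,\dots,|A|-1\}$ \emph{good} if both $i_{k+1} = i_k + 1$ and $\phi(i_k) = \phi(i_{k+1}) + 1$. The number of bad $k$ is bounded by the total number of gaps in $A$ and $B$, namely $(l-|A|) + (l'-|B|) \le 12C$. Hence the good $k$ break the interval $\{1,\dots,|A|-1\}$ into at most $12C+1$ runs of consecutive integers, and by pigeonhole the longest such run has length at least $(|A|-1-12C)/(12C+1)$. Choosing $L = L(C,d)$ large enough that $(\ell(q)-1)/2 - 6C \ge (12C+1)(d-1) + 12C + 1$ (for example $L \coloneq 2(12C+1)d + 24C + 3$), we guarantee a run of $d-1$ consecutive good indices starting at some $k_0$. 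This yields $d$ consecutive integers $i_{k_0}, i_{k_0}+1, \dots, i_{k_0}+d-1 \in A$, and their $\phi$-images $\phi(i_{k_0}) > \phi(i_{k_0}) - 1 > \cdots > \phi(i_{k_0}) - d + 1$ are likewise consecutive. Setting $s \coloneq i_{k_0}$ and $s' \coloneq l' - \phi(i_{k_0}) + 1$, and noting that reversing $q'$ turns the decreasing run of indices in $q'$ into an increasing run in $q'^{-1}$, gives exactly the consecutive matching asserted in the lemma.

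The principal obstacle is Step~1, the order-reversing property. Although it is geometrically natural (the two "chords" $p_{i_1} \leftrightarrow p'_{j_1}$ and $p_{i_2} \leftrightarrow p'_{j_2}$ in an orientable disk bounded by $o$ must be non-crossing), a rigorous proof requires the minimal-crossing reduction together with a careful case analysis, using Lemma~\ref{lem:regul}(a) applied to the subcycle $\sigma$ together with the uniqueness half of Lemma~\ref{lem:regul}(c), to rule out every possible place to which an interior component of $\sigma$ could be connected. Once this is in hand, the pigeonhole argument is routine.
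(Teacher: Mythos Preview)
Your approach is exactly the one used in the reference the paper cites (\cite[Lemma~6.5]{Minasyan_classes}): first establish that the partial bijection between components of $q$ and components of $q'$ given by Lemma~\ref{lem:regul}(c) is order-reversing (equivalently, order-preserving from $q$ to $q'^{-1}$), then run a pigeonhole count on the gaps. Your Step~2 counting is correct: the bound of at most $12C$ ``bad'' indices comes precisely from the $6C$ missing indices on each side, and the explicit value of $L$ you give works.

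There is one point in your sketch of Step~1 that needs more care. Your minimality argument assumes that the subcycle $\sigma$ has at least one interior component on one of its two long sides, so that Lemma~\ref{lem:regul}(a) produces a connection to analyse. In the base case of a \emph{minimal} crossing with $i_2=i_1+1$ and $j_2=j_1+1$, both $\tilde q$ and $\tilde q'$ are single $X$-letters (or empty) and carry no $\cH$-components; Lemma~\ref{lem:regul}(a) is then vacuous and neither ``minimality'' nor ``uniqueness'' is triggered. To close this case one has to argue directly with the length-$\le 4$ cycle $\sigma=\tilde q\, e_2\,(\tilde q')^{-1}e_1$: the bridge edges $e_1,e_2$ become (possibly trivial) $H_\lambda$-components of $\sigma$, and one checks via Lemma~\ref{lem:omega} together with condition~(2) in Definition~\ref{defn:w} (and the ``without backtracking'' property of $q$ and $q'$ from Lemma~\ref{lem:no_back}) that the resulting coset coincidences force $p_{i_1}$ and $p_{i_2}$, or $p'_{j_1}$ and $p'_{j_2}$, to lie in a common coset --- contradicting Lemma~\ref{lem:no_back}. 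This is the ``careful case analysis'' you allude to; it is genuinely needed, but once it is done the rest of your argument goes through unchanged.
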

\begin{proof}
See the proof of \cite[Lemma 6.5]{Minasyan_classes}.
\end{proof}

\begin{cor}\label{cor:wgeod}
If $m \ge 12K$ then every path $p$ in $\G$, with $\Lab(p)\in \cW(\mathcal{O}, m, X,\cH)$,  is $(4,1)$-quasi-geodesic.
\end{cor}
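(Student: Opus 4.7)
The plan is to fix an arbitrary subpath $q$ of $p$, close it into a cycle $o := qc$ by attaching a geodesic $c$ in $\ga$ from $q_+$ to $q_-$, and bound $\ell(q)$ via Lemma~\ref{lem:omega}. First I observe that $\Lab(q)$ is still in $\cW(\Omega, m, X, \cH)$, since conditions (1)--(3) of Definition~\ref{defn:w} are preserved under passing to a consecutive subword. Writing $\Lab(q) \equiv x_0 h_1 x_1 \dots h_l x_l$ in the canonical form, each $x_i$ is empty or a single edge, so $\ell(q) \le 2l + 1$ and it suffices to control $l$ linearly in $\ell(c) = \d(q_-, q_+)$.

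The central step is to count the components $h_1, \dots, h_l$ of $q$ inside $o$. Because $m \ge 12K \ge 5K$, Lemma~\ref{lem:no_back} applies and $q$ is without backtracking, so no two of the $h_i$'s are connected to one another in $o$. Consequently each $h_i$ is either isolated in $o$ or connected to some $H_\lambda$-component of $c$; moreover, two distinct $h_i, h_j$ cannot be connected to the same component of $c$, or else transitivity of ``lying in the same left coset of $H_\lambda$'' would force them to be connected to each other. This bounds the number of non-isolated $h_i$'s by the number of components of $c$, which is at most $\ell(c)$. For the set $I$ of indices of isolated components, Lemma~\ref{lem:omega} delivers $\sum_{i \in I} |h_i|_\Omega \le K \ell(o) = K(\ell(q) + \ell(c))$; on the other hand, condition (3) of Definition~\ref{defn:w} forces $|h_i|_\Omega \ge m+1$ for every such $i$. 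Combining with $m \ge 12K$ gives $|I| \le (\ell(q) + \ell(c))/12$, hence $l \le \ell(q)/12 + 13\ell(c)/12$. Feeding this back into $\ell(q) \le 2l+1$ and performing the routine algebra yields $\ell(q) \le 4\ell(c) + 1 = 4\d(q_-, q_+) + 1$, with the degenerate case $\ell(c) = 0$ (where $q$ is itself a cycle) forcing $\ell(q) \le 1$ from the same chain of inequalities.

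The only delicate point, and the one I would want to set up carefully, is the uniqueness of the pairing between non-isolated components of $q$ and components of $c$; this rests squarely on the no-backtracking conclusion of Lemma~\ref{lem:no_back}. Everything else is arithmetic book-keeping, and the hypothesis $m \ge 12K$ is calibrated precisely so that the quasi-geodesic constant comes out to $4$ rather than something worse.
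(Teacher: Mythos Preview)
Your proof is correct and follows essentially the same approach as the paper's: close up the (sub)path into a cycle with a geodesic, use Lemma~\ref{lem:no_back} to ensure no backtracking, pair the non-isolated $\cH$-components injectively with components of the geodesic, and bound the isolated ones via Lemma~\ref{lem:omega} together with condition~(3). The paper does the arithmetic slightly differently---it first uses $\ell(c)\le\ell(q)$ to bound $\ell(o)\le 2\ell(q)\le 4l+2$ and deduces $|I|\le l/2$ directly---whereas you keep $\ell(c)$ explicit until the end; both routes arrive at the same $(4,1)$ constants.
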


\noindent\emph{Proof.} Let $p$ be a path in $\G$ such that $\Lab(p)\in \cW(\mathcal{O},m, X,\cH)$. Then $p=r_0 p_1 r_1 \cdots p_l r_l$ where $p_1,\dots, p_l$ are the edges labelled by
elements of $\mathcal H$, and $r_0,\dots, r_l$ are either  trivial paths or edges labelled by elements of $X$.
Let $\lambda(1),\dots,\lambda(l) \in \Lambda$ be such that $h_i=\Lab(p_i) \in H_{\lambda(i)}$ for $i=1,\dots,l$.
Since any combinatorial subpath $p'$ of $p$ still satisfies $\Lab(p')\in \cW(\mathcal{O},m, X,\cH)$,
to prove the lemma it is enough to show that $\ell(p) \le 4 \ell(q)+1$, where $q$ is a geodesic path from $p_+$ to $p_-$ in $\G$. Note that $\ell(q) \le \ell(p) \le 2l+1$.

If $\ell(p) \leq 1$ the claim is obvious, so we assume that $\ell(p)\geq 2$, hence $l\geq 1$.
Note that by the definition of $p$, each $p_i$ is a component of $p$. Let $I \subseteq \{1,\dots,l\}$ be the set of all indices $i$ such that $p_i$ is not connected to a component of $q$ in $\G$.
Lemma \ref{lem:no_back} implies that for each $i \in I$ such $p_i$ is
an isolated component of the  cycle $pq$. Therefore, by Lemma \ref{lem:omega}, we have
$h_i \in \langle \Omega_{\lambda(i)} \rangle$ and $$ \sum\limits_{i\in I} |h_i|_{\Omega_{\lambda(i)}} \leq K\ell(pq)\leq K(4l +2).$$
However, since for $i\in I$, $|h_i|_{\Omega_{\lambda(i)}}> 12K$, we achieve  $|I|\leq K(4l+2)/(12 K)\leq  6Kl/(12K)=l/2$.

Let $I^c:=\{1,\dots,l\} \setminus I$. Then $|I^c| \ge l/2$, and for every $i \in I^c$ the component $p_i$ of $p$ is connected to a component of $q$,
and no two such components of $p$ can be connected to the same component of $q$ (as $p$ is without backtracking by Lemma \ref{lem:no_back}).
Therefore, $q$ has at least $|I^c|$ distinct components and hence
\[
 \pushQED{\qed}
\ell (q ) \ge |I^c| \ge  l/2 \ge \frac{1}{2} (\ell(p)/2- 1/2) = \frac{\ell(p)-1}{4}.\qedhere
 \popQED
\]

The main result of this section is the following.
\begin{thm}\label{thm:combtame}
Suppose that $\{H_\lambda \}_{\lambda \in \Lambda }\h (G,X)$.
Take  $\mathcal{O}=\{\Omega_\lambda\}_{\lambda \in \Lambda}$ and $K>0$ according to the claim of Lemma \ref{lem:omega}.
Let $W$ be any word from $ \cW_0(\mathcal{O}, 12K, X, \cH)$ and let $g\in G$ be the element represented by the word $W$.
Then $g$ is \tame with respect to the action of $G$ on $\G$.
\end{thm}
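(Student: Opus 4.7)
The plan is to establish loxodromicity and WPD in turn, using the combinatorial machinery of this section.

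\textbf{Loxodromic.} The first observation is that for every $n\ge 1$ the power $W^n$ still belongs to $\cW(\Omega,12K,X,\cH)$: conditions (1) and (3) of Definition \ref{defn:w} are stable under concatenation, and the only place where condition (2) could fail is at the splice between consecutive copies of $W$, which is ruled out precisely by the cyclically reduced clause of $\cW_0$ (``if $\lambda(l)=\lambda(1)$ then $x_l\notin H_{\lambda(1)}$''). Corollary \ref{cor:wgeod} then shows that the path in $\G$ labelled $W^n$ starting at $1$ is $(4,1)$-quasi-geodesic, so $\d_{X\cup\cH}(1,g^n)\ge (n\|W\|-1)/4$. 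Since $W^{-1}\in\cW_0$ as well, the same bound holds for $n<0$, and hence $n\mapsto g^n$ is a quasi-isometric embedding $\Z\hookrightarrow\G$, i.e., $g$ is loxodromic for the action on $\G$.

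\textbf{WPD setup.} Fix $\varepsilon>0$; by $G$-equivariance of the action, WPD only needs to be checked at the base vertex $1$. For each $f\in G$ satisfying $\d(1,f)\le\varepsilon$ and $\d(g^N,fg^N)\le\varepsilon$, assemble the geodesic quadrilateral $r'\,q\,r\,q'$ in $\G$, with $\Lab(q)\equiv W^N$ (from $f$ to $fg^N$), $\Lab(q')\equiv W^{-N}$ (from $g^N$ to $1$), and $r,r'$ geodesic sides of length at most $\varepsilon$. Both $\Lab(q)$ and $\Lab(q')$ lie in $\cW(\Omega,12K,X,\cH)$ by the observation above. Choosing $N$ so that $\|W\|N\ge L(\varepsilon,2)$, Lemma \ref{lem:conseq-reg} produces an index $i$ and positions $s,s'$ such that the consecutive components $p_{s+i},p_{s+i+1}$ of $q$ are connected, via $H_\lambda$-edges $\eta_i$ and $\eta_{i+1}$, to the consecutive components $p'_{s'+i},p'_{s'+i+1}$ of $q'^{-1}$. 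Lemma \ref{lem:regul}(b) further bounds $s,s'\le 4\varepsilon$, so only finitely many such $(s,s',i)$ occur.

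\textbf{Main step.} To conclude it suffices to bound $\eta_{i+1}$, as the relation $f=u_{s'+i+1}\,\eta_{i+1}\,u_{s+i+1}^{-1}$ (with $u_j$ the vertex along the $W^N$-path just before its $j$-th $\cH$-letter) then confines $f$ to a finite set. Consider the sub-cycle formed by $\eta_i$, the at most two letters of $q'^{-1}$ between $p'_{s'+i}$ and $p'_{s'+i+1}$, the reverse of $\eta_{i+1}$, and the at most two letters of $q$ between $p_{s+i}$ and $p_{s+i+1}$; its length is at most $6$. Its $\cH$-components are $\eta_i,\eta_{i+1}$ together with the two h-letters of $W^N$ at positions $s+i$ and $s'+i$, the latter three all lying in $H_{\lambda(s+i)}=H_{\lambda(s'+i)}$ (by the connection at position $s+i$). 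So $\eta_{i+1}\in H_{\lambda(s+i+1)}$ can only be connected to any of them if $\lambda(s+i+1)=\lambda(s+i)$; but then a direct check of left cosets forces either $x_{i(s+i)}$ or $x_{i(s'+i)}$ to lie in $H_{\lambda(s+i+1)}$, contradicting condition (2) of Definition \ref{defn:w} applied inside $W^N$. Hence $\eta_{i+1}$ is isolated, and Lemma \ref{lem:omega} yields $\eta_{i+1}\in\langle\Omega\rangle$ with $|\eta_{i+1}|_\Omega\le 6K$, so $\eta_{i+1}$ ranges over a finite set and the proof is complete. \emph{The main obstacle} is this isolation claim: one has to trace the $\lambda$-pattern across the two paths and verify that the exclusions built into $\cW$ and $\cW_0$ are exactly what is needed to prevent the four $\cH$-components of the small sub-cycle from sharing a common left coset of some $H_\lambda$.
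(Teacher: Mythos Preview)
Your loxodromic part is correct and matches the paper. The gap is in the WPD argument, at the sentence ``Lemma~\ref{lem:regul}(b) further bounds $s,s'\le 4\varepsilon$.'' That lemma only bounds the \emph{number} of isolated components of $q$ and of $q'$ by $4C$; it says nothing about \emph{where} the non-isolated ones sit. Likewise, Lemma~\ref{lem:conseq-reg} merely asserts the \emph{existence} of a run of consecutive connections, with no control on the starting positions $s,s'$. Without such control your formula $f=u_{s'+i+1}\,\eta_{i+1}\,u_{s+i+1}^{-1}$ does not finish the job: writing $u_j=g^{\lfloor (j-1)/l\rfloor}u_r$ by periodicity of $W^N$, you only obtain $f\in\langle g\rangle\cdot\{\text{finite set}\}\cdot\langle g\rangle$, which is infinite. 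So the ``main obstacle'' is not the isolation claim --- which you in fact handle correctly --- but the positional bound you pass over.

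The paper closes this gap with a short two-step iteration. By Lemma~\ref{lem:regul}(c) at most $6C$ components of $q$ fail to connect to $q'$, so among the first $6\varepsilon_1{+}1$ components of $q$ there is some $p_k$ connected to a component $p'_{k'}$ of $q'^{-1}$ (and not connected to any component of $r$); this bounds $k$ but not $k'$. One then passes to the sub-cycle $o_1=r\,q_1\,s\,q_1'$ (with $s$ the connecting edge and $q_1,q_1'$ the initial segments of $q,q'$) and repeats the argument from the $q'$-side to get a connection with \emph{both} indices bounded by $O(\varepsilon_1)$. With that in hand one writes $f=zhy$ where $y,z$ have bounded length over the finite alphabet $\{x_1,\dots,x_l,h_1,\dots,h_l\}$, and the connecting letter $h$ is isolated in $o_1$ (by no-backtracking of $q,q'$ together with the choice of $p_k$), so Lemma~\ref{lem:omega} gives $|h|_\Omega\le K\ell(o_1)$. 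Your hexagonal-cycle argument for the isolation of $\eta_{i+1}$ is valid and rather neat, but it cannot substitute for bounding the positions; once the positions are bounded, the simpler isolation argument above already suffices.
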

\begin{proof} Suppose that $W\equiv  h_1x_1h_2 \dots x_{l-1} h_l x_{l}$, where $x_i \in X$ (or $x_i$ is the empty word) and $h_i \in H_{\lambda(i)}$
for some $\lambda(i) \in \Lambda$, $i=1,\dots,l$.
Observe that according to the definition of $ \cW_0(\mathcal{O}, 12K, X, \cH)$, for any $n \in \Z$, $W^n \in \cW(\mathcal{O},12K,X,\cH)$, hence any path labelled by $W^n$ in $\G$ is
$(4,1)$-quasi-geodesic by Corollary \ref{cor:wgeod}.  It follows that the map $n \mapsto g^n$ is a quasi-isometric embedding from $\Z$ to $\G$.
Hence $g$  is loxodromic with respect to the action of $G$ on $\G$.

Let us prove the WPD property.
Fix any $\e >0$ and  $x\in G$, and choose $N \in \N$ so that $lN >6\e_1+1$, where $\e_1:=2|x|_{X\cup\cH}+\e$. Suppose that $f \in G$ satisfies
\begin{equation}\label{eq:f}
\d_{X\cup\cH}(x,fx)< \e \mbox{ and } \d_{X\cup\cH}(g^Nx,fg^Nx)< \e.
\end{equation}
Then $|f|_{X\cup\cH}\le \d_{X\cup\cH}(1,x)+\d_{X\cup\cH}(x,fx)+\d_{X\cup\cH}(fx,f) < 2|x|_{X\cup\cH}+\e=\e_1$. Similarly, $|g^{-N}f^{-1}g^N|_{X\cup\cH}< \e_1$.

Choose words $R$ and $R'$ over $X \cup \cH$ representing the elements $f$ and $g^{-N}f^{-1}g^N$ in $G$ with $\|R\|, \|R'\| < \e_1$.
 Let $o=r q r' q'$ be the cycle in $\G$ starting at $1$ such that $\Lab(r) \equiv R$,  $\Lab( q )\equiv W^N$,  $\Lab(r') \equiv R'$ and $\Lab( q' )\equiv W^{-N}$.

Let $p_1,\dots,p_{Nl}$ and $p'_1,\dots,p'_{Nl}$ be the lists of components of $q$ and $q'^{-1}$ in the order of their occurrence.
By Lemma \ref{lem:regul}.(c) there exists $k\in \N$, $k \le 6 \e_1+1$, such that $p_k$ is connected to a component of $p'_{k'}$ of $q'^{-1}$
and $p_k$ is not connected to any component of $r$. Thus there is a path $s$ from $(p_k)_-$ to $(p'_{k'})_-$ in $\G$, such that $s$ is
labelled either by the empty word (if $(p_k)_-=(p'_{k'})_-$) or by a letter from $H_{\lambda(j)}$, for some $j\in \{1,\dots,l\}$ (see Figure~\ref{fig:3}).
Then $\ell(s) \le 1$ and one can consider the cycle
$o_1=rq_1 s q_1'$ in $\G$, where $q_1$ is the initial segment of $q$ from $q_-=f$ to $(p_k)_-$ and $q_1'$ is the terminal segment of $q'$ from $(p'_{k'})_-$ to $q'_+=1$.

\begin{figure}[!ht]
  \begin{center}
   \input{fig3.pstex_t}
  \end{center}
  \caption{}\label{fig:3}
\end{figure}

Then $p_1',\dots,p_{k'-1}'$ is the list of components of $q_1^{-1}$ and if $k'-1 >6\e_1+1$, one can apply Lemma \ref{lem:regul}.(c) again to the cycle $o_1$ to
find $k_1' \le 6\e_1+1$ such that $p_{k_1'}'$ is connected to a component $p_{k_1}$ of $q_1$ and is not connected to a component of $r$.
In this case we replace $k$ with $k_1$ and $k'$ with $k_1'$. Thus, without loss of generality, we can further assume that $\max\{k,k'\}\le 6\e_1+2$. It follows that
$\ell(q_1)\le 2(k-1)+1\le 12 \e_1+3$; similarly, $\ell(q_1') \le 12\e_1+3$.

Let $y,z$ and $h$ be the elements of $G$ represented by the words $\Lab(q_1^{-1})$, $\Lab(q_1'^{-1})$ and $\Lab(s^{-1})$ respectively. Then $f=zhy$ in $G$,
where $h \in H_{\lambda(j)}$ for some $j \in \{1,\dots,l\}$.

By construction, $y,z$ belong to the subgroup of $G$ generated by the finite set of elements $A:=\{ x_1,\dots,x_l,h_1,\dots, h_l\}$ and $|y|_A,|z|_A\le 12\e_1 +3$.
On the other hand, note that if $h \neq 1$ in $G$ then $s$ must be an isolated $H_{\lambda(j)}$-component of the cycle $o_1$ (because $q$ and $q'$ are without backtracking
by Lemma \ref{lem:no_back} and $p_k$ is not connected to a component of $r$).
Hence we can use Lemma \ref{lem:omega} to conclude that $h \in \langle \Omega_{\lambda(j)}\rangle$ and $|h|_{\Omega_{\lambda(j)}} \le K \ell(o_1) \le K(25\e_1+7)$.

Let $B \subseteq G$ be the finite subset defined by $B=\{z \in \langle A \rangle \mid |z|_A\le 12\e_1+3\}$.
We have shown that any element $f$ satisfying \eqref{eq:f} belongs to the subset
$$\bigcup_{i=1}^l \left(B \cdot
\left\{\left. h \in\gen{\Omega_{\lambda(i)}} \,\right| \, |h|_{\Omega_{\lambda(i)}}\le K(25 \e_1+7)\right\}
\cdot B\right),$$
which is finite as a finite union of products of finite subsets. Thus we have shown that  the element $g$ is WPD.
\end{proof}


\section{Special elements in acylindrically hyperbolic groups}

In this section we fix a group $G$ and a hyperbolic space $(\cS,\d)$ where $G$ acts by isometries and coboundedly.
By Lemma \ref{lem:svarc-milnor}, there is a generating set $X$ of $G$ such that $(G,\d_X)$ is equivariantly quasi-isometric to $\cS$.
It follows that $g\in G$ is a \tame element with respect to the action of $G$ on $\cS$ if and only if $g$ is a \tame element with respect to the action of $G$ on
$\Gamma(G,X)$. Thus, without loss of generality, we can work with either $\cS$ or $\Gamma(G,X)$.

The following observation will be useful.
\begin{lem}\label{lem:tameinS}
Suppose that $X_1$ is a subset of $G$ containing $X$.
If $g$ is a \tame element with respect to the action of $G$ on $\Gamma(G,X_1)$ then $g\in \LWPD$.
\end{lem}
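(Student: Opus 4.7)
The plan is to observe that since $X \subseteq X_1$, any word over $X$ is also a word over $X_1$, so the identity map on $G$ is $1$-Lipschitz from $(G, \d_X)$ to $(G, \d_{X_1})$; equivalently, $\d_{X_1}(u,v) \le \d_X(u,v)$ for all $u,v \in G$. Because of the \v{S}varc--Milnor lemma, $(G,\d_X)$ is $G$-equivariantly quasi-isometric to $(\cS,\d)$, so it suffices to show that $g$ is a \tame element for the action of $G$ on $\Gamma(G,X)$; I will transfer the loxodromic and WPD conditions separately.

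For the loxodromic condition, the hypothesis gives constants $\lambda \ge 1$, $c \ge 0$ with $\frac{|n|}{\lambda} - c \le \d_{X_1}(1, g^n)$ for all $n \in \Z$. Combining with $\d_{X_1} \le \d_X$, I obtain $\frac{|n|}{\lambda} - c \le \d_X(1, g^n)$, while the trivial upper bound $\d_X(1, g^n) \le |g|_X \cdot |n|$ holds since $X$ generates $G$ and $g$ has finite $X$-length. Hence $n \mapsto g^n$ is a quasi-isometric embedding $\Z \to \Gamma(G,X)$, so $g$ is loxodromic with respect to the action on $\Gamma(G,X)$.

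For the WPD condition, fix $\varepsilon > 0$ and $x \in G$. By the assumption on $g$ applied in $\Gamma(G,X_1)$, there is some $N = N(\varepsilon, x)$ such that the set
\[
S_{X_1} \coloneq \{h \in G \mid \d_{X_1}(x, hx) \le \varepsilon,\ \d_{X_1}(g^N x, h g^N x) \le \varepsilon\}
\]
is finite. Since $\d_{X_1} \le \d_X$, the analogous set $S_X$ defined with $\d_X$ in place of $\d_{X_1}$ satisfies $S_X \subseteq S_{X_1}$, and is therefore also finite. This verifies the WPD property for $g$ on $\Gamma(G,X)$, and hence (via the \v{S}varc--Milnor quasi-isometry) on $\cS$, so $g \in \LWPD$.

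There is really no serious obstacle here: the key point is simply that enlarging the generating set shrinks distances, so the WPD finiteness condition, which is a statement about elements whose translation length is small, transfers in the easy direction from $X_1$ to $X$.
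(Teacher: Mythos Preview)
Your proof is correct and follows essentially the same approach as the paper: both use the key inequality $\d_{X_1} \le \d_X$ (since $X \subseteq X_1$) to transfer the loxodromic lower bound and the WPD finiteness condition from $\Gamma(G,X_1)$ to $\Gamma(G,X)$, and then invoke the \v{S}varc--Milnor quasi-isometry to pass to $\cS$. Your write-up is slightly more explicit (you spell out the upper bound $\d_X(1,g^n) \le |g|_X\cdot |n|$ and the set inclusion $S_X \subseteq S_{X_1}$), but the argument is the same.
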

\begin{proof}
It is enough to show that $g$ is \tame with respect to the $G$-action on $\Gamma(G,X)$. Since the action of $g$ is loxodromic on $\Gamma(G,X_1)$
there exist $\mu\geq 1$ and $c\geq 0$ such that $|n|\leq \mu |g^n|_{X_1}+c$ for all $n\in \mathbb{Z}$.
Since   $|h|_{X_1}\leq |h|_X$ for all $h\in G$, we get  $|n|\leq \mu |g^n|_{X}+c$ for all $n\in \mathbb{Z}$, which shows
that $g$ acts as a  loxodromic element on $\Gamma(G,X)$.

Similarly, since $\d_X(x,y) \ge \d_{X_1}(x,y)$ for any $x,y  \in G $, it easily follows that any WPD element with respect to the action of $G$ on $\Gamma(G,X_1)$ is also a WPD element with respect to the $G$-action on
$\Gamma(G,X)$.
\end{proof}

\subsection{Creating new \tame elements }
The purpose of this section is to develop basic tools for working with \tame elements and producing new \tame elements from a number of old ones.

\begin{lem}\label{lem:tame1}
Let $\{H_\lambda\}_{\lambda \in \Lambda}$ be a family of subgroups of $G$ that is hyperbolically embedded in $(G,X)$.
Set $\mathcal H =\sqcup_{\lambda \in \Lambda} (H_\lambda \setminus \{1\})$ and take an arbitrary finite subset $\{\lambda_1,\dots,\lambda_l \}\subseteq \Lambda$, $l \ge 1$.
Consider any subset $F$ of $G$ such that $|F\setminus X|< \infty$ and if $l=1$ then $F\cap H_{\lambda_1}=\emptyset$.
Then there exists a finite subset $\Phi  \subseteq G$ such that  for any $f_i\in F$ and $g_i\in H_{\lambda_i}\setminus \Phi,$  $i=1,\dots,l$, the element $g:=g_1f_1g_2f_2\dots g_lf_l$ has the
following properties:
\begin{enumerate}
\item[{\rm (a)}] $g$ is a \tame element with respect to the action on $\G$; in particular, $g\in \LWPD$; 
\item[{\rm (b)}] $g$ is not commensurable with any element $h \in \bigcup_{\lambda \in \Lambda} H_\lambda$ in $G$.
\end{enumerate}
\end{lem}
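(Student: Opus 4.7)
The plan is to write the element $g=g_1f_1\cdots g_lf_l$ as a word $W$ belonging to the class $\cW_0(\Omega,12K,X,\cH)$ of Definition~\ref{defn:w}, and then apply Theorem~\ref{thm:combtame} to deduce that $g$ is loxodromic WPD with respect to the action of $G$ on $\G$. The finite set $\Phi$ will be chosen precisely so that conditions (1)--(3) of Definition~\ref{defn:w}, together with the cyclic-reduction requirement defining $\cW_0$, are automatically satisfied.

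Since $|F\setminus X|<\infty$, I first invoke Lemma~\ref{lem:changegenset} to replace $X$ by $X\cup F$; this enlarges the generating set by finitely many elements without disturbing the hyperbolic embedding $\{H_\lambda\}_{\lambda\in\Lambda}\h(G,X)$, while making every $f_i$ a single letter of $X$. Next I apply Lemma~\ref{lem:omega} to the enlarged generating set to obtain the finite subset $\Omega\subseteq G$ and the constant $K>0$, and set
$$\Phi \coloneq \{h\in\langle\Omega\rangle : |h|_\Omega\le 12K\},$$
which is finite. For $g_i\in H_{\lambda_i}\setminus\Phi$ and $f_i\in F$, I form
$$W \equiv g_1 f_1 g_2 f_2 \cdots g_l f_l,$$
reading each $g_i$ as a letter in $H_{\lambda_i}\setminus\{1\}$ (note $1\in\Phi$) and each $f_i$ either as a letter in $X$ or as the empty word when $f_i=1$ in $G$. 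Condition~(1) of Definition~\ref{defn:w} is then built into the form of $W$, condition~(3) is the defining property of $\Phi$, and condition~(2) is vacuous since the elements of $\{\lambda_1,\dots,\lambda_l\}$ are distinct, so $\lambda_i\neq\lambda_{i+1}$ for all $i\in\{1,\dots,l-1\}$. The extra cyclic condition defining $\cW_0$ requires $f_l\notin H_{\lambda_1}$ whenever $\lambda_l=\lambda_1$: this is vacuous by distinctness when $l\ge 2$, and reduces exactly to the standing assumption $F\cap H_{\lambda_1}=\emptyset$ when $l=1$. Hence $W\in\cW_0(\Omega,12K,X,\cH)$, and Theorem~\ref{thm:combtame} combined with Lemma~\ref{lem:tameinS} yields part (a).

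For (b), assume towards a contradiction that $g\apg h$ for some $h\in H_\lambda$, $\lambda\in\Lambda$, so that $g^n=u h^m u^{-1}$ for some $u\in G$ and $n,m\in\Z\setminus\{0\}$. Since $g$ acts loxodromically on $\G$, so does $g^n$ (its orbit is at finite Hausdorff distance from that of $g$). On the other hand, every element of $H_\lambda$ sits at $\d_{X\cup\cH}$-distance at most~$1$ from the identity in $\G$, so all conjugates $u h^{mn} u^{-1}$ remain within the ball of radius $2|u|_{X\cup\cH}+1$ around $1$, forcing $uh^m u^{-1}$ to act with bounded orbits and contradicting the loxodromicity of $g^n$. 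The main delicate step of the whole argument is the verification of the cyclic-reduction clause in the definition of $\cW_0$; this is precisely why the hypothesis on $F\cap H_{\lambda_1}$ is imposed in the $l=1$ case, which is the only situation in which $\lambda_l$ and $\lambda_1$ are forced to coincide.
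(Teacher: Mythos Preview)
Your proof is correct and follows essentially the same approach as the paper: enlarge $X$ to absorb $F$ via Lemma~\ref{lem:changegenset}, set $\Phi=\{h\in\langle\Omega\rangle:|h|_\Omega\le 12K\}$, verify that $W\in\cW_0(\Omega,12K,X,\cH)$, and then invoke Theorem~\ref{thm:combtame} and Lemma~\ref{lem:tameinS} for (a) and the bounded-versus-unbounded orbit argument for (b). The only cosmetic issue is that in part (b) you reuse the letter $n$ for two different purposes; otherwise your write-up simply makes explicit the verification of conditions~(1)--(3) and the cyclic clause that the paper leaves implicit.
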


\begin{proof}
By Lemmas \ref{lem:changegenset} and \ref{lem:tameinS}, we can replace $X$ with $X \cup F$ to assume that $F \subseteq X$.
Let $\mathcal{O}=\{\Omega_{\lambda}\}_{\lambda\in\Lambda}$ and $K \in \N$ be the collection of finite subsets and the constant from the claim of Lemma \ref{lem:omega}.
We can then define the finite subset $\Phi \subseteq G$ by setting $\Phi\coloneq\bigcup_{j=1}^l\{h \in \gen{\Omega_{\lambda_j}} \mid |h|_{\Omega_{\lambda_j}} \le 12K\}$.
Now part (a) follows from
the assumptions together with the claims of Theorem \ref{thm:combtame} and Lemma \ref{lem:tameinS}.

To prove part (b) notice that for every $h \in \bigcup_{\lambda \in \Lambda} H_\lambda$, the cyclic subgroup
$\gen{h}$ acts with bounded orbits on the Cayley graph $\G$. On the other hand, all the orbits of $\gen{g}$ are unbounded because $g$ is loxodromic by part (a).
Thus a non-zero power of $g$ cannot be conjugate to a power of $h$ in $G$, i.e.,  (b) holds.
\end{proof}

Applying Lemma \ref{lem:tame1} in the special case when $l=1$ we obtain the following statement, generalizing \cite[Corollary 6.12]{DGO}:
\begin{cor}\label{cor:elemhe}
Suppose that $\{H_\lambda\}_{\lambda \in \Lambda}\h (G,X)$. Then for any $\lambda \in \Lambda$ and $f\in G\setminus H_\lambda$, there exists a finite subset $\Phi\subset G$  such that
for all $g\in H_\lambda\setminus\Phi$ the element $gf$ is  \tame with respect to the action of $G$ on $\Gamma (G, X\sqcup \cH)$; in particular, $gf \in \LWPD$.
\end{cor}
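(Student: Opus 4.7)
The plan is to obtain this corollary as the immediate $l=1$ specialization of Lemma~\ref{lem:tame1}. Given the hyperbolically embedded family $\{H_\lambda\}_{\lambda \in \Lambda} \hookrightarrow_h (G,X)$, together with the fixed $\lambda \in \Lambda$ and element $f \in G \setminus H_\lambda$, I would set $\lambda_1 := \lambda$ and choose the auxiliary set to be $F := \{f\}$. This $F$ is manifestly finite, so the requirement $|F \setminus X| < \infty$ is automatic. Moreover, the precise reason we assume $f \notin H_\lambda$ is exactly the additional hypothesis that Lemma~\ref{lem:tame1} imposes when $l=1$, namely $F \cap H_{\lambda_1} = \emptyset$. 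Thus all hypotheses of Lemma~\ref{lem:tame1} are met with these data.

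Feeding this into Lemma~\ref{lem:tame1} yields a finite set $\Phi \subseteq G$ with the property that, for every choice of $g_1 \in H_\lambda \setminus \Phi$ and $f_1 \in F$, the product $g_1 f_1$ is loxodromic WPD for the action of $G$ on $\ga(G, X \sqcup \cH)$. Since $F = \{f\}$, this reads: for every $g \in H_\lambda \setminus \Phi$, the element $gf$ lies in $\LWPD$, which is exactly the statement of the corollary.

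There is essentially no new obstacle to overcome: the corollary simply extracts part (a) of Lemma~\ref{lem:tame1} and discards the non-commensurability conclusion (b). The entire technical weight has already been absorbed into the proof of Lemma~\ref{lem:tame1}, which itself rests on Theorem~\ref{thm:combtame} (cyclically reduced words in $\cW_0(\Omega, 12K, X, \cH)$ represent loxodromic WPD elements) and on Lemma~\ref{lem:tameinS} (the loxodromic WPD property is preserved when passing from a larger to a smaller generating set). The only tiny point to notice is that even in the degenerate case $f \in X$ one may freely take $F = \{f\}$, since Lemma~\ref{lem:tame1} only demands $|F \setminus X| < \infty$, not $F \cap X = \emptyset$; so the instantiation is valid without any case split.
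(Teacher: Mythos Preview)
Your proposal is correct and follows exactly the paper's own approach: the corollary is stated immediately after Lemma~\ref{lem:tame1} as the specialization to $l=1$ with $F=\{f\}$, and the paper gives no further argument beyond that sentence. Your verification that $f\notin H_\lambda$ supplies precisely the extra hypothesis $F\cap H_{\lambda_1}=\emptyset$ needed in the $l=1$ case is the only thing there is to check.
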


Recall that by Lemma \ref{lem:elemrem}, every $g\in \LWPD$ belongs to the virtually cyclic subgroup
$$E_G^+(g)=\{ f\in G \mid fg^nf^{-1}=g^n \text{ for some } n\in \N \}\leqslant E_G(g),$$ and  $|E_G(g):E_G^+(g)|\le 2$.
This lemma also implies that $E_G(g)=E^+_G(g)$ if and only if $E_G(g)$ has infinite center.

\begin{lem}\label{lem:newtame1}
Let $\{g_1,\dots, g_l\}$ be a non-empty family of pairwise non-commensurable \tame elements with respect to the action of $G$ on $\cS$.
Consider any subset $F\subseteq G$ such that  $|F \setminus X|< \infty$ and if $l=1$ then $F\cap E_G(g_1)=\emptyset$.

Then there exists  $N_1=N_1(F) \in \mathbb N$ such that  for arbitrary $f_i\in F$ and $m_i\in  \N$ with $|m_i|\geq N_1$, $i=1,\dots,l$,
the element $g:=g_1^{m_1}f_1g_2^{m_2}f_2\dots g_l^{m_l}f_l$ belongs to $\LWPD$ and is not commensurable with any $g_i$, $i=1,\dots, l$. Moreover,
\begin{enumerate}
\item[{\rm (i)}] if $l=1$ then for every $y\in E_G(g)$ there exist $\xi, \zeta \in \Z$ such that  $g^\xi y g^{\zeta}\in E_G(g)\cap E_G(g_1)$;
\item[{\rm (ii)}] if $l \ge 3$ and $f_l=1$ then $E_G(g)=E_G^+(g)$ and for every $y\in E_G(g)$ there exist $\xi, \zeta \in \Z$ satisfying  $g^\xi y g^{\zeta}\in E_G(g_l)\cap E_G(g_1)$.
\end{enumerate}
\end{lem}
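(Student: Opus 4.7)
The plan is to first establish the loxodromicity, WPD, and non-commensurability of $g$ by reducing to Lemma~\ref{lem:tame1}, and then to analyze $E_G(g)$ using the combinatorial tools of Section~4. I would apply Corollary~\ref{cor:wpd-he} to the pairwise non-commensurable loxodromic WPD family $\{g_1,\dots,g_l\}$ to obtain a generating set $X_1$ of $G$ with $\{E_G(g_i)\}_{i=1}^l \h (G,X_1)$. After enlarging $X$ by $X_1\cup F$ (Lemma~\ref{lem:changegenset} preserves the embedding), I may assume $F\subseteq X$ and $\{E_G(g_i)\}_{i=1}^l \h (G,X)$. Then I will invoke Lemma~\ref{lem:tame1} with $H_{\lambda_i}:=E_G(g_i)$ (when $l=1$, the hypothesis $F\cap E_G(g_1)=\emptyset$ is given) to produce a finite $\Phi\subset G$, and choose $N_1$ so that $g_i^m\notin\Phi$ whenever $|m|\ge N_1$; this is possible because each $\langle g_i\rangle$ is infinite. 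For $|m_i|\ge N_1$, Lemma~\ref{lem:tame1} then delivers $g\in\LWPD$ on $\Gamma(G,X\sqcup\cH)$ (hence on $\cS$ by Lemma~\ref{lem:tameinS}) and $g$ not commensurable with any element of $\bigcup_i E_G(g_i)$, in particular with any $g_i$.

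For the moreover statements, given $y\in E_G(g)$, Lemma~\ref{lem:elemrem}(b) supplies arbitrarily large $k\in\N$ and $\epsilon\in\{\pm 1\}$ with $yg^k=g^{\epsilon k}y$. Set $W\equiv g_1^{m_1}f_1\cdots g_l^{m_l}f_l$; after further enlarging $N_1$, $W\in\cW_0(\Omega,12K,X,\cH)$ for $\Omega,K$ from Lemma~\ref{lem:omega} (cyclic reducedness uses $f_1\notin E_G(g_1)$ when $l=1$, and distinctness of the $E_G(g_i)$ when $l\ge 2$). In $\Gamma(G,X\sqcup\cH)$ I would form the cycle built from a geodesic $r_1$ from $1$ to $y$ of length $C:=|y|_{X\cup\cH}$, a path $q$ from $y$ to $yg^k$ labelled $W^k$, a geodesic $r_2$ of length $C$ from $g^{\epsilon k}y$ to $g^{\epsilon k}$, and a path $q'=p^{-1}$ labelled $W^{-\epsilon k}$. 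Applying Lemma~\ref{lem:conseq-reg} with this $C$, for $k$ sufficiently large one obtains $d$ consecutive $H$-components of $q$ connected in order to $d$ consecutive $H$-components of $q'^{-1}$. The $j$-th $H$-component of $q$ is an edge from $yP_j$ to $yP_j\cdot g_{i(j)}^{m_{i(j)}}$ with $P_j=g^{m(j)}Q_{r(j)}$, where $Q_r$ is the length-$2r$ prefix of $W$ and $j=m(j)l+r(j)+1$. Connectivity to the component of $q'^{-1}$ based at $P_{j'}$ yields $P_j^{-1}y^{-1}P_{j'}\in E_G(g_{i(j)})$ with matching constraint $i(j)=i(j')$.

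In part (i), $l=1$, so $Q_r\equiv 1$ and $i(j)\equiv 1$; taking $d=1$ already delivers $g^{-m(s')}yg^{m(s)}\in E_G(g_1)$, and this element lies in $E_G(g)$ automatically because $y$ does. In part (ii), $l\ge 3$ and $f_l=1$. The subgroup sequence along $q$ cycles through $E_G(g_1),\dots,E_G(g_l)$, while along $q'^{-1}$ it follows the same cycle when $\epsilon=+1$ and the \emph{reverse} cycle when $\epsilon=-1$. Matching $d\ge 2$ consecutive components in the reversed case would require $2i\equiv\text{const}\pmod{l}$ at two consecutive values of $i$, impossible for $l\ge 3$; therefore $\epsilon=+1$ and $E_G(g)=E_G^+(g)$. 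Now take $d:=l+1$: the window then necessarily contains two adjacent positions $s,s+1$ in $q$ with $i(s)=l$ and $i(s+1)=1$ (a ``period boundary''); the matched positions in $q'^{-1}$ are $s',s'+1$. Writing $m_0:=m(s)$, $m_0':=m(s')$, the connection at $s+1$ (where $P_{s+1}=g^{m_0+1}$) gives $g^{-m_0'-1}yg^{m_0+1}\in E_G(g_1)$, whereas the connection at $s$ (where $P_s=g^{m_0}Q_{l-1}$ and $Q_{l-1}=g\cdot g_l^{-m_l}$ by virtue of $f_l=1$) yields, after cancelling the $gg^{-1}$, that the \emph{same} expression $g^{-m_0'-1}yg^{m_0+1}$ also lies in $E_G(g_l)$. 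Setting $\xi:=-m_0'-1$ and $\zeta:=m_0+1$ then gives $g^\xi yg^\zeta\in E_G(g_l)\cap E_G(g_1)$.

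The principal obstacle is verifying in part (ii) that the connection equations at the two adjacent positions $s$ and $s+1$ do produce the \emph{identical} conjugate $g^\xi yg^\zeta$ of $y$: this requires careful bookkeeping of the index shifts across the period boundary together with the identity $Q_{l-1}\cdot g_l^{m_l}=g$, into which the hypothesis $f_l=1$ enters essentially.
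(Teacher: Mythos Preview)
Your proposal is correct and follows essentially the same route as the paper: reduce the first claim to Lemma~\ref{lem:tame1} via Corollary~\ref{cor:wpd-he}, then analyze the rectangle coming from $yg^ky^{-1}=g^{\epsilon k}$ with Lemma~\ref{lem:conseq-reg}, ruling out $\epsilon=-1$ in~(ii) by the incompatibility of the forward and reverse cyclic orders on $\{1,\dots,l\}$ for $l\ge 3$, and exploiting the period boundary together with $f_l=1$ to land in $E_G(g_l)\cap E_G(g_1)$. One small omission: in part~(i) your formula $g^{-m(s')}yg^{m(s)}\in E_G(g_1)$ tacitly assumes $\epsilon=+1$; when $\epsilon=-1$ the initial vertex of the $s'$-th component of $q'^{-1}$ is $g^{-(s'-1)}f_1^{-1}$ rather than $g^{s'-1}$, but since $f_1=h_1^{-1}g$ with $h_1\in E_G(g_1)$ you still obtain $g^{s'}yg^{s-1}\in E_G(g_1)$ after absorbing the $h_1$---the paper deals with this by an explicit case split on $\epsilon$.
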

\begin{proof} Recall that by Corollary \ref{cor:wpd-he} the family $\{E_G(g_i)\}_{i=1}^l$ is hyperbolically embedded in $(G,X)$. As before, in view of Lemmas \ref{lem:changegenset} and \ref{lem:tameinS},
we can assume that $F \subseteq X$.
Set $\mathcal H :=\sqcup_{i=1}^l (E_G(g_i) \setminus \{1\})$, and let the finite subsets
$\Omega_i \subset E_G(g_i)$, $i=1,\dots,l$, and $K \in \N$ be chosen according to Lemma \ref{lem:omega}.
Take $N_1 \in \N$ so that $g_i^m \notin \Phi\coloneq \bigcup_{j=1}^l\{h \in \gen{\Omega_j} \mid |h|_{\Omega_j} \le 12K\}$
for any $i =1,\dots,l$, whenever $|m|\ge N_1$. Consider any $g=g_1^{m_1}f_1g_2^{m_2}f_2\dots g_l^{m_l}f_l$ with $f_i \in F$ and $|m_i| \ge N_1$, $i=1,\dots,l$.
By Lemma~\ref{lem:tame1}, $g \in \LWPD$ and it is not commensurable with with any $g_i$, $i=1,\dots, l$. So, it remains to prove claims (i) and (ii).

Consider any $y\in E_G(g)$. By Lemma \ref{lem:elemrem}, there exist $m\in \mathbb N$ and $\epsilon \in  \{-1,1\}$ such that
\begin{equation}
\label{eq:m}
yg^my^{-1}=g^{\epsilon m}.
\end{equation}
Let $L=L(C, 2l)$ be the constant provided by Lemma \ref{lem:conseq-reg}, where $C:=\d_{X\sqcup \mathcal H}(1,y)$. Evidently we can take $m$ in \eqref{eq:m} to be large enough so that $ml \ge L$.

Let $U$ be a word over $X\sqcup \mathcal H$ representing $y$, with $\|U\|=C$, and let $W \equiv h_1f_1h_2 f_2\dots h_lf_l$
be the word from $\cW (\mathcal{O},12K,X,\cH)$ representing $g$, where $h_i:=g_i^{m_i} \in E_G(g_i) \setminus\{1\}$ and $\mathcal{O}=\{\Omega_j\}_{j=1}^l$.
Consider a cycle $o=rqr'q'$ in $\G$, where $\Lab( r )\equiv U$, $\Lab( q )\equiv W^m$, $\Lab(r')\equiv U^{-1}$ and
$\Lab(q')\equiv W^{-\epsilon m}$. 
Then $\ell(q)\ge ml \ge L$, hence by
Lemma \ref{lem:conseq-reg} there are $2l$ consecutive components of $q$ connected to $2l$ consecutive components of $q'^{-1}$.

Suppose, first, that $l=1$. Then there is an $E_G(g_1)$-component $p$ of $q$ connected to  $p'$, an $E_G(g_1)$-component of $q'^{-1}$. That is, there is a path $s$ in $\G$ with $s_{-}=p_{-}$, $s_{+}=p'_{-}$ such that
$\Lab(s)$ represents an element $z\in E_G(g_1)$. Note that $\Lab( p ) \equiv h_1$ and $\Lab(p')\equiv h_1^{\epsilon }$.

Let $q_1$ be the subpath of $q$ starting at $r_+=q_-$ and ending at
$p_-=s_-$; let $q_1'$ be the subpath of $q'$ starting at $s_+=p'_-$
and ending at $q'_+=r_-$.
Consider the cycle $o_1=rq_1sq_1'$ in $\G$. If $\epsilon=1$ we see that $\Lab(q'_1) \equiv W^\xi$ for some integer $\xi\le 0$ and
$\Lab(q_1) \equiv W^\zeta$ for an integer $\zeta \ge 0$. Therefore $g^\xi y g^\zeta= z^{-1}$ in $G$.
Recall that $z^{-1} \in E_G(g_1)$ and the left hand side of the latter equality belongs to $E_G(g)$,
hence $g^\xi y g^\zeta\in E_G(g) \cap E_G(g_1)$.
Similarly, in the case when $\epsilon=-1$ we see that
$g^\xi y g^\zeta= g_1^{m_1} z^{-1} \in E_G(g) \cap E_G(g_1)$ for some $\xi,\zeta \in \Z$. Thus part (i) is proved.

To prove part (ii), assume that $l \ge 3$. Then three consecutive components  $p_1$, $p_2$, $p_3$ of $q$,
with $\Lab(p_i) \equiv h_i$, $i=1,2,3$, are connected to three consecutive components $p_1'$, $p_2'$, $p_3'$, of $q'^{-1}$.
Since an $E_G(g_i)$-component cannot be connected to an $E_G(g_j)$-component if $i \neq j$, we see that $p_i'$ must be labelled by $h_i^\epsilon$, $i=1,2,3$.
However, if $\epsilon=-1$, any triple of consecutive components of $q'^{-1}$ would be labelled by a cyclic permutation of the sequence $h_3^{-1}$, $h_2^{-1}$, $h_1^{-1}$,
which cannot give the sequence $h_1^{-1}$, $h_2^{-1}$, $h_3^{-1}$. Thus $\epsilon=1$, implying that $y \in E^+_G(g)$. Since the latter is true for any $y \in E_G(g)$
we can conclude that $E_G(g)=E^+_G(g)$.

For the last claim of part (ii), suppose that $f_l=1$ and choose consecutive components $p_l$ and $p_1$ of $q$ that are connected to consecutive components $p_l'$ and $p_1'$ of $q'^{-1}$,
so that $p_i$ and $p_i'$ are $E_G(g_i)$-components of the corresponding paths for $i=1,l$.
It follows that for any path $s$ in $\G$ joining $(p_l)_+=(p_1)_-$ with $(p_l')_+=(p'_1)_-$, $\Lab(s)$ represents an element $z \in E_G(g_l) \cap E_G(g_1)$.
Since $f_l=1$ and $\epsilon=1$ the label of the subpath of $q'$ from $(p_l')_+=s_+$ to $q'_+=r_-$ represents a negative power of $g$, and the label of the subpath of $q$ from $r_+=q_-$ to $(p_l)_+=s_-$
represents a positive power of $g$. Thus there are integers $\xi<0 $ and $\zeta>0$ such that $g^\xi y g^{\zeta}=z^{-1}\in E_G(g_l)\cap E_G(g_1)$.
This completes the proof  of the lemma.
\end{proof}

\begin{lem} \label{lem:newtamenc} Let $g\in \LWPD$ and $f\in G\setminus E_G(g)$. For any finite subset $Y$ of $G$, there exists $N_2\in \N$ such that $g^n f \in \LWPD$ and is not commensurable
with any $y\in Y$ whenever $|n|\geq N_2$.
\end{lem}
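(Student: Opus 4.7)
The plan is to apply Lemma \ref{lem:tame1} after absorbing every element of $Y$ into a single hyperbolically embedded family. First I observe that, by the remark placed just after Remark \ref{rem:tame-conj}, commensurability preserves membership in $\LWPD$; so once I arrange $g^n f \in \LWPD$, the inequality $g^n f \napg y$ is automatic for every $y \in Y \setminus \LWPD$, and I only need to handle $y \in Y_0 := Y \cap \LWPD$.

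I then pick representatives $y_1 := g, y_2, \dots, y_k$ of the commensurability classes (under $\apg$) of $\{g\} \cup Y_0$. These are pairwise non-commensurable elements of $\LWPD$, so by Corollary \ref{cor:wpd-he},
\[
\{E_G(y_1), E_G(y_2), \dots, E_G(y_k)\} \hookrightarrow_h (G, X),
\]
where $X$ is the generating set furnished by the \v{S}varc--Milnor lemma at the start of Section 5. I apply Lemma \ref{lem:tame1} to this family with $l = 1$, $\lambda_1$ the index corresponding to $E_G(g) = E_G(y_1)$, and $F := \{f\}$; the hypothesis $F \cap E_G(g) = \emptyset$ is precisely the standing assumption $f \notin E_G(g)$. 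This yields a finite $\Phi \subset G$ such that for every $h \in E_G(g) \setminus \Phi$, the element $hf$ belongs to $\LWPD$ and is non-commensurable in $G$ with every element of $\bigcup_{j=1}^k E_G(y_j)$.

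To conclude, I choose $N_2$ so that $g^n \notin \Phi$ whenever $|n| \ge N_2$; this is possible since $g$ has infinite order (being loxodromic) and $\Phi$ is finite. For such $n$, Lemma \ref{lem:tame1}(a) immediately gives $g^n f \in \LWPD$. For non-commensurability, take $y \in Y$: if $y \notin \LWPD$ we are done by the initial reduction; if $y \in Y_0$, then $y \apg y_j$ for some $j$, so writing $y^a = w y_j^b w^{-1} = (w y_j w^{-1})^b$ with $a,b \ne 0$ and applying the second bullet of Remark \ref{rem:elem_inter} we obtain $E_G(y) = w E_G(y_j) w^{-1}$, whence $w^{-1} y w \in E_G(y_j) \subseteq \bigcup_{j=1}^k E_G(y_j)$. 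Lemma \ref{lem:tame1}(b) therefore gives $g^n f \napg w^{-1} y w$, and transitivity of $\apg$ together with $w^{-1} y w \apg y$ yields $g^n f \napg y$. The only mildly subtle point is recognising that an arbitrary finite set $Y$ can be packaged inside a hyperbolically embedded family by grouping its elements by commensurability class; after that the statement is essentially a direct corollary of Lemma \ref{lem:tame1}.
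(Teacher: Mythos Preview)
Your proof is correct and follows essentially the same approach as the paper: reduce to $Y\cap\LWPD$, choose pairwise non-commensurable representatives together with $g$, invoke Corollary~\ref{cor:wpd-he} to make the $E_G(y_j)$ hyperbolically embedded, and then apply Lemma~\ref{lem:tame1} with $l=1$. The only cosmetic difference is in the final step: the paper concludes by a maximality contradiction (if $g^nf\apg z$ for some $z\in Y$, then $z$ could be added to $Y_1$), whereas you argue directly that each $y\in Y_0$ is conjugate into some $E_G(y_j)$ and hence ruled out by Lemma~\ref{lem:tame1}(b).
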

\begin{proof}
By Corollary \ref{cor:wpd-he} $E_G(g)\h(G,X)$. Let $Y_1\subseteq Y$ be a maximal subset of pairwise non-commensurable elements  such that each $y\in Y_1$ is \tame with respect to the action of $G$ on $\cS$
and is not commensurable with $g$. By Corollary \ref{cor:wpd-he}, $\{ E_G(g)\}\sqcup \{E_G(y) \mid  y\in Y_1\}\h (G,X)$, hence we can apply Lemma \ref{lem:tame1} to find $N_2 \in \N$ such that
the element $g^n f$ belongs to $\LWPD$ and is not commensurable with any element from the subset $\{g\} \cup Y_1$ whenever $|n|\geq N_2$.

Suppose that there is an integer $n$ such that $|n| \ge N_2$ and
$g^nf$ is commensurable with some $z\in Y$. Then $z \in Y \setminus Y_1$,
$z$ is not commensurable with any element of $\{g\} \cup Y_1$ and $z \in \LWPD$ by Remarks \ref{rem:powers} and \ref{rem:tame-conj}.
This contradicts the maximality of $Y_1$. Thus the lemma is proved.
\end{proof}

\subsection{Special elements}
Let $H$ be a subgroup of $G$. In this subsection we develop the theory of $H$-special elements. Many ideas and statements in this subsection are similar to those of \cite[Section 3]{MO}
(see also \cite[Subsection 6.2]{DGO} for the case $H=G$).

\begin{lem}\label{lem:E_G(H)}
Let $H$ be a non-elementary subgroup of $G$ such that $H \cap \LWPD\neq \emptyset$. Then the subgroup $\displaystyle E_G(H):=\bigcap_{h \in H \cap \LWPD} E_G(h)$ is the unique maximal finite subgroup of $G$ normalized by $H$.
\end{lem}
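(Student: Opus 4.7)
The plan is to verify three things in order: (a) $E_G(H)$ is a finite subgroup of $G$; (b) it is normalized by $H$; (c) every finite subgroup of $G$ normalized by $H$ is contained in it. Together these imply that $E_G(H)$ is the unique maximal finite subgroup of $G$ normalized by $H$.

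First I would show finiteness. Clearly $E_G(H)$ is a subgroup as an intersection of subgroups. The key observation is that $H$ contains two non-commensurable loxodromic WPD elements. Indeed, pick any $h\in H\cap \LWPD$; since $H$ is non-elementary and $E_G(h)\cap H$ is elementary by Lemma \ref{lem:elemrem}, we can choose $f\in H\setminus E_G(h)$. Applying Lemma \ref{lem:newtamenc} with $Y=\{h\}$ yields an integer $n$ such that $h':=h^n f\in H\cap \LWPD$ and $h'\napg h$. Now Remark \ref{rem:elem_inter} implies that $|E_G(h)\cap E_G(h')|<\infty$, and since $E_G(H)\subseteq E_G(h)\cap E_G(h')$, we conclude that $E_G(H)$ is finite.

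Next I would show that $E_G(H)$ is normalized by $H$. Given $x\in H$ and $y\in E_G(H)$, note that for every $h\in H\cap \LWPD$ the element $x^{-1}hx$ also lies in $H\cap \LWPD$ (using that $H$ is a group and invoking Remark \ref{rem:tame-conj}). Therefore $y\in E_G(x^{-1}hx)=x^{-1}E_G(h)x$, which gives $xyx^{-1}\in E_G(h)$ for every such $h$; hence $xyx^{-1}\in E_G(H)$.

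Finally I would handle maximality. Let $F\leqslant G$ be any finite subgroup normalized by $H$, and fix $h\in H\cap \LWPD$ and $f\in F$. Since $F$ is $H$-invariant, $h^n f h^{-n}\in F$ for all $n\in\Z$, so by finiteness of $F$ there exist $n>m$ with $h^n f h^{-n}=h^m f h^{-m}$; writing $k:=n-m\neq 0$ this becomes $fh^k f^{-1}=h^k$. By the equivalence (a)$\Leftrightarrow$(c) of Lemma \ref{lem:elemrem}, $f\in E_G(h)$. As $h\in H\cap \LWPD$ was arbitrary, $f\in E_G(H)$, so $F\subseteq E_G(H)$.

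The only non-cosmetic step is the finiteness assertion, which is the main obstacle: it relies on the production of a second WPD loxodromic element in $H$ not commensurable with a given one, and this is exactly the content of Lemma \ref{lem:newtamenc} combined with the non-elementary hypothesis on $H$. Everything else is a direct application of the characterization of $E_G(h)$ from Lemma \ref{lem:elemrem} together with a standard pigeonhole argument on finite $H$-invariant sets.
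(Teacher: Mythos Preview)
Your proof is correct and follows essentially the same three-step structure as the paper's argument. The only noteworthy difference is in the finiteness step: the paper avoids invoking Lemma \ref{lem:newtamenc} by a more direct trick --- given $g\in H\cap\LWPD$ and $a\in H\setminus E_G(g)$, the conjugate $aga^{-1}$ already lies in $H\cap\LWPD$ and cannot be commensurable with $g$ (otherwise Lemma \ref{lem:elemrem}(c) would force $a\in E_G(g)$), so $E_G(H)\subseteq E_G(g)\cap E_G(aga^{-1})$ is finite by Remark \ref{rem:elem_inter}. Your route via Lemma \ref{lem:newtamenc} is valid and not circular, but the conjugation argument is lighter and keeps the lemma self-contained.
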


\begin{proof}
If a finite subgroup $F\leqslant G$ is normalized by $H$, then $|H:C_H(F)|<\infty$, where $C_H(F)$ denotes the centralizer of $F$ in $H$.
Therefore for every $h\in H$ and $f\in F$, there is $n\in \mathbb N$ such that $fh^nf^{-1}=h^n$. Hence, by Lemma \ref{lem:elemrem}(b), $F\leqslant E_G(h)$ for all $h\in H \cap \LWPD$, thus $F\leqslant E_{G}(H)$.

Let $g\in H \cap \LWPD$. Since $H$ is non-elementary, there exists $a\in H \setminus E_G(g)$. Then $aga^{-1}\in H \cap \LWPD$ by Remark \ref{rem:tame-conj}.
If the intersection $E_G(g)\cap E_G(aga^{-1})$ is infinite then, according to Remark \ref{rem:elem_inter}, there exist $m,n \in \mathbb Z\setminus \{0\}$ such that $ag^na^{-1}=g^m$, which implies that $a\in E_G(g)$
(by Lemma \ref{lem:elemrem}.(c)).
This contradiction shows that $E_G(H)\leqslant E_G(aga^{-1})\cap E_G(g)$ is finite. The fact that $E_G(H)$ is normalized by $H$ follows from its definition together with Remark \ref{rem:tame-conj} and
Lemma  \ref{lem:elemrem}: the latter two statements imply that for any $h \in H \cap \LWPD$ and any $f \in H$, $fhf^{-1} \in H \cap \LWPD$ and $fE_G(h)f^{-1}=E_G(fhf^{-1})$.
\end{proof}

\begin{rem} In the case when $H=G$, the statement of Lemma \ref{lem:E_G(H)} is proved in \cite[Lemma~6.15]{DGO},
where $K(G)$ is used to denote the largest finite normal subgroup of $G$, which is $E_G(G)$ in our notation.
\end{rem}

Set $\LWPDP :=\{g \in \LWPD \mid E_G(g)=E_G^+(g)\}$.

\begin{lem}\label{lem:lwpd+}
Let $H\leqslant G$ be a non-elementary subgroup such that $H \cap \LWPD \neq \emptyset$. For every finite subset $Y \subset G$ there exists $h \in H \cap \LWPDP$ that is not commensurable in $G$ with any element of $Y$.
In particular, $H \cap \LWPDP$ contains infinitely many pairwise non-commensurable (in $G$) elements.
\end{lem}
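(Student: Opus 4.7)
My plan is to combine the construction of Lemma \ref{lem:newtame1}(ii) with the non-commensurability guarantee of Lemma \ref{lem:tame1}(b), applied to an enlarged hyperbolically embedded family that absorbs the \tame part of $Y$. First, I reduce to the case where $Y$ consists of \tame elements: any $h \in \LWPD$ is automatically non-commensurable with every $y \in Y \setminus \LWPD$, since commensurability preserves \tame-ness (Remarks \ref{rem:powers} and \ref{rem:tame-conj}). Let $\{y_1, \ldots, y_k\}$ be representatives of the finitely many commensurability classes of $Y \cap \LWPD$.

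Next, I produce three pairwise non-commensurable \tame elements $g_1, g_2, g_3 \in H$, each non-commensurable with every $y_j$. Starting from $g^* \in H \cap \LWPD$ (existing by hypothesis) and choosing $a \in H \setminus E_G(g^*)$ (possible because $H$ is non-elementary), I apply Lemma \ref{lem:newtamenc} iteratively: at step $i \in \{1,2,3\}$, the lemma with $g = g^*$, $f = a$, and ``forbidden'' finite set $\{y_1, \ldots, y_k\} \cup \{g_1, \ldots, g_{i-1}\}$ yields $n_i \in \mathbb{Z}$ with $|n_i|$ arbitrarily large such that $g_i := (g^*)^{n_i} a \in H \cap \LWPD$ is non-commensurable with every element of the forbidden set.

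By Corollary \ref{cor:wpd-he}, the family $\mathcal{F} := \{E_G(g_i)\}_{i=1}^{3} \sqcup \{E_G(y_j)\}_{j=1}^{k}$ is then hyperbolically embedded in $(G, X)$, where $X$ is the generating set of $G$ supplied by the \v{S}varc--Milnor Lemma applied to the cobounded $G$-action on $\cS$. I now apply Lemmas \ref{lem:tame1} and \ref{lem:newtame1}(ii) to the single product $h := g_1^{m_1} g_2^{m_2} g_3^{m_3}$, viewed as the construction with $l = 3$, all separators $f_i = 1$, $F = \{1\}$, and the first three members of $\mathcal{F}$ as the chosen subgroups $H_{\lambda_i}$. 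This is permitted by Definition \ref{defn:w} because $E_G(g_1), E_G(g_2), E_G(g_3)$ are distinct members of $\mathcal{F}$, so its condition~(2) is vacuous regardless of the separators. For all sufficiently large $|m_i|$, Lemma \ref{lem:newtame1}(ii) gives $h \in \LWPD$ with $E_G(h) = E_G^+(h)$ (i.e., $h \in \LWPDP$), while Lemma \ref{lem:tame1}(b) gives that $h$ is non-commensurable with every element of $\bigcup_{i=1}^3 E_G(g_i) \cup \bigcup_{j=1}^k E_G(y_j)$. In particular, $h$ is non-commensurable with every $y_j$, and hence with all of $Y$ by the initial reduction. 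Since each $g_i \in H$, we conclude $h \in H \cap \LWPDP$.

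The ``In particular'' assertion follows by induction: once $h_1, \ldots, h_n \in H \cap \LWPDP$ have been produced pairwise non-commensurably, applying the main statement with $Y := \{h_1, \ldots, h_n\}$ yields $h_{n+1} \in H \cap \LWPDP$ non-commensurable with any $h_i$. The one delicate point in the plan is making a single $h$ satisfy both the $E_G^+$ conclusion and the non-commensurability-with-$Y$ conclusion simultaneously; this works because each of Lemmas \ref{lem:tame1} and \ref{lem:newtame1}(ii) only imposes lower bounds on the $|m_i|$'s, which can be satisfied jointly by taking all $|m_i|$ sufficiently large.
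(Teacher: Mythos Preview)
Your proof is correct and follows the same broad outline as the paper's: reduce to the \tame part of $Y$, build three pairwise non-commensurable elements $g_1,g_2,g_3\in H\cap\LWPD$ avoiding every commensurability class represented in $Y$, and then take a product of large powers. The point of departure is how non-commensurability with $Y$ is secured. The paper includes representatives $g_1,\dots,g_l$ of the \tame part of $Y$ \emph{as factors in the product} and applies Lemma~\ref{lem:newtame1} once to the full list $g_1,\dots,g_{l+3}$; you instead keep only the three factors coming from $H$, enlarge the hyperbolically embedded family to $\{E_G(g_i)\}_{i=1}^3\sqcup\{E_G(y_j)\}_{j=1}^k$ via Corollary~\ref{cor:wpd-he}, read off non-commensurability with each $y_j$ from Lemma~\ref{lem:tame1}(b), and separately invoke Lemma~\ref{lem:newtame1}(ii) on the three-term product to obtain $E_G(h)=E_G^+(h)$. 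Your split is marginally more bookkeeping but has the virtue that $h\in H$ is automatic; the paper's version forms $h=g_1^{m}\cdots g_{l+3}^{m}$ with $g_1,\dots,g_l\in Y$ not assumed to lie in $H$ and then asserts $h\in H$, which is not justified when $Y\not\subseteq H$ --- so your route is in fact the clean one here.
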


\begin{proof}Let $Y_1=\{g_1,\dots,g_l\} \subset Y$ be a maximal subset consisting of pairwise non-commensurable \tame elements (thus any element from $Y \cap \LWPD$ is commensurable to some
element from $Y_1$). If $l=0$ we understand that $Y_1$ is empty.

Take any element $g  \in H \cap \LWPD$. Since $H$ is non-elementary, there exists $f \in H \setminus E_G(g)$ and we can apply Lemma~\ref{lem:newtamenc}, to find $n \in \N$ such that
$g_{l+1}:=g^n f \in H \cap  \LWPD$ and $g_{l+1}$ is not commensurable with any element of $Y_1$. Applying this lemma two more times, we get elements $g_{l+2}, g_{l+3} \in H \cap \LWPD$
such that $g_i$ is not commensurable to $g_j$ whenever $1 \le i<j \le l+3$.

Now, by Lemma \ref{lem:newtame1}, there is $m\in \N$ such that the element $h:=g_1^{m}g_2^{m}\dots g_{l+3}^{ m} \in H$
belongs to $\LWPDP$ and is not commensurable with  any element from $\{g_1,\dots ,g_{l+3}\}$. Finally, if $h$ was commensurable to some $z \in Y$ then
$z \in \LWPD$ (by Remarks \ref{rem:powers} and \ref{rem:tame-conj}) and $z$ would be non-commensurable with any $y \in Y_1$, contradicting the choice of $Y_1$. Thus the lemma is proved.
\end{proof}

\begin{lem}\label{lem:intersec_EG}
Given two non-commensurable elements $g_1,g_2\in \LWPDP$, there exists $h\in \gen{g_1,g_2}\cap \LWPDP$ with the properties that $h$ is not commensurable with $g_i$, $i=1,2$,
$E_G(h)= \gen{h}\cdot (E_G(g_1)\cap E_G(g_2))$ and $h\in C_G(E_G(h))$, that is $E_G(h)\cong \gen{h}\times (E_G(g_1)\cap E_G(g_2))$.
\end{lem}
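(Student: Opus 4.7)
The plan is to apply Lemma \ref{lem:newtame1}(ii) to an ordered triple of pairwise non-commensurable \tame elements whose first and last entries are $g_1$ and $g_2$, so that the conjugation conclusion of that lemma lands precisely in $D:=E_G(g_1)\cap E_G(g_2)$.

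First observe that $D$ is finite: by Remark \ref{rem:elem_inter}, if $D$ were infinite then $g_1\apg g_2$. The subgroup $\gen{g_1,g_2}$ is non-elementary, since otherwise non-trivial powers of $g_1$ and $g_2$ would lie in a common infinite cyclic subgroup, forcing $g_1\apg g_2$. Apply Lemma \ref{lem:lwpd+} to $H=\gen{g_1,g_2}$ with $Y=\{g_1,g_2\}$ to produce $g_3\in\gen{g_1,g_2}\cap\LWPDP$ that is non-commensurable with both $g_1$ and $g_2$. For each $i\in\{1,2,3\}$ Lemma \ref{lem:elemrem} supplies some $n_i\in\N$ with $E_G(g_i)=C_G(g_i^{n_i})$; fix a common multiple $N$ of $n_1,n_2,n_3$, so that any power $g_i^{Nk}$ (with $k\in\Z$) centralizes the whole of $E_G(g_i)$, and in particular $g_i^{Nk}\in C_G(D)$.

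Now apply Lemma \ref{lem:newtame1} to the ordered triple $(g_1,g_3,g_2)$, with $l=3$, $F=\{1\}$, and $f_1=f_2=f_3=1$, obtaining the corresponding constant $N_1$. Choose positive integers $m_1,m_2,m_3$, each a multiple of $N$ and at least $N_1$, and set
\[
h\coloneq g_1^{m_1}g_3^{m_2}g_2^{m_3}\in\gen{g_1,g_2}.
\]
Part (ii) of Lemma \ref{lem:newtame1} then yields $h\in\LWPDP$, non-commensurable with any of $g_1,g_2,g_3$, together with the key property: for every $y\in E_G(h)$ there exist $\xi,\zeta\in\Z$ with $h^\xi y h^\zeta\in E_G(g_2)\cap E_G(g_1)=D$. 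Furthermore $h\in C_G(D)$, because each of its three factors is a power of $g_i^N$ for the appropriate $i$, and thus centralizes $D$.

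To finish, note that every element of $D$ commutes with $h$ and so lies in $E_G(h)$ by Lemma \ref{lem:elemrem}(b); combined with $\gen{h}\subseteq E_G(h)$ this gives $\gen{h}\cdot D\subseteq E_G(h)$. Conversely, given $y\in E_G(h)$, write $h^\xi y h^\zeta=d\in D$ as above; since $hd=dh$, we obtain $y=h^{-\xi-\zeta}d\in\gen{h}\cdot D$. Hence $E_G(h)=\gen{h}\cdot D$. Since $h$ is loxodromic it has infinite order, forcing $\gen{h}\cap D=\{1\}$; combined with $h\in C_G(D)$ this yields $E_G(h)\cong\gen{h}\times D$ and $h\in C_G(E_G(h))$. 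The main effort is the initial setup: producing $g_3$ inside $\gen{g_1,g_2}$ so that $h$ also lies in this subgroup, and forcing each exponent to be divisible by $N$ so that the abstract conjugation conclusion of Lemma \ref{lem:newtame1}(ii) translates into the concrete decomposition $E_G(h)=\gen{h}\cdot D$.
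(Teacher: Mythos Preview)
Your overall strategy is the same as the paper's and nearly every step is fine, but there is one genuine gap. You assert that for each $i\in\{1,2,3\}$ the element $g_i^{Nk}$ centralizes $D=E_G(g_1)\cap E_G(g_2)$, because $g_i^{Nk}$ centralizes $E_G(g_i)$. This is correct for $i=1,2$, since $D\subseteq E_G(g_i)$ in those cases. But for $i=3$ you only know that $g_3^{Nk}$ centralizes $E_G(g_3)$, and there is no reason whatsoever that $D\subseteq E_G(g_3)$: the element $g_3$ was produced by Lemma~\ref{lem:lwpd+} as a word in the \emph{original} $g_1,g_2$, which need not commute with $D$, so neither $g_3$ nor any prescribed power of it need lie in $C_G(D)$. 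Consequently your claim that $h=g_1^{m_1}g_3^{m_2}g_2^{m_3}\in C_G(D)$ is unjustified, and without it both inclusions $D\subseteq E_G(h)$ and the passage from $h^\xi y h^\zeta=d$ to $y=h^{-\xi-\zeta}d$ collapse.

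The fix is exactly the ordering used in the paper: first choose the exponent $N$ depending only on $g_1,g_2$ so that $g_1^{N},g_2^{N}\in C_G(D)$, and \emph{then} apply Lemma~\ref{lem:lwpd+} to the non-elementary subgroup $\gen{g_1^{N},g_2^{N}}$ to obtain $g_3$. Now $g_3$ is a word in elements centralizing $D$, so $g_3\in C_G(D)$ automatically, and hence $h\in C_G(D)$ regardless of the exponent on $g_3$. (Equivalently, the paper simply replaces each $g_i$, $i=1,2$, by a power central in $E_G(g_i)$ at the outset; this is harmless since $E_G(g_i^N)=E_G(g_i)$ and $\gen{g_1^N,g_2^N}\subseteq\gen{g_1,g_2}$.) With this change your argument goes through.
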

\begin{proof}
By Lemma \ref{lem:elemrem}, Remarks \ref{rem:powers} and \ref{rem:elem_inter}, we can replace $g_i$ with its power to assume that $g_i$ is central in $E_G(g_i)$, $i=1,2$.
The subgroup $\gen{g_1,g_2}\leqslant G$ is non-elementary because $g_1$ and $g_2$ are non-commensurable, hence, according to Lemma \ref{lem:lwpd+}, there is $g_3 \in \gen{g_1,g_2} \cap \LWPDP$
that is not commensurable with $g_1$ and $g_2$.

Now, by Lemma \ref{lem:newtame1}, we can choose $m \in \N$ so that the element $h:=g_1^m g_3^m g_2^m$ belongs to $\gen{g_1,g_2} \cap \LWPDP$, is not commensurable with $g_1$ and $g_2$, and
satisfies $E_G(h) \subseteq \gen{h} (E_G(g_1) \cap E_G(g_2)) \gen{h}$. Thus $E_G(h)\leqslant \gen{ h, E_G(g_1)\cap E_G(g_2)}$.
But each of $g_1$ and $g_2$ commutes with $E_G(g_1) \cap E_G(g_2)$, hence so does $h$, and so Lemma \ref{lem:elemrem} yields that  $E_G(g_1)\cap E_G(g_2) \leqslant E_G(h)$. Thus
$E_G(h)= \gen{ h, E_G(g_1)\cap E_G(g_2)}$. Finally, note that $h$ has infinite order and
$|E_G(g_1) \cap E_G(g_2)|<\infty$ by Remark \ref{rem:elem_inter},
which implies that $\gen{h} \cap E_G(g_1) \cap E_G(g_2)=\{1\}$. Therefore $E_G(h) \cong \gen{h} \times \left(  E_G(g_1) \cap E_G(g_2)\right)$, as claimed.
\end{proof}

\begin{lem}\label{lem:E_G(H)+}
Let $H\leqslant G$ be a non-elementary subgroup such that $H\cap \LWPD\neq \emptyset$.  Then $E_G(H)=\bigcap_{g\in H \cap \LWPDP} E_G(g). $

\end{lem}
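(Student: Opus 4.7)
\medskip

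\noindent\textbf{Proof proposal.} The inclusion $E_G(H) \subseteq \bigcap_{g \in H \cap \LWPDP} E_G(g)$ is immediate, since $\LWPDP \subseteq \LWPD$ and hence $H \cap \LWPDP \subseteq H \cap \LWPD$. For the reverse inclusion, set
\[
N \coloneq \bigcap_{g \in H \cap \LWPDP} E_G(g),
\]
which is a subgroup of $G$ as an intersection of subgroups. The plan is to show that $N$ is finite and normalized by $H$; Lemma \ref{lem:E_G(H)} will then give $N \subseteq E_G(H)$, completing the proof.

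\emph{Finiteness of $N$.} Since $H$ is non-elementary with $H \cap \LWPD \neq \emptyset$, Lemma \ref{lem:lwpd+} provides (infinitely many, in fact) pairwise non-commensurable elements in $H \cap \LWPDP$; in particular, there exist two non-commensurable $g_1, g_2 \in H \cap \LWPDP$. Then $N \leqslant E_G(g_1) \cap E_G(g_2)$, and this intersection is finite by Remark \ref{rem:elem_inter}.

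\emph{Normalization by $H$.} The first step is to observe that $H \cap \LWPDP$ is invariant under conjugation by $H$. Indeed, take any $h \in H$ and $g \in H \cap \LWPDP$. Then $hgh^{-1}$ lies in $H$ and in $\LWPD$ by Remark \ref{rem:tame-conj}. Moreover, by maximality in Lemma \ref{lem:elemrem} we have $E_G(hgh^{-1}) = h\,E_G(g)\,h^{-1}$, and directly from the definition of $E_G^+$ one also gets $E_G^+(hgh^{-1}) = h\,E_G^+(g)\,h^{-1}$. Hence $E_G(g) = E_G^+(g)$ implies $E_G(hgh^{-1}) = E_G^+(hgh^{-1})$, so $hgh^{-1} \in \LWPDP$. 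Now, given $x \in N$ and $h \in H$, for every $g \in H \cap \LWPDP$ the element $\tilde g \coloneq h^{-1}gh$ also belongs to $H \cap \LWPDP$, and so $x \in E_G(\tilde g) = h^{-1}E_G(g)h$; therefore $hxh^{-1} \in E_G(g)$. As $g$ was arbitrary, $hxh^{-1} \in N$.

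Thus $N$ is a finite subgroup of $G$ normalized by $H$, so $N \subseteq E_G(H)$ by the maximality statement in Lemma \ref{lem:E_G(H)}. The main obstacle would be producing sufficiently many elements of $H \cap \LWPDP$ to force $N$ to be finite, but this is exactly what Lemma \ref{lem:lwpd+} supplies; once that is in hand, the rest is a short formal manipulation with the behaviour of $E_G(\cdot)$ and $E_G^+(\cdot)$ under conjugation.
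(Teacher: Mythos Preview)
Your argument is correct and follows essentially the same route as the paper's proof: use Lemma~\ref{lem:lwpd+} to produce two non-commensurable elements of $H\cap\LWPDP$ so that the intersection is finite, observe that $H\cap\LWPDP$ is stable under $H$-conjugation so the intersection is normalized by $H$, and then invoke the maximality in Lemma~\ref{lem:E_G(H)}. Your write-up is in fact a bit more explicit than the paper's in checking that $E_G^+$ behaves correctly under conjugation, but the strategy is identical.
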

\begin{proof}
By Lemma \ref{lem:lwpd+}, there exist two non-commensurable elements  $g_1,g_2\in H \cap \LWPDP$.
Then $E_G(g_1)\cap E_G(g_2)$ is finite (Remark \ref{rem:elem_inter}), and therefore $\bigcap_{g\in H \cap \LWPDP} E_G(g)$ is finite.
Notice that the set  $H \cap \LWPDP$ is closed under $H$-conjugation and $E_G(hgh^{-1})=hE_G(g)h^{-1}$ for any $g \in \LWPD$ and any $h \in H$.
Hence $H$ normalizes the finite subgroup $\bigcap_{g\in \LWPDHP} E_G(g) \leqslant G$.
Clearly $E_G(H)=\bigcap_{g\in H \cap \LWPD} E_G(g)\leqslant \bigcap_{g\in H \cap \LWPDP} E_G(g)$. To obtain the desired equality,
it remains to recall that $E_G(H)$ is the unique maximal finite subgroup of $G$ normalized by $H$ by Lemma \ref{lem:E_G(H)}.
\end{proof}

\begin{defn} Let $H$ be a non-elementary subgroup of $G$. An element $g\in H$ will be called {\it $H$-special} if $g\in \LWPD$, $E_G(g)=\gen{g}\cdot E_G(H)$ and
$g\in C_G(E_G(H))$ (i.e., $E_G(g)\cong\gen{g}\times E_G(H)$).
The set of all $H$-special elements will be denoted by $S_G(H,\cS)$.
\end{defn}

The next statement is an analogue of \cite[Lemma 3.8.(ii)]{AMO}.
\begin{lem}\label{lem:special}
Let $H\leqslant G$ be a non-elementary subgroup such that $H \cap \LWPD \neq \emptyset$. Then $S_G(H,\cS)$ is non-empty.
\end{lem}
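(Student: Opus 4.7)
The plan is to produce an $H$-special element by combining Lemma~\ref{lem:E_G(H)+} with iterated applications of Lemma~\ref{lem:intersec_EG}, taking advantage of the abundance of non-commensurable elements given by Lemma~\ref{lem:lwpd+}.

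First, I would use Lemma~\ref{lem:E_G(H)+} to write $E_G(H)=\bigcap_{g\in H\cap \LWPDP} E_G(g)$. Pick any two non-commensurable $g_1,g_2\in H\cap\LWPDP$, which exist by Lemma~\ref{lem:lwpd+}; then $E_G(g_1)\cap E_G(g_2)$ is finite by Remark~\ref{rem:elem_inter}, so the lattice of subgroups of $E_G(g_1)\cap E_G(g_2)$ is finite. Consequently some finite subfamily already cuts out $E_G(H)$, and I would build it greedily: at stage $j$, having chosen pairwise non-commensurable $g_1,\dots,g_j\in H\cap \LWPDP$ with $F_j:=\bigcap_{i\le j}E_G(g_i)\neq E_G(H)$, there exists $g\in H\cap \LWPDP$ with $F_j\not\subseteq E_G(g)$. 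Any such $g$ is automatically non-commensurable with every $g_i$: otherwise $E_G(g)=E_G(g_i)\supseteq F_j$ by Remark~\ref{rem:elem_inter}, a contradiction. Picking $g_{j+1}$ to be any such element forces $F_{j+1}\subsetneq F_j$, so after finitely many steps one gets pairwise non-commensurable $g_1,\dots,g_k\in H\cap \LWPDP$ with $\bigcap_{i=1}^k E_G(g_i)=E_G(H)$.

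Next I would inductively produce $h_j\in H\cap\LWPDP$ for $j=2,\dots,k$ satisfying $E_G(h_j)=\langle h_j\rangle\times F_j$ and $h_j\in C_G(F_j)$. The base case $j=2$ is precisely Lemma~\ref{lem:intersec_EG} applied to $g_1,g_2$. For the inductive step, the key point is that $h_j$ and $g_{j+1}$ are non-commensurable: if they were, Remark~\ref{rem:elem_inter} would give $E_G(g_{j+1})=E_G(h_j)\supseteq F_j$, contradicting the choice of $g_{j+1}$. Applying Lemma~\ref{lem:intersec_EG} to $h_j$ and $g_{j+1}$ delivers $h_{j+1}\in\langle h_j,g_{j+1}\rangle\cap\LWPDP\subseteq H\cap \LWPDP$ with $E_G(h_{j+1})=\langle h_{j+1}\rangle\cdot(E_G(h_j)\cap E_G(g_{j+1}))$ and $h_{j+1}\in C_G(E_G(h_{j+1}))$. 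To identify the torsion part, note that $E_G(h_j)=\langle h_j\rangle\times F_j$ has torsion subgroup exactly $F_j$, and $E_G(h_j)\cap E_G(g_{j+1})$ is finite by non-commensurability (Remark~\ref{rem:elem_inter}), hence lies in $F_j$; therefore $E_G(h_j)\cap E_G(g_{j+1})=F_j\cap E_G(g_{j+1})=F_{j+1}$. This gives $E_G(h_{j+1})=\langle h_{j+1}\rangle\times F_{j+1}$ and completes the induction.

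Taking $j=k$ yields $h_k\in H\cap \LWPDP$ with $E_G(h_k)=\langle h_k\rangle\times E_G(H)$ and $h_k\in C_G(E_G(H))$, so $h_k$ is $H$-special and $S_G(H,\cS)\neq\emptyset$. The main obstacle I expect is the bookkeeping ensuring non-commensurability at each inductive step; the strict descent $F_{j+1}\subsetneq F_j$ is what makes the argument work, and it must be set up carefully so that Lemma~\ref{lem:intersec_EG} applies at every stage and so that the torsion subgroup of $E_G(h_j)$ is precisely $F_j$ (which is where the product decomposition from Lemma~\ref{lem:intersec_EG} is essential).
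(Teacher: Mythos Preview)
Your overall strategy is sound and close in spirit to the paper's: both arguments use Lemma~\ref{lem:intersec_EG} repeatedly to shrink the finite torsion part until it equals $E_G(H)$. The paper packages this as a minimality argument (pick $h$ with smallest $|K_h|$ and derive a contradiction), while you run a forward induction; either works.

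However, there is a genuine gap at exactly the point you flagged as the main obstacle. You argue that if $h_j$ and $g_{j+1}$ were commensurable, then Remark~\ref{rem:elem_inter} would give $E_G(g_{j+1})=E_G(h_j)\supseteq F_j$. But Remark~\ref{rem:elem_inter} only yields $E_G(g)=E_G(h)$ when $g$ and $h$ share a \emph{common power}. Commensurability only says some power of $h_j$ is \emph{conjugate} to some power of $g_{j+1}$; this gives $E_G(h_j)=zE_G(g_{j+1})z^{-1}$ for some $z\in G$, hence only $F_j\subseteq zE_G(g_{j+1})z^{-1}$, which does not contradict $F_j\not\subseteq E_G(g_{j+1})$. (The analogous claim in your first step, that the $g_i$ are pairwise non-commensurable, has the same flaw, though that claim is not actually needed for the rest of your argument.)

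The fix is exactly what the paper does when the analogous situation arises: if the element $g$ with $x\notin E_G(g)$ happens to be commensurable with $h_j$, use Lemma~\ref{lem:lwpd+} to pick some $g'\in H\cap\LWPDP$ non-commensurable with $g$, and apply Lemma~\ref{lem:intersec_EG} to $g,g'$ to obtain $f\in H\cap\LWPDP$ with $E_G(f)=\langle f\rangle\times(E_G(g)\cap E_G(g'))$ and $f\napg g$. Then $x\notin E_G(g)$ forces $x\notin E_G(f)$, and $f\napg g$ together with $g\apg h_j$ gives $f\napg h_j$, so you can take $g_{j+1}:=f$ and proceed. With this adjustment your inductive scheme goes through.
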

\begin{proof}
Let $B$ be the set of all elements $h\in H \cap \LWPDP$ such that $E_G(h)$ is the direct product of $\gen{h}$ with some finite subgroup $K_h$ of $G$.
By Lemma \ref{lem:lwpd+} there exists two non-commensurable elements in $H \cap \LWPDP$, and so, by Lemma \ref{lem:intersec_EG} and Remark \ref{rem:elem_inter}, the set
$B$ is non-empty. Let $h\in B$ be such that $|K_{h}|$ is minimal. We will show that $K_h=E_G(H)$ and thus $h\in S_G(H,\cS)$.

Notice that $E_G(H)\leqslant K_{h}$, as $E_G(H)\leqslant E_G(h)$ and $K_h$ is the unique maximal finite subgroup of $E_G(h)$ by definition.
Arguing by contradiction, assume that there exists a finite order element $x\in K_{h}\setminus E_G(H)$.
Then, according to Lemma \ref{lem:E_G(H)+}, there is $g\in H \cap \LWPDP$ such that $x\notin E_G(g)$. If $g$ and $h$ are non-commensurable,
using Lemma \ref{lem:intersec_EG} we can find $f\in H \cap \LWPDP$ such that  $E_G(f)=\gen{f} \cdot (E_G(h)\cap E_G(g))$ and $f\in C_G(E_G(h)\cap E_G(g))$.
Moreover, Remark \ref{rem:elem_inter} shows that $E_G(h)\cap E_G(g)$ is finite, and so it is contained in $K_h$.
Thus $f\in B$ and, as $x\notin E_G(h)\cap E_G(g)$, we have that $|K_f|=|E_G(h)\cap E_G(g) |< |K_h|$, contradicting the minimality of $|K_h|$.

It remains to consider the case when $g$ is commensurable with $h$. By Lemma \ref{lem:lwpd+}, there exists $g'\in H \cap \LWPDP$
non-commensurable with $g$. Then, by Lemma \ref{lem:intersec_EG}, we can find $f\in H \cap \LWPDP$ such that
$E_G(f)=\gen{f} \cdot (E_G(g')\cap E_G(g))$, $f\in C_G(E_G(g')\cap E_G(g))$ and $f$ is not commensurable with $g$, and hence $f \napg h$.
Moreover, since $x\notin E_G(g)$, we have that $x\notin E_G(f)$ as the torsion of
$E_G(f)$ is exactly $E_G(k)\cap E_G(g)$. Then $f$ has the same properties as $g$ in the previous paragraph, which leads to a contradiction with the minimality of $|K_h|$.
Therefore $K_h=E_G(H)$ and so $h \in S_G(H,\cS)\neq \emptyset$.
\end{proof}

The following lemma is similar to \cite[Lemma 3.6]{MO}:

\begin{lem}\label{lem:SG}
Suppose that $H \leqslant G$, $g\in S_G(H,\cS)$ and $x\in C_H(E_G(H))\setminus E_G(g)$. Then there exists $N_3\in \mathbb N$ such that $g^nx\in S_G(H,\cS)$ for any $n\in \mathbb Z$ with $|n|\geq N_3$.
\end{lem}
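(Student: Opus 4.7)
The plan is to apply Lemma \ref{lem:newtame1}(i) with $l=1$, $g_1 := g$ and $F := \{x\}$. The hypothesis $F \cap E_G(g_1) = \emptyset$ is exactly the assumption $x \notin E_G(g)$. Let $N_3$ be the constant $N_1(\{x\})$ provided by that lemma, and fix $n \in \Z$ with $|n| \ge N_3$; write $h := g^n x$. I first verify the three conditions making $h$ an element of $S_G(H,\cS)$, other than the structure of $E_G(h)$: clearly $h \in H$ since $g,x \in H$; Lemma \ref{lem:newtame1} immediately gives $h \in \LWPD$ and that $h$ is non-commensurable with $g$; and since both $g$ and $x$ lie in $C_G(E_G(H))$, so does $h$.

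The heart of the proof is identifying $E_G(h)$. Lemma \ref{lem:newtame1}(i) guarantees that for every $y \in E_G(h)$ there exist $\xi,\zeta \in \Z$ with
\[
h^\xi y h^\zeta \in E_G(h) \cap E_G(g).
\]
Since $g$ is $H$-special, $E_G(g) = \gen{g} \times E_G(H)$, whose torsion subgroup is exactly $E_G(H)$ (as $\gen{g}$ is torsion-free). On the other hand, $h$ and $g$ being non-commensurable forces $E_G(h)\cap E_G(g)$ to be finite by Remark~\ref{rem:elem_inter}, hence to consist of torsion elements of $E_G(g)$, giving $E_G(h) \cap E_G(g) \subseteq E_G(H)$. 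The reverse inclusion is immediate: $E_G(H) \leqslant E_G(g)$ by definition, and since every $e \in E_G(H)$ commutes with $h$, Lemma~\ref{lem:elemrem} yields $e \in E_G(h)$. Thus $E_G(h) \cap E_G(g) = E_G(H)$.

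Substituting this back into the previous containment, and using that $h$ commutes with $E_G(H)$, every $y \in E_G(h)$ satisfies $y \in \gen{h} \cdot E_G(H)$. Combined with the trivial reverse inclusion, this yields $E_G(h) = \gen{h}\cdot E_G(H)$. Finally, $h$ has infinite order while $E_G(H)$ is finite, so $\gen{h} \cap E_G(H) = \{1\}$ and the product is direct: $E_G(h) \cong \gen{h} \times E_G(H)$. Therefore $h \in S_G(H,\cS)$, as required.

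I do not expect any serious obstacle; the only point requiring care is that the finiteness of $E_G(h)\cap E_G(g)$ together with the direct-product structure $E_G(g) = \gen{g} \times E_G(H)$ forces this intersection to be precisely $E_G(H)$, which is what allows the consequence of Lemma \ref{lem:newtame1}(i) to be refined from ``$h^\xi y h^\zeta$ lies in a small finite subgroup'' to ``$h^\xi y h^\zeta$ lies in $E_G(H)$''.
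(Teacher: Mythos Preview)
Your proof is correct and follows essentially the same route as the paper's: both apply Lemma~\ref{lem:newtame1}(i) with $l=1$ to obtain $h = g^n x \in \LWPD$ non-commensurable with $g$, then use the finiteness of $E_G(h)\cap E_G(g)$ (Remark~\ref{rem:elem_inter}) together with the structure $E_G(g)=\gen{g}\times E_G(H)$ to force $E_G(h)\cap E_G(g)\subseteq E_G(H)$, and conclude $E_G(h)=\gen{h}E_G(H)$. Your extra step establishing the full equality $E_G(h)\cap E_G(g)=E_G(H)$ is harmless but not needed; the paper only uses the inclusion and then invokes Lemma~\ref{lem:E_G(H)} for the reverse containment $E_G(H)\leqslant E_G(h)$, which you obtain the same way.
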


\begin{proof}
By Lemma  \ref{lem:newtame1} there exists $N_3\in \N$ such that for all $n \in \Z$ with $|n|\geq N_3$,
$h:=g^n x \in H \cap \LWPD$ and this element is not commensurable with $g$. Part (i) of this lemma also shows that $E_G(h) \subseteq \gen{h} (E_G(g) \cap E_G(h)) \gen{h}$.
Since $g$ is $H$-special and the subgroup $E_G(g) \cap E_G(h)$ is finite (by Remark \ref{rem:elem_inter}), we see that $E_G(g) \cap E_G(h) \leqslant E_G(H)$.
Recalling Lemma \ref{lem:E_G(H)}, we obtain
$$E_G(h) \leqslant \gen{h, E_G(H)} =\gen{h} E_G(H) \leqslant E_G(h),$$
thus $E_G(h) =\gen{h} E_G(H)$. It remains to observe that $h \in C_H(E_G(H))$ because both $g$ and $x$ belong to this centralizer by the assumptions.
Hence $h \in S_G(H,\cS)$, as claimed.
\end{proof}

\begin{prop}\label{prop:special}
Let $H$ be a non-elementary subgroup of $G$ with $H \cap \LWPD\neq \emptyset$. Then $C_H(E_G(H))$ is generated by the set $S_G(H,\cS)$. In particular $\gen{S_G(H,\cS)}$ has finite index in $H$.
\end{prop}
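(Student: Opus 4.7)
The plan is to show the two inclusions $\gen{S_G(H,\cS)} \leqslant C_H(E_G(H))$ and $C_H(E_G(H)) \leqslant \gen{S_G(H,\cS)}$, and deduce the ``in particular'' part from the first equality together with the fact that $C_H(E_G(H))$ has finite index in $H$. The latter finite-index claim is a standard fact: since $E_G(H)$ is finite (Lemma~\ref{lem:E_G(H)}) and is normalized by $H$, the conjugation action gives a homomorphism $H \to \mathrm{Aut}(E_G(H))$ with kernel $C_H(E_G(H))$, and $|\mathrm{Aut}(E_G(H))| < \infty$. The first inclusion is immediate from the definition of an $H$-special element, which requires membership in $H \cap C_G(E_G(H)) = C_H(E_G(H))$.

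The heart of the argument is the reverse inclusion. I would fix some $g \in S_G(H,\cS)$, which exists by Lemma~\ref{lem:special}, and for an arbitrary $x \in C_H(E_G(H))$ split into two cases according to whether or not $x \in E_G(g)$. If $x \notin E_G(g)$, then Lemma~\ref{lem:SG} gives $g^n x \in S_G(H,\cS)$ for all sufficiently large $n$; fixing one such $n$, we obtain $x = g^{-n}(g^n x) \in \gen{S_G(H,\cS)}$ directly.

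The main obstacle is the case $x \in E_G(g)$, since here Lemma~\ref{lem:SG} does not apply to $x$ itself. I would proceed as follows. Since $g$ is $H$-special, $E_G(g) = \gen{g} \times E_G(H)$, so we can write $x = g^k e$ uniquely with $k \in \Z$ and $e \in E_G(H)$. Both $x$ and $g^k$ lie in $H \cap C_G(E_G(H))$, hence so does $e$; thus $e \in H \cap Z(E_G(H))$. Since $g^k \in \gen{g} \subseteq \gen{S_G(H,\cS)}$, it suffices to show $e \in \gen{S_G(H,\cS)}$.

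To handle $e$, I will verify that $ge$ is itself $H$-special, so that $e = g^{-1}(ge) \in \gen{S_G(H,\cS)}$. Because $g$ commutes with $E_G(H)$ and $e \in E_G(H)$, the elements $g$ and $e$ commute; with $m$ the order of $e$, we get $(ge)^m = g^m$, so $ge \in \LWPD$ by Remarks~\ref{rem:powers} and \ref{rem:tame-conj}, and $E_G(ge) = E_G(g^m) = E_G(g) = \gen{g} E_G(H)$ by Remark~\ref{rem:elem_inter}. The equality $\gen{ge} E_G(H) = \gen{g} E_G(H)$ then follows by writing $g = (ge)e^{-1}$ and $ge \in \gen{g} E_G(H)$; combined with $ge \in H \cap C_G(E_G(H))$ this confirms $ge \in S_G(H,\cS)$, completing the proof.
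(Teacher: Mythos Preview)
Your proof is correct and follows essentially the same approach as the one the paper refers to (the argument of \cite[Prop.~3.3]{MO}, transported via Lemmas~\ref{lem:special} and~\ref{lem:SG}): fix an $H$-special element $g$, use Lemma~\ref{lem:SG} to express any $x\in C_H(E_G(H))\setminus E_G(g)$ as $g^{-n}(g^nx)$, and handle the remaining case $x\in E_G(g)=\langle g\rangle\times E_G(H)$ directly. Your treatment of the residual case, showing that $ge$ is again $H$-special so that $e=g^{-1}(ge)\in\langle S_G(H,\cS)\rangle$, is clean and fully justified.
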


\begin{proof} The proof is  omitted, as it is identical to the proof of \cite[Proposition 3.3]{MO}, modulo Lemmas~\ref{lem:special} and \ref{lem:SG}.
\end{proof}


\section{Technical lemmas}\label{sec:techn}
The goal of this section is to prove several auxiliary statements that will help in establishing the claim of the main Theorem \ref{thm:comm}. All of these statements are analogous to the ones from \cite[Section 4]{MO}.
Throughout this section $G$ will denote a group acting coboundedly by isometries on a hyperbolic space $(\cS,\d)$.
Let $X$ be the generating set of $G$ given by Lemma  \ref{lem:svarc-milnor}, so that $\Gamma(G,X)$ is equivariantly quasi-isometric to $\cS$.

The main technical tool is the following lemma, which generalizes \cite[Lemma 4.4]{MO}. Roughly speaking,
it says that the products of large powers of WPD loxodromic elements are commensurable only in the ``obvious'' cases.

\begin{lem}\label{lem:cyclicperm}
Let $\{g_1,\dots, g_l\}\subseteq \LWPD$, $l\geq 2$, be a set of pairwise non-commensurable \tame elements. Let $F$ be a subset of $G$ such that $|F \setminus X|<\infty$ (e.g., $F$ could be finite).

There exists  $N_4 \in \mathbb N$ such that for any permutation $\sigma$ of $\{1,\dots, l\}$ and
arbitrary elements $h_i\in E_G(g_{\sigma(i)})$, $i=1,\dots,l$, of infinite order, the following holds. Suppose that
$(g_1^{m_1}g_2^{m_2}\dots g_l^{m_l})^{\zeta}$ is conjugate to $(h_1^{n_1}f_1h_2^{n_2} \cdots h_l^{n_l}f_l)^{\eta}$ in $G$,
for  some $f_i\in F$, $\eta, \zeta\in \mathbb{N}$, and $m_i,n_i \in  \mathbb Z,$ $|m_i|\geq N_4$, $|n_i|\geq N_4$ for all $i=1,\dots,l$.
Then $\eta=\zeta$ and there is $k\in \{0,\dots, l-1\}$ such that $\sigma$ is a cyclic shift by $k$, that is $\sigma(i)\equiv i+k (\text{mod } l)$
for all $i\in \{1,2,\dots, l\}$, and $f_j\in E_G(g_{\sigma(j)})E_G(g_{\sigma(j+1)})$ when $j=1,2,\dots, l-1$, $f_l \in  E_G(g_{\sigma(l)})E_G(g_{\sigma(1)})$.
\end{lem}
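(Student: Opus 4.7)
The plan is to follow the strategy of \cite[Lemma 4.4]{MO}, applying the combinatorial machinery of Section 4 to the family $\{E_G(g_i)\}_{i=1}^l$, which is hyperbolically embedded in $(G,X)$ by Corollary \ref{cor:wpd-he}. First I would use Lemma \ref{lem:changegenset} to enlarge $X$ by the finite set $F\setminus X$, so that without loss of generality $F\subseteq X$. Let $\Omega\subseteq G$ and $K\in\mathbb{N}$ be provided by Lemma \ref{lem:omega} for this family, and set $\mathcal{H}:=\bigsqcup_{i=1}^l(E_G(g_i)\setminus\{1\})$. The constant $N_4$ will be chosen so large that for every $i$, every infinite-order $h\in E_G(g_i)$, and every integer $n$ with $|n|\geq N_4$, the element $h^n$ avoids the finite set $\{g\in\langle\Omega\rangle : |g|_\Omega\leq 12K\}$. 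Such a uniform choice is possible because $E_G(g_i)$ is virtually cyclic (Lemma \ref{lem:elemrem}), so large powers of any infinite-order element escape any prescribed finite set at a rate controlled by $[E_G(g_i):\langle g_i\rangle]$. With this choice the words $W\equiv g_1^{m_1}\cdots g_l^{m_l}$ and $V\equiv h_1^{n_1}f_1\cdots h_l^{n_l}f_l$, together with all their cyclic powers, lie in $\mathcal{W}_0(\Omega,12K,X,\mathcal{H})$.

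Next, I would translate the hypothesized conjugacy $u\,W^{\zeta}u^{-1}=V^{\eta}$ into geometry. Form the cycle $o=rqr'q'$ in $\Gamma(G,X\sqcup\mathcal{H})$ with $\Lab(q)\equiv V^{\eta}$, $\Lab(q')\equiv W^{-\zeta}$, and $r,r'$ geodesic paths of length at most $C:=|u|_{X\sqcup\mathcal{H}}$. By Corollary \ref{cor:wgeod} both $q$ and $q'$ are $(4,1)$-quasi-geodesic, and by Lemma \ref{lem:no_back} both are without backtracking. Applying Lemma \ref{lem:regul}(c), all but at most $6C$ of the $\mathcal{H}$-components of $q$ connect to $\mathcal{H}$-components of $q'^{-1}$, and symmetrically; moreover Lemma \ref{lem:conseq-reg} with $d=l+1$ yields $l+1$ consecutive connected pairs once the total length is large enough.

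The structural conclusions will follow from the matching. A component labelled in $E_G(g_i)$ can be connected only to a component labelled in the same $E_G(g_i)$ (components in different $H_\lambda$'s are never connected), so the $h_j^{n_j}$-component of $q$ must match a $g_{\sigma(j)}^{m_{\sigma(j)}}$-component of $q'^{-1}$. The cyclic ordering of components on both sides, together with $l+1$ consecutive matches, forces $\sigma$ to be a cyclic shift $\sigma(j)\equiv j+k\pmod{l}$, and also forces $\eta=\zeta$, since the numbers of $\mathcal{H}$-components on the two sides (namely $l\eta$ and $l\zeta$) must agree up to $O(C)$, and this becomes rigid once $N_4$ is sufficiently large relative to $C$. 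For the factorization of each $f_j$, consider the small quadrilateral cycle formed by two consecutive connections: one side is the $X$-edge labelled $f_j$ on the $q$ side, the opposite side on $q'^{-1}$ is empty (since $W$ has no $X$-letters between consecutive $g_i^{m_i}$'s), and the two connecting sides are single $\mathcal{H}$-edges in $E_G(g_{\sigma(j)})$ and $E_G(g_{\sigma(j+1)})$ respectively; reading around this cycle gives $f_j\in E_G(g_{\sigma(j)})\cdot E_G(g_{\sigma(j+1)})$, and analogously for $f_l$.

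The main obstacle I anticipate is quantitative control of the boundary effects coming from $r$ and $r'$: up to $O(C)$ components near the endpoints may fail to connect, so one must ensure that enough interior components are matched both to pin down $\sigma$ uniquely and to upgrade the approximate equality $\eta\approx\zeta$ to the exact one $\eta=\zeta$. Handling the small-$\eta$ or small-$\zeta$ regimes (where the asymptotic counting arguments are weakest) and establishing the uniformity of $N_4$ across all admissible $h_i$ and all conjugators $u$ are the technical points demanding the most care.
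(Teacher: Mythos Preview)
Your overall strategy is the same as the paper's, and most of the pieces (hyperbolic embedding of $\{E_G(g_i)\}$, enlarging $X$ to absorb $F$, uniform choice of $N_4$ via the virtually-cyclic structure of $E_G(g_i)$, matching components to pin down $\sigma$ as a cyclic shift, and reading off the factorization $f_j\in E_G(g_{\sigma(j)})E_G(g_{\sigma(j+1)})$ from small sub-cycles) are essentially correct.

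There is, however, a genuine gap, and it is precisely the obstacle you flag at the end. You write that the component counts $l\eta$ and $l\zeta$ ``must agree up to $O(C)$, and this becomes rigid once $N_4$ is sufficiently large relative to $C$''. But $C=|u|_{X\sqcup\mathcal H}$ is the length of the conjugator, which is \emph{not} bounded in advance: the statement fixes $N_4$ first and then quantifies over all conjugacies, so $N_4$ cannot be chosen depending on $C$. With your cycle as written ($\Lab(q)\equiv V^\eta$, $\Lab(q')\equiv W^{-\zeta}$), the lengths $\ell(q),\ell(q')$ are fixed by $\eta,\zeta$, while $C$ can be arbitrarily large; neither Lemma~\ref{lem:regul}(c) nor Lemma~\ref{lem:conseq-reg} then gives any useful information, and the argument stalls exactly at the ``small $\eta$, small $\zeta$'' regime you worry about.

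The paper resolves this with a simple but essential trick you are missing: from $uW^\zeta u^{-1}=V^\eta$ one deduces $uW^{n\zeta}u^{-1}=V^{n\eta}$ for every $n\in\mathbb N$, and \emph{after} the conjugator $b$ (hence $C$) is given one chooses $n$ so large that $n l>6C$ and $n\zeta l\ge L(C,2l)$. Now the cycle has labels $W^{n\zeta}$ and $V^{-n\eta}$; the number of components on each side is $n\zeta l$ and $n\eta l$, both large compared to $C$, so the $O(C)$ boundary losses are negligible and the counting gives $\zeta=\eta$ exactly. Lemma~\ref{lem:conseq-reg} (with $d=2l$, not $l+1$, so that among $2l$ consecutive matches one finds a block aligned to start at an $E_G(g_1)$-component) then yields the cyclic shift and the factorization of the $f_j$. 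The point is that $N_4$ depends only on $\Omega$, $K$, and the groups $E_G(g_i)$ (to make the words lie in $\mathcal W(\Omega,7K,X,\mathcal H)$), while the auxiliary exponent $n$ absorbs all dependence on the unknown conjugator.
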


\begin{proof} This proof is very similar to the proof of \cite[Lemma 4.4(2)]{MO}, using the appropriate references.

By Corollary \ref{cor:wpd-he} the family $\{E_G(g_i)\}_{i=1}^l$ is hyperbolically embedded in $(G,X)$, and, by Lemma \ref{lem:changegenset}, we can enlarge $X$ to ensure that $F \subseteq X$.
Set $\mathcal H :=\sqcup_{i=1}^l (E_G(g_i) \setminus \{1\})$ and let the finite subsets $\Omega_i \subset E_G(g_i)$, $i=1,\dots,l$ and $K \in \N$
be chosen according to Lemma \ref{lem:omega}. Let $S$ be the finite subset of $G$ given by $S:=\bigcup_{j=1}^l\{h \in \gen{\Omega_j} \mid |h|_{\Omega_j} \le 7K\}$.

First, let us show that for each $i$ there is $K_i \in \N$ such that $g^k \notin S$ whenever $g \in E_G(g_i)$ is an element of infinite order and $|k|\ge K_i$. Indeed, since $|E_G(g_i):\gen{g_i}|<\infty$
we see that every infinite order element $g\in E_G(g_i)$ in fact belongs to the subgroup $E^+_G(g_i)$. Note that the center of $E^+_G(g_i)$ has finite index in it
(e.g., by the last assertion of Lemma \ref{lem:elemrem}). Hence all the elements of finite order form a finite normal subgroup
$T_i \lhd E^+_G(g_i)$, and the quotient $E^+_G(g_i)/T_i$ is an infinite cyclic group, generated by the coset $yT_i$, for some $y \in E^+_G(g_i)$. Since $y$ has infinite order and the set $ST_i$ is finite,
there exists $K_i \in \N$ such that $y^k \notin ST_i$ provided $|k| \ge K_i$. Then for any infinite order element $g \in E_G(g_i)$ there is $m \in \Z \setminus\{0\}$ with $g \in y^{m}T_i$.
Thus for any $k \in \Z$,  $g^k \in y^{km}T_i $. But if  $|k|\ge K_i$ then $|km| \ge K_i$ and hence $y^{km}T_i \cap S = \emptyset$, implying that $g^k \notin S$, as required.

Now,  set $N_4:= \max\{K_i \mid i=1,\dots,l\}$.
Choose arbitrary elements  $f_1,\dots,f_l \in F$ and assume that
$ b\left( g_1^{m_1}g_2^{m_2}\dots g_l^{m_l} \right)^\zeta b^{-1}=
\left( h_1^{n_1}f_1 h_2^{n_2} f_2 \dots h_l^{n_l}f_l\right)^\eta$ in $G$,
for some infinite order elements $h_i \in E_G(g_{\sigma(i)})$, where $\sigma$ is a permutation of $\{1,\dots,l\}$,
and some $b \in G$, $\zeta,\eta \in \mathbb N$, $m_i,n_i\in \mathbb Z$ with $|m_i|,|n_i|\ge N_4$, $i=1,2,\dots,l$.
Then, for every $n \in \mathbb N$ we have
\begin{equation}\label{eq:b-zeta} b\left( g_1^{m_1}g_2^{m_2}\dots g_l^{m_l} \right)^{n \zeta} b^{-1}=
\left( h_1^{n_1}f_1 h_2^{n_2} f_2 \dots h_l^{n_l}f_l\right)^{n \eta} .
\end{equation}
Let $U_i$, $V_i$ and $W_i$ be the letters from $\mathcal H$ and from $X$ representing the elements $h_{i}^{n_i}$,
$g_i^{m_i}$ and $f_i$, $i=1,\dots,l$, respectively. By our choice of $m_i$ and $n_i$, the words $(V_1V_2\dots V_l)^{n\zeta}$,
and
$\left( U_1 W_1 U_2 W_2\dots U_l W_l \right)^{n\eta}$ are in $\cW(\mathcal{O}, 7K,X,\cH)$ for all $n\in \mathbb Z$, where $\mathcal{O}\coloneq \{\Omega_j\}_{j=1}^l$.

Choose a shortest word $B$ over $X\cup \mathcal H$ representing $b$ in $G$.
Set $\e=|B|$ and let $L=L(\e,2l)\in \mathbb N$ be the constant given by Lemma \ref{lem:conseq-reg}. Take $n \in \N$ to be sufficiently large so that
$nl>6 \e$ and $n\zeta l \ge L$.

In the Cayley graph $\G$ equation \eqref{eq:b-zeta} gives rise to a cycle $o=rqr'q'$, in which
$\Lab(r)\equiv B$, $q_-=r_+$, $\Lab(q)\equiv (V_1V_2\dots V_l)^{n\zeta}$,
$r'_-=q_+$, $\Lab(r')\equiv B^{-1}$, $q'_-=r'_+$,
$\Lab(q')\equiv \left( U_1 W_1 U_2 W_2\dots U_l W_l \right)^{-n\eta}$.

By construction, the paths $q$ and $q'$ have exactly $n \zeta l$ and $n\eta l$ components respectively.
Suppose that $\zeta >\eta$. By Lemma \ref{lem:regul}.(c), at least
$n\zeta l-6\e > n l(\zeta-1)\geq n l\eta$ components of
$q$ must be connected to components of $q'$, hence two distinct components of $q$ will have to be connected
to the same component of $q'$, contradicting Lemma \ref{lem:regul}.(c). Hence $\zeta \le \eta$. A symmetric argument
shows that $\eta \le \zeta$. Consequently $\zeta=\eta$.

Since $\ell(q)= n \zeta l \ge L$, we can apply Lemma
\ref{lem:conseq-reg} to find $2l$ consecutive components of $q$
that are connected to $2l$ consecutive components of $q'^{-1}$.
Therefore there are consecutive components $p_1,\dots ,p_{l+1}$ of
$q$ and $p'_1,\dots, p'_{l+1}$ of $q'^{-1}$ such that $p_j$ is
connected to $p'_j$ for each $j$, and $\Lab(p_i)\equiv V_i$ for
$i=1,\dots,l$, $\Lab(p_{l+1})\equiv V_1$
(see Figure \ref{pic:2}). Therefore $\Lab(p_i') \in E_G(g_i)$, $i=1,\dots,l$, $\Lab(p_{l+1}') \in E_G(g_1)$. From the
form of $\Lab(q'^{-1})$ it follows that there is $k \in \{0,1,\dots,l-1\}$ such that $\Lab(p_j')\equiv U_{j+k}$ for
$j=1,\dots,l+1$  (indices are added modulo $l$). Thus
$U_{j+k}= h_{j+k}^{n_{j+k}}\in E_G(g_{j})$. On the other hand,
$h_{j+k}^{n_{j+k}} \in E_G(g_{\sigma(j+k)})$ and it has infinite order by the assumptions, hence $g_{\sigma(j+k)}$ is commensurable with $g_j$ in $G$ by Remark \ref{rem:elem_inter}.
The latter yields
that $\sigma(j+k)=j$ for all $j$. Therefore $\sigma$ is a cyclic shift (by $l-k$) of $\{1,\dots,l\}$.

\begin{figure}[!ht]
  \begin{center}
   \input{pic2.pstex_t}
  \end{center}
  \caption{}\label{pic:2}
\end{figure}

To prove the last claim of the lemma, note that the subpath $w_i$ of $q'^{-1}$ between
$(p'_i)_+$ and $(p'_{i+1})_-$ is labelled by $W_{i+k} \equiv W_{\sigma^{-1}(i)}$. As we showed, the vertex $(p_i)_+=(p_{i+1})_-$
is connected to $(w_i)_-$ by a path $s_i$ with $\Lab(s_i) \in E_G(g_i)$, and  to $(w_i)_+$
by a path $t_i$ with $\Lab(t_i) \in E_G(g_{i+1})$, $i =1,\dots,l$ (here we use the convention that $g_{l+1}=g_1$). Considering the cycle
$t_i^{-1}s_i w_i$ we achieve the desired inclusion: $f_{\sigma^{-1}(i)}=\Lab(w_i) \in E_G(g_i) E_G(g_{i+1})$,
$i=1,\dots,l$.
\end{proof}

 Let $H \leqslant G$ be a non-elementary subgroup such that $H \cap \LWPD \neq\emptyset$.
The following three lemmas are analogues of \cite[Lemmas 4.5, 4.6, 4.7]{MO} respectively.
The proofs are exactly the same as in \cite{MO} once one uses Lemma \ref{lem:newtame1} instead of \cite[Lemma 4.4.(i)]{MO},
 Lemma~\ref{lem:cyclicperm} instead of \cite[Lemma 4.4.(ii)]{MO} and Lemma \ref{lem:elemrem}.(c) instead of \cite[Lemma 2.4.(b)]{MO}.

\begin{lem}\label{lem:prod_of_three}
Suppose that $\varphi \colon H \to G$ is a homomorphism such that
$\varphi(h) \apg h$ for all $h\in H \cap \LWPD$. Then for any $g_1,g_2,g_3\in H \cap \LWPD$,
satisfying $g_i \stackrel{G}{\not\approx} g_j$
for $i\neq j$,
there exists $N_5 \in \N$ such that for arbitrary $n_1,n_2,n_3 \in \Z$, with $|n_i| \ge N_5$, $i=1,2,3$, and for
$g =g_1^{n_1} g_2^{n_2} g_3^{n_3}$, one has $g \in \LWPD$ and
$(\varphi(g))^\zeta= e g^\zeta e^{-1}$, for some $e \in G$ and $\zeta \in \N$.
\end{lem}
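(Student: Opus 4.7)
The plan is to first apply Lemma \ref{lem:newtame1} to guarantee that $g \in \LWPD$ for all sufficiently large $|n_i|$, then use the hypothesis $\varphi(g) \apg g$ to obtain an equation $\varphi(g)^{\zeta'}=eg^{\eta'}e^{-1}$ for some $\zeta',\eta' \in \Z\setminus\{0\}$, and finally match the two sides via Lemma \ref{lem:cyclicperm} to force $\zeta'=\eta'>0$. The key preparation is to rewrite $\varphi(g)$ in a form to which Lemma \ref{lem:cyclicperm} applies: since $g_i \in H \cap \LWPD$, the hypothesis gives $\varphi(g_i)\apg g_i$, so by Lemma \ref{lem:elemrem}(c) and Remark \ref{rem:elem_inter} we may write $\varphi(g_i)=c_i d_i c_i^{-1}$ with $d_i \in E_G(g_i)$ of infinite order, for $i=1,2,3$.

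Set $f_1=c_1^{-1}c_2$, $f_2=c_2^{-1}c_3$, $f_3=c_3^{-1}c_1$, so that a direct computation yields
\[
\varphi(g)=c_1\bigl(d_1^{n_1}f_1 d_2^{n_2}f_2 d_3^{n_3} f_3\bigr)c_1^{-1}
\quad\text{and}\quad
\varphi(g)^{\zeta'}=c_1\bigl(d_1^{n_1}f_1 d_2^{n_2}f_2 d_3^{n_3} f_3\bigr)^{\zeta'}c_1^{-1}.
\]
Let $F\subseteq G$ be the finite set $\{1,f_1^{\pm 1},f_2^{\pm 1},f_3^{\pm 1}\}$, let $N_1$ be the constant from Lemma \ref{lem:newtame1} (applied with $l=3$ and $F$), and let $N_4=N_4(F)$ be the constant from Lemma \ref{lem:cyclicperm} applied to the pairwise non-commensurable family $\{g_1,g_2,g_3\}$. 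Define $N_5:=\max\{N_1,N_4\}$.

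Now fix $n_1,n_2,n_3 \in \Z$ with $|n_i|\ge N_5$. Lemma \ref{lem:newtame1} gives $g \in \LWPD$, so the hypothesis yields $\varphi(g)\apg g$, i.e.\ there exist $e\in G$ and $\zeta',\eta'\in \Z\setminus\{0\}$ with $\varphi(g)^{\zeta'}=e g^{\eta'} e^{-1}$. After possibly replacing both exponents by their negatives, we may assume $\zeta'>0$. Combining with the displayed expansion, the element $(d_1^{n_1}f_1 d_2^{n_2}f_2 d_3^{n_3} f_3)^{\zeta'}$ is conjugate in $G$ to $(g_1^{n_1}g_2^{n_2}g_3^{n_3})^{\eta'}$.

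If $\eta'>0$, Lemma \ref{lem:cyclicperm} (with $l=3$, $m_i=n_i$, $h_i=d_i \in E_G(g_{\sigma(i)})$ where $\sigma$ is the identity, and the $f_i \in F$ from above) directly gives $\zeta'=\eta'$, and the lemma is proved with $\zeta=\zeta'$. The main obstacle is the case $\eta'<0$: here we take the inverse of the conjugacy equation, rearrange the resulting word, and conjugate by $f_3^{-1}$ to obtain that
\[
\bigl(d_3^{-n_3}f_2^{-1}d_2^{-n_2}f_1^{-1}d_1^{-n_1}f_3^{-1}\bigr)^{\zeta'}
\]
is conjugate to $(g_1^{n_1}g_2^{n_2}g_3^{n_3})^{-\eta'}$, with both exponents $\zeta',-\eta'$ now positive and $|{-n_i}|\ge N_5\ge N_4$. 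Applying Lemma \ref{lem:cyclicperm} to this new conjugacy forces the permutation $\sigma(i)=4-i$ (given by $d_3 \in E_G(g_3)$, $d_2\in E_G(g_2)$, $d_1\in E_G(g_1)$) to be a cyclic shift of $\{1,2,3\}$, which it is not; this contradiction rules out $\eta'<0$ and completes the proof.
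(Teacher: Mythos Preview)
Your proof is correct and follows essentially the same strategy as the paper's (which simply cites \cite[Lemma~4.5]{MO} after the substitutions Lemma~\ref{lem:newtame1} $\leftrightarrow$ \cite[Lemma~4.4(i)]{MO}, Lemma~\ref{lem:cyclicperm} $\leftrightarrow$ \cite[Lemma~4.4(ii)]{MO}, Lemma~\ref{lem:elemrem}(c) $\leftrightarrow$ \cite[Lemma~2.4(b)]{MO}). The rewriting $\varphi(g_i)=c_id_ic_i^{-1}$ with $d_i\in E_G(g_i)$ of infinite order, the expansion of $\varphi(g)$ as a conjugate of $d_1^{n_1}f_1d_2^{n_2}f_2d_3^{n_3}f_3$, and the use of Lemma~\ref{lem:cyclicperm} to rule out the non-cyclic permutation $\sigma(i)=4-i$ when $\eta'<0$ are exactly the ingredients of the original argument.
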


\begin{lem}\label{lem:first_step}
 Let $a,b \in \LWPD$ be non-commensurable elements and let $y,z \in G$.
There exists $N_6\in \N$ such that the following holds. Suppose that
$a^{k'} y b^{l'} z\stackrel{G}{\approx} a^{k} b^{l}$ for some integers $k,l,k',l'$ with
$|k|,|l|,|k'|,|l'|\ge N_6$. Then  $y \in E_G(a) E_G(b)$ and $z \in E_G(b) E_G(a)$.
\end{lem}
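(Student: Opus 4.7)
The plan is to derive this directly from Lemma \ref{lem:cyclicperm}, applied with $l = 2$, $g_1 = a$ and $g_2 = b$. Set $F := \{y, z\}$, so $|F \setminus X| \le 2 < \infty$, and let $N_6 := N_4(F)$ be the constant produced by that lemma; the magnitude hypotheses $|k|, |l|, |k'|, |l'| \ge N_6$ will then guarantee the bounds $|m_i|, |n_i| \ge N_4$ required for invoking it.

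The commensurability assumption unpacks as an equation
\[
(a^{k'} y b^{l'} z)^{\eta_0} = c\, (a^k b^l)^{\zeta_0}\, c^{-1}
\]
for some $c \in G$ and $\eta_0, \zeta_0 \in \Z \setminus \{0\}$. After possibly inverting both sides I may assume $\eta_0 > 0$, and then split on the sign of $\zeta_0$. The key observation is that the choice of the permutation $\sigma$ of $\{1,2\}$ entering Lemma \ref{lem:cyclicperm} is forced by the constraint $h_i \in E_G(g_{\sigma(i)})$. Indeed, if $a$ lay in $E_G(b)$ then $\langle a \rangle \subseteq E_G(a) \cap E_G(b)$ would be infinite, so Remark \ref{rem:elem_inter} would produce a relation $a^m = b^n$ with $m,n \neq 0$, contradicting non-commensurability of $a$ and $b$; hence $a \notin E_G(b)$, and symmetrically $b \notin E_G(a)$.

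In the case $\zeta_0 > 0$ I apply Lemma \ref{lem:cyclicperm} with $\zeta = \zeta_0$, $\eta = \eta_0$, $m_1 = k$, $m_2 = l$, $n_1 = k'$, $n_2 = l'$, $h_1 = a$, $h_2 = b$, $f_1 = y$ and $f_2 = z$; the observation above then forces $\sigma = \mathrm{id}$. In the case $\zeta_0 < 0$ I rewrite $(a^k b^l)^{\zeta_0} = (b^{-l} a^{-k})^{|\zeta_0|}$ and apply the lemma instead with $g_1' := b$, $g_2' := a$, $m_1 = -l$, $m_2 = -k$ (keeping the rest of the data), whereupon the same argument forces $\sigma$ to be the non-trivial transposition of $\{1,2\}$. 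In both cases the conclusion of Lemma \ref{lem:cyclicperm} yields
\[
f_1 \in E_G(g_{\sigma(1)})\, E_G(g_{\sigma(2)}), \qquad f_2 \in E_G(g_{\sigma(2)})\, E_G(g_{\sigma(1)}),
\]
and substituting back the labels $g_i$ (respectively $g_i'$) gives in both cases exactly $y \in E_G(a) E_G(b)$ and $z \in E_G(b) E_G(a)$, as desired. There is no real obstacle beyond careful bookkeeping of signs; all of the geometric content has already been absorbed into Lemma \ref{lem:cyclicperm}.
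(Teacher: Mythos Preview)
Your proof is correct and follows essentially the same approach as the paper: the paper defers to \cite[Lemma~4.6]{MO}, whose proof likewise consists of applying the cyclic-permutation lemma (here Lemma~\ref{lem:cyclicperm}) with $l=2$, $g_1=a$, $g_2=b$, $F=\{y,z\}$, and then reading off the containments $f_1\in E_G(g_{\sigma(1)})E_G(g_{\sigma(2)})$, $f_2\in E_G(g_{\sigma(2)})E_G(g_{\sigma(1)})$. Your case split on the sign of $\zeta_0$ and the observation that non-commensurability forces $a\notin E_G(b)$ (hence pins down $\sigma$) are exactly the bookkeeping needed; note also that the constant $N_4$ from Lemma~\ref{lem:cyclicperm} depends only on the set $\{E_G(a),E_G(b)\}$ and on $F$, not on the ordering, so a single $N_6$ suffices for both cases.
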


\begin{lem} \label{lem:spec-image}  Assume that $g \in S_G(H,\cS)$ and $\psi\colon H \to G$ is a homomorphism satisfying
$\psi(g^n)=g^n z$ for some $n \in \N$ and $z \in E_G(H)$. Then there is $f \in E_G(H)$ such that $\psi(g)=gf$.
\end{lem}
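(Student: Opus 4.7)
\medskip

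The plan is to let $w \coloneq \psi(g)$ and show that $w \in E_G(g)$, at which point the exact form $w = gf$ will fall out from the hypothesis $w^n = g^n z$ together with the direct product structure $E_G(g) = \gen{g} \times E_G(H)$ given by $g \in S_G(H,\cS)$.

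First I would observe that $g$ commutes with $w^n$. Indeed, $w^n = g^n z$, and $g$ commutes with $g^n$ trivially and with $z \in E_G(H)$ by the definition of $S_G(H,\cS)$, which requires $g \in C_G(E_G(H))$. Next I would verify that both $w$ and $w^n$ are loxodromic WPD elements: since $g$ and $z$ commute and $z$ has finite order in $E_G(H)$, we have $(g^n z)^{|z|} = g^{n|z|}$, so $w^n \apg g$, and hence $w^n \in \LWPD$ by Remarks \ref{rem:powers} and \ref{rem:tame-conj} combined with $g \in \LWPD$. Remark \ref{rem:powers} then gives $w \in \LWPD$ as well.

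The commutation $g w^n g^{-1} = w^n$ together with the fact that $w^n$ has infinite order allows us to apply Lemma \ref{lem:elemrem}(c) (with $h = w^n$ and $x = g$) to conclude $g \in E_G(w^n)$. Since $w$ and $w^n$ are commensurable nontrivial powers, Remark \ref{rem:elem_inter} yields $E_G(w^n) = E_G(w)$. Therefore $g \in E_G(w)$, and because $g$ has infinite order another application of Remark \ref{rem:elem_inter} gives $E_G(g) = E_G(w)$. In particular $w \in E_G(g) = \gen{g} \cdot E_G(H)$, so we can write $w = g^m f$ for some $m \in \Z$ and $f \in E_G(H)$.

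Finally, using that $g$ commutes with $f \in E_G(H)$, I would expand $w^n = (g^m f)^n = g^{nm} f^n$ and compare this with $w^n = g^n z$. This gives $g^{n(m-1)} = z f^{-n}$, where the right-hand side has finite order (since both $z, f^{-n} \in E_G(H)$, which is finite). As $g$ has infinite order, this forces $n(m-1) = 0$ and hence $m = 1$, so $\psi(g) = w = gf$ with $f \in E_G(H)$, completing the proof. The argument is essentially mechanical once the key identification $E_G(w) = E_G(g)$ is made; the only subtle point to be careful about is verifying that $w$ itself (not merely a power of it) is loxodromic WPD so that the machinery of $E_G(\cdot)$ applies.
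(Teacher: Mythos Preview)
Your proof is correct and follows essentially the same strategy as the paper (which defers to \cite[Lemma~4.7]{MO}): show that $w=\psi(g)$ lies in $E_G(g)$ and then read off $w=gf$ from the direct product decomposition $E_G(g)\cong\gen{g}\times E_G(H)$. One small simplification: from your computation $(w^n)^{|z|}=g^{n|z|}$ you already have $w^{n|z|}=g^{n|z|}$, so $w$ centralizes this power of $g$ and Lemma~\ref{lem:elemrem}(b) gives $w\in E_G(g)$ directly, avoiding the detour through establishing $w\in\LWPD$ and identifying $E_G(w)=E_G(g)$.
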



\section{Commensurating homomorphisms} \label{sec:comm_hom}
This section is dedicated to proving our main technical theorem:

\begin{thm}\label{thm:comm}
Let $G$ be a group acting coboundedly by isometries on a hyperbolic space $\cS$.
Let $H\leqslant G$ be a non-elementary subgroup of $G$ and let $\phi\colon H\to G$ be a homomorphism.
Suppose that $H \cap \LWPD\neq \emptyset$ and  $\phi(h)\apg h$ for all $h\in H \cap \LWPD$.

Then there exists a set map $\e \colon H\to E_G(H)$, whose restriction to $C_H(E_G(H))$ is a homomorphism,
and an element $w\in G$ such that for every $h\in H$, $\phi(h)=w(h\e(h))w^{-1}$.
Moreover, if $\phi(H)=H$ then $w\in N_G(H  E_G(H))$.
\end{thm}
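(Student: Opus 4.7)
I follow the strategy of Minasyan--Osin~\cite{MO}, replacing their use of relative hyperbolicity by the WPD machinery developed in Sections~\ref{sec:adding}--\ref{sec:techn}. First, to produce the conjugator $w$: by Proposition~\ref{prop:special} and Lemma~\ref{lem:lwpd+} I pick three pairwise non-commensurable elements $g_1,g_2,g_3\in S_G(H,\cS)$. Lemma~\ref{lem:prod_of_three} applied to this triple with sufficiently large exponents $n_1,n_2,n_3$ yields $w\in G$ and $\zeta\in\N$ such that $g:=g_1^{n_1}g_2^{n_2}g_3^{n_3}$ lies in $\LWPD$ and $\phi(g)^\zeta=wg^\zeta w^{-1}$. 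Setting $\psi(h):=w^{-1}\phi(h)w$ gives a commensurating homomorphism with $\psi(g)^\zeta=g^\zeta$; the rest of the proof will show that $\psi(h)\in hE_G(H)$ for every $h\in H$, after which $\e(h):=h^{-1}\psi(h)$ is the desired error map.

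The heart of the proof is showing $\psi(s)\in sE_G(H)$ for every $H$-special $s$. Given $s\in S_G(H,\cS)$, I use Lemma~\ref{lem:lwpd+} to arrange that $s$ is non-commensurable with $g_2$ and $g_3$. Apply Lemma~\ref{lem:prod_of_three} to $(s,g_2,g_3)$ to obtain a second conjugator $w'\in G$ with $\phi(u)^{\zeta'}=w'u^{\zeta'}w'^{-1}$ for $u:=s^{m_1}g_2^{m_2}g_3^{m_3}$ and sufficiently large exponents. Combining this with the Stage~1 identity yields a conjugation between two long products of large-power WPD loxodromic elements; Lemma~\ref{lem:cyclicperm} then forces the underlying permutation to be a cyclic shift (necessarily the identity, by non-commensurability of $s$ with $g_2,g_3$) and confines $w^{-1}w'$ to a coset involving the elementary closures $E_G(g_i)$. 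Feeding this into Lemma~\ref{lem:first_step} yields $\psi(s^m)\in s^m E_G(H)$ for some $m\in\N$, and Lemma~\ref{lem:spec-image} upgrades this to $\psi(s)=s\e(s)$ with $\e(s)\in E_G(H)$.

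To finish, I extend $\e$ from $S_G(H,\cS)$ to all of $H$. Proposition~\ref{prop:special} gives $C_H(E_G(H))=\langle S_G(H,\cS)\rangle$ and $[H:C_H(E_G(H))]<\infty$. Since $E_G(H)$ is centralised by $C_H(E_G(H))$, the a priori cocycle identity $\e(h_1h_2)=h_2^{-1}\e(h_1)h_2\cdot\e(h_2)$ collapses to $\e(h_1)\e(h_2)$ on $C_H(E_G(H))$, so $\e$ is a homomorphism there. To define $\e$ on the remaining finitely many cosets of $C_H(E_G(H))$ in $H$, I repeat the argument of the previous paragraph with each coset representative $t\in H$ in place of $s$, using Lemma~\ref{lem:newtamenc} to embed $t$ into a suitable loxodromic WPD product so that the same chain of lemmas confines $\psi(t)$ to $tE_G(H)$. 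For the moreover statement, $\phi(H)=H$ combined with the identity $\phi(h)=w(h\e(h))w^{-1}$ gives $w(HE_G(H))w^{-1}=HE_G(H)$ at the level of sets (using that $E_G(H)\lhd H$ and $\e$ takes values in $E_G(H)$), so $w\in N_G(HE_G(H))$.

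The principal obstacle is the second stage. Without Lemma~\ref{lem:cyclicperm} one would only obtain $\psi(s)\in\langle s\rangle E_G(H)=E_G(s)$, i.e.\ $\psi(s)=s^k z$ for some unknown $k\in\Z$ and $z\in E_G(H)$; the rigidity of that lemma, combined with Lemma~\ref{lem:first_step}, is precisely what forces the exponent of $s$ in $\psi(s)$ to equal $1$ and thereby produces $\e(s)\in E_G(H)$ rather than the weaker conclusion $\e(s)\in E_G(s)$.
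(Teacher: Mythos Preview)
Your overall strategy matches the paper's, but there are two genuine gaps.

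\textbf{The main gap is in Stage~2.} After Stage~1 you have a single $w$ with $\psi(g^\zeta)=g^\zeta$ for one product $g=g_1^{n_1}g_2^{n_2}g_3^{n_3}$. To invoke Lemma~\ref{lem:first_step} or Lemma~\ref{lem:cyclicperm} for a new special element $s$, you would need a relation of the shape $a^{k'}yb^{l'}z\apg a^kb^l$ in which the \emph{same} loxodromics $a,b$ appear on both sides and only $y,z$ are unknown; but at this point you know nothing about $\psi(s),\psi(g_2),\psi(g_3)$ beyond commensurability, so neither side of your putative identity has that form and these lemmas do not apply. The paper fixes this with an additional alignment step you have omitted: using Lemma~\ref{lem:towards_inner} it produces \emph{two} non-commensurable special elements $a,b$ with $\phi(a^\mu)=ua^\mu u^{-1}$ and $\phi(b^\nu)=vb^\nu v^{-1}$, and then applies Lemma~\ref{lem:first_step} to the single relation $a^{k\mu}(u^{-1}v)b^{k\nu}(u^{-1}v)^{-1}\apg a^{k\mu}b^{k\nu}$ to force $u^{-1}v\in E_G(a)E_G(b)$. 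Only after this does one obtain a single $w$ with $\psi(a^\mu)=a^\mu$ \emph{and} $\psi(b^\nu)=b^\nu$, at which point Lemma~\ref{lem:from2toall} (which is essentially what your Stage~2 is trying to reprove) handles every special $s$.

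\textbf{The extension to all of $H$ is also incorrect.} Your plan to ``embed each coset representative $t$ into a suitable loxodromic WPD product'' fails when $t$ has finite order, and even otherwise Lemma~\ref{lem:spec-image} requires its input to be $H$-special. The paper's argument is different and simpler: once $\psi(x)=x\tilde\e(x)$ on $C_H(E_G(H))$, one gets $\psi(z^n)=z^n$ for \emph{every} $z\in H$ (with $n=|E_G(H)|\cdot|H:C_H(E_G(H))|$), and then for arbitrary $h\in H$ and $g\in H\cap\LWPD$ the identity $\psi(h)g^n\psi(h)^{-1}=\psi(hg^nh^{-1})=hg^nh^{-1}$ forces $h^{-1}\psi(h)\in E_G(g)$, hence into $E_G(H)$. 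Finally, for the moreover clause you must first show $wE_G(H)w^{-1}=E_G(H)$; without this, $\phi(H)=H$ only gives one inclusion $w^{-1}Hw\subseteq HE_G(H)$, not normalisation.
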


We need two auxiliary lemmas in order to prove the  theorem. As usual, $G$ is a group acting isometrically and coboundedly on a hyperbolic space $\cS$ and $H \leqslant G$ is a non-elementary subgroup
with $H \cap \LWPD \neq \emptyset$.

\begin{lem}\label{lem:towards_inner}
Let $\psi\colon H\to G$ be a homomorphism such that $\psi(h)\apg h$ for all $h\in H \cap \LWPD$.
Suppose that $g_1,g_2,g_3 \in H \cap \LWPD$ is a triple of pairwise non-commensurable (in $G$) elements with $g_1 \in S_G(H,\cS)$ and $g_2,g_3 \in C_H(E_G(H))$.
Then for any $l,m \in \N$ there are $n_1,n_2,n_3,n_1'\in \N$ such that $a:=g_1^{ln_1}g_2^{mn_2}g_3^{n_3}$ and $b:=g_1^{n_1'}g_2^{mn_2}g_3^{n_3}$ satisfy the following properties:
\begin{itemize}
  \item $a,b \in S_G(H,\cS)$;
  \item the elements $a,b,g_1,g_2,g_3$ are pairwise non-commensurable in $G$;
  \item there exist $\mu,\nu\in \N$, $u,v\in G$ such that $\psi(a^\mu)=u a^\mu u^{-1}$ and $\psi(b^\nu)=v b^{\nu} v^{-1}$.
\end{itemize}
\end{lem}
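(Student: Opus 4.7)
The plan is to build $a$ and $b$ by prepending large powers of $g_1$ to a common tail $x := g_2^{mn_2}g_3^{n_3}$ and verifying each required property with the technical results from Section \ref{sec:techn}; in particular the conjugation assertions will come from Lemma \ref{lem:prod_of_three}. First I apply Lemma \ref{lem:prod_of_three} to the triple $g_1,g_2,g_3$ and the homomorphism $\psi$, obtaining a threshold $N_5$ above which any product $g_1^{k_1}g_2^{k_2}g_3^{k_3}$ with all $|k_i|\ge N_5$ lies in $\LWPD$ and has some positive power sent by $\psi$ to a conjugate of itself. Throughout the argument I will ensure $ln_1,\, n_1',\, mn_2,\, n_3 \ge N_5$.

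Next I fix the common tail. Choose $n_3\ge N_5$; since $g_3\napg g_2$ and $E_G(g_2)$ is virtually cyclic, every infinite-order element of $E_G(g_2)$ is commensurable with $g_2$, so $g_3^{n_3}\notin E_G(g_2)$. Lemma \ref{lem:newtamenc} applied to $g_2$, $f=g_3^{n_3}$, and $Y=\{g_1\}$ therefore gives an $N_2$ such that $x := g_2^{mn_2}g_3^{n_3}\in \LWPD$ and $x\napg g_1$ whenever $|mn_2|\ge \max\{N_2,N_5\}$; I pick such $n_2$. Because $x$ has infinite order, the same virtually-cyclic observation applied to $E_G(g_1)$ upgrades $x\napg g_1$ to $x\notin E_G(g_1)$. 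Moreover $x\in C_H(E_G(H))$ since $g_2,g_3\in C_H(E_G(H))$ by hypothesis.

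With $x$ fixed I construct $a$ and $b$ in parallel. Lemma \ref{lem:SG} applied to the $H$-special element $g_1$ and $x\in C_H(E_G(H))\setminus E_G(g_1)$ produces $N_3$ with $g_1^n x\in S_G(H,\cS)$ for all $|n|\ge N_3$; a second use of Lemma \ref{lem:newtamenc}, now with $g=g_1$, $f=x$, $Y=\{g_1,g_2,g_3\}$, yields $N_2'$ so that $g_1^n x\in \LWPD$ and is non-commensurable with every $g_i$ when $|n|\ge N_2'$. Choosing $n_1$ with $ln_1\ge \max\{N_3,N_2',N_5\}$ and setting $a := g_1^{ln_1}x$, I get $a\in S_G(H,\cS)$, $a\napg g_i$ for $i=1,2,3$, and (since $|ln_1|,|mn_2|,|n_3|\ge N_5$) elements $\mu\in\N$, $u\in G$ from Lemma \ref{lem:prod_of_three} satisfying $\psi(a^\mu)=ua^\mu u^{-1}$. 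Repeating the construction for $b=g_1^{n_1'}x$, but with the enlarged set $Y'=\{g_1,g_2,g_3,a\}$ in the second application of Lemma \ref{lem:newtamenc}, produces $n_1'$, $\nu\in \N$, and $v\in G$ with $b\in S_G(H,\cS)$, $b$ non-commensurable with each of $a,g_1,g_2,g_3$, and $\psi(b^\nu)=vb^\nu v^{-1}$. The main delicacy is securing $x\notin E_G(g_1)$ so that Lemma \ref{lem:SG} can be invoked; everything else is bookkeeping, provided the thresholds are chosen in the right order ($N_5$, then $n_3$, then $n_2$ which fixes $x$, then $n_1$ and $n_1'$).
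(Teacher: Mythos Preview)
Your argument is correct and follows essentially the same route as the paper: fix the tail $x=g_2^{mn_2}g_3^{n_3}$ via Lemma~\ref{lem:newtamenc}, use Lemma~\ref{lem:SG} to make $g_1^{n}x$ special, and invoke Lemma~\ref{lem:prod_of_three} for the conjugation statement. The only cosmetic difference is that the paper appeals to Lemma~\ref{lem:newtame1} (with $F=\emptyset$) to obtain non-commensurability of $a,b$ with the $g_i$, whereas you get this directly from Lemma~\ref{lem:newtamenc} by enlarging the finite set $Y$; both are equivalent bookkeeping.
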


\begin{proof}
Let $N_1\in \N$ be given by Lemma \ref{lem:newtame1} applied to the set $\{g_1,g_2,g_3\}$ and $F=\emptyset$.
Choose $N_5\in \N$ according to an application of Lemma \ref{lem:prod_of_three} to $\psi, g_1,g_2,g_3$ and let $n_3:=\max\{N_1,N_5\}$.
By Lemma \ref{lem:newtamenc}, there is $n_2\geq \max\{N_1,N_5\}$ such that $g_2^{mn_2}g_3^{n_3}\in H \cap \LWPD$ and this element is not commensurable with $g_1$ in $G$.
It follows that the element $g_2^{mn_2}g_3^{n_3}\in C_H(E_G(H))$ has infinite order, and thus it cannot belong to the virtually cyclic subgroup  $E_G(g_1)$.
Since $g_1$ is $H$-special, we can use  Lemma \ref{lem:SG} to find $N_3\in \mathbb{N}$ such that $g_1^ng_2^{mn_2}g_3^{n_3}\in S_G(H,\cS)$ whenever $n\geq N_3$.
Take $n_1 \in \N$ so that $ln_1 \ge\max\{N_1,N_3, N_5\}$, and apply Lemma \ref{lem:newtamenc} to find $n_1'>ln_1$ such that the elements
$a=g_1^{ln_1}g_2^{mn_2}g_3^{n_3}$ and $b=g_1^{n_1'}g_2^{mn_2}g_3^{n_3}$ are non-commensurable in $G$.

By Lemma  \ref{lem:SG} we have $a,b \in S_G(H,\cS)$, and by Lemma \ref{lem:newtame1} neither of these two elements is commensurable to any $g_i$, $i=1,2,3$.
Finally, using Lemma \ref{lem:prod_of_three}, one can conclude that there exist $u,v\in G$, $\mu, \nu\in \N$ such that $\psi(a^\nu)=ua^\nu u^{-1}$ and $\psi(b^\nu)=vb^\nu v^{-1}$.
\end{proof}

\begin{lem}\label{lem:from2toall}
Let $\psi\colon H\to G$ be a homomorphism such that $\psi(h)\apg h$ for all $h\in H \cap \LWPD$.
Suppose that there are two non-commensurable elements $a,b\in S_G(H,\cS)$ such that $\psi(a^\mu)=a^\mu$ and $\psi(b^\nu)=b^\nu$ for some $\mu,\nu\in \N$.
Then for every $g\in S_G(H,\cS)$ there is $f=f(g)\in E_G(H)$ such that $\psi(g)=gf$.
\end{lem}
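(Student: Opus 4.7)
The plan is to reduce the statement to Lemma \ref{lem:spec-image}, so it suffices to find $n\ne 0$ with $\psi(g^n)\in g^n E_G(H)$.

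First I would establish two preparatory facts. Since $\psi(a^\mu)=a^\mu$ and $a\in S_G(H,\cS)$, Lemma \ref{lem:spec-image} gives $\psi(a)=af_a$ with $f_a\in E_G(H)$ and $f_a^\mu=1$; similarly $\psi(b)=bf_b$ with $f_b^\nu=1$. A centralizer argument then yields $\psi(E_G(H))\subseteq E_G(H)$: for $f\in E_G(H)$, applying $\psi$ to $[f,a^\mu]=1$ forces $\psi(f)\in C_G(a^\mu)$, and the centrality of $a$ in $E_G(a)=\gen{a}\times E_G(H)$ combined with Lemma \ref{lem:elemrem}.(c) gives $C_G(a^\mu)=E_G(a)$; analogously $\psi(f)\in E_G(b)$, and one verifies $E_G(a)\cap E_G(b)=E_G(H)$ by writing any $x$ in the intersection as $a^s f_x=b^t f_y$, deducing $a^{sm}=b^{tm}$ for $m=|f_y f_x^{-1}|$, and invoking $a\napg b$.

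The main case is $g\napg a$ and $g\napg b$. Choose $K$ large enough that $\psi(g^K)=cg^Kc^{-1}$ for some $c\in G$. Applying Lemma \ref{lem:prod_of_three} to the triple $\{a,g,b\}$ with $h=a^{M\mu}g^Kb^{N\nu}$ ($M,N$ large) produces a conjugacy $\psi(h)^\zeta=eh^\zeta e^{-1}$ in $G$; feeding this into Lemma \ref{lem:cyclicperm} (with $F=\{c,c^{-1},1\}$) and noting that $a,g,b$ are pairwise non-commensurable forces the cyclic shift to be the identity. This yields $c\in E_G(a)E_G(g)$ and $c\in E_G(b)E_G(g)$; writing $c=\alpha\gamma=\beta^{-1}\gamma'$ with $\alpha\in E_G(a),\beta\in E_G(b),\gamma,\gamma'\in E_G(g)$, and using that $g$ is central in $E_G(g)$ together with the direct product structures $E_G(a)=\gen{a}\times E_G(H)$ and $E_G(b)=\gen{b}\times E_G(H)$, one obtains
\[
\psi(g^K)=a^jg^Ka^{-j}=b^ig^Kb^{-i}
\]
for some integers $i,j$. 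Equating these and invoking Lemma \ref{lem:elemrem}.(c) gives $b^{-i}a^j\in C_G(g^K)=E_G(g)$, so $b^{-i}a^j=g^lf$ for some $l\in\Z$, $f\in E_G(H)$. If $l\ne 0$, I apply $\psi$ to $b^{-i}a^j=g^lf$: expanding via $\psi(a)=af_a$, $\psi(b)=bf_b$, and using $\psi(f)\in E_G(H)$, the identity rearranges to $\psi(g^l)\in g^l E_G(H)$, so Lemma \ref{lem:spec-image} yields $\psi(g)=gf'$. If $l=0$, then $b^{-i}a^j=f\in E_G(H)$; since $a,b$ centralize $E_G(H)$, rearranging to $a^j=b^if$ and raising to the $|f|$-th power produces $a^{jm}=b^{im}$, which forces $i=j=0$ by $a\napg b$, hence $\psi(g^K)=g^K$ and Lemma \ref{lem:spec-image} again concludes. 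The main obstacle is tracking the decomposition of $c$ and establishing that its $E_G(a)$- and $E_G(b)$-components reduce, modulo $E_G(H)$, to pure powers $a^j$ and $b^{-i}$.

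To handle the remaining case $g\apg a$ (symmetrically $g\apg b$), I use Lemma \ref{lem:SG} together with Lemma \ref{lem:newtamenc} to produce an element $a'\in S_G(H,\cS)$ pairwise non-commensurable with $a,b,g$. Applying the main case to $a'$ with the pair $(a,b)$ yields $\psi(a')=a'f_{a'}$, hence $\psi(a'^{\mu'})=a'^{\mu'}$ for $\mu'=|f_{a'}|$; now $(a',b)$ satisfies the lemma's hypotheses and $g$ is non-commensurable with both, so the main case applied once more produces $\psi(g)=gf_g$, completing the proof.
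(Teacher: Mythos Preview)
Your overall strategy is close to the paper's, but there is a genuine gap at the assertion ``Choose $K$ large enough that $\psi(g^K)=cg^Kc^{-1}$ for some $c\in G$.'' Commensurability only gives $\psi(g)^m=cg^nc^{-1}$ for some nonzero $m,n$ which need not be equal; taking $K=mt$ yields $\psi(g^K)=cg^{K'}c^{-1}$ with $K'=nt$, and nothing you cite forces $K'=K$. Lemma~\ref{lem:cyclicperm} still applies with $n_2=K'$ and gives the same conclusions about $c$, so the case $l\ne 0$ survives. But in the case $l=0$ you only obtain $\psi(g^K)=g^{K'}$, hence $\psi(g)=g^sf$ with $sK=K'$ and $f\in E_G(H)$; you have not ruled out $s\ne 1$, so Lemma~\ref{lem:spec-image} does not apply and the argument does not close.

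The paper avoids this by never asserting exponent-matching for $g$ itself. Instead it passes through two auxiliary special elements: first $d:=a^{l\mu}g$ (chosen non-commensurable with $a$ and $b$), then $c:=a^{n_1\mu}b^{n_2\nu}d^{n_3}$, for which Lemma~\ref{lem:prod_of_three} (via Lemma~\ref{lem:towards_inner}) \emph{does} guarantee $\psi(c^\eta)=ec^\eta e^{-1}$ with matched exponent. It then uses Lemma~\ref{lem:first_step} (not Lemma~\ref{lem:cyclicperm}) on the pairs $(a,c)$ and $(b,c)$ to force $e\in E_G(a)E_G(c)\cap E_G(b)E_G(c)$, and a short contradiction argument shows the $a$-exponent in $e$ must vanish, yielding $\psi(c^\eta)=c^\eta$. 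From there one unwinds: Lemma~\ref{lem:spec-image} gives $\psi(c)=cf_4$, and since $\psi$ already fixes $a^\mu$ and $b^\nu$, this forces $\psi(d^{n_3})=d^{n_3}f_4$, then $\psi(d)=df_5$, and finally (using $d=a^{l\mu}g$) $\psi(g)=gf_5$. The key point is that the exponent-matching is produced for the auxiliary product $c$ and then \emph{inherited} by $g$ through unwinding, rather than assumed for $g$ directly.
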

\begin{proof}
Consider any $g\in S_G(H,\cS)$. If $g\in E_G(a)$ then there is $n\in \N$ such that $g^n\in \gen{a^\mu}$ because $|E_G(a):\gen{a^\mu}|<\infty$. Hence $\psi(g^n)=g^n$ and
then by Lemma \ref{lem:spec-image}, $\psi(g)=gf$ for some $f\in E_G(H)$.

Suppose now that $g\not\in E_G(a)$. Recall that $g\in C_H(E_G(H))$ because this element is $H$-special.
Now, combining Lemmas  \ref{lem:SG} and \ref{lem:newtamenc},
we can find some $l\in \N$ such that $d:=a^{l\mu}g\in S_G(H,\cS)$ and $d$ is not commensurable with $a$ and $b$ in $G$.

By Lemma \ref{lem:towards_inner}, we can find $n_1,n_2,n_3\in \N$ such that $c:=a^{n_1\mu}b^{n_2\nu}d^{n_3}\in S_G(H,\cS)$,
$c\napg a$, $c\napg b$ and $\psi(c^\eta)=ec^{\eta}e^{-1}$ for some $\eta\in \N$ and $e\in G$.

By Lemma \ref{lem:newtame1}, $a^{k\mu}c^{k\eta}\in H \cap \LWPD$ for every sufficiently large $k\in \N$.
Hence $a^{k\mu}ec^{k\eta}e^{-1}=\psi(a^{k\mu}c^{k\eta})\apg a^{k\mu}c^{k\eta}$ whenever $k$ is sufficiently large.
So, Lemma \ref{lem:first_step} shows that $e\in E_G(a)E_G(c)$. Thus $e=a^pc^sf$ for some $p,s\in \Z$ and $f\in E_G(H)$. This implies that $\psi(c^{\eta})=a^pc^{\eta} a^{-p}$ since $c\in C_H(E_G(H))$.

Similarly one proves that  $e\in E_G(b)E_G(c)$, and thus there is $q\in \Z$ such that $\psi(c^\eta)=b^qc^{\eta}b^{-q}$. Hence $(a^{-p}b^q) c^{\eta}(a^{-p}b^q)^{-1}=c^{\eta}$ and therefore $a^{-p}b^q\in E_G( c )$
by Lemma \ref{lem:elemrem}.(b).

Assume, first, that $p\neq 0$.
If  $a^{-p}b^q\in E_G(c)$ has finite order, then $a^{-p}b^q\in E_G(H)$ because $c\in S_G(H,\cS)$. Hence, $a^{p}\in b^q E_G(H)\subset E_G(b)$ contradicting the assumption that $a\napg b$.
Thus $a^{-p}b^q$ must have infinite order, and so there are $\alpha,\beta\in \Z\setminus\{0\}$  such that $(a^{-p}b^q)^\alpha=c^\beta$.

Since  $\psi(a^\mu)=a^\mu$ and $\psi(b^\nu)=b^\nu$, by Lemma \ref{lem:spec-image} there exist $f_1,f_2\in E_G(H)$ such that $\psi(a)=af_1$ and $\psi(b)=bf_2$.
Since $a,b\in C_H(E_G(H))$ we obtain that
$$\psi(c^\beta)=\psi((a^{-p}b^q)^\alpha)=(a^{-p}b^q)^\alpha f_3= c^\beta f_3\text{  for some } f_3\in E_G(H). $$
Then for $\gamma:=\beta \eta |E_G(H)|$ we get that $c^\gamma=\psi(c^\gamma)=a^p c^{\gamma} a^{-p}$, implying that $a^p\in E_G(c)$, which contradicts $a\napg c$.

Therefore, $p=0$ and, thus, $\psi(c^\eta)=c^\eta$.
By Lemma \ref{lem:spec-image}, there exists $f_4\in E_G(H)$ such that $\psi(c)=cf_4$.
Since $c=a^{n_1\mu}b^{n_2\nu}d^{n_3}$ and $a^\mu$, $b^\nu$ are fixed by $\psi$, we see that $\psi(d^{n_3})=d^{n_3}f_4$.
Applying Lemma \ref{lem:spec-image} again, we find $f_5\in E_G(H)$ such that $\psi(d)=df_5$. Finally, since $d=a^{l\mu}g$, we achieve that $\psi(g)=gf_5$ as needed.
\end{proof}

We are now ready to prove the main result of this section.

\begin{proof}[Proof of Theorem \ref{thm:comm}]
Since $H \cap \LWPD\neq \emptyset$,  by Lemma \ref{lem:special} there is at least one element $g _1 \in S_G(H,\cS)$.
Since $H$ is non-elementary and $C_H(E_G(H))$ has finite index in it, $C_G(E_G(H))$ is non-elementary itself.
On the other hand, $E_G(g_1)$ is elementary by  Lemma \ref{lem:elemrem}, hence there exists $y\in C_H(E_G(H))\setminus E_G(g_1)$.

By Lemma \ref{lem:newtamenc}, there is $k_2\in \N$ such that $g_2:=g_1^{k_2}y\in \LWPD$ and $g_2\napg g_1$.
Using the same lemma again, we can find $k_3\in \N$ such that $g_3:=g_1^{k_3}y\in \LWPD$ and $g_3\napg g_i$ for $i=1,2$. 

Note that, by construction, $g_2,g_3 \in C_H(E_G(H))$, so one can
use Lemma \ref{lem:towards_inner} to find non-commensurable elements $a,b\in S_G(H,\cS)$ such that $\phi(a^\mu)=ua^\mu u^{-1}$ and $\phi(b^\nu)=vb^\nu v^{-1}$ for some
$u,v\in G$ and $\mu, \nu\in \N$.

Let $\chi\colon H\to G$ be the homomorphism defined by $\chi(h)=u^{-1}\phi(h)u$ for all $h\in H$. Then $\chi(a^\mu)=a^\mu$, $\chi(b^{\nu})=(u^{-1}v)b^{\nu}(u^{-1}v)^{-1}.$
Note that $\chi(h)\apg h$ for every $h\in H \cap \LWPD$. By Lemma \ref{lem:newtame1}, $(a^\mu)^k(b^\nu)^k\in H \cap \LWPD$ if $k\in \N$ is large enough. Therefore
$$a^{k\mu}(u^{-1}v)b^{k\nu}(u^{-1}v)^{-1}= \chi (a^{k\mu}b^{k\nu})\apg a^{k\mu}b^{k\nu} \text{  for every sufficiently large }k\in\N. $$
Consequently, by Lemma \ref{lem:first_step}, $u^{-1}v\in E_G(a)E_G(b)$, thus $u^{-1}v=a^sb^tf$ for some $s,t\in \Z$ and $f\in E_G(H)$. Hence $\chi(b^\nu)=(a^sb^tf) b^\nu (a^sb^tf)^{-1}$, and since $b\in C_H(E_G(H))$, $\chi(b^\nu)=a^s b^\nu a^{-s}$. Let $w:=ua^s\in G$ and let the homomorphism $\psi\colon H\to G$ be defined by $\psi(h)=w^{-1}\phi(h)w=a^{-s}\chi(h)a^s$ for all $h\in H$. By construction
\begin{equation*}
\psi(a^\mu)=a^\mu,\, \phi(b^\nu)=b^\nu \text{ and } \psi(h)\apg h \text{ for each } h\in H \cap \LWPD .
\end{equation*}
Now we are under the hypothesis of Lemma \ref{lem:from2toall}, claiming that for every $g\in S_G(H,\cS)$ there exists $f=f(g)\in E_G(H)$ such that $\psi(g)=gf$.

By Proposition \ref{prop:special}, $C_H(E_G(H))$ is generated by $S_G(H,\cS)$, therefore for each $x\in C_H(E_G(H))$ there is $\tilde{\e}(x)\in E_G(H)$ such that $\psi(x)=x\tilde{\e}(x).$ Since the map $\psi$ is a homomorphism,  the map $\tilde{\e}\colon C_H(E_G(H))\to E_G(H)$ will also be a homomorphism. By construction, we have $\phi(x)=w\psi (x) w^{-1}=wx \tilde{\e}(x)w^{-1}$ for all $x \in C_H(E_G(H))$.

Now we need to extend the homomorphism $\tilde{\e}\colon C_H(E_G(H))\to E_G(H)$ to a set map $\e:H \to E_G(H)$. Define
$l:=|H:C_H(E_G(H))|$, $m:=|E_G(H)|$  and $n:=ml\in \N$.

Since $E_G(H)$ is normalized by $H$, the centralizer $C_H(E_G(H))$ is a normal subgroup of $H$. Consequently, for any $z\in H$ we have that $z^l\in C_H(E_G(H))$ and
\begin{equation}\label{eq:fix_zn}
\psi(z^n)=z^n\tilde{\e}(z^l)^m=z^n.
\end{equation}
Take  an arbitrary $h\in H$. For any $g\in H \cap \LWPD$ we have 
$\psi(h)g^n\psi(h)^{-1}=\psi(hg^nh^{-1})=hg^nh^{-1}$, implying that $h^{-1}\psi(h)\in E_G(g)$. Since $g$
was an arbitrary element of $H \cap \LWPD$, we conclude that $h^{-1}\psi(h)\in E_G(H)=\cap_{g\in H \cap \LWPD}E_G(g)$ (see Lemma \ref{lem:E_G(H)}).

After defining the $\e(h):=h^{-1}\psi(h)$ for each $h\in H$, one immediately sees that $\e\colon H \to E_G(H)$ is a map with the required properties.
Evidently the restriction of $\e$ to $C_H(E_G(H))$ is the homomorphism $\tilde{\e}$.

It remains to prove the last claim of the theorem. Assume that $\phi(H)=H$.
Consider any element $f\in E_G(H)$. By the above assumption, for any $g\in H \cap \LWPD$ there is $h\in H$ such that $\phi(h)=g$.
Recalling \eqref{eq:fix_zn} and the definition of $\psi$ we achieve $g^n=\phi(h^n)=w \psi(h^n) w^{-1}=wh^nw^{-1}$.
But $h^n \in C_H(E_G(H))$, therefore
$$wfw^{-1}g^n(wfw^{-1})^{-1}=wfh^nf^{-1}w^{-1}=wh^nw^{-1}=g^n.$$
Hence, $wfw^{-1}\in E_G(g)$ for every $g\in H \cap \LWPD$; consequently $wfw^{-1}\in E_G(H)$. The latter implies that $wE_G(H)w^{-1}\subseteq E_G(H)$ and since $E_G(H)$ is finite, we conclude that $w$ normalizes
$E_G(H)$.

Observe that $\widehat{H}:=HE_G(H)$ is a subgroup of $G$ because $E_G(H)$ is normalized by $H$ (see Lemma \ref{lem:E_G(H)}). For any $h\in H$ we have that $whw^{-1}=wh\e(h)w^{-1}w\e(h)^{-1}w^{-1}\in HE_G(H)$;
thus $wHw^{-1}\leqslant \widehat{H}$. Since $w^{-1}\phi(h)w=h\e(h)\in \widehat{H}$ and $\phi(H)=H$, one gets $w^{-1}Hw\subseteq \widehat{H}$. Therefore $w\widehat{H}w^{-1}\subseteq \widehat{H}wE_G(H)w^{-1}=\widehat{H},$ $w^{-1}\widehat{H}w\subseteq \widehat{H}w^{-1}E_G(H)w=\widehat{H}$, i.e., $w\in N_G(\widehat{H})$. This finishes the proof of the theorem.
\end{proof}

 Theorem \ref{thm:comm} allows us to  generalize  Corollaries 5.3 and 5.4 from \cite{MO}.
\begin{cor}\label{cor:comm_equiv_cond}
Let $G$ be group acting coboundedly and by isometries on a hyperbolic space $\cS$. Suppose that $H\leqslant G$ is a non-elementary subgroup,
with $H \cap \LWPD\neq \emptyset$, and $\phi\colon H\to G$ is a homomorphism.
The following are equivalent:
\begin{enumerate}
\item[{\rm (a)}] $\phi$ is commensurating;
\item[{\rm (b)}] $\phi(g)\apg g$ for every $g\in H \cap \LWPD$;
\item[{\rm (c)}] there is a set map $\e \colon H\to E_G(H)$, whose restriction to $C_H(E_G(H))$ is a homomorphism, and an element $w\in G$ such that for every $g\in G$, $\phi(g)=w(g\e(g))w^{-1}$.
\end{enumerate}
In particular, if  $E_G(H)=\{1\}$ then every commensurating homomorphism from $H$ to $G$ is the restriction to $H$ of an inner automorphism of $G$.
\end{cor}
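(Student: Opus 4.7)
The plan is to establish the cyclic chain of implications (a) $\Rightarrow$ (b) $\Rightarrow$ (c) $\Rightarrow$ (a). The implication (a) $\Rightarrow$ (b) is immediate, since $H \cap \LWPD \subseteq H$ and so the commensurability condition in (a) specializes directly to (b). For (b) $\Rightarrow$ (c) I would simply invoke Theorem \ref{thm:comm}: its hypotheses are exactly (b) together with the standing assumption $H \cap \LWPD \neq \emptyset$, and its conclusion produces precisely the set map $\e \colon H \to E_G(H)$ and the conjugating element $w \in G$ asserted in (c).

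The substantive direction is (c) $\Rightarrow$ (a). To verify that an arbitrary $h \in H$ satisfies $\phi(h) \apg h$, I would exploit two finiteness facts. By Lemma \ref{lem:E_G(H)}, $E_G(H)$ is a finite subgroup of $G$ normalized by $H$; consequently the conjugation action of $H$ on $E_G(H)$ factors through the finite group $\operatorname{Aut}(E_G(H))$, so $C_H(E_G(H))$ is a normal subgroup of finite index $l := [H : C_H(E_G(H))]$ in $H$. Setting also $n := |E_G(H)|$, we have $h^l \in C_H(E_G(H))$ for every $h \in H$. Since the restriction of $\e$ to $C_H(E_G(H))$ is a homomorphism into $E_G(H)$, we compute
\[
\e(h^{ln}) = \e(h^l)^n = 1.
\]
The formula in (c) then yields $\phi(h^{ln}) = w\, h^{ln}\e(h^{ln})\, w^{-1} = w h^{ln} w^{-1}$, i.e.\ $\phi(h)^{ln}$ is conjugate to $h^{ln}$ in $G$. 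Since $ln \neq 0$, this is precisely the statement that $\phi(h) \apg h$, establishing (a).

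The final ``in particular'' assertion is then an immediate corollary: if $E_G(H) = \{1\}$ and $\phi$ is commensurating, the equivalence just proved furnishes $w \in G$ with $\phi(g) = w \cdot g \e(g) \cdot w^{-1} = wgw^{-1}$ for all $g \in H$, so $\phi$ is the restriction to $H$ of the inner automorphism of $G$ induced by $w$. I do not foresee any real obstacle in this proof: the substance is already contained in Theorem \ref{thm:comm}, and the only new ingredient is the elementary observation that the finiteness of $E_G(H)$ and of the index $[H : C_H(E_G(H))]$ allow one to promote commensurability on loxodromic WPD elements, encoded in the explicit formula of (c), to commensurability on all of $H$.
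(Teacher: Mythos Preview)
Your proof is correct and follows essentially the same approach as the paper: both establish the cycle (a) $\Rightarrow$ (b) $\Rightarrow$ (c) $\Rightarrow$ (a), invoking Theorem~\ref{thm:comm} for (b) $\Rightarrow$ (c), and for (c) $\Rightarrow$ (a) both use the finiteness of $E_G(H)$ and of the index $[H:C_H(E_G(H))]$ to show $\e(h^{ln})=1$ and hence $\phi(h)^{ln}=wh^{ln}w^{-1}$. The only difference is cosmetic (your $n$ is the paper's $m$, and you spell out slightly more detail on why $C_H(E_G(H))$ has finite index).
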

\begin{proof}
(a) implies (b) by definition, and (b) implies (c) by Theorem \ref{thm:comm}.
It remains to show that (c) implies (a). Indeed, let the homomorphism $\phi $ satisfy (c), and let $g$ be an arbitrary element of $H$.
Thus
$\phi (g)=w(g\e(g))w^{-1}$ for some $w\in G$ and $\e(g)\in E_G(H)$.

Since $E_G(H)$ is a finite subgroup of $G$ normalized by $H$, the subgroup $C_H(E_G(H))$ is normal and of finite index in $H$. Set $m:=|E_G(H)| \in \N$ and $l:=|H:C_H(E_G(H))|\in \N$.
It follows that $g^l\in C_H(E_G(H))$ and $\e(g^{lm})={\e(g^l)}^m=1$ in $G$ by the assumptions of (c).
Therefore
$$\phi(g)^{lm}=\phi(g^{lm})=w g^{lm}\e(g^{lm})w^{-1}=w g^{lm} w^{-1}.$$
Hence $\phi(g) \apg g$ for all $g \in H$, as required.
\end{proof}

An application of the above corollary to the case $H=G$ gives rise to the following characterization of commensurating endomorphisms, which is
very similar to the result for relatively hyperbolic groups from \cite[Cor. 1.4]{MO}:

\begin{thm}\label{thm:comm_end} Let $G$ be an acylindrically hyperbolic group.
An endomorphism $\phi\colon G\to G$ is commensurating if and only if there is a set map $\e \colon G\to E_G(G)$, whose restriction to $C_G(E_G(G))$ is a homomorphism, and an element $w\in G$
such that $\phi(g)=w(g\e(g))w^{-1}$ for every $g\in G$. In particular, if $E_G(G)=\{1\}$ then every commensurating endomorphism is an inner automorphism of $G$.
\end{thm}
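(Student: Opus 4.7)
The plan is to deduce the theorem as a direct specialization of Corollary~\ref{cor:comm_equiv_cond} to the case $H=G$. First I would invoke Theorem~\ref{thm:acyl-equiv_def}(iii): since $G$ is acylindrically hyperbolic, $G$ is non-elementary and there exists a hyperbolic space $(\cS,\d)$ on which $G$ acts coboundedly by isometries with $\LWPD\neq\emptyset$. In particular the hypotheses of Corollary~\ref{cor:comm_equiv_cond} are satisfied with $H:=G$, namely $H$ is a non-elementary subgroup of $G$ and $H\cap\LWPD\neq\emptyset$.

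Next I would observe that with $H=G$ we have $E_G(H)=E_G(G)$ and $C_H(E_G(H))=C_G(E_G(G))$. The equivalence of conditions (a) and (c) in Corollary~\ref{cor:comm_equiv_cond} then translates verbatim into the ``if and only if'' statement of the present theorem: the endomorphism $\phi$ is commensurating precisely when there exist a set map $\e\colon G\to E_G(G)$ whose restriction to $C_G(E_G(G))$ is a homomorphism, together with an element $w\in G$, such that $\phi(g)=w(g\e(g))w^{-1}$ for every $g\in G$. This handles both directions simultaneously.

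For the final assertion, under the hypothesis $E_G(G)=\{1\}$ the map $\e$ must be identically trivial, and the formula collapses to $\phi(g)=wgw^{-1}$. This is conjugation by $w$, hence an inner automorphism of $G$ (and in particular automatically a bijection).

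The substantive work lies entirely upstream, in Theorem~\ref{thm:comm} and its Corollary~\ref{cor:comm_equiv_cond}, which treat the general case of a commensurating homomorphism from a non-elementary subgroup $H\leqslant G$ with $H\cap\LWPD\neq\emptyset$. In the present theorem the only step to verify is that these hypotheses hold when $H=G$, and this is immediate from the equivalent characterizations of acylindrical hyperbolicity recorded in Theorem~\ref{thm:acyl-equiv_def}. I do not expect any genuine obstacle in this reduction.
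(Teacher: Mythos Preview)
Your proposal is correct and follows exactly the same route as the paper's own proof: invoke Theorem~\ref{thm:acyl-equiv_def} to obtain a cobounded action on a hyperbolic space with $\LWPD\neq\emptyset$, then apply Corollary~\ref{cor:comm_equiv_cond} with $H=G$. The only difference is that you spell out the specialization and the ``in particular'' clause a bit more explicitly, which is fine.
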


\begin{proof} 
By Theorem \ref{thm:acyl-equiv_def}, since $G$ is acylindrically hyperbolic,
it is non-elementary and admits a cobounded action on a hyperbolic space $\cS$ such that $\LWPD \neq \emptyset$.
Now the claim follows from Corollary \ref{cor:comm_equiv_cond} applied to the case when $H=G$.
\end{proof}

\begin{rem}\label{rem:f_many}
If $G$ is a finitely generated acylindrically hyperbolic group then Theorem \ref{thm:comm_end} easily implies that $Inn(G)$ has finite index in the group $Aut_{com}(G)$ of all commensurating
automorphisms of $G$. On the other hand, it is not difficult to show that this is not true for $F_\infty \times \Z_2$, the direct product of the free group of countably infinite rank and the cyclic group of order $2$ (in fact this group
has uncountably many commensurating automorphisms).
\end{rem}

The above remark shows that to establish Corollary  \ref{cor:conj_aut} we need to work a bit more since the group $G$ may not be finitely generated
(however, the proof is very similar to that of \cite[Cor. 5.4]{MO}).

\begin{proof}[Proof of Corollary \ref{cor:conj_aut}]
Again, by Theorem \ref{thm:acyl-equiv_def}, $G$ is non-elementary and admits a cobounded action on a hyperbolic space $\cS$ such that $\LWPD \neq \emptyset$.
Applying Corollary \ref{cor:comm_equiv_cond} to the case when $H=G$, we see that for any automorphism $\varphi \in Aut_{pi}(G)$,
there exist $w \in G$ and a map $\e \colon G \to E_G(G)$ such that $\varphi(h)=wg \e(g) w^{-1}$ for each $g \in G$.
Take any element $h \in S_G(G,\cS)$. 
Then $h$ commutes with $\e(h) \in E_G(G)$, and, consequently,
$(\varphi(h))^m=wh^mw^{-1}$ where $m:=|E_G(G)|\in \N$.

Now, since $\varphi$ is a pointwise inner automorphism of $G$, there
is $x \in G$ such that $\varphi(h)=xhx^{-1}$. Hence $x h^m x^{-1}=\varphi(h^m)=wh^mw^{-1}$, i.e.,
$w^{-1}x \in E_G(h)\cong \langle h \rangle \times E_G(G)$, hence $w^{-1}x \in C_G(h)$.
Consequently, we have $h=w^{-1}x h \left(w^{-1}x\right)^{-1}=h \e(h)$, which implies that $\e(h)=1$.
Since the latter holds for any $h \in S_G(G,\cS)$, it follows from Proposition \ref{prop:special} that $\e(C_G)=\{1\}$, where $C_G:=C_G(E_G(G))$.

Note that $|G:C_G|<\infty$, hence there are $g_1,\dots,g_l \in G$ such that $G=\bigsqcup_{i=1}^l C_G g_i$.
For any $g \in G$ there are $a \in C_G$ and $i \in \{1,\dots, l\}$ such that $g=a g_i$, and one has
\begin{multline*}\varphi(a) \varphi(g_i) = \varphi(g)=w g \e(g) w^{-1}=(waw^{-1}) (w g_i \e(a g_i) w^{-1})\\ =  \varphi(a)
(\varphi(g_i) w (\e(g_i))^{-1}\e(a g_i) w^{-1}). \end{multline*}
Therefore $\e(g)=\e(a g_i)=\e(g_i)$, i.e.,
the map $\e$ is uniquely determined by the images of $g_1,\dots,g_l$.
Since $\varphi(g)=w (g \e(g_i))w^{-1}$, the automorphism $\varphi \in Aut_{pi}(G)$, up to composition with an inner automorphism of $G$,
is completely determined by the finite collection of elements $\e(g_1),\dots,\e(g_l) \in E_G(G)$, and since
$E_G(G)$ is finite, we can conclude that $|Aut_{pi}(G):Inn(G)|<\infty$.

Finally, if $E_G(G)=\{1\}$ we have $\varphi(g)=wgw^{-1}$ for all $g \in G$, that is $\varphi \in Inn(G)$.
\end{proof}

Combining Grossman's criterion with Corollary \ref{cor:conj_aut}, we obtain the following
\begin{cor}
Let $G$ be a finitely generated acylindrically hyperbolic group. If $G$ is conjugacy separable and contains no non-trivial finite normal subgroups then $Out(G)$  is residually finite.
\end{cor}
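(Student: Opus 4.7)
The plan is to derive this directly from Grossman's criterion together with Corollary~\ref{cor:conj_aut}. Recall Grossman's theorem, stated in the introduction: if $G$ is finitely generated, conjugacy separable, and satisfies $Aut_{pi}(G)=Inn(G)$, then $Out(G)$ is residually finite. Finite generation and conjugacy separability are given by hypothesis, so the only thing to verify is the equality $Aut_{pi}(G)=Inn(G)$.

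First, I would observe that the hypothesis ``$G$ contains no non-trivial finite normal subgroups'' forces $E_G(G)=\{1\}$. Indeed, by Lemma~\ref{lem:E_G(H)} applied with $H=G$ (which is permissible since $G$ is acylindrically hyperbolic, hence non-elementary with $G\cap \LWPD\neq\emptyset$ by Theorem~\ref{thm:acyl-equiv_def}), $E_G(G)$ is the unique maximal finite subgroup of $G$ normalized by $G$, i.e., the maximal finite normal subgroup. Under our assumption this subgroup is trivial.

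Next, I would invoke the ``moreover'' clause of Corollary~\ref{cor:conj_aut}: since $G$ is acylindrically hyperbolic and $E_G(G)=\{1\}$, every pointwise inner automorphism of $G$ is inner, i.e., $Aut_{pi}(G)=Inn(G)$. Finally, with all three hypotheses of Grossman's criterion in place, we conclude that $Out(G)$ is residually finite.

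The proof is essentially a one-line verification; there is no real obstacle, as all the technical content has been absorbed into Corollary~\ref{cor:conj_aut}. The only point where one must be slightly careful is checking that $E_G(G)$ as defined in Lemma~\ref{lem:E_G(H)} genuinely coincides with the maximal finite normal subgroup of $G$ (so that the hypothesis ``no non-trivial finite normal subgroups'' translates to $E_G(G)=\{1\}$), but this is precisely the content of that lemma.
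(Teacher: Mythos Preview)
Your proposal is correct and follows exactly the same approach as the paper: the authors present this corollary with the one-line justification ``Combining Grossman's criterion with Corollary~\ref{cor:conj_aut}, we obtain the following'', and your write-up simply unpacks that sentence. Your added remark that the hypothesis on finite normal subgroups is equivalent to $E_G(G)=\{1\}$ via Lemma~\ref{lem:E_G(H)} is the natural justification.
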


In \cite[Cor. 1.6]{Cap-Min} Caprace and the second author showed that any pointwise inner automorphism of a finitely generated Coxeter group $W$ is inner.
Theorem \ref{thm:comm_end} can be used to say much more in the case when $W$ is acylindrically hyperbolic.

\begin{lem}\label{lem:Cox-ahyp} Suppose that $W$ is a finitely generated infinite irreducible non-affine Coxeter group. Then $W$ is acylindrically hyperbolic and $E_W(W)=\{1\}$.
\end{lem}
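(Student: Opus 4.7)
The plan is to realize $W$ through its action on its Davis--Moussong complex $\Sigma$, a complete CAT(0) space on which $W$ acts properly, cocompactly, and faithfully by isometries, simply transitively on chambers (so that every point in the interior of a chamber has trivial $W$-stabilizer). I first observe that under the stated hypotheses $W$ is not virtually cyclic: the only infinite irreducible virtually cyclic Coxeter group is the infinite dihedral group $W(\tilde A_1)$, which is of affine type and therefore excluded. By the theorem of Caprace--Fujiwara, every infinite irreducible non-affine Coxeter group contains an element acting on $\Sigma$ as a rank-one isometry. Combining this with the criterion recalled in Subsection~\ref{subsec:acyl_hyp} (due to Sisto, and independently Osin)---that a non-virtually-cyclic group acting properly on a proper CAT(0) space with at least one rank-one element is acylindrically hyperbolic---gives the first assertion.

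For the triviality of $E_W(W)$, set $N := E_W(W)$, which is finite and normal in $W$ by Lemma~\ref{lem:E_G(H)}. Being finite, $N$ has a non-empty fixed set $F := \mathrm{Fix}_\Sigma(N)$ by the Bruhat--Tits fixed point theorem; this set is closed, convex, and---because $N$ is normal in $W$---also $W$-invariant (indeed, for $w \in W$, $y \in F$, and $n \in N$ one has $nwy = w(w^{-1}nw)y = wy$). Let $g \in W$ be a rank-one loxodromic isometry produced above. Since the closest-point projection $\pi \colon \Sigma \to F$ is $\langle g\rangle$-equivariant and non-expanding, the minimum displacement of $g$ on $F$ equals that on $\Sigma$, and consequently $F$ contains an axis $\alpha'$ of $g$.

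The key geometric fact is now that no axis of a rank-one isometry of $\Sigma$ can lie entirely in the codimension-one skeleton of $\Sigma$: if $\alpha'$ were contained in a single wall $H$, then the reflection $s_H$ would fix $\alpha'$ pointwise and commute with a positive power of $g$ (as $g$ preserves $H$), producing a flat half-plane adjacent to $\alpha'$ and contradicting the rank-one hypothesis on $g$. Therefore $\alpha'$ meets the interior of some chamber of $\Sigma$ at a point $x$, and simple transitivity of the $W$-action on chambers forces $\mathrm{Stab}_W(x) = \{1\}$. Since $x \in F$ is fixed by $N$, this yields $N = \{1\}$.

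The main obstacle is exactly the last geometric step--showing that the axis of the chosen rank-one element does not lie in the wall skeleton of $\Sigma$; this is where the rank-one hypothesis must be genuinely used. An alternative that I would fall back on, should the direct argument prove delicate, is to extract from Caprace--Fujiwara's construction a rank-one element $g$ for which $E_W(g)$ is torsion-free infinite cyclic, whereupon the inclusion $N \subseteq E_W(g)$ (Lemma~\ref{lem:E_G(H)}) forces $N = \{1\}$ immediately.
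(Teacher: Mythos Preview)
Your argument for acylindrical hyperbolicity is correct and matches the paper's approach exactly: Caprace--Fujiwara gives a rank-one isometry on the Davis complex, and Sisto's criterion upgrades this to acylindrical hyperbolicity once you note $W$ is not virtually cyclic.

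The second part, however, contains a genuine error. Your ``key geometric fact'' --- that no axis of a rank-one isometry can lie entirely in the wall skeleton --- is false. Take any hyperbolic triangle group $W=(2,q,r)$ acting on $\mathbb{H}^2$. Pick a wall $H$; since one of the angles is $\pi/2$, there are infinitely many walls perpendicular to $H$, and the product of two reflections across such perpendicular walls is a loxodromic element $g$ translating along $H$. Every loxodromic in $\mathbb{H}^2$ is rank-one (there are no flat half-planes), yet the axis of $g$ is precisely the wall $H$. Your justification breaks down at the step ``producing a flat half-plane adjacent to $\alpha'$'': having $s_H$ commute with $g^n$ yields only a $\mathbb{Z}\times\mathbb{Z}/2$, which is virtually cyclic, not $\mathbb{Z}^2$; no flat is forced. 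Concretely, in the example above $s_H$ and $g$ commute inside $\mathrm{Isom}(\mathbb{H}^2)$ without any flat half-plane appearing. Your fallback alternative --- extracting from Caprace--Fujiwara a rank-one $g$ with $E_W(g)$ torsion-free --- is not substantiated either and would require a separate argument.

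The paper's proof of $E_W(W)=\{1\}$ is entirely algebraic and avoids this difficulty: any finite subgroup of a Coxeter group lies in a finite parabolic, so a finite \emph{normal} subgroup lies in a finite normal parabolic (its parabolic closure). But by the structure of normalizers of parabolics (Deodhar, Krammer), a normal parabolic $W_J$ forces $W=W_J\times W_{J^\perp}$; irreducibility then gives $J=\emptyset$ or $J=S$, and finiteness rules out the latter. Hence the finite normal subgroup is trivial.
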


\begin{proof} The assumptions imply that $W$ is not virtually cyclic and $W$ contains a rank $1$ isometry for the natural action on the associated Davis CAT($0$) complex --  see \cite{Cap-Fuj}.
Therefore $W$ is acylindrically hyperbolic by \cite{Sisto-contr}.

It remains to note that $E_W(W)=\{1\}$ because any finite normal subgroup of a Coxeter group is contained in a finite normal
parabolic subgroup, but an infinite irreducible Coxeter group cannot have any proper normal parabolic subgroups (the normalizer of a parabolic subgroup $P \leqslant W$ is itself a parabolic subgroup, which is
isomorphic to the direct product $P \times R$, where $R$ is the orthogonal complement of $P$ in $W$ -- see \cite{Deod,Kram}).
\end{proof}

A combination  of Lemma \ref{lem:Cox-ahyp} with Theorem \ref{thm:comm_end} immediately yields the following:

\begin{cor}\label{cor:Cox} If $W$ is a finitely generated infinite irreducible non-affine Coxeter group then every commensurating endomorphism of $W$ is an inner automorphism.
\end{cor}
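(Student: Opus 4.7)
The plan is to derive Corollary \ref{cor:Cox} as a direct consequence of the two immediately preceding results. First I would apply Lemma \ref{lem:Cox-ahyp} to the Coxeter group $W$: under the standing hypotheses (finitely generated, infinite, irreducible, non-affine), this lemma yields two conclusions simultaneously, namely that $W$ is acylindrically hyperbolic and that the finite radical $E_W(W)$ is trivial. No additional work with Davis complexes, rank one isometries, or normalizers of parabolic subgroups is needed at this stage, since all of it has been packaged into Lemma \ref{lem:Cox-ahyp}.

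Next I would feed both conclusions into Theorem \ref{thm:comm_end}, taking $G = W$. That theorem classifies commensurating endomorphisms of acylindrically hyperbolic groups: every such $\phi$ has the form $\phi(g) = w(g\,\e(g))w^{-1}$ for some $w \in G$ and some set map $\e \colon G \to E_G(G)$. Because $E_W(W) = \{1\}$, the map $\e$ is forced to be the constant map with value $1$, so $\phi$ reduces to $\phi(g) = wgw^{-1}$ for all $g \in W$. In particular $\phi$ is automatically bijective, hence an inner automorphism of $W$, as required.

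I do not anticipate any genuine obstacle here: all the technical content has already been absorbed into Lemma \ref{lem:Cox-ahyp} (the geometric input from \cite{Cap-Fuj} and \cite{Sisto-contr}, together with the parabolic-normalizer argument showing $E_W(W) = \{1\}$) and into Theorem \ref{thm:comm_end} (the structural result coming from the main technical theorem of the paper). The only small sanity check is that the hypotheses of Corollary \ref{cor:Cox} coincide with those of Lemma \ref{lem:Cox-ahyp}, which is immediate. The entire argument is therefore essentially a one-line citation.
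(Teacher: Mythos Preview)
Your proposal is correct and matches the paper's approach exactly: the paper simply states that the corollary follows from a combination of Lemma~\ref{lem:Cox-ahyp} with Theorem~\ref{thm:comm_end}, and your write-up spells out precisely that combination.
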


\section{Normal endomorphisms of acylindrically hyperbolic groups}
This section is dedicated to  proving Theorem \ref{thm:norm_end}. Our argument uses the powerful machinery of algebraic Dehn fillings,
developed for hyperbolically embedded subgroups by Dahmani, Guirardel and Osin \cite{DGO}:
\begin{thm}[{\cite[Thm. 7.19]{DGO}}] \label{thm:Dehn_fil} Let $G$ be a group, $X$ a subset of $G$, $\Hl$ a collection of subgroups of $G$.
Suppose that $\Hl \h (G,X)$. Then  there exists a family of
finite subsets $F_\lambda \subseteq H_\lambda\setminus\{1\}$,  $\lambda\in \Lambda$, such that for every collection of normal subgroups $\mathfrak{N} = \{N_\lambda\lhd H_\lambda \mid \lambda \in \Lambda\}$, satisfying
$N_\lambda \cap F_\lambda=\emptyset$ for all $\lambda \in \Lambda$, the following hold:

\begin{itemize}
  \item[(a)] $N \cap H_\lambda=N_\lambda$ for all $\lambda \in \Lambda$, where $N:= \llangle N_\lambda \mid \lambda \in \Lambda \rrangle^G \lhd G$;
  \item[(b)] every element of $N$  is either conjugate to an element of $\bigcup_{\lambda \in \Lambda} N_\lambda\subseteq G$ or is loxodromic with respect to the action of $G$ on $\ga(G,X \sqcup \mathcal{H})$,
  where $\mathcal{H}:=\bigsqcup_{\lambda \in \Lambda} \left( H_\lambda \setminus\{1\}\right) $;
  \item[(c)] $N$ is isomorphic to the free product of copies of groups from $\mathfrak{N}$.
\end{itemize}
\end{thm}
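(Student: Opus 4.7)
The plan is to argue by contradiction. Assume $\phi$ is a normal endomorphism that is not commensurating; we shall deduce $\phi(G) \subseteq E_G(G)$. Since $G$ is acylindrically hyperbolic, Theorem \ref{thm:acyl-equiv_def} provides a cobounded isometric action on a hyperbolic space $\cS$ with $\LWPD \neq \emptyset$; let $X$ be the generating set of $G$ supplied by Lemma \ref{lem:svarc-milnor}. Corollary \ref{cor:comm_equiv_cond}, applied with $H = G$, then yields an element $g_0 \in \LWPD$ with $\phi(g_0) \napg g_0$. The ``in particular'' clause follows immediately from the main dichotomy together with Theorem \ref{thm:comm_end} (when $E_G(G)=\{1\}$ the perturbation $\e$ is trivial and commensurating endomorphisms are inner).

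The core step is a Dehn-filling lemma: \emph{if $g \in \LWPD$ and $\phi(g) \in \LWPD$, then $\phi(g) \apg g$.} To prove it, set $h:=\phi(g)$ and assume for contradiction that $h \napg g$. By Corollary \ref{cor:wpd-he} the family $\{E_G(g), E_G(h)\}$ is hyperbolically embedded in $(G,X)$. Apply Theorem \ref{thm:Dehn_fil} with normal subgroups $N_g := \langle g^n\rangle \lhd E_G(g)$ (normality from Lemma \ref{lem:elemrem}(b)), for $n$ large enough that $N_g$ misses the exceptional finite set $F_g$, and $N_h := \{1\}$. Conclusion (a) of that theorem yields $\llangle g^n\rrangle^G \cap E_G(h) = N_h = \{1\}$. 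But normality of $\phi$ gives $\phi(g^n) = h^n \in \llangle g^n\rrangle^G$, while trivially $h^n \in E_G(h)$; hence $h^n = 1$, contradicting $h \in \LWPD$.

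Applying Theorem \ref{thm:Dehn_fil} to the single family $\{E_G(g_0)\}$ and analysing $\phi(g_0^n) \in \llangle g_0^n\rrangle^G$ via conclusion (b) (together with Lemma \ref{lem:tameinS}), one sees that for all sufficiently large $n$, $\phi(g_0^n)$ is either conjugate to some $g_0^{nk}$ or is WPD loxodromic. The WPD-loxodromic possibility is excluded by the key lemma (via Remark \ref{rem:powers} it would force $\phi(g_0) \in \LWPD$ and hence $\phi(g_0) \apg g_0$), while the case $k \ne 0$ contradicts $\phi(g_0) \napg g_0$. So $\phi(g_0^n)=1$ for all large $n$, and comparing $n$ with $n+1$ forces $\phi(g_0) = 1$. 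To propagate, let $g_1 \in \LWPD$. If $\phi(g_1) \apg g_1$, then either $g_1 \apg g_0$, in which case some nonzero power of $\phi(g_1)$ is trivial, contradicting $\phi(g_1)\apg g_1 \in \LWPD$; or $g_1 \napg g_0$, in which case Lemma \ref{lem:newtame1} yields $g := g_0^{N_0}g_1^{N_1} \in \LWPD$ with $g \napg g_1$, whence $\phi(g) = \phi(g_1)^{N_1}\apg g_1$ lies in $\LWPD$, so the key lemma forces $\phi(g)\apg g$, giving the contradiction $g \apg g_1$. Thus $\phi(g_1)\napg g_1$ for every $g_1 \in \LWPD$, and the same Dehn-filling argument as for $g_0$ yields $\phi(g_1)=1$.

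For arbitrary $g \in G$: if $g \in E_G(h)$ for every $h \in \LWPD$, then $g \in E_G(G)$ by Lemma \ref{lem:E_G(H)}, and $\phi(g) \in \phi(E_G(G)) \subseteq E_G(G)$ by normality of $\phi$ on the finite normal subgroup $E_G(G)$; otherwise some $h \in \LWPD$ has $g \notin E_G(h)$, and Lemma \ref{lem:newtamenc} supplies $n$ with $h^n g \in \LWPD$, so that $\phi(g) = \phi(h^n g) = 1 \in E_G(G)$. Hence $\phi(G) \subseteq E_G(G)$. The principal obstacle is the central Dehn-filling lemma; the crucial trick there is the choice $N_h = \{1\}$ in Theorem \ref{thm:Dehn_fil}, which is precisely what excludes the delicate possibility that $\phi$ sends a WPD loxodromic to a non-commensurable WPD loxodromic.
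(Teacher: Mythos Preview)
Your proposal does not address the stated theorem. Theorem~\ref{thm:Dehn_fil} is the algebraic Dehn filling theorem of Dahmani--Guirardel--Osin; the present paper merely quotes it from \cite{DGO} and offers no proof. What you have written is instead an argument for Theorem~\ref{thm:norm_end} (the dichotomy for normal endomorphisms of an acylindrically hyperbolic group), and your argument \emph{invokes} Theorem~\ref{thm:Dehn_fil} as a black box rather than establishing it.

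If your intended target was Theorem~\ref{thm:norm_end}, then your ``key lemma'' (if $g,\phi(g)\in\LWPD$ then $\phi(g)\apg g$) and its proof via Dehn filling on $\{E_G(g),E_G(\phi(g))\}$ with the choice $N_h=\{1\}$ is precisely the contradiction mechanism the paper deploys at the end of its Case~1. The paper, however, is organised differently: it first reduces to the case $E_G(G)=\{1\}$ by passing to $G/E_G(G)$, and then argues by contradiction that some $g_1$ satisfies $\phi(g_1)\in\LWPD$ with $\phi(g_1)\napg g_1$, which your key lemma immediately rules out. Your direct route (show $\phi$ annihilates every element of $\LWPD$, then propagate to all of $G$) is appealing but, as written, has two gaps. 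First, you set up the action via part~(iii) of Theorem~\ref{thm:acyl-equiv_def}; to conclude that a loxodromic element of $\llangle g_0^n\rrangle^G$ is automatically WPD you need the action on $\cS$ to be acylindrical (Remark~\ref{rem:acyl->tame}), so you should use part~(ii), as the paper does. Second, the step ``$\phi(g_0^n)=1$ for all large $n$, and comparing $n$ with $n+1$ forces $\phi(g_0)=1$'' overlooks the hypothesis $\langle g_0^n\rangle\lhd E_G(g_0)$ needed to apply Theorem~\ref{thm:Dehn_fil}: this is a divisibility condition on $n$ (cf.\ the proof of Lemma~\ref{lem:norm_sbgp_in_acyl_hyp} and the paragraph invoking it in the paper's Case~1), so in general you only obtain $\phi(g_0)^L=1$ for some $L$ depending on $E_G(g_0)$, and the propagation and the final ``$\phi(g)=\phi(h^n g)=1$'' step then break down.
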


Combining the above result with Corollary \ref{cor:wpd-he} one obtains the following statement:

\begin{lem}\label{lem:norm_sbgp_in_acyl_hyp} Assume that $G$ is a group acting isometrically and coboundedly on a hyperbolic space $\cS$.
For any element $g \in \LWPD$  there exists $M \in \N$ such that if $|m| \ge M$ and $\gen{g^m} \lhd E_G(g)$ then
the normal closure $\llangle g^m \rrangle^G\lhd G$ is free and every non-trivial element in it is loxodromic (with respect to the action of $G$ on $\cS$).
\end{lem}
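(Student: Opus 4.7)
The plan is to apply the Dehn filling machinery of Dahmani--Guirardel--Osin (Theorem \ref{thm:Dehn_fil}) to the hyperbolically embedded cyclic elementary closure of $g$. First, by Lemma \ref{lem:svarc-milnor}, I choose a symmetric generating set $X_1$ of $G$ such that $\ga(G,X_1)$ is $G$-equivariantly quasi-isometric to $(\cS,\d)$. Since $g \in \LWPD$ is a single element (trivially a ``collection of pairwise non-commensurable \tame elements''), Corollary \ref{cor:wpd-he} gives $\{E_G(g)\} \h (G,X_1)$. Setting $\mathcal H := E_G(g) \setminus \{1\}$, the Cayley graph $\ga(G, X_1 \sqcup \mathcal H)$ is hyperbolic.

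Next, I feed this hyperbolic embedding into Theorem \ref{thm:Dehn_fil}, which produces a finite ``exceptional'' subset $F \subseteq E_G(g) \setminus \{1\}$. Because $g$ is loxodromic (in particular of infinite order), the intersection $F \cap \langle g \rangle$ is a finite subset of the infinite cyclic group $\langle g \rangle$, so there exists $M \in \N$ such that $g^k \notin F$ whenever $|k| \ge M$. I claim this $M$ works: for any $m$ with $|m| \ge M$ satisfying $\langle g^m \rangle \lhd E_G(g)$, every non-identity element of $\langle g^m \rangle$ has the form $g^{km}$ with $|km| \ge M$, hence lies outside $F$. Thus the hypothesis $N_* \cap F = \emptyset$ of Theorem \ref{thm:Dehn_fil} is satisfied with $N_* := \langle g^m \rangle$, giving: by part (c), the normal closure $N := \llangle g^m \rrangle^G$ is isomorphic to a free product of copies of $\langle g^m \rangle \cong \Z$ and is therefore a free group; by part (b), every non-trivial element of $N$ is either conjugate (in $G$) to a non-zero power of $g^m$ or is loxodromic with respect to the action of $G$ on $\ga(G, X_1 \sqcup \mathcal H)$.

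It remains to upgrade loxodromicity on $\ga(G, X_1 \sqcup \mathcal H)$ to loxodromicity on $\cS$. Since $X_1 \subseteq X_1 \sqcup \mathcal H$, the word metrics satisfy $\d_{X_1 \sqcup \mathcal H}(x,y) \le \d_{X_1}(x,y)$ for all $x,y \in G$, so (arguing exactly as in the proof of Lemma \ref{lem:tameinS}) any element acting loxodromically on $\ga(G, X_1 \sqcup \mathcal H)$ acts loxodromically on $\ga(G, X_1)$. Since $\ga(G, X_1)$ is $G$-equivariantly quasi-isometric to $\cS$, this passes to loxodromicity on $\cS$. For elements of $N$ that fall into the first alternative of Theorem \ref{thm:Dehn_fil}(b), loxodromicity on $\cS$ is inherited directly: non-zero powers of $g^m$ are loxodromic by Remark \ref{rem:powers}, and their conjugates are loxodromic by Remark \ref{rem:tame-conj}.

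The only nontrivial point in this plan is the choice of $M$ and its verification, which hinges on the (elementary) observation that $F \cap \langle g \rangle$ is finite; everything else is an invocation of the already-established infrastructure (Corollary \ref{cor:wpd-he}, Theorem \ref{thm:Dehn_fil}, Lemma \ref{lem:svarc-milnor}, and Lemma \ref{lem:tameinS}). I do not anticipate a serious obstacle here, as the hypothesis $\langle g^m \rangle \lhd E_G(g)$ is precisely what is needed to legitimately apply Theorem \ref{thm:Dehn_fil} to the filling datum $N_* = \langle g^m \rangle \lhd H_* = E_G(g)$.
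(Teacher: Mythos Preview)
Your proof is correct and follows essentially the same route as the paper's: both use Corollary~\ref{cor:wpd-he} to hyperbolically embed $E_G(g)$, invoke Theorem~\ref{thm:Dehn_fil} to obtain the finite exceptional set $F$, choose $M$ so that $\langle g^m\rangle\cap F=\emptyset$ for $|m|\ge M$, deduce freeness from part~(c), and then upgrade loxodromicity from $\ga(G,X_1\sqcup\mathcal H)$ to $\cS$ via the argument of Lemma~\ref{lem:tameinS} (handling the conjugate-to-a-power case by Remarks~\ref{rem:powers} and~\ref{rem:tame-conj}).
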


\begin{proof} Let $X$ be a symmetric generating set of $G$ given by Lemma \ref{lem:svarc-milnor}.
By Corollary \ref{cor:wpd-he}, $E_G(g) \h (G,X)$, therefore we can apply Theorem \ref{thm:Dehn_fil}, which claims that
there exists a finite subset $F \subseteq E_G(g)\setminus\{1\}$ such that for every normal subgroup $N_0 \lhd E_G(g)$, with $N_0 \cap F =\emptyset$, the normal closure $N:=\llangle N_0 \rrangle^G$ is isomorphic to
the free product of some copies  of $N_0$, and every element of $N$ is either conjugate to an element of $N_0$ in $G$ or is loxodromic with respect to the action of $G$ on the
Cayley graph $\ga(G, X \sqcup E_G(g)\setminus\{1\})$.

Since the order of $g$ is infinite, there is $M \in \N$ such that $\gen{g^m} \cap F=\emptyset$ whenever $|m| \ge M$. So, if $m$ satisfies this inequality and  $\gen{g^m} \lhd E_G(g)$, by the
previous paragraph we see that $\llangle g^m \rrangle^G\lhd G$ is isomorphic to the free product of infinite cyclic groups (hence, it is free) and every element $h \in \llangle g^m \rrangle^G \setminus \{1\}$
is either conjugate to some non-zero power of $g$ in $G$ or is loxodromic with respect to the action of $G$ on  $\ga(G, X \sqcup E_G(g)\setminus\{1\})$. Therefore such $h$ is loxodromic with respect to the
action of $G$ on $\ga(G,X)$: in the former case this is true because $g \in \LWPD$ and in the latter case this is demonstrated in the first paragraph of the proof of Lemma \ref{lem:tameinS}
(one can take $X_1:=X \cup E_G(g)\setminus\{1\}$).
It follows that $h$ is loxodromic with respect to the action of $G$ on $\cS$.
\end{proof}

\begin{proof}[Proof of Theorem \ref{thm:norm_end}] The argument will be split in two cases.

\emph{Case 1:} $E_G(G)=\{1\}$. We need to show that either $\phi(G)=\{1\}$ or $\phi$ is an inner automorphism of $G$.
Arguing by contradiction suppose that $\phi(G) \neq \{1\}$ and $\phi \notin Inn(G)$. Let us first prove the following claim:
\begin{equation}\label{eq:claim} \text{ there is some $g_1 \in \LWPD$ such that $\phi(g_1) \in \LWPD$ and $\phi(g_1) \napg g_1$}.
\end{equation}
Since $G$ is acylindrically hyperbolic, it has a symmetric generating set $X$  such that $\cS\coloneq \ga(G,X)$ is hyperbolic, $|\partial\cS|>2$, and $G$ acts on $\cS$ acylindrically.
Then $G$ is non-elementary and $\LWPD\neq \emptyset$ (as explained in Theorem \ref{thm:acyl-equiv_def} and in the paragraph after it), hence
$G$ is generated by the $G$-special elements (by Proposition \ref{prop:special}).

Therefore there must exist $g \in S_G(G,\cS)$ such that $\phi(g) \neq 1$. Choose $M \in \N$ according to Lemma \ref{lem:norm_sbgp_in_acyl_hyp}. Then for any
$m \ge M$, $\gen{g^m} \lhd E_G(g)=\gen{g}$ and every non-trivial element of $N:=\llangle g^m \rrangle^G$ is loxodromic. It follows that $N \setminus \{1\} \subseteq \LWPD$ by Remark \ref{rem:acyl->tame}
and the fact that  the action of $G$ on $\cS$ is acylindrical. Since $\phi(g) \neq 1$, there exists $m \ge M$ such that $\phi(g^m) \neq 1$. On the other hand, $\phi(N) \subseteq N$ as $\phi$ is a normal endomorphism, hence
we can conclude that $\phi(g^m) \in \LWPD$. Consequently, $\phi(g) \in \LWPD$ by Remark \ref{rem:powers}.

If $\phi(g) \napg g$, then claim \eqref{eq:claim} is true for $g_1=g$. So, suppose that $\phi(g) \apg g$. Since $E_G(G)=\{1\}$ and $\phi \notin Inn(G)$, $\phi$ is not commensurating by Theorem \ref{thm:comm_end}.
Hence, according to Corollary~\ref{cor:comm_equiv_cond}, there exists $h \in \LWPD$  such that $\phi(h)\napg h$. Recall that $E_G(h)$ is virtually cyclic, hence there is $L \in \N$ such that $\gen{h^l} \lhd E_G(h)$
whenever $l$ is divisible by $L$. Therefore, we can apply Lemma \ref{lem:norm_sbgp_in_acyl_hyp} as before to find $l \in \N$ such that $\llangle h^l \rrangle^G \setminus \{1\} \subseteq \LWPD$. Again, since $\phi$ is normal,
it must map this normal closure into itself. So, if $\phi(h^l) \neq 1$ then $\phi(h^l) \in \LWPD$,  consequently $\phi(h) \in \LWPD$ and $g_1=h$ satisfies claim \eqref{eq:claim}.

Thus it remains to consider the case when $\phi(h^l)=1$. Then $h^l \notin E_G(g)=\gen{g}$, and
by Lemma~\ref{lem:newtamenc}, there exists $n \in \N$ such that the element $g_1\coloneq g^n h^l$ belongs to $\LWPD$ and is not commensurable with $g$ in $G$.
But $\phi(g_1)=\phi(g^n) \apg g$ by the assumption above, therefore $\phi(g_1) \in \LWPD$ (by Remarks \ref{rem:powers} and \ref{rem:tame-conj}) and $\phi(g_1) \napg g_1$ (as $g \napg g_1$).
Thus we have shown the validity of claim \eqref{eq:claim}.

So, let $g_1 \in \LWPD$ be as in claim \eqref{eq:claim}. Then, according to Corollary \ref{cor:wpd-he}, the family $\{E_G(g_1),E_G(\phi(g_1))\}$ is hyperbolically embedded in $G$.
Now we can use the theory of algebraic Dehn fillings: let $F_1 \subset E_G(g_1)\setminus \{1\}$ and $F_2 \subset E_G(\phi(g_1))\setminus\{1\}$ be the finite subsets given
by Theorem \ref{thm:Dehn_fil}.
Evidently, there is $n \in \N$ such that $\gen{g_1^n}\cap F_1=\emptyset$ and $\gen{g_1^n}\lhd E_G(g_1)$. Then we can take $N_1\coloneq \gen{g_1^n} \lhd E_G(g_1)$ and $N_2\coloneq \{1\} \lhd E_G(\phi(g_1))$.
Since $N_i \cap F_i=\emptyset$, Theorem \ref{thm:Dehn_fil} claims that for $N\coloneq \llangle N_1,N_2 \rrangle^G=\llangle g_1^n \rrangle^G \lhd G$ one has
$$N \cap E_G(g_1)=N_1=\gen{g_1^n} \text{ and } N\cap E_G(\phi(g_1))=N_2=\{1\}.$$
Thus the image of $g_1$ in $G/N$ has finite order $n\in \N$ and the image of $\phi(g_1)$ has infinite order in $G/N$. On the other hand, since $\phi\colon G \to G$ is a normal endomorphism, $\phi(N) \subseteq N$, hence
it naturally induces an endomorphism $\overline{\phi} \colon G/N \to G/N$, defined by the formula $\overline{\phi}(fN) \coloneq\phi(f)N$ for all $f \in G$.
This yields a contradiction, as the order of $\overline{\phi}(g_1N)$ does not divide the order of $g_1N$
in $G/N$. Therefore, the proof under the assumption of Case 1 is complete.

\emph{Case 2:} $E_G(G)\neq \{1\}$. In this case $\overline{G}\coloneq G/E_G(G)$ is also acylindrically hyperbolic and
$E_{\overline{G}}(\overline{G})=\{1\}$ (see \cite[Lemma 5.10]{Hull}). Since $\phi:G \to G$ is normal, it naturally induces an endomorphism
$\overline{\phi}:\overline{G} \to \overline{G}$. Clearly $\overline{\phi}$ will be
a normal endomorphism of $\overline{G}$. Therefore we can apply Case 1 to $\overline{G}$ and $\overline{\phi}$, concluding that either $\overline{\phi}(\overline{G})=\{1\}$ or there exists
an element $\overline{w} \in \overline{G}$ such that $\overline{\phi}(\overline{f})=\overline{w}\overline{f}\overline{w}^{-1}$ for all $\overline{f}\in \overline{G}$.

If  $\overline{\phi}(\overline{G})=\{1\}$ then $\phi(G) \subseteq E_G(G)$, as required. In the remaining case, pick some preimage $w \in G$ of $\overline{w}\in \overline{G}$. Then for every $f \in G$ there exists
$\e(f) \in E_G(G)$ such that $\phi(f)=w f \e(f) w^{-1}$. Clearly, since $\phi$ is an endomorphism, the restriction of $\e$ to $C_G(E_G(G))$ is a homomorphism from $G$ to $E_G(G)$, hence,
by Corollary \ref{cor:comm_equiv_cond}, $\phi$ is commensurating.
\end{proof}

\begin{rem} Now that we have proved Theorem \ref{thm:norm_end}, one can show that if $G$ is acylindrically hyperbolic then
$Inn(G)$ has finite index in the group of all normal automorphisms $Aut_n(G)\leqslant Aut(G)$. If $G$ is finitely generated, then this is a consequence of Remark \ref{rem:f_many}.
If $G$ is not finitely generated, then one can use a more involved argument similar to the one from \cite[Thm. 6.4 and Cor. 6.5]{MO}.
\end{rem}

\begin{rem} If the finite radical of an acylindrically hyperbolic group $G$ is non-trivial, then it may possess non-commensurating normal automorphisms with non-trivial finite images.
Indeed, let $F$ be the free group of rank $2$ and let $Q$ be a non-abelian finite simple group.
Let $G:=F \times Q$ be the direct product of $F$ and $Q$, so that $G$ is hyperbolic and $E_G(G)=Q$.
Then $G$ has a natural endomorphism $\phi\colon G \to G$, which is the projection onto $Q$. It is not difficult to check that every normal subgroup $N \lhd G$
either contains $Q$ or is contained in $\ker(\phi)=F$. It follows that $\phi$ is a normal endomorphism of $G$ with $\phi(G)=Q$.
\end{rem}

\section{Commensurating endomorphisms of subgroups of right angled Artin groups}\label{sec:RAAG-end}
The purpose of this section is to prove Theorem \ref{thm:comm_end-raag} from the Introduction.

Let $\ga=(V,E)$ be a simplicial graph with the vertex set $V\ga=V$ and the edge set $E\ga=E$.
The associated  \textit{right angled Artin group} $A=A(\ga)$ is the group given by the presentation $$\prs{V}{ [u,v]=1, \, \forall \{u,v\}\in E }.$$
The cardinality $|V|$ is said to be the \emph{rank} of $A$. Algebraically, the rank of $A$ is exactly the smallest cardinality of  a generating set of $A$ (this can be justified by looking at the abelianization of
$A$, which is isomorphic to $\Z^{|V|}$).

Right angled Artin groups are special cases of \emph{graph products of groups}, when all the vertex groups are infinite cyclic (see \cite[Subsection 2.2]{AntolinMinasyan}
for some background on graph products).

\begin{lem}\label{lem:norm-centr} Suppose that $A$ is a right angled Artin group and $H \leqslant A$ is any subgroup.
\begin{itemize}
  \item[(i)] If $N \lhd H$ is a normal subgroup which does not contain non-abelian free subgroups, then $N$ is central in $H$.
  \item[(ii)] The quotient of $H$ by its center $Z=Z(H)$ is centerless.
\end{itemize}
\end{lem}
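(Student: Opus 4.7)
The plan is to prove~(i) by combining two classical facts --- Baudisch's theorem on two-generated subgroups of RAAGs and the CSA property of free groups --- and then to deduce~(ii) from~(i) by a short commutator manipulation.

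For~(i), I first observe that by Baudisch's theorem every two-generated subgroup of $A$ is either free of rank $2$ or free abelian; since $N$ contains no non-abelian free subgroup, any two elements of $N$ must commute, so $N$ is abelian. Given $n \in N$ and $h \in H$, normality gives $hnh^{-1} \in N$, and abelianness of $N$ then yields $[n,\,hnh^{-1}] = 1$. Assuming $n \neq 1$, I apply Baudisch again to $\langle n, h\rangle \leq A$: this subgroup is either abelian (in which case $[n,h] = 1$ and we are done) or isomorphic to $F_2$. In the free case, inside $F_2 = \langle n, h\rangle$ the commuting non-trivial elements $n$ and $hnh^{-1}$ lie in a common maximal abelian subgroup $M$, which is cyclic, and the CSA property of free groups implies that $M$ is malnormal in $F_2$. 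Since $hnh^{-1} \in M \cap hMh^{-1}$ is non-trivial, malnormality forces $h \in M$; but then $h$ commutes with $n \in M$, contradicting $\langle n, h\rangle \cong F_2$. Hence the free case cannot occur and $[n,h] = 1$, so $N \subseteq Z(H)$.

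For~(ii), let $h \in H$ represent a central element of $H/Z(H)$, i.e.\ $[h, g] \in Z(H)$ for every $g \in H$, and set $N \coloneq \langle h\rangle \cdot Z(H) \leq H$. Since $Z(H)$ commutes with $h$ and with itself, $N$ is abelian. For normality, note that for any $g \in H$ we have $ghg^{-1} = h \cdot (h^{-1}ghg^{-1}) = h \cdot [h, g^{-1}] \in h \cdot Z(H) \subseteq N$, while $g$ centralises $Z(H)$; hence $N \lhd H$. Part~(i) then gives $N \subseteq Z(H)$, so $h \in Z(H)$, as required.

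The whole argument reduces to the two-generated subgroup picture, and the only real ingredients are Baudisch's theorem together with the CSA property of free groups; no delicate analysis of centralisers of elements of $A$ is needed, and bi-orderability of RAAGs plays no role. The one point to be careful about is that the malnormality in the $F_2$ step concerns the maximal abelian subgroup of $F_2 = \langle n, h\rangle$, not of the ambient $A$ (which need not be CSA), but this suffices because both $n$ and $hnh^{-1}$ lie in this $F_2$.
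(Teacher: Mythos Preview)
Your proof is correct. For part~(ii) your argument is essentially the same as the paper's: the paper takes $N$ to be the full preimage of $Z(H/Z)$, observes it is nilpotent of class at most $2$ (hence contains no non-abelian free subgroup), and applies~(i); you do the same thing one element at a time with $N=\langle h\rangle\cdot Z(H)$. (A minor point: your identity $h^{-1}ghg^{-1}=[h,g^{-1}]$ uses the convention $[x,y]=x^{-1}y^{-1}xy$, opposite to the paper's $[x,y]=xyx^{-1}y^{-1}$; the conclusion $ghg^{-1}\in hZ(H)$ is of course unaffected.)

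For part~(i) you take a genuinely different route. The paper argues directly: if some $g\in N$ and $h\in H$ fail to commute, Baudisch gives $\langle g,h\rangle\cong F_2$, and then $F_2\cap N$ is a non-trivial normal subgroup of $F_2$, hence non-abelian free --- contradiction. This avoids any preliminary step and does not invoke the CSA property. Your argument first proves $N$ is abelian (an extra application of Baudisch) and then, inside $\langle n,h\rangle\cong F_2$, uses that $n$ and $hnh^{-1}$ commute together with malnormality of maximal cyclic subgroups to force $h$ into the centraliser of $n$. Both arguments are elementary; the paper's is shorter and uses only the Nielsen--Schreier picture of normal subgroups of free groups, while yours trades that for the CSA property. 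Your closing remark that the malnormality is used only inside the free subgroup $\langle n,h\rangle$, not in the ambient RAAG, is exactly the right caveat.
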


\begin{proof} To prove (i), suppose that $N$ is not central in $H$. Then there exist $h \in H\setminus \{1\}$ and $g \in N\setminus \{1\}$ such that $hg \neq gh$.
By a theorem of Baudisch \cite{Baud} (see also \cite[Cor. 1.6]{AntolinMinasyan}), the latter implies that $h$ and $g$ generate
a free subgroup $F$, of rank $2$, in $A$. Since  $g \in F \cap N$, this intersection is a non-trivial normal subgroup of $F$, hence it is a non-abelian free group.
This contradicts the assumption that $N$ has no non-abelian free subgroups. Therefore $N$ must be central in $H$.

To verify (ii), let $N\lhd H$ be the full preimage of the center of $H/Z$ under the homomorphism $H \to H/Z$. Then  $N$ is nilpotent of class at most $2$, hence it satisfies the assumptions of (i), and therefore it must be central in $H$.
Thus $N \leqslant Z$; on the other hand $Z \leqslant N$ by the definition of $N$. It follows that $N=Z$, and so the image of $N$ in $H/Z$ (i.e., the center of $H/Z$) is trivial.
\end{proof}

For any subset $U$ of $V$ the subgroup $A_U:=\gen{U}$ is said to be a {\it full subgroup} of $A$. It is not difficult to show that $A_U$ is naturally isomorphic to the right angled Artin group
$A(\ga_U)$, where $\ga_U$ the full subgraph of $\ga$ spanned on the vertices from $U$ (see, for example, \cite[Section 6]{Minasyan_hcs}).
For every $U \subseteq V$ there is a {\it canonical retraction} $\rho_U\colon A\to A_U$ defined on the
generators of $A$ by $\rho_U(x)=x$, if $x\in U$ and $\rho_U(x)=1$ if $x\notin U$.

A subgroup $H\leqslant A(\ga)$ is called {\it parabolic} if it is conjugate to a full subgroup, i.e.,  there exist $U\subseteq V$ and $a\in A$ such that $H=a^{-1}A_Ua$; we will say that $H$ is a \emph{proper parabolic subgroup} of $A(\ga)$
if $U \neq V$. If the graph $\ga $ is finite then any subgroup $H \leqslant A(\ga)$ is contained in a unique minimal parabolic subgroup $\pc_\ga(H)$, called the \emph{parabolic closure} of $H$ in $A(\ga)$
(see \cite[Prop.~3.10]{AntolinMinasyan}).

Using the terminology from \cite{AntolinMinasyan}, we will say that a graph $\ga$ is \emph{reducible} if there exists a partition $V=A \sqcup B$ into non-empty disjoint subsets $A$ and $B$ such that
every vertex from $A$ is adjacent to every vertex from $B$ in $\ga$. Otherwise, $\ga$ is said to be \emph{irreducible}.
Alternatively, $\ga$ is irreducible if and only if the \emph{complement graph} $\ga^c$ is connected (recall that $\ga^c$ is defined by $V\ga^c:=V$ and $E\ga^c:=(V \times V) \setminus E$).

Every finite graph $\ga$ can be decomposed into irreducible subgraphs; this means that there is a partition $V= U_1\sqcup \dots \sqcup U_k$, where $U_i \neq \emptyset$, $\ga_{U_i}$ is irreducible for $i=1,\dots,k$,
and for any pair of indices $i \neq j$, every vertex of $U_i$ is adjacent with every vertex of $U_j$ in $\ga$ (this corresponds to the decomposition of $\ga^c$ into the union of its connected components).
Using this we obtain the \emph{standard factorization} of the right angled Artin group $A=A(\ga)$:
$$A = A_0 \times A_1 \times \dots \times A_l,$$ where $A_0$ is a free abelian group (i.e., the right angled Artin group corresponding to a complete subgraph of $\ga$)
and each $A_i$, $i=1,\dots,l$, is a right angled Artin group corresponding to a full irreducible subgraph $\ga_i$, of $\ga$, with $|V\ga_i|\ge 2$.
We will say that $A_0$ is the \emph{abelian factor} of $A$ and $A_1,\dots, A_l$ are the \emph{irreducible factors} of $A$. Note that $A_0$ is central in $A$ by definition (in fact $A_0$ coincides with the
center of $A$, which, for example,  follows from Lemma \ref{lem:no_center} below).

The following fact was proved in \cite[Cor. 3.15]{AntolinMinasyan}:
\begin{lem}\label{prop_parab} Let $\ga$ be a finite irreducible graph and let $A=A(\ga)$ be the associated right angled Artin group. Suppose that $H \leqslant A$ and $N \lhd H$ is a non-trivial normal subgroup of $H$.
If $\pc_\ga(H)=A$ then $\pc_\ga(N)=A$.
\end{lem}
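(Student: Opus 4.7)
The plan is to argue by contradiction: suppose $P \coloneq \pc_\ga(N) \subsetneq A$. Since parabolic subgroups of $A$ are the conjugates of full subgroups, I would write $P = a^{-1} A_U a$ for some $a \in A$ and some proper $U \subsetneq V$; note that $U \neq \emptyset$ because $N \neq \{1\}$.

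The first step is to record the equivariance of the parabolic closure under conjugation: for any $g \in A$,  $\pc_\ga(gNg^{-1}) = g \pc_\ga(N) g^{-1}$. Indeed, the right-hand side is a parabolic subgroup containing $gNg^{-1}$, while any parabolic subgroup containing $gNg^{-1}$ yields, after conjugating back by $g^{-1}$, a parabolic subgroup containing $N$, hence containing $\pc_\ga(N)$; minimality of $\pc_\ga$ then gives equality. Since $N \lhd H$, one has $hNh^{-1} = N$ for every $h \in H$, whence $hPh^{-1} = P$, i.e., $H \leqslant N_A(P) = a^{-1} N_A(A_U) a$.

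The second step is to invoke the structure of normalizers of standard parabolic subgroups in right angled Artin groups: $N_A(A_U) = A_{U \cup U^\perp}$, where $U^\perp \coloneq \{v \in V \setminus U : v \text{ is adjacent in } \ga \text{ to every vertex of } U\}$. This is a standard consequence of Servatius's centralizer description $C_A(A_U) = A_{U^\perp}$ together with the decomposition $N_A(A_U) = A_U \cdot C_A(A_U)$ in a RAAG; the precise statement is available in \cite{AntolinMinasyan}. Now the irreducibility of $\ga$ enters: if $U \cup U^\perp = V$ with $U \subsetneq V$, then every vertex of $V \setminus U$ would be adjacent in $\ga$ to every vertex of $U$, exhibiting $V = U \sqcup (V \setminus U)$ as a non-trivial partition with a complete bipartite join between the parts. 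This contradicts irreducibility of $\ga$, so $U \cup U^\perp \subsetneq V$ and $N_A(A_U)$ is a proper parabolic subgroup.

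Combining the two steps, $H \leqslant a^{-1} A_{U \cup U^\perp} a$, which is a proper parabolic subgroup of $A$. By the minimality defining $\pc_\ga(H)$ one then obtains $\pc_\ga(H) \leqslant a^{-1} A_{U \cup U^\perp} a \subsetneq A$, contradicting the hypothesis $\pc_\ga(H) = A$. The main obstacle is justifying the normalizer formula $N_A(A_U) = A_{U \cup U^\perp}$; in case no convenient citation in \cite{AntolinMinasyan} or \cite{Minasyan_hcs} covers exactly this statement, it can be derived in a few lines from the centralizer theorem and the normal form for RAAGs, using that any element of $A$ normalizing $A_U$ must preserve the set of parabolic conjugates and hence can be written as a product of an element of $A_U$ and an element centralizing $A_U$.
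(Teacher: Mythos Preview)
Your argument is correct. The paper does not actually prove this lemma but simply cites it as \cite[Cor.~3.15]{AntolinMinasyan}; your proposal reconstructs essentially the argument that appears there: the parabolic closure is equivariant under conjugation, so $H$ normalizes $P=\pc_\ga(N)$, and then the normalizer formula $N_A(A_U)=A_{U\cup\link(U)}$ (which is \cite[Prop.~3.13]{AntolinMinasyan} in the paper's bibliography) together with irreducibility of $\ga$ forces $H$ into a proper parabolic. The only point to tighten is the last paragraph: the decomposition $N_A(A_U)=A_U\cdot C_A(A_U)$ is true but is not quite as immediate as you suggest, so it is cleaner to cite the normalizer formula directly rather than sketch a derivation from the centralizer theorem.
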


We will also need the following statement, which is a special case of \cite[Cor. 6.20]{MO2}.

\begin{lem} \label{lem:heraags}
Let $A=A(\ga)$ be a right angled Artin group corresponding to some finite irreducible graph $\ga$ with $|V\ga| \ge 2$. Then $A$ acts simplicially and coboundedly by isometries on a simplicial tree $\cT$ so that the following holds.
For any subgroup $H\leqslant  A$ with $\pc_\ga (H) = A$ one has $H \cap \mathcal{L}_{WPD}(A,\cT) \neq\emptyset$.
\end{lem}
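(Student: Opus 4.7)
The plan is to realize $\cT$ as the Bass-Serre tree of a canonical splitting of $A(\ga)$ over a link subgroup, and then to extract a WPD loxodromic element of $H$ using the parabolic-closure hypothesis together with the structural theory of parabolic subgroups in RAAGs from \cite{AntolinMinasyan}.

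Since $\ga$ is irreducible with $|V\ga|\ge 2$, the complement graph $\ga^c$ is connected on at least two vertices and therefore possesses an edge; equivalently, there exists $v\in V\ga$ with $\link(v)\subsetneq V\ga\setminus\{v\}$. Setting $U:=V\ga\setminus\{v\}$ yields the nontrivial amalgamated decomposition
\[
A(\ga)=A(U)\,*_{A(\link(v))}\,A(\text{star}(v)),\qquad A(\text{star}(v))=A(\link(v))\times\langle v\rangle.
\]
Let $\cT$ be its Bass-Serre tree. The natural action $A:=A(\ga)\curvearrowright\cT$ is simplicial, by isometries, with a single edge-orbit, hence cobounded. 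Vertex stabilizers are $A$-conjugates of the proper parabolic subgroups $A(U)$ and $A(\text{star}(v))$, and edge stabilizers are $A$-conjugates of the proper parabolic $A(\link(v))$.

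First I would show that $H$ contains a hyperbolic element for this action. If $H$ fixed a vertex of $\cT$, then $H$ would lie in a proper parabolic subgroup, contradicting $\pc_\ga(H)=A$. By the standard trichotomy for group actions on trees, the only remaining obstacle to having a hyperbolic element is that $H$ fixes an end $\xi$. In that case, the stabilizer of $\xi$ is the ascending union $\bigcup_n P_n$, where $P_n:=\bigcap_{j\ge n}\mathrm{Stab}_A(e_j)$ is the pointwise stabilizer of a subray $[v_n,\xi)$. Each $P_n$ is an intersection of $A$-conjugates of $A(\link(v))$ and hence is itself a parabolic subgroup of $A$ (intersections of parabolic subgroups in a RAAG are parabolic, cf.\ \cite{AntolinMinasyan}). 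Since $V\ga$ is finite, there are only finitely many parabolic-conjugacy types, so the increasing chain $\{P_n\}$ must stabilize to a proper parabolic subgroup $P$ containing $H$, again violating $\pc_\ga(H)=A$. Therefore $H$ admits a hyperbolic element $g$.

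The last and most delicate step is to upgrade $g$ to a WPD element. The pointwise stabilizer of a length-$n$ segment of $\cT$ is an $n$-fold intersection of conjugates of $A(\link(v))$, and on the axis of $g$ this nested intersection approaches the subgroup $Q_\infty:=\bigcap_{k\in\Z} g^k A(\link(v)) g^{-k}$, the maximal $g$-invariant subgroup of $A(\link(v))$. For WPD it suffices that $Q_\infty$ be trivial (using that RAAGs are torsion-free). Using $\pc_\ga(H)=A$ together with Lemma \ref{prop_parab} applied to the normal subgroup $\llangle g\rrangle^H\lhd H$, I can combine $g$ with elements $h_1,\dots,h_k\in H$ chosen to leave every proper parabolic subgroup, forming a new hyperbolic element $g':=g^{m_1}h_1\cdots g^{m_k}h_k\in H$ (with $|m_i|$ large) whose axis ``mixes'' enough distinct conjugates of $A(\link(v))$ to force $Q_\infty=\{1\}$, thereby yielding $g'\in\LWPDH$. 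The principal obstacle is precisely this WPD verification: the $A$-action on $\cT$ is generally \emph{not} acylindrical---for instance, when $\ga=C_5$ the pointwise stabilizer of a length-two segment through a $\text{star}(v)$-vertex is the entire edge-group $A(\link(v))\cong F_2$---so Remark \ref{rem:acyl->tame} does not apply and WPD must be established by hand via the parabolic-intersection theory of RAAGs.
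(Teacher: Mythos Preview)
The paper does not prove this lemma; it is quoted as a special case of \cite[Cor.~6.20]{MO2}. Your approach---Bass--Serre tree of the link splitting, then produce a WPD loxodromic in $H$---is exactly the framework used in \cite{MO2}, so the comparison is really between your argument and the one there.

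Your setup and the existence of a hyperbolic element are essentially correct, with two small imprecisions. First, in the fixed-end case you write that $\mathrm{Stab}_A(\xi)=\bigcup_n P_n$; this is false in general, since hyperbolic elements of $A$ whose axis limits to $\xi$ also stabilize $\xi$. What you need (and what is true) is that every \emph{elliptic} element fixing $\xi$ lies in some $P_n$, and that suffices since you are in the case where $H$ has no hyperbolic element. Second, ``finitely many parabolic conjugacy types'' does not by itself force an ascending chain of subgroups to stabilize; the correct reason is that the rank $|S|$ of a parabolic $gA_Sg^{-1}$ strictly increases along a strictly ascending chain and is bounded by $|V\ga|$.

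The genuine gap is Step~4. You correctly identify that WPD for a hyperbolic $g$ is equivalent to triviality of the pointwise axis stabilizer $Q=Q_\infty(g)$ (a parabolic subgroup, being an intersection of edge-group conjugates), but the ``mixing'' construction of $g'$ is not a proof: you have not shown $g'$ is hyperbolic, nor that $Q_\infty(g')=\{1\}$, and the appeal to Lemma~\ref{prop_parab} only tells you that $\llangle g\rrangle^H$ has full parabolic closure, which does not directly furnish the elements $h_i$ with the required property. The step that makes this work is the following structural fact about RAAGs: for $\ga$ irreducible and any nontrivial proper parabolic $P=gA_Sg^{-1}$, the normalizer $N_A(P)=gA_{S\cup\link(S)}g^{-1}$ is again a \emph{proper} parabolic (since $S\cup\link(S)=V$ would make $\ga$ a join). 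Thus if $Q\neq\{1\}$ then $g\in N_A(Q)$ is a proper parabolic, so one can pick $h\in H\setminus N_A(Q)$; then $hQh^{-1}\neq Q$ and $Q\cap hQh^{-1}$ is a parabolic of strictly smaller rank. A ping-pong argument on $\cT$ shows that for large $n$ the element $g^n(hg h^{-1})^n\in H$ is hyperbolic with axis containing long subsegments of both $\mathrm{Axis}(g)$ and $h\cdot\mathrm{Axis}(g)$, forcing its $Q_\infty$ into $Q\cap hQh^{-1}$. Iterating by induction on the rank of $Q$ yields a hyperbolic element of $H$ with trivial axis stabilizer, i.e.\ a WPD element. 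This rank-reduction via normalizers is the missing engine in your sketch.
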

Note that the geometric realization of a simplicial tree is $0$-hyperbolic. Therefore, Lemma~\ref{lem:heraags}
shows that the theory which we developed in Section \ref{sec:comm_hom} can be applied to any such $H$.
%
%

\begin{lem}\label{lem:no_center} Let $\ga$ be a finite irreducible graph and let $A=A(\ga)$ be the corresponding right angled Artin group. Suppose that $H \leqslant A$ is a non-cyclic subgroup
such that $\pc_\ga(H)=A$. Then $H$ has trivial center and there is $h \in H\setminus\{1\}$ such that $E_A(h)=\gen{h} \subseteq H$,
where the subgroup $E_A(h)\leqslant A$ is defined as in Remark~\ref{rem:E_G(g)}.
\end{lem}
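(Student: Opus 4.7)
The overall approach is to apply the theory of acylindrically hyperbolic actions developed in the preceding sections to the tree action of $A$ provided by Lemma \ref{lem:heraags}, exploiting that right angled Artin groups are torsion-free. First I would make some easy reductions: $|V\ga| \ge 2$ (otherwise $A \cong \Z$ is cyclic, contradicting non-cyclicity of $H$), and $A$, hence $H$, is torsion-free. Since a torsion-free virtually cyclic group is either trivial or infinite cyclic, the fact that $H$ is non-cyclic forces it to be non-elementary. By Lemma \ref{lem:heraags}, $A$ admits a cobounded simplicial isometric action on a tree $\cT$ (which is $0$-hyperbolic), and the hypothesis $\pc_\ga(H) = A$ gives $H \cap \LWPD \neq \emptyset$, with $\LWPD$ read as $\mathcal{L}_{\text{WPD}}(A,\cT)$. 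This places us in the setting of the special elements theory.

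Next I would observe that $E_A(H) = \{1\}$: by Lemma \ref{lem:E_G(H)}, $E_A(H)$ is a finite subgroup of $A$, and $A$ is torsion-free. To prove $Z(H) = \{1\}$, I would use Lemma \ref{lem:lwpd+} to select two non-commensurable elements $g_1, g_2 \in H \cap \LWPD$. By Lemma \ref{lem:elemrem} each $E_A(g_i)$ is virtually cyclic, hence infinite cyclic (being torsion-free and virtually cyclic), and by Remark \ref{rem:elem_inter} the intersection $E_A(g_1) \cap E_A(g_2)$ is finite, so it is trivial. Any $z \in Z(H)$ commutes with both $g_1$ and $g_2$, and Lemma \ref{lem:elemrem} also gives $C_A(g_i) \subseteq E_A(g_i)$; hence $z \in E_A(g_1) \cap E_A(g_2) = \{1\}$, which forces $Z(H) = \{1\}$.

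For the second claim, I would apply Lemma \ref{lem:special} to obtain an $H$-special element $h \in S_A(H, \cT) \subseteq H$. By the definition of $H$-special and the equality $E_A(H) = \{1\}$, one has $E_A(h) = \gen{h} \cdot E_A(H) = \gen{h}$, and trivially $\gen{h} \subseteq H$ since $h \in H$. The argument poses no serious obstacle; all the delicate work has already been packaged into Lemma \ref{lem:heraags} and the structural results on $H$-special elements, and the only new content of the present lemma is the observation that torsion-freeness of $A$ collapses the formula $E_A(g) = \gen{g} \cdot E_A(H)$ to $\gen{g}$ and forces any normal abelian-like subgroup of $H$ (such as the center) into $\bigcap_i E_A(g_i)$, which is trivial for non-commensurable $g_i$.
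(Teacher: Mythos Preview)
Your proposal is correct and takes essentially the same approach as the paper: both invoke Lemma \ref{lem:heraags} for the tree action, use torsion-freeness of $A$ to get $E_A(H)=\{1\}$ and non-elementarity of $H$, apply Lemma \ref{lem:special} to produce an $H$-special element with $E_A(h)=\gen{h}$, and deduce triviality of the center from the trivial intersection of two $E_A(\cdot)$ groups for non-commensurable loxodromic WPD elements. The only cosmetic difference is that the paper reuses the special element $h$ as one of the two non-commensurable elements (finding a second via Lemma \ref{lem:lwpd+}), whereas you pick two fresh non-commensurable elements for the center argument and then separately invoke Lemma \ref{lem:special}.
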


\begin{proof} Since $H$ is not cyclic, $|V\ga| \ge 2$, and so we can apply Lemma \ref{lem:heraags} to find a simplicial tree $\cT$ such that $A$ acts on $\cT$ isometrically and coboundedly,
and $H \cap \mathcal{L}_{WPD}(A,\cT) \neq\emptyset$.
Recall that right angled Artin groups are torsion-free, hence $E_A(H)=\{1\}$ (see Lemma \ref{lem:E_G(H)}) and $H$ is non-elementary (because it is not cyclic, and a torsion-free elementary group is cyclic).
Therefore we can apply Lemma \ref{lem:special} to find an infinite order element $h \in H$ such that $E_A(h)=\gen{h}$. Moreover, by Lemma \ref{lem:lwpd+}, there is an element $g \in H \cap \mathcal{L}_{WPD}(A,\cT)$
such that $g$ is not commensurable with $h$ in $A$. In view of Remark \ref{rem:elem_inter}, the latter implies that $E_A(h) \cap E_A(g)=\{1\}$. Since this intersection contains the center of $H$, $H$ must be centerless.
\end{proof}

The following simple observation will be useful:
\begin{rem}\label{rem:comm_end_of_ab} If $H$ is a free abelian group then the only commensurating endomorphisms of $H$
are endomorphisms of the form $h \mapsto h^s$ for some $s \in \Z \setminus \{0\}$ and for all $h \in H$.
\end{rem}

We can now prove the main result of this section.

\begin{proof}[Proof of Theorem \ref{thm:comm_end-raag}] Choose a  finite graph $\ga$, with the smallest possible $|V\ga|$, so that the corresponding right angled Artin group $A=A(\ga)$ contains (an isomorphic copy of) $H$.
Let $A=A_0\times A_1 \times \dots \times A_l$ be the standard factorization of $A$, where $A_0$ is the abelian factor of $A$ and $A_1,\dots,A_l$ are the irreducible factors of $A$.
Observe that $A_0$ is a finitely generated free abelian group and $l\ge 1$ as $H$ is non-abelian.
Let $\rho_i\colon A \to A_i$ denote the canonical retraction (in other words, $\rho_i$ is the $i$-th coordinate projection), $i=0,1,\dots,l$.

Note that for every $i \in \{1,\dots,l\}$, the image $\rho_i(H)$ cannot be isomorphic to a subgroup of a right angled Artin group $G$ whose rank is strictly smaller than the rank of $A_i$.
Indeed, otherwise $H$ would embed into the direct product $$P:=A_0 \times A_1 \times A_{i-1} \times G \times A_{i+1} \times \dots \times A_l, $$
which would be a right angled Artin group of smaller rank than $A$, contradicting the choice of $\ga$.
It follows that for each $i \in \{1,\dots,l\}$, $\rho_i(H)$ cannot be cyclic (as the rank of $A_i$ is at least $2$ by the definition of irreducible factors) and the parabolic closure of $\rho_i(H)$ in $A_i$ is $A_i$.

One can also deduce that $N_i:=H \cap A_i\lhd H$ is non-trivial whenever $i =1,\dots,l$, because otherwise $H$ would embed into the direct product of $A_0 \times A_1 \times A_{i-1} \times A_{i+1} \times \dots \times A_l$,
which is a right angled Artin group of smaller rank than $A$.
Observe that $N_i=\rho_i(N_i) \lhd \rho_i(H)$, hence $\pc_{\ga_i}(N_i)=A_i$ by Lemma \ref{prop_parab}, where $\ga_i$ is the full irreducible subgraph of $\ga$ corresponding to $A_i$,
$i=1,\dots,l$. Moreover, $N_i$ cannot be cyclic in view of Lemma \ref{lem:norm-centr}.(i) as the center of $\rho_i(H)$ is trivial by Lemma \ref{lem:no_center}. Hence we can apply Lemma  \ref{lem:no_center}
to $A_i$ and $N_i$ in to find an element $h_i \in N_i\setminus\{1\}$ such that $E_{A_i}(h_i)=\gen{h_i} \subseteq N_i$, $i=1,\dots,l$.

Now consider any commensurating endomorphism $\phi\colon H \to H$. For each $i \in \{0,1,\dots,l\}$ let $B_i\lhd A$ denote the product of all $A_j$, $j \neq i$; thus
$A=A_i B_i \cong A_i \times B_i$ and $B_i= \ker \rho_i$. By the hypothesis, for any $g \in H \cap B_i$, $\phi(g) \in H$ and $\phi(g)^m=u g^n u^{-1} \in B_i$ for some $m,n \in \Z\setminus \{0\}$ and $u \in A$.
And since $A/B_i \cong A_i$ is torsion-free, we can conclude that $\phi(g) \in B_i$. The latter shows that $\phi$ preserves the kernel of the restriction of $\rho_i$ to $H$, $i=0,1,\dots,l$. Therefore
$\phi$ naturally induces an endomorphism $\phi_i\colon \rho_i(H) \to \rho_i(H)$ for $i=0,1,\dots,l$, defined by the formula
$\phi_i(\rho_i(g)):=\rho_i(\phi(g))$ for all $g \in H$.

Evidently, $\phi_i$ will be a commensurating endomorphism of $\rho_i(H)$ for each $i=0,1,\dots,l$. Therefore, according to Remark \ref{rem:comm_end_of_ab}, there must exist $s \in \Z \setminus\{0\}$ such that
$\phi_0(a)=a^s$ for all $a \in \rho_0(H)$. On the other hand, if $i \in \{1,\dots,l\}$, we can recall that $A_i$ is an irreducible factor of $A$ and  $\rho_i(H)$ is a non-elementary subgroup of $A_i$ such that the parabolic closure of
$\rho_i(H)$ in $A_i$ is $A_i$. Therefore, in view of Lemma \ref{lem:heraags}, all the assumptions of Theorem \ref{thm:comm} are satisfied, hence there exists $w_i \in A_i$ such that $\phi_i(a)=w_i a w_i^{-1}$ for all
$a \in \rho_i(H)$ (here we used the fact that $E_{A_i}(\rho_i(H))=\{1\}$ as $A_i$ is torsion-free), $i=1,\dots,l$.

Let $\psi \in Inn(A)$ be the inner automorphism defined by $\psi(g):=wgw^{-1}$ for all $g \in A$, where $w:=w_1 \cdots w_l \in A$. Let us show that the endomorphism $\phi$ is actually the restriction of $\psi$ to $H$.
The preceding paragraph implies that this is true if the abelian factor $A_0$ is trivial, because in this case for every $g \in H$ one would have $g=\rho_1(g) \cdots \rho_l(g)$, and so
$$\phi(g)=\rho_1(\phi(g)) \cdots \rho_l(\phi(g))=\phi_1 \left(\rho_1(g)\right) \cdots \phi_l \left( \rho_l(g)\right)=\rho_1(g)^{w_1} \cdots \rho_l(g)^{w_l}=g^w.$$

On the other hand, if $A_0$ is non-trivial, then $N_0:=H \cap A_0$ is also non-trivial (by the minimality of the rank of $A$). So, pick any
$h_0 \in N_0\setminus\{1\}$. Let $h_1 \in N_1= H\cap A_1$ be the element constructed above. Since $\phi$ is commensurating and $h_0h_1 \in H$, there must exist $m,n \in \Z\setminus\{0\}$ and $u \in H$ such that
$$\phi(h_0h_1)^m=u (h_0h_1)^n u^{-1}=h_0^n u_1 h_1^n u_1^{-1}, \text{ where } u_1:=\rho_1(u) \in A_1.$$
But we also have $\phi(h_0 h_1)=\phi_0(h_0) \phi_1(h_1)=h_0^s w_1h_1w_1^{-1}$. Therefore
\begin{equation*}\label{eq:left-right}
h_0^{sm} w_1h_1^mw_1^{-1}=h_0^n u_1 h_1^n u_1^{-1}.
\end{equation*}

Applying $\rho_0$ and $\rho_1$ to the above equation we obtain $h_0^{sm}=h_0^n$ and $u_1^{-1}w_1 h_1^m w_1^{-1} u_1=h^n$.
The former yields that $n=sm$; and the latter shows that $u_1^{-1}w_1 \in E_{A_1}(h_1)=\gen{h_1}$,
in particular this element commutes with $h_1$. Thus $h_1^m=h_1^n$, and so $m=n$. Consequently, $s=1$, which implies that $\phi(g)=w g w^{-1}=\psi(g)$ for all $g \in H$.
If $w \in H$ then the proof would have been finished. However, this may not be the case, so one more step is needed.

Let $h_i \in N_i =H \cap A_i$, $i=1,\dots,l$, be the elements constructed above so that  $E_{A_i}(h_i)=\gen{h_i} \subseteq H$,
and set $h:= h_1 \cdots h_l \in H$. By the  assumption, there exist $m,n \in \Z\setminus\{0\}$ and $u \in H$ such that $\phi(h)^m=u h^n u^{-1}$. On the other hand, we know that $\phi(h)=w h w^{-1}$.
Combining these two equalities one gets $ w h^m w^{-1}=u h^n u^{-1}$ in $A$. Applying $\rho_i$ yields that $u_i^{-1}w_i \in E_{A_i}(h_i)=\gen{h_i}$, where $u_i:=\rho_i(u) \in A_i$, for $i=1,\dots,l$. It follows that
for every $i=1,\dots,l$, there exists $t_i \in \Z$ such that $w_i=u_i h_i^{t_i}$ in $A_i$. Thus, denoting $u_0:=\rho_0(u) \in A_0$, we achieve
$$w=w_1 \cdots w_l= u_1 h_1^{t_1} \cdots u_l h_l^{t_l}=  u_0^{-1} u h_1^{t_1} \cdots h_l^{t_l}=u_0^{-1} v,$$
where the element  $v:=u h_1^{t_1} \cdots h_l^{t_l}$ belongs to $H$ by construction. Since  $u_0 \in A_0$ is central in $A$, we see that $\phi(g)=wgw^{-1}=v g v^{-1}$ for all $g \in H$, thus $\phi$ is
indeed an inner automorphism of $H$.
\end{proof}

\begin{rem} The claim of Theorem \ref{thm:comm_end-raag} would be no longer true if one dropped the assumption that the ambient right angled Artin group is finitely generated. Indeed,
let $G$ be the direct product of infinitely (countably)  many copies of the free group of rank $2$. Then $G$ is a normal subgroup in the cartesian (i.e., unrestricted) product $P$ of
these free groups and any inner automorphism of $P$ induces a pointwise inner automorphism of $G$. It follows that $G$
has uncountably many pointwise inner (hence, commensurating) but non-inner automorphisms.
\end{rem}


\section{Criteria for residual finiteness of outer automorphism groups}\label{sec:crit}
Recall that, given a group $G$, the \emph{profinite topology} on $G$ is the topology whose basic open sets are cosets to normal subgroups of finite index in $G$. It is easy to see that group operations and group
homomorphisms are continuous with respect to this topology. In particular,  $G$, equipped with this topology, is a topological group.
One can also observe that the profinite topology is Hausdorff if and only if $\{1\}$ is a closed subset of $G$ if and only if $G$ is residually finite. It follows that any finite subset of a residually finite group is closed (in the profinite topology).

If $N \lhd G$, then $G/N$ is residually finite if and only if $N$ is closed in $G$. Thus if $G$ is residually finite and $|N|<\infty$ then $G/N$ is also residually finite.
Finally, residual finiteness is preserved under taking subgroups or overgroups of finite index.

\begin{rem}\label{rem:rf} Suppose that $G$ is a group and for every $g \in G \setminus \{1\}$ there is a homomorphism $\psi$ from $G$ to a residually finite group $K$ such that $\psi(g) \neq 1$. Then
$G$ is residually finite.
\end{rem}

In this section we discuss various conditions one can impose on $G$ to ensure residual finiteness of $Out(G)$. One set of conditions is given by Grossman's criterion \cite{Grossman}, mentioned in the Introduction.
In particular, since any pointwise inner automorphism is commensurating, we can combine this criterion with Theorem \ref{thm:comm_end-raag} to obtain

\begin{cor}\label{cor:cs+raag->out-rf}
Let $G$ be a finitely generated  conjugacy separable subgroup of a right angled Artin group. Then $Out(G)$  is residually finite.
\end{cor}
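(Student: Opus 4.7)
The plan is to deduce this corollary from Grossman's criterion together with Theorem~\ref{thm:comm_end-raag}. Recall that Grossman's criterion says that if $G$ is finitely generated and conjugacy separable and $Aut_{pi}(G) = Inn(G)$, then $Out(G)$ is residually finite. Since $G$ is assumed to be finitely generated and conjugacy separable, what remains is to verify that every pointwise inner automorphism of $G$ is an inner automorphism.

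First I would treat the case when $G$ is non-abelian. Any pointwise inner automorphism $\alpha$ of $G$ sends each $g \in G$ to a conjugate of $g$, and is in particular a commensurating endomorphism of $G$ (taking $m = n = 1$ in the definition). Applying Theorem~\ref{thm:comm_end-raag} to the non-abelian subgroup $H = G$ of the ambient finitely generated right angled Artin group, we conclude that $\alpha$ is an inner automorphism of $G$. Hence $Aut_{pi}(G) \subseteq Inn(G)$, and equality follows since the reverse inclusion is automatic.

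Next I would dispense with the abelian case separately, as Theorem~\ref{thm:comm_end-raag} does not apply. Since $G$ is a subgroup of a right angled Artin group, it is torsion-free, so if $G$ is abelian then $G$ is a finitely generated free abelian group. In this case every element is its own unique conjugate, so any pointwise inner automorphism $\alpha$ satisfies $\alpha(g) = g$ for all $g \in G$; hence $\alpha$ is the identity, which is trivially inner. So again $Aut_{pi}(G) = Inn(G)$.

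Combining the two cases with Grossman's criterion yields the residual finiteness of $Out(G)$. I do not expect any serious obstacle here; the content of the corollary is essentially just the observation that Theorem~\ref{thm:comm_end-raag} supplies precisely the hypothesis missing from Grossman's criterion, once one disposes of the trivial abelian case by hand.
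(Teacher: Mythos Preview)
Your proof is correct and follows exactly the approach the paper sketches: combine Grossman's criterion with Theorem~\ref{thm:comm_end-raag}, using that every pointwise inner automorphism is commensurating. The only difference is that you explicitly dispose of the abelian case (where Theorem~\ref{thm:comm_end-raag} does not apply), which the paper leaves implicit; your treatment of that case is sound.
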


In \cite[Cor. 2.1]{Minasyan_hcs} the second author proved that groups from the class $\vr$ (i.e., virtual retracts of finitely generated right angled Artin groups)
are conjugacy separable. Since these groups are finitely generated (and even finitely presented), as virtual retracts of finitely
presented groups, we can apply Corollary \ref{cor:cs+raag->out-rf} to achieve
\begin{cor}\label{cor:vr}
If $G \in \vr$ then $Out(G)$ is residually finite.
\end{cor}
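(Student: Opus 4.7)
The plan is to deduce Corollary \ref{cor:vr} directly from the immediately preceding Corollary \ref{cor:cs+raag->out-rf} by verifying that any $G \in \vr$ satisfies its two hypotheses, namely finite generation and conjugacy separability inside a right angled Artin group.

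First I would check finite generation. By the very definition of the class $\vr$, the group $G$ is a virtual retract of some finitely generated right angled Artin group $A$; in particular $G$ embeds into $A$, and there exists a finite index subgroup $A_0 \leqslant A$ (containing $G$) together with a retraction $\rho \colon A_0 \to G$. Since $A$ is finitely generated and $[A : A_0] < \infty$, the subgroup $A_0$ is finitely generated, and hence so is its homomorphic image $G = \rho(A_0)$.

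Next I would invoke the conjugacy separability result \cite[Cor. 2.1]{Minasyan_hcs}, which asserts precisely that every group in the class $\vr$ is conjugacy separable. Combined with the previous paragraph, this shows that $G$ is a finitely generated conjugacy separable subgroup of a right angled Artin group, so Corollary \ref{cor:cs+raag->out-rf} applies and yields that $Out(G)$ is residually finite.

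I do not expect any genuine obstacle in this deduction: the substantive content has already been absorbed into Theorem \ref{thm:comm_end-raag} (through its use in establishing Corollary \ref{cor:cs+raag->out-rf}) and into the cited conjugacy separability theorem for $\vr$. The present corollary is essentially a packaging statement combining these two results, and the only thing to verify is the trivial observation that retracts of finitely generated groups are themselves finitely generated.
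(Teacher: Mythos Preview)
Your proposal is correct and follows essentially the same approach as the paper: verify finite generation (as a retract of a finite index subgroup of a finitely generated right angled Artin group), invoke \cite[Cor.~2.1]{Minasyan_hcs} for conjugacy separability, and then apply Corollary~\ref{cor:cs+raag->out-rf}. The paper's argument is a one-sentence remark preceding the corollary, and your write-up simply unpacks the finite generation step in slightly more detail.
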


Another useful tool for establishing residual finiteness of $Out(G)$ is given by the following observation:

\begin{lem}[{\cite[Lemma 5.4]{G-L-i}}] \label{lem:outnormsub}
Suppose that $G$ is a finitely generated group, and $N$ is a centerless normal
subgroup of finite index in $G$.
Then some finite index subgroup
$Out_0(G)\leqslant Out(G )$ is isomorphic to a quotient of a subgroup of $Out(N )$ by
a finite normal subgroup. In particular, if $Out(N)$ is residually finite then so is $Out(G)$.
\end{lem}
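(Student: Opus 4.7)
The plan is to restrict to a carefully chosen finite-index subgroup of $Aut(G)$ on which the restriction map to $Aut(N)$ becomes injective, following the strategy of Guirardel and Levitt. First I would set up $Aut_N(G) := \{\phi \in Aut(G) : \phi(N) = N\}$, which has finite index in $Aut(G)$: since $G$ is finitely generated there are only finitely many subgroups of index $[G:N]$, so $Aut(G)$ permutes a finite set of such subgroups and $Aut_N(G)$ is the stabilizer of $N$. I would then define the further subgroup
\[
Aut_0(G) := \{\phi \in Aut_N(G) : \phi \text{ induces the identity on } G/N\},
\]
which has finite index in $Aut_N(G)$ as the kernel of the natural homomorphism $Aut_N(G) \to Aut(G/N)$ (whose target is finite since $G/N$ is).

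The heart of the argument, and the main obstacle, is showing that the restriction map $r : Aut_0(G) \to Aut(N)$, $\phi \mapsto \phi|_N$, is injective. Indeed, if $\phi \in Aut_0(G)$ fixes $N$ pointwise then for each $g \in G$ the element $g^{-1}\phi(g)$ lies in $C_G(N)$ (as $\phi(g)$ and $g$ act identically on $N$ by conjugation) and also in $N$ (as $\phi$ acts trivially on $G/N$), so it lies in $N \cap C_G(N) = Z(N) = \{1\}$, forcing $\phi = \mathrm{id}$. This simultaneous exploitation of the two conditions together with $Z(N) = 1$ is the crucial idea that makes the whole argument work. Noting that $Inn(N) \subseteq Aut_0(G)$ (each $c_n$ with $n \in N$ preserves $N$ and acts trivially on $G/N$), a slight elaboration of the injectivity argument shows that the composition $\bar r : Aut_0(G) \to Out(N)$ has kernel exactly $Inn(N)$, yielding an induced isomorphism $Aut_0(G)/Inn(N) \cong \bar H_0$, where $\bar H_0 \leqslant Out(N)$ is the image.

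Finally, I would define $Out_0(G) := Aut_0(G) \cdot Inn(G)/Inn(G) \leqslant Out(G)$, which has finite index since $Aut_0(G)$ has finite index in $Aut(G)$. Setting $I := Aut_0(G) \cap Inn(G) = \{c_g : gN \in Z(G/N)\}$, which is a normal subgroup of $Aut_0(G)$ containing $Inn(N)$, the quotient $I/Inn(N)$ is finite (being a quotient of a subgroup of the finite group $G/N$). Transporting through the isomorphism above gives
\[
Out_0(G) = Aut_0(G)/I \cong \bar H_0/\tilde F,
\]
where $\tilde F \lhd \bar H_0$ is the finite normal subgroup corresponding to $I/Inn(N)$. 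This is exactly the required description. The ``in particular'' statement is then immediate: residual finiteness passes from $Out(N)$ to its subgroup $\bar H_0$, then to the quotient $\bar H_0/\tilde F \cong Out_0(G)$ (since the normal subgroup being quotiented out is finite), and finally from the finite-index subgroup $Out_0(G)$ to $Out(G)$ by a standard argument.
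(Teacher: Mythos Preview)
The paper does not give its own proof of this lemma; it simply cites \cite[Lemma 5.4]{G-L-i}. Your argument is correct and is essentially the standard proof (and indeed the one given by Guirardel and Levitt): restrict to the finite-index subgroup $Aut_0(G)$ of automorphisms acting trivially on $G/N$, use $Z(N)=\{1\}$ together with triviality on $G/N$ to get injectivity of restriction to $Aut(N)$, and then compare $Inn(N)$ with $Aut_0(G)\cap Inn(G)$ to produce the required finite normal quotient.
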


For our purposes we will also need a criterion (see Proposition \ref{prop:new_crit} below) which applies when the center of $N$ is non-trivial.

Given a subgroup $H \leqslant G$,  define $Aut(G;H):=\{ \alpha \in Aut(G) \mid \alpha(H)=H\}\leqslant Aut(G)$,
and let $Out(G;H)$ be its image in $Out(G)$. Since a finitely generated group contains only finitely many subgroups of any given finite index, the following observation can be made:

\begin{rem} \label{rem:out(G;H)} If $G$ is a finitely generated group and $H \leqslant G$ has finite index then $|Aut(G):Aut(G;H)|<\infty$ and $|Out(G):Out(G;H)|< \infty$.
\end{rem}

If $Q$ is an \emph{abelian} group and $n \in \N$ then $Q^n:=\{z^n \mid z \in Q\}$ is called a \emph{congruence subgroup} of $Q$.
Clearly, every finite index subgroup of $Q$ contains $Q^n$ for some $n \in \N$.
If, additionally, $Q$ is finitely generated, then $|Q:Q^n|<\infty$  for all $n \in \N$, hence
the profinite topology of $Q$ is generated by the  congruence subgroups. It follows that for any fixed $m \in \N$, the profinite topology on $Q$ is also generated by the collection $\{Q^{mn} \mid n \in \N\}$.

Let us also specify some notation. If $x,y$ are elements of a group $G$, we will write $x^y$ for the conjugate $yxy^{-1}$ and $[x,y]$ for the commutator $xyx^{-1}y^{-1}$. If $E \subseteq G$
then $E^y$ and $[E,x]$ will denote the subsets $\{e^y \mid e \in E\} \subseteq G$ and $\{[e,x] \mid e \in E\} \subseteq G$ respectively.

\begin{prop}\label{prop:new_crit} Let $G$ be a finitely generated group, let $N\lhd G$ be a normal subgroup of finite index such that the center $Z=Z(N)$,  of $N$, is finitely generated. Suppose that
$Out(G/Z)$ is residually finite and there is $m \in \N$ such that $Out(G/Z^{mn})$ is residually finite for all $n \in \N$. Then $Out(G)$ is also residually finite.
\end{prop}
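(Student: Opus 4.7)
The plan is to reduce to the finite-index subgroup of $Out(G)$ preserving $N$, embed it diagonally in a product of residually finite groups, and show the embedding is injective.

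First, since $N$ is normal in $G$ but not necessarily characteristic, the subgroup $Aut(G;N) := \{\alpha \in Aut(G) : \alpha(N)=N\}$ has finite index in $Aut(G)$ by Remark~\ref{rem:out(G;H)}, so $Out(G;N) := Aut(G;N)/Inn(G)$ has finite index in $Out(G)$. Since residual finiteness of a finite-index subgroup implies residual finiteness of the ambient group, it suffices to prove $Out(G;N)$ is residually finite. Each $\alpha \in Aut(G;N)$ restricts to an automorphism of $N$ and thus preserves the characteristic subgroups $Z = Z(N)$ and $Z^{mn} \leqslant Z$, yielding a well-defined diagonal homomorphism
\[
\iota : Out(G;N) \longrightarrow Out(G/Z) \times \prod_{n \ge 1} Out(G/Z^{mn}).
\]
The target is residually finite as a product of residually finite groups, so it suffices to show that $\ker \iota$ is trivial.

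Take $[\alpha] \in \ker\iota$. Triviality of $\alpha$ in $Out(G/Z)$ permits us, after composing with an inner automorphism of $G$, to assume $\alpha(x) \in xZ$ for all $x \in G$. Then $\lambda(x) := x^{-1}\alpha(x) \in Z$ defines a derivation $\lambda : G \to Z$ with respect to the conjugation $G$-action on $Z$ (that is, $\lambda(xy) = \lambda(x)^y \lambda(y)$). Triviality of $\alpha$ in $Out(G/Z^{mn})$ produces $h_n \in G$ with $\alpha(x) \equiv h_n x h_n^{-1} \pmod{Z^{mn}}$; the assumption $\alpha(x) \in xZ$ forces $h_n \in Z_1$, the preimage in $G$ of $Z(G/Z)$. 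Rewriting, $\lambda(x) \equiv [x^{-1},h_n] \pmod{Z^{mn}}$ for every $x$, and $\alpha$ is inner iff $\lambda(x) = [x^{-1},h]$ holds exactly for some single $h \in Z_1$.

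Fix a finite generating set $\{x_1,\dots,x_r\}$ of $G$; since derivations are determined by their values on generators, the question reduces to whether $\lambda_0 := (\lambda(x_i))_i \in Z^r$ lies in the image of $\Phi : Z_1 \to Z^r$, $\Phi(h) := ([x_i^{-1},h])_i$. The data above shows $\lambda_0$ lies in the closure of $\Phi(Z_1)$ in the topology on $Z^r$ generated by the subgroups $(Z^{mn})^r$. The main obstacle is that although this topology is Hausdorff ($Z$ being finitely generated abelian gives $\bigcap_n (Z^{mn})^r = 0$, since the free part is separated by the subgroups $mn\Z^{rk}$ and any torsion part is annihilated when $mn$ is divisible by its exponent) and every subgroup of $Z^r$ is closed in it, the map $\Phi$ is only a crossed homomorphism (the formula $\Phi(h_1h_2) = \Phi(h_1) + h_1\Phi(h_2)h_1^{-1}$ verifies this), so $\Phi(Z_1)$ need not be a subgroup. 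To handle this, observe that the coherent sequence $(h_n)$ satisfies $h_n^{-1}h_{n'} \in C_n := \{c \in Z_1 : [c,G] \subseteq Z^{mn}\}$ whenever $n$ divides $n'$, and $C_n$ has finite index in $Z_1$ (since $\Phi$ embeds $Z_1/C_n$ into the finite group $Z^r/(Z^{mn})^r$) with $\bigcap_n C_n = Z(G)$. Passing to the graph of $\Phi$ inside the semidirect product $Z^r \rtimes Z_1$---a genuine subgroup isomorphic to $Z_1$---and combining the closed-subgroup property for $Z^r$ with the finite-index constraints from the descending $C_n$ tower, one extracts an actual $h \in Z_1$ with $\Phi(h) = \lambda_0$, yielding $\alpha = \mathrm{conj}_h \in Inn(G)$ and proving $\ker \iota = 1$.
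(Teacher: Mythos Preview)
Your approach is the same as the paper's right up to the last step, and you correctly diagnose the obstacle: $\Phi\colon Z_1\to Z^r$ is only a crossed homomorphism, so $\Phi(Z_1)$ need not be a subgroup of $Z^r$ and hence is not obviously closed. The gap is that your proposed resolution does not actually close this. Passing to the graph $\Gamma=\{(\Phi(h),h)\}\le Z^r\rtimes Z_1$ gives a subgroup isomorphic to $Z_1$, but you have no topology on $Z^r\rtimes Z_1$ in which $\Gamma$ is known to be closed, and the element $\lambda_0$ lives only in $Z^r$, not in the semidirect product, so knowing $\Gamma$ is a subgroup buys nothing. Likewise, the coherent coset data $(h_nC_n)$ defines an element of the inverse limit $\varprojlim Z_1/C_n$, but you need that element to come from $Z_1$ itself; this is a surjectivity question that fails for general descending chains of finite-index subgroups, and nothing you have written rules out failure here. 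The phrase ``one extracts an actual $h\in Z_1$'' is exactly where a genuine argument is needed and none is given.

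The missing idea, which the paper supplies, uses $N$ in an essential way (you only use $N$ to pass to $Aut(G;N)$). Set $C_1:=Z_1\cap N$; this has finite index in $Z_1$ because $|G:N|<\infty$. Since $C_1\le N$ and $Z=Z(N)$, every element of $C_1$ centralizes $Z$, so the restriction $\Phi|_{C_1}$ is an honest group homomorphism and $\Phi(C_1)$ is a subgroup of the finitely generated abelian group $Z^r$, hence closed in the topology generated by the $(Z^{mn})^r$. Writing $Z_1=\bigcup_{j=1}^l e_jC_1$ and using the crossed-homomorphism identity gives
\[
\Phi(Z_1)=\bigcup_{j=1}^l \Phi(e_j)\cdot{}^{e_j}\!\Phi(C_1),
\]
a finite union of cosets of subgroups of $Z^r$, hence closed. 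Since $\lambda_0$ lies in the closure of $\Phi(Z_1)$ and $\alpha\notin Inn(G)$ means $\lambda_0\notin\Phi(Z_1)$, you get a contradiction; equivalently, $\ker\iota=1$. This is precisely the step you should substitute for the final sentence of your argument.
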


\begin{proof} In view of Remark \ref{rem:out(G;H)} and since $Out(G;N)\leqslant Out(G;Z)$
(because  $Z$ is a characteristic subgroup of $N$), it is enough to prove that $Out(G;Z)$ is residually finite.  So, consider any $\alpha \in Aut(G;Z) \setminus Inn(G)$ (note that $Inn(G) \leqslant Aut(G;Z)$ as $Z \lhd G$)
and let $\bar \alpha \in Out(G;Z)$ denote its image in $Out(G)$.

Note that $\alpha(Z^n)=Z^n$ for every $n \in \N$, hence $\alpha$ naturally induces
an automorphism of $G/Z^n$ (as it permutes the cosets of $Z^n$ in $G$). This gives rise to the following commutative diagram between automorphism groups:
$$
 \xymatrix{
Aut(G;Z) \ar[r]\ar[d] &  Aut(G/Z^n; Z/Z^n)\ar[r]\ar[d]  &   Aut(G/Z) \ar[d] \\
Out(G;Z) \ar[r]          &  Out(G/Z^n; Z/Z^n)\ar[r]          &   Out(G/Z)
}
$$

In view of the assumptions and Remark \ref{rem:rf}, to prove the proposition it is enough to show that there exists
$s \in \{1\} \cup m\N$ such that the image of $\bar \alpha$ in $Out(G/Z^{s})$ (coming from the commutative diagram above) is non-trivial.

If $\alpha$ induces a non-inner automorphism of $G/Z$, then  the image of $\bar \alpha$ will be non-trivial in $Out(G/Z)$. Thus, we can now suppose that $\alpha$ induces an inner
automorphism of $G/Z$.  This means that we can replace $\alpha$ by its composition with an inner automorphism of $G$ (this does not affect $\bar \alpha$) to further assume that $\alpha$ induces the identity on $G/Z$.
In other words, $\alpha(g)g^{-1} \in Z$ for all $g \in G$.

Choose a finite generating set $\{x_1,\dots,x_k\}$ of $G$. Then for every $i=1,\dots,k$, there is $z_i \in Z$ such that $\alpha(x_i)=z_ix_i$. Let $C$ be the full preimage in $G$ of the center $Z(G/Z)$, and set $C_1:=C \cap N$.

Let $P=G\times \dots \times G$ be the $k$-th direct power of $G$, let $Q=Z \times \dots \times Z \leqslant P$ be the $k$-th direct power of $Z$, and let $D:=\{(g,\dots,g)\mid g \in G\}\leqslant P$
be the corresponding diagonal subgroup of $P$.

Observe that for any given $n \in \N$, $\alpha$ induces an inner automorphism of $G/Z^n$  if and only if  there exists $a \in C$ such that $\alpha(x_i) \equiv a x_i a^{-1} \text{ (mod $Z^n$)}$ for every $i=1,\dots,k$.
The latter equality can be re-written as $z_i \equiv [a,x_i]  \text{ (mod $Z^n$)}$ in $G$.
Thus $\alpha$ induces an inner automorphism of $G/Z^n$  if and only if $(z_1,\dots,z_k) \in [E,(x_1,\dots,x_k)]$ $ \text{ (mod $Q^n$)}$, where $E:=(C \times \dots \times C) \cap D\leqslant P$.
Note that $[E,(x_1,\dots,x_k)]\subseteq Q$ as $[a,g] \in Z$ for all $a \in C$, $g \in G$, by the definition of $C$.

Observe that the subgroup $E_1:=(C_1 \times \dots \times C_1) \cap D\leqslant Q$ has finite index in $E$ (because $|C:C_1|<\infty$).
Moreover, if $c,c' \in C_1$ then $[c c',g]=[c,g][c',g]$ for any $g \in G$. This can be derived from the commutator identities,
because $[c,g], [c',g] \in Z$, and $Z$ is an abelian subgroup centralized by $C_1 \leqslant N$. It follows that $[E_1,(x_1,\dots,x_k)]$ is actually a subgroup of the finitely generated abelian group $Q$.
Therefore, $[E_1,(x_1,\dots,x_k)]$ is closed in the profinite topology of $Q$ (in fact any subgroup $H \leqslant Q$ is closed because the quotient $Q/H$ is again a finitely generated abelian group, and so it is residually finite as a
direct sum of cyclic groups).

By construction, there exist $e_1,\dots,e_l \in E$ such that $E=\bigcup_{j=1}^l e_j E_1$. Utilizing commutator identities once again, we get
$$ [E,(x_1,\dots,x_k)]=\bigcup_{j=1}^l [E_1,(x_1,\dots,x_k)]^{e_j} [e_j,(x_1,\dots,x_k)].$$ This shows that $[E,(x_1,\dots,x_k)]$ is also a closed subset of $Q$, as finite union of closed subsets.
Recall, that $\alpha \notin Inn(G)$, therefore $(z_1,\dots,z_k) \notin [E,(x_1,\dots,x_k)]$ in $Q$. It follows that we can find $n \in \N$ such that  $(z_1,\dots,z_k) \notin [E,(x_1,\dots,x_k)]$ $\text{ (mod $Q^{mn}$)}$.
The latter demonstrates that $\alpha$ induces a non-inner automorphism of $G/Z^{mn}$, which finishes the proof of  the proposition.
\end{proof}

\begin{rem} The proof of Proposition \ref{prop:new_crit} actually shows that if $G$ is a finitely generated group and $N\lhd G$ is a finite index normal subgroup such that the center $Z$, of $N$, is finitely generated then
for any $m \in \N$, $Out(G;Z)$ embeds into the cartesian product $Out(G/Z) \times \prod_{n\in \N} Out(G/Z^{mn})$.
\end{rem}


\section{Residual finiteness of outer automorphism groups of groups from $\mathcal{AVR}$}
In this section we will prove Theorem \ref{thm:avr}. In view of Corollary \ref{cor:vr} and Lemma \ref{lem:outnormsub}, essentially it remains to deal with the case when a finite index normal subgroup
$N \in \vr$ of  a group $G \in \avr$  has non-trivial center.

\begin{lem} \label{lem:descr_of_center} Let $A$ be the right angled Artin group corresponding to a finite graph $\ga$ and let $A=A_0 \times A_1 \times \dots \times A_l$ be its standard factorization, where $A_0$ is the abelian factor
and $A_1,\dots,A_l$ are the irreducible factors of $A$.
Suppose that $H \leqslant A$ is a subgroup such that $\pc_\ga(H)=A$ and $\rho_i(H)$ is not cyclic, for each $i=1,\dots,l$, where $\rho_i
\colon A \to A_i$ denotes the canonical retraction. Then the center of $H$ is equal to the
intersection of $H$  with $A_0$.
\end{lem}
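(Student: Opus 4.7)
The plan is to prove the two inclusions $H \cap A_0 \subseteq Z(H)$ and $Z(H) \subseteq H \cap A_0$ separately. The first is immediate from the definition of the standard factorization: $A_0$ is the abelian factor corresponding to a complete subgraph, hence it is central in $A$, and therefore $H \cap A_0 \subseteq Z(H)$.

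For the reverse inclusion, take any $z \in Z(H)$. Writing $z = \rho_0(z)\rho_1(z) \cdots \rho_l(z)$, it suffices to show $\rho_i(z) = 1$ for each $i \in \{1, \dots, l\}$, since then $z = \rho_0(z) \in A_0$. Since $\rho_i \colon A \to A_i$ is a retraction and $z$ is central in $H$, we see that $\rho_i(z)$ is central in $\rho_i(H)$. So the task reduces to showing that $Z(\rho_i(H)) = \{1\}$ for $i = 1, \dots, l$.

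The first key step is to verify that $\pc_{\ga_i}(\rho_i(H)) = A_i$ for each $i \in \{1, \dots, l\}$, where $\ga_i$ is the full subgraph of $\ga$ corresponding to the irreducible factor $A_i$. Suppose for contradiction that $\rho_i(H)$ is contained in a proper parabolic subgroup $a^{-1} A_U a$ of $A_i$, for some $U \subsetneq V\ga_i$ and $a \in A_i$. Then using $g = \rho_0(g)\rho_1(g) \cdots \rho_l(g)$ for every $g \in H$, we obtain
\[
H \subseteq A_0 \times A_1 \times \cdots \times A_{i-1} \times (a^{-1} A_U a) \times A_{i+1} \times \cdots \times A_l,
\]
which is a proper parabolic subgroup of $A$ (being conjugate to the full subgroup on $V\ga_0 \cup V\ga_1 \cup \cdots \cup U \cup \cdots \cup V\ga_l \subsetneq V\ga$). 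This contradicts the hypothesis $\pc_\ga(H) = A$. Hence $\pc_{\ga_i}(\rho_i(H)) = A_i$ as claimed.

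The second key step is to apply Lemma \ref{lem:no_center} to the subgroup $\rho_i(H) \leqslant A_i = A(\ga_i)$: by construction $\ga_i$ is a finite irreducible graph (with $|V\ga_i| \geq 2$, since $A_i$ is an irreducible factor), by hypothesis $\rho_i(H)$ is non-cyclic, and we have just established $\pc_{\ga_i}(\rho_i(H)) = A_i$. Lemma \ref{lem:no_center} then yields $Z(\rho_i(H)) = \{1\}$, so $\rho_i(z) = 1$ for every $i \in \{1, \dots, l\}$, completing the proof. There is no real obstacle here; the argument is essentially an assembly of the projection/parabolic-closure bookkeeping with the previously established centerlessness result.
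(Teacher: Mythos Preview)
Your proof is correct and follows essentially the same approach as the paper: both arguments show that $\pc_{\ga_i}(\rho_i(H))=A_i$ by observing that a proper parabolic containment for $\rho_i(H)$ would yield one for $H$, then invoke Lemma~\ref{lem:no_center} to conclude that each $\rho_i(H)$ is centerless, forcing $\rho_i(Z(H))=\{1\}$ for $i=1,\dots,l$. The inclusion $H\cap A_0\subseteq Z(H)$ is handled identically in both.
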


\begin{proof}
Consider any $i \in \{1,\dots,l\}$ and observe that if $\rho_i(H)$ is contained in a proper parabolic subgroup $aB_ia^{-1}$ of $A_i$ (where $a \in A_i$ and $B_i$ is a full subgroup of $A_i$, and, hence, of $A$), then
$H$ is contained in the subgroup $a\left (A_0A_1 \dots A_{i-1} B_i A_{i+1}\dots A_l  \right)a^{-1}$, which is a proper parabolic subgroup of $A$, contradicting the assumption that $\pc_\ga(H)=A$. Therefore
the parabolic closure of $\rho_i(H)$ in $A_i$ is the whole of $A_i$, $i=1,\dots,l$.

Let $Z$ denote the center of $H$. Then $\rho_i(Z)$ is contained in the center of $\rho_i(H)$, which is trivial for $i=1,\dots,l$, by Lemma \ref{lem:no_center}. Thus $\rho_i(Z)=\{1\}$ for each $i \in \{1,\dots,l\}$,
which implies that $Z \leqslant A_0$. Evidently, $H \cap A_0 \leqslant Z$ because $A_0$ is central in $A$, hence $Z=H\cap A_0$, as claimed.
\end{proof}

It is not difficult to see that the class $\vr$ is closed under taking subgroups of finite index (see \cite[Remark 9.4]{Minasyan_hcs}). To prove the main result of this section we will also need the
fact that this class is closed under taking quotients by the center:

\begin{prop}\label{prop:quot_by_center} Let $C$ be a finitely generated right Angled Artin group, let $H \leqslant C$ be an arbitrary subgroup and let $Z$ be the center of $H$.
\begin{itemize}
\item[(a)] For any subgroup $Z_1 \leqslant Z$, $Z_1$ is finitely generated and the quotient $H/Z_1$ is residually finite.
\item[(b)] If $H$ is a virtual retract of $C$ then $H/Z \in \vr$.
\end{itemize}
\end{prop}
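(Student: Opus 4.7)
The plan is to begin by replacing $C$ with a finitely generated RAAG $A = A(\ga)$ on a graph $\ga$ with the smallest possible number of vertices such that $H$ embeds in $A$; fix such an embedding. Writing $A = A_0 \times A_1 \times \cdots \times A_l$ for the standard factorization, minimality of the rank forces $\pc_\ga(H) = A$ (otherwise $H$ would sit in a proper parabolic subgroup of $A$, which is a RAAG of strictly smaller rank), and forces each $\rho_i(H)$ with $i \ge 1$ to be non-cyclic (otherwise $A_i$ could be replaced by $\mathbb{Z}$, lowering the rank further). By Lemma~\ref{lem:descr_of_center} we obtain $Z = H \cap A_0$, so $Z$ is a subgroup of the finitely generated free abelian group $A_0$, hence finitely generated, and so is any $Z_1 \le Z$. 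This settles the first assertion of part~(a).

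For the residual finiteness of $H/Z_1$ in (a), I would take any $g \in H \setminus Z_1$ and separate $g$ from $Z_1$ in a finite quotient of $H$ according to two cases. If $g \notin Z$, the projection $(\rho_1,\dots,\rho_l)\colon H \to A_1 \times \cdots \times A_l$ has kernel exactly $Z$ and lands in a finitely generated (hence residually finite) RAAG; a finite index normal subgroup of the image separating the image of $g$ from $1$ pulls back to a finite index normal subgroup of $H$ containing $Z \supseteq Z_1$ and missing $g$. If instead $g \in Z \setminus Z_1$, I would use the canonical retraction $\rho_0\colon A \to A_0$, which restricts to the identity on $Z$; then $\rho_0(H)$ is a finitely generated abelian group, so $\rho_0(H)/Z_1$ is residually finite, and a finite index subgroup $N_0 \le \rho_0(H)$ containing $Z_1$ but missing $g$ pulls back along $\rho_0|_H$ to a finite index subgroup $N \le H$ of the required form (normality in $H$ is automatic since $\rho_0(H)$ is abelian).

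For (b), the goal is to realise $H/Z$ as a virtual retract of the finitely generated RAAG $D := A/A_0 \cong A_1 \times \cdots \times A_l$, into which $H/Z$ embeds via the canonical projection $\pi\colon A \to D$ (whose restriction to $H$ has kernel $H \cap A_0 = Z$). The plan is first to upgrade the hypothesis ``$H$ is a virtual retract of $C$'' to ``$H$ is a virtual retract of $A$'', producing a retraction $s\colon A' \to H$ with $H \le A' \le A$ and $[A:A'] < \infty$. Once this is done, descent to $H/Z$ is clean: since $A' \cap A_0$ is central in $A$, $s(A' \cap A_0)$ commutes with $s(A') = H$, so $s(A' \cap A_0) \le Z$; the composition $A' \to H \to H/Z$ therefore kills $A' \cap A_0$ and factors through $A'/(A' \cap A_0) \cong A' A_0 / A_0$, yielding a homomorphism $\bar s$ from the finite index subgroup $A' A_0 / A_0$ of $D$ to $H/Z$. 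A direct check shows that $\bar s$ restricts to the identity on the embedded copy of $H/Z$ in $D$, so $\bar s$ is the required retraction and $H/Z \in \vr$.

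The main obstacle will be the intermediate step of showing that $H$ is a virtual retract of $A$, which is not a formal consequence of the two embeddings $H \le C$ and $H \le A$. My strategy here is to use the diagonal embedding $\Delta\colon H \hookrightarrow C \times A$, $h \mapsto (h,h)$, and to extend the original retraction $r\colon C' \to H$ to the homomorphism $(c,a) \mapsto (r(c), r(c))$ on $C' \times A$, which visibly retracts onto $\Delta(H)$; this realises $H$ as a virtual retract of the finitely generated RAAG $C \times A$. Transferring this retraction to one landing in the factor $A$ alone is the delicate point, and I would expect it to require either a careful projection argument exploiting the centrality of $A_0$ in $A$, or a closure property of $\vr$ under central quotients coming from the special cube complex machinery of Haglund--Wise.
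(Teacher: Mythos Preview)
Your setup and part~(a) are correct, though the paper handles residual finiteness of $H/Z_1$ more directly: since $Z_1 \le A_0$ is central in $A$, it is normal there, and $H/Z_1$ embeds in $A/Z_1 \cong (A_0/Z_1) \times A_1 \times \cdots \times A_l$; each factor is residually finite (a finitely generated abelian group, respectively a RAAG), so the product is, and hence so is $H/Z_1$. No case split is needed.

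The genuine gap is in~(b), and you have correctly identified it: from your choice of $A$ as an \emph{abstract} minimal-rank RAAG containing $H$, there is no evident reason why $H$ should be a virtual retract of $A$, and your diagonal construction in $C \times A$ does not close this gap. The paper's remedy is to make a sharper choice of $A$ from the outset: take $A$ to be a right angled Artin \emph{subgroup of $C$} containing $H$, of minimal rank among all such subgroups (this set is nonempty since $C$ itself qualifies). The minimality arguments you give for $\pc_\ga(H)=A$ and for each $\rho_i(H)$ being non-cyclic still go through, because the smaller RAAGs produced in those arguments are subgroups of $A$, hence of $C$. With this choice the obstacle in~(b) dissolves: if $K \le C$ has finite index with $H \le K$ and $\theta\colon K \to H$ is a retraction, then $K \cap A$ has finite index in $A$, contains $H$, and $\theta|_{K\cap A}$ retracts onto $H$. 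The rest of your descent to $A/A_0$ is then exactly what the paper does.
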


\begin{proof}
Since $C$ has finite rank, there exists a right angled Artin subgroup $A \leqslant C$ which contains $H$ and has minimal rank (among all such subgroups of $C$).
Let $\ga$ be the finite simplicial graph corresponding to $A$ and let
$A=A_0 \times A_1 \times \dots \times A_l$  be the standard factorization of $A$, where $A_0$ is the abelian factor and
$A_1,\dots,A_l$ are the irreducible factors of $A$. If $l=0$ then the groups $H$ and $A=A_0$ are free abelian of finite rank, hence
both statements are evidently true. Therefore we can assume that $l \ge 1$. Let $\rho_i\colon A \to A_i$ denote the canonical projection of $A$ onto $A_i$, $i=0,1,\dots,l$.

We remark that $\pc_\ga(H)=A$, by the choice of $A$.
If $\rho_i(H)$ is a cyclic subgroup $B$ of $A_i$ for some $i \in \{1,\dots,l\}$, then $H$ embeds into the subgroup $P \leqslant A$ where
$$P:=A_0  A_1  \dots  A_{i-1} B  A_{i+1}  \dots A_l \cong A_0 \times A_1\times  \dots \times A_{i-1}\times B \times  A_{i+1}\times  \dots \times A_l,$$
which is a right angled Artin group of strictly smaller rank than $A$, contradicting the choice of $A$.
Therefore we can conclude that $\rho_i(H)$ is non-cyclic for every $i\in \{1,\dots,l\}$.
Thus we are able to apply Lemma \ref{lem:descr_of_center}, claiming that $Z=H \cap A_0$.

Consider any subgroup $Z_1 \leqslant Z \leqslant A_0$. Since $A_0$ is a finitely generated abelian group we see that $Z_1$ is also finitely generated.
Moreover,  the quotient $H/Z_1$ naturally embeds into the quotient $A/Z_1 \cong A_0/Z_1 \times A_1 \times \dots \times A_l$. Therefore $A/Z_1$ (and hence $H/Z_1$)
is residually finite, as a direct product of residually finite groups: $A_0/Z_1$ is a finitely generated abelian group and $A_i$, $i=1,\dots,l$, are  right angled Artin groups,
whose residual finiteness is well-known (see \cite[Ch. 3, Thm 1.1]{Dromsphd} or \cite[Cor. 3.5]{Hsu-Wise}). Thus (a) is proved.

To prove (b) assume that $H$ is a virtual retract of $C$. This implies that for any subgroup $D \leqslant C$ such that $H \subseteq D$,
$H$ is a virtual retract of $D$. In particular, $H$ will also be a virtual retract of $A$.
Thus $A$ contains a finite index subgroup $K$ such that $H \subseteq K$ and there is a retraction $\theta: K \to H$.
Since $A_0$ is central in $A$, $K \cap A_0$ is central in $K$, and so $\theta(K \cap A_0) \subseteq Z$.

Consider the canonical projection $\xi\colon A \to A/A_0 \cong A_1 \times \dots \times A_l$, and observe that
$$\theta (K \cap \ker \xi)=\theta(K \cap A_0) \subseteq Z =H \cap A_0 \subseteq K \cap \ker \xi.$$
It follows (see \cite[Lemma 4.1]{Minasyan_hcs}) that  $\theta$ naturally induces a retraction $\bar \theta$ of $\xi(K)$
onto its subgroup $\xi(H)$. Thus $\xi(H)$ is a retract of $\xi(K)$, and the latter has finite index in the finitely generated right angled Artin group $A/A_0$, because $|A:K|<\infty$.
It remains to recall that $H \cap \ker \xi=H \cap A_0=Z$, hence $\xi(H) \cong H/Z$. Thus $H/Z \in \vr$, and the proposition is proved.
\end{proof}

\begin{rem} \begin{enumerate} \item[]
 \item Part (a) of Proposition \ref{prop:quot_by_center} can actually be derived from more general results. Indeed, it is known that the finitely generated right angled Artin group $C$
can be embedded into ${\rm GL}_k(\Z)$ for some $k \in \N$. Therefore every solvable subgroup $B \leqslant H \leqslant {\rm GL}_k(\Z)$ is polycyclic
(hence, finitely generated) and is closed in the profinite topology of ${\rm GL}_k(\Z)$  by a result of Segal \cite[4.C, Thm. 5]{Segal}.
Since the profinite topology of $H$ is finer than the topology induced by the profinite topology of ${\rm GL}_k(\Z)$, we can conclude that $B$ is closed in $H$.

\item Since right angled Artin groups are CAT($0$), it is easy to prove a weaker version of Proposition~\ref{prop:quot_by_center}.(b), that $H/Z \in \mathcal{AVR}$, using
the Flat Torus Theorem \cite[II.7.1.(5)]{BH}.
\end{enumerate}
\end{rem}

Combining Proposition \ref{prop:quot_by_center} with Lemma \ref{lem:norm-centr}.(ii) one immediately obtains

\begin{cor}
Suppose that $H \in \vr$ and $Z$ is the center of $H$. Then the group $H/Z$ is centerless and belongs to $\vr$.
\end{cor}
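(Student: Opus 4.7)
The plan is to deduce this corollary as a direct combination of the two results that immediately precede it, namely Proposition \ref{prop:quot_by_center}(b) and Lemma \ref{lem:norm-centr}(ii). Since $H \in \vr$, by the definition of the class $\vr$ there exists a finitely generated right angled Artin group $C$ such that $H$ embeds in $C$ as a virtual retract. In particular, $H$ may be viewed as a subgroup of $C$, so the hypotheses of both of the cited results are satisfied with this choice of ambient group.

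First, I would apply Proposition \ref{prop:quot_by_center}(b) to the pair $H \leqslant C$. Since $Z$ is the center of $H$, this proposition immediately yields $H/Z \in \vr$, which handles the membership part of the statement.

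For the centerless assertion, I would invoke Lemma \ref{lem:norm-centr}(ii), which asserts that for \emph{any} subgroup of a right angled Artin group (and in particular for our $H \leqslant C$), the quotient by its center is centerless. Applied to $H$, this says precisely that $H/Z$ has trivial center.

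There is essentially no obstacle here: the only minor point to verify is that the subgroup $Z \leqslant H$ appearing in each of the two lemmas is literally the same, but this is automatic since $Z$ is defined intrinsically as the center of the abstract group $H$ and does not depend on the ambient group. Thus the corollary follows at once by concatenating these two observations.
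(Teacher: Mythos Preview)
Your proof is correct and takes essentially the same approach as the paper, which simply states that the corollary follows by combining Proposition~\ref{prop:quot_by_center}(b) with Lemma~\ref{lem:norm-centr}(ii).
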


\begin{proof}[Proof of Theorem \ref{thm:avr}]
Let $G$ be any group from the class $\avr$. This means that $G$ contains a finite index subgroup $H \in \vr$; in particular, $G$ is finitely generated.
Note that $N:=\bigcap_{g \in G} H^g$ is a finite index normal subgroup of $G$, and $N \in \vr$ because the class $\vr$ is closed under taking finite index subgroups.

Let $Z$ denote the center of $N$. We are going to check that all the assumptions of the criterion from Proposition~\ref{prop:new_crit} are satisfied.
First, the fact that $Z$ is finitely generated follows from Proposition~\ref{prop:quot_by_center}.(a).
Second,  take any $n \in \N$ and note that $N/Z^n$ is residually finite,  also by Proposition~\ref{prop:quot_by_center}.(a).
Hence there is a finite index normal subgroup $M \lhd N/Z^n$ such that $M$ has trivial intersection with the finite subgroup $Z/Z^n$ in $N/Z^n$.
Again, we can replace $M$ with the intersection of all its conjugates in $G/Z^n$ to
further assume that $M \lhd G/Z^n$.

By construction, $M$ injects into the quotient $N/Z$ under the natural epimorphism $N/Z^n \to N/Z$. Let $\overline{M} \cong M$ denote the image of $M$ in $N/Z \leqslant G/Z$.
Since $N/Z \in \vr$ by Proposition~\ref{prop:quot_by_center}.(b) and $\overline{M}$ has finite index in $N/Z$, we see that $\overline{M} \in \vr$. Hence, according to Corollary \ref{cor:vr},
$Out(\overline{M}) \cong Out(M)$ is residually finite. Moreover, since the center of $N/Z \in \vr$ is trivial (Lemma~\ref{lem:norm-centr}.(ii)), the center of $Z(\overline{M})$ must be trivial as well
(because $Z(\overline{M})$ is an abelian normal subgroup of $N/Z$ and so it is central in $N/Z$ by Lemma~\ref{lem:norm-centr}.(i)). Therefore $M\cong \overline{M}$ is a centerless finite index normal subgroup
in $G/Z^n$ with a residually finite outer automorphism group. Consequently, Lemma \ref{lem:outnormsub} yields that $Out(G/Z^n)$ is residually finite.
Since this works for arbitrary $n \in \N$, we see that all the assumptions of  Proposition~\ref{prop:new_crit}  are satisfied.
It remains to apply this proposition to conclude that $Out(G)$ is residually finite, which
finishes the proof of the theorem.
\end{proof}


\section{Outer automorphisms of $3$-manifold groups}\label{sec:manifolds}
This last section of the paper is dedicated to proving Theorem \ref{thm:3-man}. We start with the following lemma, which allows to deal with the Seifert fibered case.

\begin{lem}\label{lem:Seifert}
Suppose that $G$ is a finitely generated group containing a finite index subgroup $H$ that fits into the short exact sequence
$$\{1\} \to K \to H \to L \to \{1\},$$ where
$K$ is a cyclic group and $L$ has a finite index subgroup which embeds into the fundamental group of a compact surface. Then $Out(G)$ is residually finite.
\end{lem}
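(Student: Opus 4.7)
The plan is to apply Proposition \ref{prop:new_crit} to $G$. First, I would pass to a finite-index normal subgroup $N \lhd G$ with $N \leq H$, obtained as the intersection of the normal core of $H$ in $G$ with the kernel of the conjugation map $H \to Aut(K)$; this kernel has finite index in $H$ because $Aut(K)$ is finite (as $K$ is cyclic). Then $Z_0 := K \cap N$ is central in $N$, and the short exact sequence $1 \to Z_0 \to N \to L_N \to 1$ exhibits $N$ as a central extension of a finite-index subgroup $L_N \leq L$ by a cyclic group. Refining $N$ further, we may ensure that $L_N$ itself embeds into $\pi_1(\Sigma)$ for some compact surface $\Sigma$, and that $L_N$ is a \emph{pure} subgroup of $\pi_1(\Sigma)$: trivial, infinite cyclic, free of rank $\geq 2$, or a closed surface group.

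Let $Z := Z(N)$. Then $Z_0 \leq Z$, and $Z/Z_0$ embeds in $Z(L_N) \leq Z(\pi_1(\Sigma))$, which is finitely generated abelian of rank at most $2$; hence $Z$ is finitely generated. When $L_N$ is non-abelian (free of rank $\geq 2$, or a hyperbolic closed surface group), $L_N$ is centerless, so $Z = Z_0$ and $N/Z = L_N$ has residually finite outer automorphism group by Grossman's theorem (or the classical fact that $Out(F_k)$ is residually finite). When $L_N$ is abelian, $N$ is virtually polycyclic of Hirsch length at most $3$, for which $Out(N)$ is arithmetic and hence residually finite.

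To invoke Proposition \ref{prop:new_crit}, one must show that $Out(G/Z)$ and $Out(G/Z^{mn})$ are residually finite for some fixed $m \in \N$ and all $n \in \N$. The group $G$ is residually finite (virtually a $3$-manifold group, hence linear by Thurston--Selberg), so every $G/Z^{mn}$ is residually finite; in particular, one can choose a finite-index normal subgroup $M \lhd G/Z^{mn}$ with $M \leq N/Z^{mn}$ and $M \cap (Z/Z^{mn}) = \{1\}$. Under the projection $G/Z^{mn} \to G/Z$, $M$ embeds as a finite-index subgroup of $N/Z = L_N$. After a final refinement ensuring $M$ is centerless with residually finite $Out(M)$ (using that $L_N$ is centerless with residually finite $Out(L_N)$, by the preceding step), Lemma \ref{lem:outnormsub} yields residual finiteness of $Out(G/Z^{mn})$. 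Applying Proposition \ref{prop:new_crit} then gives the conclusion.

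The main obstacle is managing the two layers of center in $N$ -- the cyclic fiber $Z_0$ and the possibly larger $Z(N)$ coming from $Z(L_N)$ -- together with the central-extension structure: one must verify that after refinement $L_N$ is genuinely centerless, and that the finite-index centerless subgroup $M$ of $G/Z^{mn}$ can be chosen uniformly in $n$. The leverage comes from Grossman's theorem for $Out(\pi_1(\Sigma))$ and the classical residual finiteness of $Out$ of free and polycyclic groups, together with the explicit structure of the central extension $Z_0 \to N \to L_N$.
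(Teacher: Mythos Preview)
Your overall strategy coincides with the paper's: reduce to a finite-index normal $N \lhd G$ that is a central cyclic extension of a surface group, dispose of the abelian/polycyclic case separately, and in the hyperbolic case apply Proposition~\ref{prop:new_crit} together with Lemma~\ref{lem:outnormsub} and Grossman's theorem. Your observation that $Z_0 = Z(N)$ once $L_N$ is non-abelian is exactly what the paper uses, so the ``two layers of center'' worry evaporates; and no uniformity in $n$ is needed, since Proposition~\ref{prop:new_crit} only asks for residual finiteness of each $Out(G/Z^{mn})$ individually.

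There is, however, a genuine gap in your justification for the existence of $M$. You assert that $G$ is residually finite because it is ``virtually a $3$-manifold group, hence linear by Thurston--Selberg'', but the hypotheses of the lemma are purely group-theoretic and make no such assumption; this lemma is meant to be applied to groups that are only \emph{virtually} of the stated form, and nothing forces $G$ (or even $H$) to be a $3$-manifold group. Moreover, even granting residual finiteness of $G$, it does not follow automatically that $G/Z^{mn}$ is residually finite: you would still need $Z^{mn}$ to be closed in the profinite topology of $G$. The paper sidesteps this entirely by invoking \cite[Lemma~4.2]{Martino}, which directly produces a finite-index subgroup $M \lhd N/Z^n$ with $M \cap (Z/Z^n)=\{1\}$ from the fact that $N/Z^n$ is a finite central extension of a surface group---no appeal to residual finiteness of the ambient group is required. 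A secondary point: in the polycyclic case you conclude residual finiteness of $Out(N)$, but what is needed is residual finiteness of $Out(G)$; since $N$ has nontrivial center here, Lemma~\ref{lem:outnormsub} does not apply, and you should instead note (as the paper does) that $G$ itself is virtually polycyclic and cite Wehrfritz's theorem that $Out$ of a virtually polycyclic group is linear over~$\Z$.
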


\begin{proof} Since $K$ is cyclic, its automorphism group is finite.  Moreover, $H$ acts on $K$ by conjugation because $K \lhd H$, and the kernel of this action is the centralizer of $K$ in $H$.
It follows that  $|H:C_H(K)|<\infty$. Combining this with the other assumptions on $G$ and $H$,  we can find a finite index normal subgroup $N\lhd G$ such that  $N \leqslant H$, $Z\coloneq N \cap K$ is central in $N$ and
$N/Z$ is a subgroup of some compact surface group. Note that $N$ is finitely generated, as this is true for $G$ by the hypothesis, hence the quotient $N/Z$ is itself isomorphic to
the fundamental group of some compact surface $\Sigma$
(because any finitely generated subgroup of a surface group is itself a surface group).
Evidently we can also assume that $\Sigma$ is orientable.
It follows that $N/Z\cong \pi_1(\Sigma)$ is either abelian (isomorphic to $\{1\}$, $\Z$ or $\Z^2$)
or is non-elementary torsion-free hyperbolic. In the former case, $N$ is polycyclic, hence $G$ is virtually polycyclic and so $Out(G)$ is residually finite (according to a theorem of Wehrfritz \cite{Wehrfritz}, $Out(G)$ is linear over $\Z$).
Thus we can assume that $N/Z$ is a torsion-free hyperbolic group, which, in particular, implies that it is centerless and so the cyclic subgroup $Z$ is equal to the center of $N$.

Now, in order to apply Proposition \ref{prop:new_crit}, we check that $Out(G/Z^n)$ is residually finite for any $n \in \N$. Indeed, observe that $Z/Z^n$ is a finite central subgroup of
$N/Z^n$ such that the quotient $(N/Z^n)/(Z/Z^n)\cong N/Z$ is isomorphic to the surface group $\pi_1(\Sigma)$. It follows that $N/Z^n$ possesses a
finite index normal subgroup $M \lhd N/Z^n$ which intersects $Z/Z^n$ trivially (see \cite[Lemma~4.2]{Martino}). Thus the image $\overline{M}$, of $M$ in $N/Z$, is naturally isomorphic to $M$ and has finite index in $\pi_1(\Sigma)$.
Consequently, $\overline{M}$ is itself isomorphic to the fundamental group of a compact orientable surface, which finitely covers $\Sigma$.
By Grossman's theorem \cite{Grossman}, $Out(M) \cong Out(\overline{M})$ is residually finite;
moreover, $M \cong \overline{M}$ is centerless because it is a non-elementary torsion-free hyperbolic group (as it has finite index in $N/Z$).
Since $M$ has finite index in $G/Z^n$, Lemma~\ref{lem:outnormsub} implies that $Out(G/Z^n)$
is residually finite for any $n \in \N$. Therefore we can use Proposition~\ref{prop:new_crit} to conclude that $Out(G)$ is residually finite.
\end{proof}

One of the main ingredients of the proof of Theorem \ref{thm:3-man} is the following beautiful result of Hamilton, Wilton and Zalesskii,
which is based on the deep work of Wise \cite{Wise-QH} and Agol \cite{Agol} mentioned in the Introduction.

\begin{thm}[{\cite[Thm. 1.3]{HWZ}}]\label{thm:HWZ} If $\cM$ is a compact orientable $3$-manifold, then $\pi_1(\cM)$ is conjugacy
separable.
\end{thm}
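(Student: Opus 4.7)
My plan is to prove conjugacy separability of $\pi_1(\cM)$ by breaking $\cM$ into geometric pieces via the sphere and JSJ decompositions, establishing conjugacy separability of each piece (in a suitably strong form), and then combining the pieces using a graph-of-groups combination theorem. First, using the Kneser--Milnor prime decomposition, $\cM$ is a connected sum of prime $3$-manifolds $\cM_1, \ldots, \cM_k$ (together with possibly some $S^2 \times S^1$ summands). Thus $\pi_1(\cM)$ decomposes as a free product of the $\pi_1(\cM_i)$ and possibly some copies of $\Z$. Since conjugacy separability is preserved under free products of finitely many conjugacy separable groups (a classical theorem, e.g., of Stebe), it suffices to establish the statement when $\cM$ is prime. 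The case where $\cM$ has compressible boundary reduces, by capping off with handlebodies and using that handlebody groups are free, to the case of irreducible $\cM$ with incompressible boundary (or closed), which I will assume henceforth.

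Next, apply the JSJ decomposition to write $\cM$ as a graph of manifolds whose pieces are either Seifert fibered or hyperbolic, with tori (or Klein bottles, which do not appear in the orientable case) as the splitting surfaces. Correspondingly, $\pi_1(\cM)$ is the fundamental group of a finite graph of groups whose vertex groups are the fundamental groups of the geometric pieces and whose edge groups are copies of $\Z^2$ embedded as cusp/boundary subgroups. The heart of the argument is then: (a) each vertex group is \emph{hereditarily conjugacy separable} in a strong sense; and (b) the amalgamating edge groups $\Z^2$ satisfy a sufficient separability condition in each vertex group to make the combination go through.

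For (a), the hyperbolic vertex groups are virtually compact special by the theorems of Agol (closed case) and Wise (cusped case), and consequently lie in the class $\mathcal{AVR}$; by Minasyan's theorem such groups are hereditarily conjugacy separable (indeed, conjugacy separability descends to any finite-index subgroup of a virtual retract of a right-angled Artin group). The Seifert fibered pieces can be handled either by direct arguments (their fundamental groups are central extensions of Fuchsian groups, which are conjugacy separable by work of Martino and others) or, thanks to Liu's theorem, again by virtual compact specialness in the case of non-empty boundary. For (b), I will verify that each edge subgroup $P\cong \Z^2$ is separable, and in fact that the conjugates of $P$ are ``sufficiently separated'' from each other --- concretely, that the $\pi_1$ of each vertex manifold is $P$-conjugacy separable and that the double coset $PgP$ is closed in the profinite topology for each $g$ in the vertex group. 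For virtually compact special groups, these conditions are standard consequences of the separability of quasi-convex subgroups; for Seifert pieces, they follow from the abelian-by-surface structure and known results on polycyclic subgroups.

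Finally, I will invoke a combination theorem for graphs of groups --- in the spirit of the Martino--Minasyan work on hereditary conjugacy separability, or a tailored variant that matches the JSJ setting --- to glue the conjugacy separability of the vertex groups into conjugacy separability of $\pi_1(\cM)$. The main obstacle, and the technical heart of the proof, is step~(b) together with the combination theorem: one must control \emph{all} conjugates of the peripheral tori simultaneously in the profinite topology, including the subtle issue of distinguishing elements that are conjugate in each finite quotient yet non-conjugate in $\pi_1(\cM)$. Here the ``efficiency'' of the JSJ decomposition with respect to the profinite topology (i.e., the fact that the fundamental group of the graph of groups injects into the inverse limit of its finite quotients in a way compatible with the Bass--Serre structure), established using the virtual specialness of the pieces and Hamilton's earlier work on separability properties of 3-manifold groups, provides the crucial extra input to close the argument.
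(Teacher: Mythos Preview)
This theorem is not proved in the present paper: it is quoted verbatim as an external input from \cite{HWZ} and is used as a black box in the proof of Theorem~\ref{thm:3-man}. There is therefore no ``paper's own proof'' to compare your proposal against.

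That said, your outline is a reasonable high-level reconstruction of the strategy in \cite{HWZ}. The argument there does proceed via the prime decomposition (reducing to free factors) and then the JSJ decomposition, uses the virtual specialness of hyperbolic pieces due to Wise \cite{Wise-QH} and Agol \cite{Agol} to obtain hereditary conjugacy separability via \cite{Minasyan_hcs}, treats Seifert pieces by separate means, and then combines. The title of \cite{HWZ} itself (``Separability of double cosets and conjugacy classes in 3-manifold groups'') signals that the double-coset separability of the peripheral $\Z^2$ subgroups is indeed the technical core, and the combination step relies on the efficiency of the JSJ decomposition in the profinite topology established by Wilton and Zalesskii. Your sketch of step~(b) and the final paragraph capture this correctly in spirit, though of course turning this into an actual proof is a substantial undertaking --- which is precisely why the present paper cites the result rather than reproving it.
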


The other ingredient comes from the following trichotomy, established by the second author and Osin:

\begin{thm}[{\cite[Thm. 5.6]{MO2}}]\label{thm:trichotomy} Let $\cM$ be a compact
$3$-manifold and let $H$ be a subgroup of $\pi_1(\cM)$. Then exactly one of the following three conditions holds.
\begin{enumerate}
\item[(I)] $H$ is acylindrically hyperbolic with $E_H(H)=\{1\}$;

\item[(II)] $H$ contains an infinite cyclic normal subgroup $K$ such that $H/K$ is virtually a subgroup of the fundamental group of a compact surface;

\item[(III)] $H$ is virtually polycyclic.
\end{enumerate}
\end{thm}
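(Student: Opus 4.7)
The plan is to realize $\pi_1(\cM)$ via the geometric decomposition of $\cM$ — prime decomposition followed by the JSJ decomposition of each irreducible summand — as the fundamental group of a finite graph of groups $\mathcal{G}$. The vertex groups are fundamental groups of geometric pieces (spherical, Seifert fibered, or finite-volume hyperbolic), and the edge groups come from essential spheres and tori (finite, infinite cyclic, or virtually $\Z^2$). Let $T$ be the Bass-Serre tree of $\mathcal{G}$. A crucial input is that the action of $\pi_1(\cM)$ on $T$ is $k$-acylindrical for a uniform $k$ (a result of Wilton-Zalesskii built on the malnormality/relative separability of the JSJ edge stabilizers), so that the restricted action of any $H\leqslant\pi_1(\cM)$ on $T$ is also acylindrical.

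I would analyze $H$ via its action on $T$ using Osin's classification of acylindrical actions on hyperbolic spaces. If $H$ acts non-elementarily on $T$, then it contains a loxodromic WPD element and Theorem~\ref{thm:acyl-equiv_def} gives that $H$ is acylindrically hyperbolic. To see $E_H(H)=\{1\}$, observe that any finite normal subgroup of $H$ must fix pointwise the axes of two independent loxodromic WPD elements; those axes contain disjoint sub-arcs of length exceeding the acylindricity constant $k$, and the pointwise stabilizer of such an arc is trivial, placing $H$ in case (I). If $H$ has bounded orbits on $T$, it fixes a vertex and lies in a vertex group $G_v$, handled in the next paragraph. If $H$ has unbounded orbits but acts elementarily, it preserves either a bi-infinite axis $L\subseteq T$ or an end $\xi\in\partial T$; in both situations Bass-Serre analysis combined with acylindricity produces a normal subgroup $K\lhd H$ which is (an ascending union of conjugates of) an edge stabilizer, forcing $K$ to be infinite cyclic or virtually $\Z^2$ with virtually cyclic quotient, and a short refinement (splitting off the $\Z^2$ case into (III)) puts $H$ in case (II) or (III).

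For the vertex-contained situation I would treat each geometric type separately. If $G_v$ is the fundamental group of a Seifert fibered piece, then $G_v$ admits an infinite cyclic normal subgroup $Z$ (the $S^1$-fiber) with $G_v/Z$ virtually a $2$-orbifold group, hence virtually a surface subgroup; the intersection $H\cap Z$ is either infinite cyclic (placing $H$ directly in case (II)) or trivial, in which case $H$ embeds in $G_v/Z$ and the trichotomy for surface subgroups (virtually polycyclic, or non-elementary hyperbolic acting on $\mathbb{H}^2$) yields (III) or (I). If $G_v$ is a finite-volume hyperbolic piece, $G_v$ is virtually torsion-free and discrete in $\operatorname{PSL}_2(\mathbb{C})$, and the Tits alternative for Kleinian groups shows $H$ is either virtually abelian of rank $\leq 2$ (case (III)) or a non-elementary Kleinian group (acylindrically hyperbolic with trivial finite radical, case (I)). Spherical pieces contribute only finite vertex groups and thus virtually polycyclic subgroups, falling in case (III).

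The principal obstacle is rigorously controlling $E_H(H)=\{1\}$ in case (I) when $\pi_1(\cM)$ carries torsion (from spherical summands or non-orientability). The natural remedy is to pass to a torsion-free finite-index subgroup $H_0\leqslant H$ corresponding to a suitable finite cover of $\cM$, derive $E_{H_0}(H_0)=\{1\}$ from torsion-freeness, and invoke Lemma~\ref{lem:E_G(H)} to conclude that $E_H(H)$, being finite and normalized by $H_0$, lies in $E_{H_0}(H_0)=\{1\}$. Finally, mutual exclusivity of (I), (II), (III) is straightforward: acylindrically hyperbolic groups with trivial finite radical are SQ-universal and therefore not virtually polycyclic, ruling out (III); moreover, such a group cannot contain an infinite cyclic normal subgroup $K$, since $K$ would lie in $E_G(g)$ for every loxodromic WPD element $g$ and hence in $E_H(H)=\{1\}$, ruling out (II).
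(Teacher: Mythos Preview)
The paper does not prove this result; it is quoted from \cite[Thm.~5.6]{MO2} and used as a black box in the proof of Theorem~\ref{thm:3-man}, so there is no proof here to compare your proposal against. Your outline is essentially the strategy of \cite{MO2}: analyze $H$ via the Bass--Serre tree coming from the prime and JSJ decompositions, use acylindricity of that action (Wilton--Zalesskii for the JSJ edges; trivial edge groups for the free-product edges), and apply Osin's classification of acylindrical actions.

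Two points need repair. First, the ``unbounded but elementary'' case is over-engineered and partly wrong: since the restricted action of $H$ on $T$ is still acylindrical, \cite[Thm.~1.1]{Osin-acyl} says directly that such an $H$ is virtually cyclic, landing immediately in (III); no normal $K$ built from edge stabilizers is needed, and no route to (II) arises here. Second, your fix for $E_H(H)=\{1\}$ via a torsion-free finite-index $H_0\leqslant H$ has a gap: Lemma~\ref{lem:E_G(H)} applied inside $H_0$ controls only finite subgroups of $H_0$ normalized by $H_0$, but $E_H(H)$ need not lie in $H_0$, so $E_{H_0}(H_0)=\{1\}$ does not by itself force $E_H(H)=\{1\}$. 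The concern is in any case unnecessary in your own framework. When $H$ acts non-elementarily on $T$, your axis argument already suffices, since edge stabilizers in $T$ are trivial (prime edges) or torsion-free (JSJ tori), so a finite group fixing a long arc pointwise is trivial. When $H$ sits in a Seifert or hyperbolic vertex group, that group is the fundamental group of an irreducible $3$-manifold with infinite $\pi_1$, hence aspherical and torsion-free (after first passing to the orientable double cover if needed), and $E_H(H)=\{1\}$ follows immediately.
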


\begin{proof}[Proof of Theorem \ref{thm:3-man}]
By the assumptions $H\coloneq \pi_1(\cM)$ has finite index in $G$, thus both $H$ and $G$ are finitely generated  because $\cM$ is compact.
Moreover, after replacing $H$ with some finite index subgroup, we can assume that $H \lhd G$.

If $H$ is virtually polycyclic then so is $G$, hence $Out(G)$ is residually finite by Wehrfritz's theorem \cite{Wehrfritz}. If $H$ satisfies condition (II) of Theorem \ref{thm:trichotomy}
then  $Out(G)$ is residually finite by Lemma \ref{lem:Seifert}.

Thus, in view of Theorem \ref{thm:trichotomy}, we can assume that $H$ is acylindrically hyperbolic and $E_H(H)=\{1\}$. Therefore we can apply Corollary \ref{cor:conj_aut}, stating that
every pointwise inner automorphism of $H$ is inner. Recall that $H$ is finitely generated and conjugacy separable by Theorem \ref{thm:HWZ}, hence $Out(H)$ is residually finite
by Grossman's criterion \cite[Thm. 1]{Grossman}.

It remains to observe that the center $Z(H)$, of $H$, is finite because $H$ is acylindrically hyperbolic (see \cite[Cor. 4.34]{Osin-acyl}),
hence $Z(H) \leqslant E_H(H)=\{1\}$, i.e., $H$ is centerless. Consequently, Lemma \ref{lem:outnormsub} implies that $Out(G)$ is residually finite.
\end{proof}


\end{document}